\numberwithin{equation}{section}
\newdimen\vintkern\vintkern12pt
\def\vint{-\kern-\vintkern\int}
\newtheorem{thm}{Theorem}[section]
\newtheorem{lem}[thm]{Lemma}
\newtheorem{cor}[thm]{Corollary}
\newtheorem{prop}[thm]{Proposition}
\newtheorem{quest}[thm]{Question}
\newcommand{\tref}[1]{Theorem~\ref{#1}}
\newcommand{\cref}[1]{Corollary~\ref{#1}}
\newcommand{\R}{\mathbb{R}}
\begin{document}
	%\tableofcontents
	\pagebreak
	%\bibliographystyle{alpha}
	
	%\pagenumbering{roman}
	
	\title{Smoothness of Submetries}
	
\thanks{A. L. was partially supported by the DFG grant   SFB TRR 191.}

	\author{Alexander Lytchak}
	
	\email{alexander.lytchak@kit.edu}

\author{Burkhard Wilking}

\email{wilking@uni-muenster.de}	

\keywords
{Riemannian submersion, Alexandrov space, submetry, singular
Riemannian foliation, positive curvature}
\subjclass
[2010]{53C20, 53C21, 53C23}

	\begin{abstract}
We prove that a Riemannian submersion between smooth, compact, non-negatively curved
Riemannian manifolds has to be smooth, resolving a conjecture by Berestovskii--Guijarro.  We show that without any curvature assumption, the smoothness of the base is implied by the smoothness of the total space.  Results are proven in the much more general setting of submetries. These are  metric generalizations of Riemannian submersions and isometric actions, which have recently appeared in different areas of geometry.
%
% for any equidistant decomposition of a smooth, complete,  positively curved Riemannian manifold in topological manifolds, any element of the decomposition is  smooth. Moreover, the factor space is stratified by smooth Riemannian manifolds.  Some results
%are generalized to non-negative curvature and  to curvature-free local setting. As a special case we derive, that any Riemannian submersion between compact, smooth, non-negatively curved Riemannian manifolds is  smooth.
	\end{abstract}
	
	\maketitle

	\renewcommand{\theequation}{\arabic{section}.\arabic{equation}}
	\pagenumbering{arabic}

	\section{Introduction}
\subsection{Smoothness of submersions}	
 A  \emph{submetry} is a map $P:X\to Y$  between metric spaces  which 
sends balls  in $X$ onto  balls  of the same radius  in $Y$.
Submetries as  metric generalization of Riemannian submersions and quotient maps for isometric group actions were introduced  in \cite{Berest2}. Valeryi  Berestovksii and Luis Guijarro  verified that a submetry between smooth complete Riemannian manifolds  has to be a $\mathcal C^{1,1}$-Riemannian submersion, but  does not need to be $\mathcal C^2$,  \cite{Berest}.  They conjectured that non-smoothness cannot occur in non-negative curvature, a statement motivated by the 
conjectural smoothness of the Sharafudtinov retraction \cite{Per-soul}, verified  later in \cite{Wilking}. The second statement of the following theorem, one of the main results of this paper, confirms this conjecture in the compact case.

	\begin{thm} \label{thm-sm1}
	Let $M,N$ be $\mathcal C^1$ Riemannian manifolds and $P:M\to N$ be a surjective  $\mathcal C^1$ Riemannian submersion. If  
	the metric on $M$ is smooth then the Riemannian manifold $N$ is smooth as well.
	
If, in addition, $M$ is compact and has non-negative sectional curvature, then $P$ is a smooth map. %on an open and dense subset  $U$ of $M$.    This subset $U$ coincides with $M$ if either 
%$M$ has positive curvature or is isometric to a Euclidean space.
\end{thm}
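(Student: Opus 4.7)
The plan is to treat the two statements in turn, using horizontal geodesics and the exponential map of $M$ as the main tool. By the Berestovskii--Guijarro regularity result, $P$ is already $\mathcal C^{1,1}$, so at each $x \in M$ the horizontal subspace $\mathcal H_x = (\ker dP_x)^{\perp} \subset T_xM$ is well defined and depends Lipschitz-continuously on $x$; this is the only transverse regularity one may safely use a priori.

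\emph{Step 1: smoothness of $N$.}
Fix $x\in M$, set $q=P(x)$, and consider the map
\[
F\co \mathcal H_x \to N, \qquad F(v)=P(\exp^M_x(v)),
\]
defined on a small ball around the origin. Since horizontal geodesics in $M$ descend to geodesics of the same length in $N$, $F$ coincides with $\exp^N_q$ under the linear identification $dP_x\co \mathcal H_x \xrightarrow{\sim} T_qN$, and is in particular a pointed $\mathcal C^1$ homeomorphism. I would use $F$ to define a chart on a neighborhood of $q$ in $N$ and then compute the pull-back metric $F^*g_N$ on $\mathcal H_x$. The key observation is that the entries of $F^*g_N$ are expressible through horizontal Jacobi fields along the radial geodesics $t\mapsto \exp^M_x(tv)$, whose projections to $N$ determine the metric via O'Neill's horizontal Jacobi equation. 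Although the horizontal distribution $\mathcal H$ on $M$ is only Lipschitz globally, its restriction to an individual radial geodesic is propagated by a linear ODE whose coefficients involve only the smooth curvature of $M$ and the smooth initial datum $\mathcal H_x$ at $x$. This upgrades $F^*g_N$ to a $\mathcal C^\infty$ metric in the variable $v$, and hence $N$ becomes smooth in these normal coordinates once one checks that different choices of basepoint $x$ produce compatible smooth charts.

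\emph{Step 2: smoothness of $P$ under $\sec\geq 0$ and compactness.}
By Step 1 one may assume both $M$ and $N$ are smooth, so the remaining task is to show that the horizontal distribution, equivalently the fibers of $P$, form a smooth foliation rather than a merely $\mathcal C^{1,1}$ one. Non-negativity of the sectional curvature should enter through the duality principle for Riemannian foliations in non-negative curvature and the already established smoothness of Sharafudtinov-type retractions: broken horizontal geodesics emanating from any fiber sweep out all of $M$, imposing a strong rigidity on the second fundamental form and A-tensor of the leaves. Combined with compactness and the horizontal Jacobi control from Step 1, this should allow one to set up an elliptic system satisfied by the fiber shape operator along each leaf, and to bootstrap from $\mathcal C^{1,1}$ to $\mathcal C^\infty$ regularity by standard elliptic arguments. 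An equivalent route is to prove smoothness of the horizontal lifts of a smooth frame on $N$, using that these lifts are integral curves of a Lipschitz but curvature-rigid distribution.

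\emph{Main obstacle.}
The essential difficulty is that the horizontal distribution is only Lipschitz transversally while being forced to be much more regular along each horizontal geodesic by the smooth geodesic flow of $M$. Reconciling these two viewpoints, and verifying that the smooth charts obtained at different base points fit together, is already what makes Step 1 non-trivial. In Step 2 the additional bootstrap from $\mathcal C^{1,1}$ to $\mathcal C^\infty$ for $P$ itself requires turning the qualitative consequences of the duality theorem and non-negative curvature into quantitative analytic ellipticity on the leaves, and this is where I expect the hardest technical work to lie.
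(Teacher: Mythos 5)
Your Step~1 follows essentially the same route as the paper's proof of \tref{thm-sm3}: push the metric of $N$ back to a small ball in the horizontal space at a point $x$ of a fiber via $P\circ\exp_x$, and show that this a priori Lipschitz metric tensor is smooth because the vertical spaces $V^q$ along the image are spanned by values $J_{h,v}(1)$ of honest Jacobi fields of the smooth manifold $M$. One correction, though: the initial data of these Jacobi fields is not ``the smooth initial datum $\mathcal H_x$ at $x$'' alone. It is the pair $(v,\nabla_v\hat h)$, where $\hat h$ is the basic normal extension of $h$ along the fiber; it therefore involves the first derivative of the horizontal distribution at $x$ in fiber directions. Since $P$ is only $\mathcal C^{1,1}$, this derivative exists only at almost every point of the fiber, so you must first choose a \emph{good} base point $x$; the saving grace, which your sketch omits, is that $(h,v)\mapsto\nabla_v\hat h$ is then bilinear, so the initial data --- hence the solution of the Jacobi equation, hence $V^q$ and the pulled-back metric --- depends smoothly on $h$, i.e.\ on the point of the chart. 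Your concern about compatibility of charts is unnecessary: once the metric is smooth in one chart near each point, distance functions to nearby points are smooth and distance coordinates give a smooth atlas.

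Step~2, by contrast, has a genuine gap. The one concrete mechanism you propose --- an elliptic system for the fiber shape operator, bootstrapped by elliptic regularity --- is available only when the mean curvature of the fibers is a \emph{basic} normal field (then $\Delta F=l\cdot\mathfrak h$ in harmonic coordinates and one bootstraps as in Proposition~\ref{prop: meanbas}); this is a special property, established in the paper for $\mathbb R^n$ and known for normal homogeneous spaces, but neither known nor claimed for a general compact non-negatively curved $M$. The paper's actual argument is different: (i) equifocality (Proposition~\ref{jac: family}) shows that focalizing $L$-Jacobi fields along a typical basic normal field vary as smoothly in the base point as $P$ itself; (ii) in non-negative curvature every Lagrangian of $L$-Jacobi fields splits orthogonally into a focal-generated part and a parallel part (\tref{thm: GAG}), which first yields $\mathcal C^2$ everywhere and then, on the open dense set where the rank of the parallel part is locally constant, permits the bootstrap $\mathcal C^k\Rightarrow\mathcal C^{k+1}$ via Corollary~\ref{cor: contdepend}; (iii) smoothness propagates along dual leaves (\tref{thm: dualleaf}), and completeness of the dual leaves --- which is exactly where compactness and \cite[Theorem 3]{Wilking} enter --- upgrades dense smoothness to smoothness everywhere (\tref{thm: dualcomplete+}). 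Your sketch offers no substitute for the handling of the parallel Jacobi fields (whose dimension is only semicontinuous), which is the real obstruction in non-negative, as opposed to positive, curvature, and no route from ``smooth on a dense open set'' to ``smooth everywhere.''
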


The first statement of  
Theorem \ref{thm-sm1} answers a question formulated by Frank Lempert  \cite[p. 372]{Lem}, a paper in which this first statement was proved for real-analytic metrics by means of complex analysis.

%The first statement of  
%Theorem \ref{thm-sm1} anwers a question formulated in  \cite{Lem}, where this statement was verified for real-analytic $M$.   

As we are going to show, Theorem \ref{thm-sm1} essentially  remains  valid for submetries  $P:M\to N$ without any a priori assumption on 
$N$.

\subsection{Smoothness of submetries, the base}

%The  second central resut of the paper  is a generalization  of 
%Theorem \ref{thm-sm1} to submetries with total space being a Riemannian manifold and without a priori assumptions on the base space.

Submetries $P:X\to Y$  are in one-to-one correspondence with equidistant decompositions
of the \emph{total space} $X$. The correspondence assigns to $P$ the decomposition of $X$ by fibers of $P$ \cite[Section 2.2]{KL}.  For an equidistant decomposition of $X$, the \emph{base space} $Y$ appears as an auxiliary object and  it is natural  to understand its properties only in terms of $X$.   Motivated by recent appearances
of submetries in many unrelated settings \cite{s24rigid}, \cite{Stadler}, \cite{StadlerII}, \cite{Lyt-buildings}, \cite{Rade},  \cite{Mendes-Rad}, \cite{Mendes-Rad2}, \cite{MR-Weyl},
  \cite{Grovesubmet},   \cite{GW}, \cite{Mondino1}, 
\cite{LPZ}, \cite{Ber-finite}, \cite{Ber-Nik-hom},  \cite{Lange},
a systematic investigation of submetries $P:M\to Y$ with  $M$ a  Riemannian
manifold was initiated in \cite{KL}.

The space $Y$ has to be a length metric space of curvature locally bounded from below \cite[Proposition 3.1]{KL}.  There is a canonical stratification 
$Y= \cup _{e=0} ^m Y^e$, where $Y^e$ is a locally convex subset, locally isometric to an $e$-dimensional  Riemannian manifold with Lipschitz continuous metric tensor, \cite[Theorem 1.6 Theorem 11.1]{KL}. %The subset $Y^l$ is locally convex in $Y$, for any $l$, and  it is an $l$-dimensional manifold.  
The largest stratum $Y^m$, the set of \emph{regular points} of $Y$, is open and convex in $Y$. The restriction $P:P^{-1} (Y^m)\to Y^m$ is a $\mathcal C^{1,1}$-Riemannian submersion.

 Our main results concern smoothness of the base and of the fibers of $P$.  The next theorem generalizes the first part of Theorem \ref{thm-sm1}.

\begin{thm} \label{thm: new3}
Let $M$ be a smooth Riemannian manifold, let $P:M\to Y$ be a submetry and let $Y=\cup _{e=0} ^m Y^e$ be the canonical stratification of $Y$. Then all  strata $Y^e$ are   smooth Riemannian manifolds.
\end{thm}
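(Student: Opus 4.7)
The plan is to reduce every case to the first part of \tref{thm-sm1}. For the top stratum this is immediate: the excerpt records that $P\colon P^{-1}(Y^m)\to Y^m$ is a $\mathcal{C}^{1,1}$ Riemannian submersion from the smooth open subset $P^{-1}(Y^m)\subset M$, so the first part of \tref{thm-sm1} applies directly and gives that $Y^m$ is smooth. For a stratum $Y^e$ with $e<m$ I would work locally around a fixed point $y\in Y^e$, aiming to construct a smooth Riemannian submanifold $\Sigma\subset M$ together with a surjective $\mathcal{C}^1$ Riemannian submersion $P|_\Sigma\colon\Sigma\to U$ onto an open neighborhood $U\subset Y^e$ of $y$. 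Once such $\Sigma$ is available, \tref{thm-sm1} again yields smoothness of $U$, hence of $Y^e$ near $y$.

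The natural candidate for $\Sigma$ is obtained as follows. Fix $p\in P^{-1}(y)$. At each $q\in P^{-1}(y)$ the horizontal space splits as $H_q=H_q^e\oplus N_q$, where $H_q^e$ is the lift under $dP_q$ of the tangent space $T_yY^e\subset T_yY$ and $N_q$ is its orthogonal complement inside $H_q$. Extending $H^e$ to a subbundle over a neighborhood of $p$ in the fiber $F=P^{-1}(y)$ and exponentiating this subbundle in $M$ produces a candidate $\Sigma$. One expects that the local convexity of $Y^e$ in $Y$, together with the fact that a horizontal geodesic whose initial velocity projects into $T_yY^e$ should remain inside $P^{-1}(Y^e)$, forces $P(\Sigma)\subset Y^e$ and makes $P|_\Sigma$ a submetry onto a neighborhood of $y$ in $Y^e$.

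The main obstacle is the regularity of this construction. Since the horizontal distribution on $M$ is only Lipschitz a priori, a naive construction yields $\Sigma$ as at best a Lipschitz submanifold and $P|_\Sigma$ only a Lipschitz submersion, well below the $\mathcal{C}^1$-Riemannian-submersion regime required to invoke \tref{thm-sm1}. I would try to overcome this by induction on codimension, using the already-established smoothness of strata of larger dimension together with the local geometry of the stratification near $Y^e$ to select the subbundle $H^e$ smoothly, and then to bootstrap the regularity of $P|_\Sigma$ by arguments in the spirit of those underlying the first part of \tref{thm-sm1}. The delicate point is that, unlike the full horizontal distribution on $M$, the piece $H^e$ tangent to the stratum ought to inherit better regularity from the Lipschitz Riemannian structure on $Y^e$ provided by \cite{KL}, and the challenge is to extract this extra regularity in a form usable for the exponential construction above.
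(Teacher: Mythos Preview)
Your proposal identifies the right geometric picture but has a genuine gap at exactly the point you flag, and the paper resolves it by a different and simpler idea than the bootstrap you suggest.

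The difficulty with your construction of $\Sigma$ is real and, as you describe it, not overcome. If you extend $H^e$ over a neighborhood of $p$ in the fiber $F=P^{-1}(y)$ and exponentiate, then $\Sigma$ inherits only the regularity of $F$ and of the horizontal distribution along $F$, which is $\mathcal C^{1,1}$ at best; you then cannot feed this into \tref{thm-sm1}, which requires a smooth total space. Your proposed induction on codimension and the hope that $H^e$ has extra regularity are not developed, and in fact the paper never establishes any such extra regularity of $H^e$ along a fiber. Even if you took $\Sigma=\exp_p(H^e_p)$ at a \emph{single} point (which is smooth), the restriction $P|_\Sigma$ is not a Riemannian submersion for the metric induced from $M$: the tangent spaces of $\Sigma$ are not horizontal away from $p$, so you still cannot invoke \tref{thm-sm1} directly.

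The paper's argument avoids all of this by working at a single \emph{good point} $z\in L=P^{-1}(y)$ (a point where $L$ is twice differentiable and the basic normal fields are differentiable). One sets $N:=\exp_z(O_z)$ with $O_z$ a small ball in $H^z_0\subset H^z$; this $N$ is automatically a \emph{smooth} submanifold of $M$, and $P|_N$ is a $\mathcal C^{1,1}$-diffeomorphism onto a neighborhood of $y$ in $Y^e$. One then pulls back the Riemannian metric from $Y^e$ to a metric $g^*$ on $N$ and shows $g^*$ is smooth. The key observation is that $\|u\|_{g^*}$, for $u\in T_qN$, is the norm of the projection of $u$ onto $H^q$, hence is determined by $V^q$. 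But along $N$ the vertical space $V^q$ is spanned by the time-$1$ values of the holonomy Jacobi fields $J_{h,v}$ along $\gamma^h$ with $h=\exp_z^{-1}(q)$ and $v\in V^z$ (\lref{lem: consequent}). Since Jacobi fields depend smoothly on their initial data and $h$ depends smoothly on $q$, the distribution $q\mapsto V^q$ is smooth along $N$, and hence $g^*$ is smooth. The crucial point is that all the non-smoothness of $L$ and of the horizontal distribution is concentrated at the single point $z$, where only first-order data (the values $v\in V^z$ and $\nabla_v\hat h$ at $z$) enter the initial conditions of the Jacobi fields; the rest is carried by the smooth exponential map of $M$. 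There is no induction on strata and no appeal to \tref{thm-sm1}; in fact the first part of \tref{thm-sm1} is a special case of this argument.
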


The smoothness of the stratum  $Y^e$ is not uniform near points of $\overline{Y^e} \setminus Y^e$. For instance,  the curvature in the maximal stratum $Y^m$
usually explodes near points in strata of larger codimension \cite[Theorem 1.1]{Thorb}. If the submetry is given by an isometric group action, or, more generally, by the leaves of a smooth \emph{singular Riemannian foliation}, see \cite{Molino}, \cite{Alexandrino-survey}, \cite{Thorb} and Section \ref{sec: srf} below, then this (non)-explosion is characterized by infinitesimal data in \cite[Theorem 1.4]{Thorb}.  In these cases, $Y$ is a smooth Riemannian orbifold up to strata of codimension $\geq 3$ \cite[Proposition 3.1]{Thorb}.  For  submetries,
 we prove:

\begin{thm} \label{thm: codim1}
Let $M$ be a smooth Riemannian manifold and $P:M\to Y$ be a submetry with $\dim (Y)=m$.  Then any point $y\in Y^{m-1}$  has a  neighborhood in $Y$  isometric to a smooth Riemannian manifold with totally geodesic boundary. 
\end{thm}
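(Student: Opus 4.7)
The plan is to reduce the claim to the already established Theorem \ref{thm: new3} by a doubling trick. At $y \in Y^{m-1}$ the tangent cone $T_yY$ is an $m$-dimensional Alexandrov space of nonnegative curvature that contains the $(m-1)$-dimensional Euclidean space $T_y(Y^{m-1})$ as a closed convex subset; since $y\notin Y^m$, the splitting theorem forces $T_yY = \mathbb R^{m-1} \times [0,\infty)$. In particular $Y^{m-1}$ is a locally convex (hence, inside $Y$, locally totally geodesic) hypersurface near $y$ by the stratification results of \cite{KL}, and it is a smooth Riemannian manifold by Theorem \ref{thm: new3}.

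Next I would lift this picture to $M$. Set $F:= P^{-1}(Y^{m-1})$. Since $Y^{m-1}$ is locally totally geodesic in $Y$, horizontal geodesics tangent to $F$ project to geodesics inside $Y^{m-1}$ and thus stay in $F$; combined with the obvious fact that vertical geodesics stay in $F$, this shows that $F$ is totally geodesic in $M$ near $P^{-1}(y)$. Using the $\mathcal{C}^{1,1}$-submersion structure on $P^{-1}(Y^m)\to Y^m$ together with Theorem \ref{thm: new3} applied to the restriction of $P$ to $F\to Y^{m-1}$, I would argue that $F$ is a smooth codimension one submanifold of $M$ in a neighborhood of $P^{-1}(y)$.

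Now choose a small convex neighborhood $U$ of $y$ in $Y$, set $V = P^{-1}(U)$, and form the metric doublings $\hat U := U \cup_{Y^{m-1}\cap U} U$ and $\hat V := V \cup_{F\cap V} V$. Because $F\cap V$ is smooth and totally geodesic, $\hat V$ inherits a smooth Riemannian metric. The map $P$ extends by reflection to $\hat P \colon \hat V \to \hat U$. The key point is that $\hat P$ is again a submetry: this follows from the fact that length-minimizing paths in $Y$ meeting $Y^{m-1}$ either reflect off it or remain in it (by local convexity of $Y^{m-1}$ and the half-space structure of the tangent cones), with the analogous statement for horizontal geodesics in $M$ at $F$. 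Granted this, the tangent cone to $\hat Y$ at $y$ is the double of a Euclidean half-space, namely $\mathbb R^m$, so $y$ lies in the top stratum $(\hat Y)^m$ of the canonical stratification of $\hat Y$.

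Theorem \ref{thm: new3} applied to $\hat P$ now shows that a neighborhood of $y$ in $\hat U$ is a smooth Riemannian manifold. The canonical reflection $\sigma$ of $\hat U$ interchanging the two copies of $U$ is a smooth isometric involution whose fixed point set is $Y^{m-1}\cap U$, so the latter is a smooth totally geodesic hypersurface in $\hat U$; the quotient identifies $U$ with a smooth Riemannian manifold with totally geodesic boundary $Y^{m-1}\cap U$, as required. The main technical obstacle will be verifying that the naively doubled map $\hat P$ is genuinely a submetry, rather than only a $1$-Lipschitz map satisfying some weaker submersion property; this amounts to controlling the reflection behaviour of minimizers at $Y^{m-1}$ and of their horizontal lifts at $F$, and is the place where the stratification-theoretic input from \cite{KL} and the totally geodesic properties established above are combined.
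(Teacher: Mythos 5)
Your doubling strategy has two fatal gaps, both located in the step where you pass from $Y$ to $M$. First, the set $F=P^{-1}(Y^{m-1})$ need not be a hypersurface in $M$: as described in Section \ref{subsec: finer}, at a point $z\in F$ the differential $D_zP$ restricted to $H^z_1=T_z^\perp S$ is $h\mapsto \|h\|$ onto the ray $(T_yY^{m-1})^\perp=[0,\infty)$, and $\dim H^z_1$ can be any number $\geq 1$; the stratum preimage then has codimension $\dim H^z_1$ in $M$ (already for $P=d_{\{0\}}:\R^2\to[0,\infty)$ the preimage of the codimension-one stratum is a point). A metric double along a subset of codimension $\geq 2$ is meaningless, so the construction of $\hat V$ does not get off the ground in general. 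Second, even when $F$ is a hypersurface, your argument for total geodesy is not valid. Checking horizontal and vertical directions does not suffice, since a generic tangent vector to $F$ is neither; and the assertion that ``vertical geodesics stay in $F$'' is false, because fibers of a submetry are not totally geodesic (for the Hopf fibration $S^3\to S^2$ the preimage of a great circle is the Clifford torus, which is minimal but not totally geodesic). If $F$ is not totally geodesic, the doubled metric on $\hat V$ is only Lipschitz across $F$, so Theorem \ref{thm: new3} (which requires a smooth total space) cannot be applied to $\hat P$. The submetry property of $\hat P$, which you flag as the main obstacle, is therefore not even the first problem.

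For comparison, the paper's proof avoids doubling the total space entirely. It first constructs a \emph{slice}: a smooth transversal $N=\exp_z(\tilde O_z)$ through a good point $z$ of the fiber over $y$, equipped with a modified smooth metric $\tilde g$ (built from the smoothly varying spaces $E^u$ of holonomy-type Jacobi field values) for which $P:(N,\tilde g)\to Y$ is again a local submetry with $P^{-1}(y)$ a single point. After this reduction, $S=P^{-1}(Y^{m-1})$ becomes a genuine smooth locally convex submanifold mapped locally isometrically onto $Y^{m-1}$, the fibers of $P\circ\exp_S$ are concentric spheres in the normal spaces of $S$, hence form a smooth singular Riemannian foliation, and the orbifold statement follows from \cite[Proposition 3.1]{Thorb}. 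If you want to keep a reflection-type picture, it has to be produced on the base after this slice reduction, not by doubling $M$ along $P^{-1}(Y^{m-1})$.
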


As a consequence \emph{almost every quasi-geodesic} in $Y$ is contained in  
the open, convex  subset of $Y$ isometric to a smooth Riemannian manifold with totally geodesic boundary, therefore, it can be be controlled by classical means of Riemannian geometry, Lemma \ref{lem: no-conj}.

\subsection{Smoothness of submetries, the fibers} Let again $P:M\to Y$ be a submetry, where $M$ is a smooth Riemannian manifold.
 \emph{Most} fibers of $P$ are $\mathcal C^{1,1}$-submanifolds of $M$, but some fibers may be arbitrary subsets of positive reach in $M$ with rather wild singularities  \cite[Theorem 1.8, Examples 6.6-6.10, Proposition 6.3]{KL}.  It may appear somewhat surprising that the possibly wild singularities of the fibers provide no restrictions on the smoothness of the base, as stated in Theorem \ref{thm: new3}.

On the other hand, one cannot hope for higher regularity of the fibers   without
some  priori assumption, even if the total space is the round sphere or the Euclidean space:
 The distance function to a ray $P:\mathbb R^2\to [0,\infty)$ is a  submetry and the fibers of $P$ are not $\mathcal C^2$.

A submetry $P:M\to Y$ is called \emph{transnormal} (or \emph{manifold submetry}) if
all fibers of $P$ are topological manifolds without boundary.   This property   is equivalent to the statement that geodesics starting normally to a fiber are normal to all fibers they intersect. Transormality is a \emph{stable} property, \cite[Corollary 12.6]{KL};  major classes of submetries including Riemannian submersions, isometric group actions and singular Riemannian foliations with closed leaves are transnormal. Moreover, submetries appearing in connections with the rank rigidity \cite{Lyt-buildings}, \cite{Stadler}, \cite{StadlerII} and 
submetries appearing in connections with  Laplacian algebras \cite{Mendes-Rad}, \cite{Mendes-Rad2}, \cite{MR-Weyl} are transnormal.

Transnormality turns out to be   necessary for  smoothness of the fibers in the regular part; this answers a question  by Marco Radeschi:

\begin{thm} \label{thm:last}
Let $M$ be a smooth Riemannian manifold and $P:M\to Y$ be a submetry. If, for some $k \geq 2$,  all regular fibers are $\mathcal C^k$ submanifolds  then $P$ is transnormal and all fibers of $P$ are $\mathcal C^k$ submanifolds.
%Moreover, for $e\geq \dim (Y)-1$, the stratum $P^{-1} (Y^e)$ and the restriction $P:P^{-1} (Y^e)\to Y^e$ is a    $\mathcal C^k$ Riemannian submersion.
\end{thm}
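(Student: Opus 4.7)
The plan is to first establish transnormality, by using the $\mathcal C^2$-hypothesis on regular fibers to rule out boundary-of-fiber configurations, and then to transfer $\mathcal C^k$-regularity from regular fibers to singular ones via the nearest-point projection.

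First, one upgrades regularity on the regular part $M^{\mathrm{reg}}=P^{-1}(Y^m)$. By \tref{thm: new3}, $Y^m$ carries a smooth Riemannian structure. Combining this with the hypothesis that every regular fiber is a $\mathcal C^k$-submanifold and with the $\mathcal C^{1,1}$-Riemannian submersion structure provided by \cite{KL}, one sees that the vertical distribution $\mathcal V=\ker dP$ and its orthogonal complement $\mathcal H$ are $\mathcal C^{k-1}$ on $M^{\mathrm{reg}}$, and that the second fundamental forms of regular fibers (encoded via O'Neill's $T$-tensor) depend continuously on the fiber. In particular, these second fundamental forms are locally bounded on $M^{\mathrm{reg}}$.

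Second, transnormality is established by contradiction. Suppose $P$ is not transnormal. By the manifold-without-boundary characterization quoted in the introduction, some fiber $F_{y_s}$ has a boundary point $q$; equivalently, there is a horizontal geodesic from a regular fiber $F_{y_r}$ arriving at $q$ tangentially to $F_{y_s}$. The paradigmatic local model is the ``distance-to-ray'' submetry $\mathbb R^2\to[0,\infty)$ mentioned in the introduction: every regular fiber $F_{y_r}$ there is a flat portion parallel to $F_{y_s}$ joined to a small round arc wrapping the endpoint, and at the junction the second fundamental form jumps from $0$ to a value of order $1/d(y_r,y_s)$ --- a $\mathcal C^1$-but-not-$\mathcal C^2$ point. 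In higher dimensions, the transverse slice structure around $q$ from \cite[Section 12]{KL} reduces the local picture to this two-dimensional model, so some regular fiber close to $F_{y_s}$ must carry unbounded second fundamental form near $q$, contradicting the bound from the previous step.

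Third, once transnormality holds, all fibers are topological manifolds without boundary, and by \cite[Theorem 1.8]{KL} every fiber is a $\mathcal C^{1,1}$-submanifold. To upgrade to $\mathcal C^k$ at a singular fiber $F_{y_s}$, fix $q\in F_{y_s}$ and a nearby regular value $y_r$. The nearest-point projection $\pi\colon F_{y_r}\to F_{y_s}$ is well-defined thanks to the positive reach of $F_{y_s}$ and transnormality, and its inverse is the horizontal exponential of length $d(y_r,y_s)$. As $\pi$ is the restriction to $F_{y_r}$ of the smooth footpoint map of a tubular neighborhood of $F_{y_s}$ in $M$, and $F_{y_r}$ is $\mathcal C^k$, the fiber $F_{y_s}$ is the image of a $\mathcal C^k$-manifold under a smooth submersion, hence itself $\mathcal C^k$.

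The hard part is the second step: making precise the reduction from non-transnormality to a $\mathcal C^2$-violation on some regular fiber. The distance-to-ray example supplies the correct intuition, but promoting it to arbitrary codimension and without symmetry requires careful use of the tangent cone decompositions and metric splittings around boundary points of singular fibers developed in \cite{KL}.
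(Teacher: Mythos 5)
Your outline matches the paper's strategy at the top level (non-transnormality forces a $\mathcal C^2$-violation on a regular fiber; then regularity is transported from regular fibers to singular ones), but both key steps contain genuine gaps. In step 2, the reduction "the transverse slice structure around $q$ from \cite[Section 12]{KL} reduces the local picture to this two-dimensional model" is not available: the paper stresses that for general submetries the local structure \emph{cannot} be read off the infinitesimal one (there is no slice theorem; a slice is constructed only over codimension-one strata in Theorem \ref{thm-sm4}). Moreover, your proposed contradiction --- "some regular fiber close to $F_{y_s}$ must carry unbounded second fundamental form near $q$" --- is not what happens even in the distance-to-ray model: there each regular fiber has second fundamental form bounded by $1/r$ on all of the fiber; the obstruction to $\mathcal C^2$ is a \emph{jump discontinuity} of $\mathrm{II}$ within a single fiber, not unboundedness, and local boundedness of $\mathrm{II}$ on $M^{\mathrm{reg}}$ gives no control near the singular point $q\notin M^{\mathrm{reg}}$ anyway. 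Detecting the jump without a slice is exactly the content of the paper's Theorem \ref{thm: transnormal}, which works infinitesimally (the fiber of $D_xP$ is a $\mathcal C^{1,1}$ manifold with boundary meeting $H_0$ orthogonally) and then compares a $\mathcal C^2$-interpolating curve in a nearby regular fiber with a curve in $L$ via the second variation formula, showing the distance between the fibers would drop below $\delta$. You flag this step as "the hard part", but as written it is missing, and the mechanism you propose for the contradiction is the wrong one.

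Step 3 is circular as stated: the footpoint map of a tubular neighborhood of $F_{y_s}$ is only as regular as $F_{y_s}$ itself (for a $\mathcal C^{1,1}$ submanifold it is merely Lipschitz, see Section \ref{Sec: 8}), so you cannot invoke its smoothness to prove smoothness of $F_{y_s}$. The correct regularity input is that the basic normal field on the \emph{regular} fiber $F_{y_r}$ pointing towards $F_{y_s}$ is $\mathcal C^{k-1}$ (since $P$ is $\mathcal C^k$ on the regular part by Lemma \ref{lem: regsm}), whence the holonomy/projection map $F_{y_r}\to F_{y_s}$ is $\mathcal C^{k-1}$ and Lipschitz-open. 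But even then, "image of a $\mathcal C^k$ manifold under a $\mathcal C^{k-1}$ submersion" only yields $\mathcal C^{k-1}$ for $F_{y_s}$, one derivative short. The paper recovers the missing derivative by using the holonomy map to transport $\mathcal C^{k-1}$ \emph{normal vector fields} onto $F_{y_s}$ (via a local $\mathcal C^{k-1}$ section and the geodesic flow), concluding that the normal bundle of $F_{y_s}$ is $\mathcal C^{k-1}$ and then applying Theorem \ref{thm: normalfields} to get $\mathcal C^k$; this is done inductively on $k$ in Proposition \ref{thm: ext}. Your step 3 needs to be replaced by this normal-bundle bootstrap.
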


\subsection{Smoothness of submetries, non-negative curvature}
Finally, we turn to non-negative curvature and formulate our main results.
%As mentioned above,  behind the proof of Theorem \ref{thm: smoothonefiber} lies a statement of   the a  priori smoothness of transnormal submetries in positive curvature, one of our main results:
%One of our main results shows that in  non-negative curvature transnormal submetries are  smooth on an open dense part: 

\begin{thm} \label{thm: maincompact}
Let $M$  be complete and  of positive sectional curvature or
  a non-negatively curved  symmetric space.  Let $P:M\to Y$ be a transnormal submetry. Then all fibers of $P$ are smooth.
\end{thm}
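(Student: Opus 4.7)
The plan reduces smoothness of all fibers to smoothness of regular fibers via \tref{thm:last}, and then uses Wilking's dual foliation theorem together with the special structure of the two curvature settings to exhibit regular fibers as smooth submanifolds of $M$.

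\textbf{Reduction.} By \tref{thm:last} it suffices to show that every regular fiber $F=P^{-1}(y)$ with $y\in Y^m$ is a smooth submanifold of $M$; the conclusion then extends to all fibers of $P$. By \tref{thm: new3} the regular stratum $Y^m$ is already a smooth Riemannian manifold and $P\co P^{-1}(Y^m)\to Y^m$ is a $\mathcal{C}^{1,1}$-Riemannian submersion between smooth manifolds. The task is to upgrade the regularity of this submersion from $\mathcal{C}^{1,1}$ to $\mathcal{C}^\infty$.

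\textbf{The dual foliation.} Transnormality forces horizontal geodesics of $M$ to remain perpendicular to every fiber they meet, so the horizontal geodesic flow permutes the fibers and one obtains a dual decomposition $\mathcal{F}^{\#}$ of $M$ whose classes consist of points joined by broken horizontal geodesics. In non-negative curvature, Wilking's dual foliation theorem gives that $\mathcal{F}^{\#}$ is again a transnormal decomposition of $M$ into totally geodesic leaves, and in our two settings it is especially rigid. In positive sectional curvature $\mathcal{F}^{\#}$ has a single leaf, so any two points of $M$ are joined by a broken horizontal geodesic. On a non-negatively curved symmetric space each leaf of $\mathcal{F}^{\#}$ is an orbit of a closed Lie subgroup $G^{\#}\leq \mathrm{Isom}(M)$ generated by transvections along horizontal geodesics, all of which are global smooth isometries of $M$.

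\textbf{Upgrading regularity of regular fibers.} In both settings the strategy is to produce a large supply of smooth ambient diffeomorphisms of $M$ carrying regular fibers of $P$ to regular fibers, and then to use these to transfer smooth submanifold charts among points of a given regular fiber $F$. In the symmetric case the subgroup of $G^{\#}$ that preserves the fiber partition of $P$ acts on $F$, and combining the orbit structure of $G^{\#}$ with the dual foliation theorem gives that this action is transitive on $F$; hence $F$ is a smooth homogeneous submanifold. In the positive-curvature case, broken horizontal geodesics starting and ending on $F$ yield smooth ambient diffeomorphisms of $M$ that transfer smooth submanifold charts between neighborhoods of any two points of $F$ inside $M$; the single-leaf property of $\mathcal{F}^{\#}$ guarantees enough such diffeomorphisms to produce a smooth atlas on $F$ compatible with its $\mathcal{C}^{1,1}$ embedding. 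Applying \tref{thm:last} then yields smoothness of all fibers.

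\textbf{Main obstacle.} The crucial technical hurdle is the transitivity argument: Wilking's dual foliation theorem only connects arbitrary pairs of points of $M$ by broken horizontal geodesics, whereas for the smoothness propagation one needs such geodesics starting and ending on the \emph{same} regular fiber $F$ and corresponding to smooth diffeomorphisms of $M$ which preserve the whole fiber decomposition, not merely exchange individual fibers. In the symmetric case this fiber-preserving subgroup of $G^{\#}$ may be strictly smaller than $G^{\#}$ itself and must be identified via its infinitesimal action on $F$; in the positively curved non-symmetric case one needs a regularity bootstrap for the second fundamental form of $F$ along horizontal geodesics, using the Riccati equation and the curvature lower bound to gain derivatives beyond the $\mathcal{C}^{1,1}$ provided by $P$. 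The non-compact positively curved setting, in which the soul theorem forces $M\cong \mathbb{R}^n$, introduces a further subtlety that should reduce to the compact case by an exhaustion argument.
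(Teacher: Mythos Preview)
Your proposal correctly identifies its own fatal gap in the final paragraph: the ``smooth ambient diffeomorphisms'' you invoke in the positive-curvature case simply do not exist a priori. A holonomy map along a horizontal geodesic is obtained by exponentiating a basic normal field along a fiber, and such fields are only $\mathcal C^{k-1}$ when $P$ is $\mathcal C^k$ (see Section~\ref{sec: cont}); starting from $\mathcal C^{1,1}$ you get Lipschitz holonomy maps, not smooth ones, so no regularity is transferred. Broken horizontal geodesics connect \emph{points}, they do not furnish smooth maps between neighborhoods unless smoothness of the foliation is already known. The symmetric-space argument has a parallel defect: a transvection along a horizontal geodesic is indeed a smooth isometry of $M$, but it has no reason whatsoever to preserve the fiber decomposition of $P$, since $P$ need not be equivariant for the isometry group; hence the ``fiber-preserving subgroup'' of $G^{\sharp}$ may well be trivial.

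The paper's route is substantially different and avoids these circularities. For positive curvature the dual foliation plays no role at all: the point is that positive curvature rules out parallel normal Jacobi fields, so the rank function of Section~\ref{sec: rank} vanishes identically, the open dense set $\mathcal U$ of \tref{thm: openc} is all of $\mathcal H_{reg}$, and the bootstrap there gives smoothness on the entire regular part; \tref{cor: smimpltr} then extends this to all fibers. For symmetric spaces one first establishes $\mathcal C^2$ via \tref{thm: c2neu}, then cites \cite{Llohann} to obtain that every dual leaf is complete (in fact a direct factor of $M$), and finally applies \tref{thm: dualcomplete+}. The latter does not argue by transitivity of isometries on a fiber; instead it forms the auxiliary transnormal submetry $P^{\sharp}$ whose fibers are dual-leaf closures, shows these are smooth, and then upgrades each $P$-fiber by observing that $P^{\sharp}$ restricts to a submetry on it with smooth normal bundle.
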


Theorem \ref{thm: maincompact} should be valid  for all complete $M$ of non-negative curvature.  In this generality  we can verify  smoothness almost everywhere:

\begin{thm} \label{thm: main}
 Let  $M$ be a complete Riemannian manifold of non-negative sectional curvature and let $P:M\to Y$ be a transnormal submetry. Then all fibers of $P$ are $\mathcal C^2$. There is an open, dense subset $U$ of $M$ such that the intersection of $U$ with any fiber of $P$ is a smooth submanifold of $M$.
\end{thm}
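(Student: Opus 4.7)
The plan splits the theorem into its two assertions. First, for the claim that every fiber is $\mathcal{C}^2$: by Theorem \ref{thm:last} it suffices to prove that the fibers over the open dense top stratum $Y^m$ are $\mathcal{C}^2$, since transnormality then propagates this to all fibers. On $P^{-1}(Y^m)$ the paper \cite{KL} already supplies a $\mathcal{C}^{1,1}$-Riemannian submersion, and Theorem \ref{thm: new3} above makes $Y^m$ itself smooth, so what has to be proved is a normal-direction upgrade of the fibers from $\mathcal{C}^{1,1}$ to $\mathcal{C}^2$.

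I would establish this upgrade by Riccati analysis along horizontal geodesics. Given a horizontal geodesic $\gamma$ in the regular part, consider the shape operator $S_t$ of the fiber through $\gamma(t)$ in the direction $\dot\gamma(t)$. It satisfies a Riccati-type ODE whose curvature term is non-negative and whose O'Neill $A$-tensor contribution is tangentially smooth by Theorem \ref{thm: new3} and normally $\mathcal{C}^{1,1}$. Non-negativity of the curvature term produces sharp one-sided monotonicity of the trace and the symmetric part of $S_t$, and transnormality excludes focal collapse that would create jumps. Combined with the smooth tangential dependence, the one-sided bounds force continuity of $S_t$ in all directions, which is exactly what is needed to lift the fibers from $\mathcal{C}^{1,1}$ to $\mathcal{C}^2$.

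For the second assertion, I would build the smoothness set $U$ by a generic-rank argument. Let $U$ be the set of points in $P^{-1}(Y^m)$ at which the $A$-tensor and the second fundamental form of the fiber have locally constant rank; the continuity obtained in the previous step together with upper semicontinuity of rank of continuous tensor fields makes $U$ open and dense in $M$. On $U$ the Riccati system has smooth coefficients in the fiber direction (Theorem \ref{thm: new3}) and coefficients of the current regularity in the horizontal direction, so a bootstrap along this ODE---having $\mathcal{C}^k$ fibers upgrades the $A$-tensor to $\mathcal{C}^k$, which in turn upgrades horizontal parallel transport of the shape operator to $\mathcal{C}^{k+1}$ and hence the fibers to $\mathcal{C}^{k+1}$---iterates to smoothness of each fiber after intersection with $U$.

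The hard step will be passing from the one-sided Riccati comparison in non-negative curvature to genuine continuity of the shape operator. Standard Riccati comparison yields only one-sided bounds, and to close the argument one must exploit transnormality (to rule out focal blow-up) and the algebraic rigidity of the $A$-tensor in non-negative curvature, in the spirit of Wilking's dual-foliation theory \cite{Wilking}. Once continuity is in place, both the $\mathcal{C}^2$-conclusion for all fibers and the smoothness bootstrap on $U$ should follow from essentially formal ODE iterations.
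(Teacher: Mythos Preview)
Your proposal has a genuine structural gap in both parts, stemming from a directional mismatch.

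For the $\mathcal C^2$ claim: the Riccati equation governs how the shape operator $S_t$ evolves \emph{along a single horizontal geodesic} $\gamma$. But $\mathcal C^2$-regularity of a fiber $L$ requires continuity of the second fundamental form $\mathrm{II}_{L,q}$ as $q$ varies \emph{along $L$}, i.e.\ in the vertical direction. One-sided Riccati bounds along $\gamma$ say nothing about how $S^{\hat h_q}$ depends on $q\in L$. You acknowledge this as the hard step, but ``transnormality rules out focal blow-up'' and ``algebraic rigidity of the $A$-tensor'' are not a mechanism for transferring information between different points of $L$. The paper's mechanism is \emph{equifocality} (Proposition~\ref{jac: family}): for a typical basic normal field $\hat h$, the focalizing subspaces $\mathcal W_q^t$ of $L$-Jacobi fields along $\gamma^{\hat h_q}$ depend Lipschitz-continuously on $q\in L$, because they are determined by conjugate-point data in the quotient $Y$, which is the same for all $q$. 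Non-negative curvature enters via Theorem~\ref{thm: GAG}: the Lagrangian $\mathcal W(\hat h_q)$ decomposes as the focal-generated part ${}^o\mathcal W$ (controlled by equifocality) plus parallel Jacobi fields $\mathcal W^{par}$ (determined as the orthogonal complement in $V^q$). This forces $q\mapsto \mathcal W(\hat h_q)$ to be continuous, and Corollary~\ref{cor: contdepend} converts that into $\mathcal C^2$.

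For the smoothness bootstrap: your proposed loop ``$\mathcal C^k$ fibers $\Rightarrow$ $A$-tensor $\mathcal C^k$ $\Rightarrow$ shape operator $\mathcal C^{k+1}$'' does not close. If the fibers are $\mathcal C^k$, the horizontal distribution and the $A$-tensor are only $\mathcal C^{k-1}$, and there is no general mechanism by which horizontal parallel transport of a $\mathcal C^{k-1}$ object yields a $\mathcal C^{k+1}$ shape operator in the fiber variable. The paper's bootstrap instead uses equifocality again: if $P$ is $\mathcal C^k$ near $L$, then $q\mapsto \mathcal W_q^t$ is $\mathcal C^{k-1}$, hence ${}^o\mathcal W(\hat h_q)$ is $\mathcal C^{k-1}$. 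The open dense set is where $\mathrm{rk}(h):=\dim\mathcal W^{par}(h)$ is locally constant (not where the $A$-tensor or $\mathrm{II}$ has constant rank); on that set the parallel part is the orthogonal complement of a $\mathcal C^{k-1}$ subspace inside $V^q$, so the full Lagrangian is $\mathcal C^{k-1}$, and Corollary~\ref{cor: contdepend} gives $\mathcal C^{k+1}$ fibers.
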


In order to fully  extend Theorem \ref{thm: maincompact} to non-negative curvature, a better understanding of the \emph{dual foliation}   would be needed,  essentially a resolution of   \cite[Conjecture]{Wilking}.  The resolution of this conjecture for  submersions \cite[Theorem 3]{Wilking} and  symmetric spaces \cite{Llohann} implies  the remaining parts of Theorem \ref{thm-sm1} and Theorem \ref{thm: maincompact}.

\subsection{Additional statements and questions}  

\subsubsection{Locality}   The statements and the proofs  of Theorems \ref{thm: new3}, \ref{thm: codim1}, \ref{thm:last} are local. They do not require completeness of $M$ and are valid for \emph{local submetries}:  a generalization of submetries which covers Riemannian submersions between non-complete manifolds and restrictions of submetries to open subsets, see Section \ref{subsec: loc}, Theorems \ref{thm-sm3}, \ref{thm-sm4}.

\subsubsection{O'Neill formula}
While the classical proof of the O'Neill formula for Riemannian submersions $P$ does not work if  $P$ is not  $\mathcal C^3$, it is still possible to verify the almost everywhere validity of  the  formula by different means.  We   state  the result in the  following  form:
  
\begin{prop} \label{prop: oneil}
Let  $M,Y$ be  smooth Riemannian manifolds and let $P:M\to Y$ be a $\mathcal C^{1,1}$ Riemannian submersion.  Then, for any $x\in M$ and any two-dimensional plane $E$ in the horizontal space $H^x\subset T_xM$, the sectional curvatures of $E$ in $M$ and of $E':=D_xP(E)$ in $Y$ satisfy
\begin{equation}  \label{eq: veryfirst}
\kappa (E) \leq \kappa (E')\;.
\end{equation}
Equality in   \eqref{eq: veryfirst} holds  for all  $x\in M$ and  all horizontal planes $E \subset H^x$ if and only if any $x\in M$ is contained in a totally geodesic submanifold $N^x$ of $M$, which is sent by $P$ isometrically onto an  open subset of $Y$.
\end{prop}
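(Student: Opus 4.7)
The inequality \eqref{eq: veryfirst} follows from a Rauch-style fan argument. Fix $x\in M$ and orthonormal $v_0,v_1\in H^x$ spanning $E$, so that $v_i':=D_xP(v_i)$ are orthonormal in $T_{P(x)}Y$ and span $E'$. Since $P$ is a $\mathcal C^{1,1}$ Riemannian submersion, the horizontal geodesics $\gamma(t):=\exp^M_x(tv_0)$ and $\gamma_s(t):=\exp^M_x(t(v_0+sv_1))$ project under $P$ to geodesics of $Y$ starting at $P(x)$ with initial velocities $v_0'$ and $v_0'+sv_1'$. The classical Taylor expansion for the squared distance between geodesics issuing from a common point yields, in both smooth manifolds $M$ and $Y$,
\begin{equation*}
d_M(\gamma(t),\gamma_s(t))^2 \;=\; s^2 t^2 - \tfrac{1}{3}\,\kappa_M(E)\, s^2 t^4 + o(s^2 t^4),
\end{equation*}
with the analogous formula in $Y$ featuring $\kappa_Y(E')$. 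Since $P$ is $1$-Lipschitz, $d_Y\circ(P,P)\le d_M$, and comparison of the $s^2t^4$ coefficients forces $\kappa_M(E)\le\kappa_Y(E')$.

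The forward direction of the rigidity claim is immediate: if $N^x\subset M$ is totally geodesic with $P|_{N^x}$ an isometry onto an open subset of $Y$, then $T_yN^x$ is a $\dim(Y)$-dimensional subspace on which $D_yP$ is an isometry, which forces $T_yN^x=H^y$; hence any horizontal plane $E\subset H^x\subset T_xN^x$ satisfies $\kappa_M(E)=\kappa_{N^x}(E)$ (by total geodesy) and $\kappa_{N^x}(E)=\kappa_Y(D_xP(E))$ (by the isometry), giving equality in \eqref{eq: veryfirst}. Since such an $N^x$ passes through every point, equality holds at every horizontal plane at every point.

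For the backward direction, assume equality in \eqref{eq: veryfirst} everywhere. Fix $x$, let $L:=D_xP|_{H^x}$ be the canonical linear isometry $H^x\to T_{P(x)}Y$, and on a small ball $U$ around $P(x)$ in $Y$ define
\begin{equation*}
\sigma(y) \;:=\; \exp^M_x\!\bigl(L^{-1}\exp_{P(x)}^{-1}(y)\bigr).
\end{equation*}
The map $\sigma$ is smooth, satisfies $P\circ\sigma=\mathrm{id}_U$, and sends radial geodesics from $P(x)$ in $Y$ to horizontal geodesics from $x$ in $M$. Setting $N^x:=\sigma(U)$, this is a smooth embedded submanifold of $M$ for $U$ small. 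The crux is to prove $T_yN^x\subset H^y$ for every $y\in N^x$; by dimension this is automatically an equality, so $P|_{N^x}$ becomes a Riemannian isometry, and then $N^x$ is totally geodesic because any $M$-geodesic $c$ with $c(0)\in N^x$ and $\dot c(0)\in T_{c(0)}N^x=H^{c(0)}$ is horizontal, its projection $P\circ c$ is a geodesic of $Y$, and $c$ coincides with the unique horizontal lift $\sigma\circ(P\circ c)$, which stays in $N^x$. To establish the horizontality $T_yN^x\subset H^y$, fix a radial geodesic $\beta(s):=\exp_{P(x)}(su)$ with horizontal lift $\gamma(s):=\sigma(\beta(s))$ and form the variation $\beta_t(s):=\exp_{P(x)}(s(u+tw))$ with lift $\gamma_t(s):=\sigma(\beta_t(s))$, both horizontal geodesics. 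The resulting Jacobi fields $J_M$ along $\gamma$ and $J_Y$ along $\beta$ satisfy $J_Y=DP(J_M)$, so $|J_M(s)|_M\ge|J_Y(s)|_Y$ with equality precisely when $J_M(s)\in H^{\gamma(s)}$. A Rauch-type rigidity argument, invoking the assumed equality in \eqref{eq: veryfirst} at every horizontal plane at every point of $\gamma$, upgrades this norm inequality to genuine equality, forcing $J_M$ to remain horizontal and hence $D_z\sigma(T_zY)\subset H^{\sigma(z)}$ for every $z\in U$.

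The main obstacle is precisely this Rauch rigidity upgrade, for two reasons. First, since $P$ is only $\mathcal C^{1,1}$, the O'Neill $A$-tensor is merely in $L^\infty$, and the classical identity $\kappa_Y(E')-\kappa_M(E)=\tfrac{3}{4}|A_{v_0}v_1|^2$ cannot be invoked pointwise; the argument must stay at the level of distances and geodesic variations in the smooth ambient manifolds $M$ and $Y$. Second, $J_M$ a priori has both horizontal and vertical components, so the equality of sectional curvatures assumed only on horizontal planes does not directly compare the planes $J_M(s)\wedge\dot\gamma(s)$ with $J_Y(s)\wedge\dot\beta(s)$. My plan is to bypass these issues by iterating the fourth-order distance expansion of the first step at sample points along $\gamma$: any vertical component of $J_M$ at some $s_0$ would produce a distance defect between $\gamma_t$ and $\gamma$ strictly exceeding the value predicted by the horizontal sectional curvature at $\gamma(s_0)$, contradicting the equality in \eqref{eq: veryfirst} at that point.
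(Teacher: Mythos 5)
Your proof of the inequality \eqref{eq: veryfirst} is correct and takes a genuinely different, more elementary route than the paper: you compare the fourth-order Taylor expansions of $d_M^2$ and $d_Y^2$ for a fan of geodesics issuing from a common point and use only that $P$ is $1$-Lipschitz and that horizontal geodesics project to geodesics with the corresponding initial velocity. The paper instead works along \emph{typical} horizontal geodesics and identifies Wilking's transversal Jacobi operator $\tilde{\mathcal R}=\mathcal R^{\perp}+3\tilde A\tilde A^{\ast}$ with the Jacobi operator $\bar{\mathcal R}$ of $Y$, which gives the inequality together with the quantitative defect $3\|\tilde A^{\ast}(w)\|^2$. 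Your argument is cleaner for the inequality alone, but it discards exactly the information that the paper needs for the rigidity statement.

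That is where your proposal has a genuine gap. The crux of the backward direction is to show that equality of horizontal sectional curvatures forces the Jacobi field $J_M$ of a variation through horizontal geodesics (with $J_M(0)=0$, $J_M'(0)$ horizontal) to stay horizontal, equivalently that the horizontal distribution is parallel along horizontal geodesics. Your proposed mechanism --- iterating the fourth-order distance expansion at sample points along $\gamma$ and claiming that a vertical component of $J_M$ at some $s_0$ would produce a distance defect ``exceeding the value predicted by the horizontal sectional curvature'' --- does not work. Expanding $J_M(t)=t\,v_1-\tfrac{t^3}{6}R(v_1,v_0)v_0+O(t^4)$ shows that the vertical part of $J_M$ is $O(t^3)$, so $|J_M|^2-|J_Y|^2=|J_M^{\mathrm{vert}}|^2=O(t^6)$, while the curvature equality only controls the $t^4$ coefficient of these expansions. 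The fourth-order expansion at a common starting point is therefore blind to the quantity you need to kill; and the expansion ``at $\gamma(s_0)$'' does not apply because the varied geodesics $\gamma_t$ do not pass through $\gamma(s_0)$. What is really needed is an integrated comparison of the two second-order ODEs satisfied by $J_M^{\mathrm{hor}}$ and $J_Y$ along the whole geodesic; this is precisely the content of the transversal Jacobi equation \eqref{eq: tilde}, which yields $\bar{\mathcal R}=\tilde{\mathcal R}=\mathcal R^{\perp}+3\tilde A\tilde A^{\ast}$ and hence $\tilde A=0$ under the equality assumption. Since $A$ is only $L^{\infty}$ here, the paper then still has to pass from ``$H$ is parallel along horizontal geodesics'' to an actual integral submanifold, which it does via a Frobenius theorem for Lipschitz distributions; your construction $N^x=\sigma(U)$ by radial horizontal lifts would also work at that stage, but only after the horizontality $T_yN^x\subset H^y$ has been established, and that is exactly the step your sketch leaves unproved.
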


For any \emph{submetry} $P:M\to Y$, the above Proposition \ref{prop: oneil} applies to the open dense subset of regular fibers $P:M_{reg} \to Y_{reg}$.     It should be possible to verify  in case of equality in \eqref{eq: veryfirst} on all of $M_{reg}$, that  
the quotient space $Y$ is a smooth Riemannian orbifold.  If in addition, the  submetry is transnormal, it should be possible to recover the whole structure of a polar foliation (also known as singular Riemannian foliation with sections), \cite{Thorb-survey}, \cite{Thorb-survey2}, \cite{Alex-section}, \cite{Thorbergsson}, \cite{Chi}, \cite{Terng}, \cite{Ly-polar}.    We leave this to the interested reader.

\subsubsection{Equifocality}  \label{subsec: equifocality}
 Let  $M$ be a smooth, \emph{complete} Riemannian manifold and let $P:M\to Y$ be a transnormal submetry. 
 For any regular fiber $L$ of $P$, any normal vector $h\in T^{\perp} _p L$ to $L$  extends uniquely to a \emph{basic normal field}  $\hat h_q \in T^{\perp}  _q L$ along $L$, such that the geodesics in the directions of $\hat h_q$ are sent by $P$ to the same quasi-geodesic in $Y$, independent of $q$, 
\cite[Proposition 12.7]{KL}. 
  It is an important observation, that  the focal times of $L$ along the geodesic $\gamma _q (t):=\exp _q (t\cdot \hat h_q) $ do not depend on $q$, Proposition \ref{jac: family}, a statement  known under the name of \emph{equifocality}
in the theory of singular Riemannian foliations, \cite{AT1}.

  Moreover, any $L$-Jacobi field $J^p$ along $\gamma _p$, which vanishes  at $t$, extends to a family of $L$-Jacobi fields $J^q$  along $\gamma _q$ vanishing at the time $t$,   for all $q\in L$ close to $p$, Theorem \ref{jac: family}.

\subsubsection{Basic mean curvature}
We say that a submetry $P:M\to Y$ has \emph{basic mean curvature} if for any regular fiber $L$ of $P$, the mean curvature vector is a basic normal vector field. This property plays an important role in the theory of (singular) Riemannian foliations,  cf. \cite{MR-Weyl}.

For a submetry with basic mean curvature, the  mean curvature vector of any regular fiber is  as smooth as the differential of $P$.
  Using  elliptic regularity and a  boot-strap argument we show:

\begin{prop} \label{thm:  smoothbasic}
If  a submetry $P:M\to Y$ has basic mean curvature then all fibers of $P$ are smooth. 
\end{prop}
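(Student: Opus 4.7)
My plan is to treat the regular and singular fibers separately and handle the singular case by reducing it to the regular one. By \tref{thm:last}, if I can establish that all regular fibers are of class $\mathcal{C}^k$ for every $k \geq 2$, then $P$ is automatically transnormal and every fiber inherits $\mathcal{C}^k$-regularity; hence all fibers are smooth. So the real task is to show that regular fibers admit arbitrary smoothness, and for this I would run a quasilinear elliptic bootstrap on the mean curvature equation.

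Initially, by Berestovskii--Guijarro, $P$ restricted to the regular part $M_{reg}$ is a $\mathcal{C}^{1,1}$ Riemannian submersion, so the horizontal distribution $H$ and the regular fibers are both $\mathcal{C}^{1,1}$ on $M_{reg}$. For a regular fiber $L$ with a local parametrization $\phi\co U \to M$, $U \subset \mathbb{R}^{\dim L}$, the prescribed mean curvature equation takes the form
\begin{equation*}
\frac{1}{\sqrt{|g|}}\,\partial_i\!\bigl(\sqrt{|g|}\, g^{ij}\,\partial_j \phi\bigr) \;=\; \hat H\circ\phi,
\end{equation*}
where $g_{ij} = \langle \partial_i\phi, \partial_j\phi\rangle_M$ is the induced metric and $\hat H$ is a horizontal vector field on $M_{reg}$ extending the basic mean curvature of $L$. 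This is a uniformly elliptic quasilinear system in $\phi$, and the basic hypothesis is precisely what repackages the right-hand side as the restriction of a single horizontal field on $M_{reg}$, independently of which regular fiber one works with.

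The bootstrap then reads as follows. If, in an open subset of $M_{reg}$, the family of regular fibers is of class $\mathcal{C}^{k,\alpha}$, then its normal bundle $H$, and with it the basic field $\hat H$, is of class $\mathcal{C}^{k-1,\alpha}$. The coefficients $g^{ij}$ and $\sqrt{|g|}$ depend smoothly on $D\phi$, so they too are of class $\mathcal{C}^{k-1,\alpha}$. Standard Schauder-type estimates for quasilinear elliptic systems (e.g.\ Morrey, Giaquinta) upgrade $\phi$ to $\mathcal{C}^{k+1,\alpha}$. Starting from $k=1$ with any $\alpha \in (0,1)$, iteration yields $\phi \in \mathcal{C}^\infty$, so all regular fibers are smooth, and \tref{thm:last} closes the argument.

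The main obstacle is the bookkeeping of regularity in \emph{families} rather than along a single fiber: the field $\hat H$ is defined on $M_{reg}$, and its H\"older class is controlled by the simultaneous regularity of the whole foliation by regular fibers near $L$, not of $L$ alone. To handle this I would work in a tubular neighborhood of $L$ in $M_{reg}$, trivialize the fibration by normal exponential maps along basic horizontal fields (using equifocality, \secref{subsec: equifocality}, valid from the $\mathcal{C}^{1,1}$ stage onwards), and check at each round of the bootstrap that the gain of one derivative for $L$ propagates to the whole local family, so that $\hat H$ indeed has the regularity required at the next iteration. Once this simultaneous regularity is in place, the elliptic iteration runs routinely.
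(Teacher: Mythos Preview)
Your approach is correct and essentially the same as the paper's: an elliptic bootstrap on the mean curvature equation for regular fibers, followed by \tref{thm:last} to handle the singular ones. The paper packages the elliptic step as the last clause of \tref{thm: normalfields} (mean curvature in $\mathcal C^{k-1,\alpha}$ forces the submanifold into $\mathcal C^{k+1,\alpha}$) and handles the simultaneous upgrade of all regular fibers via \lref{lem: regsm}, which is cleaner than your tubular--neighborhood bookkeeping and avoids the appeal to equifocality (the latter is stated only for transnormal submetries, so it is not available at the outset here).
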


In the Euclidean space, the eigenvalues of  the  second fundamental form of a submanifold are directly linked with the distances to focal points, \cite{PalT}. Hence, equifocality implies that all transnormal submetries $P:\mathbb R^n\to Y$
have basic mean curvature.    Thus, Proposition \ref{thm: smoothbasic} implies
Theorem \ref{thm: main} for $M=\mathbb R^n$.

As has been recently verified in \cite{LMR-preprint}, all transnormal submetries on non-negatively curved normal homogeneous Riemannian manifolds have basic mean curvature. Thus, Proposition \ref{thm: smoothbasic} applies  and provides an alternative proof of Theorem \ref{thm: maincompact} in this case.  Indeed, similarly to \cite{Mendes-Rad}, in this special case one can go beyond smoothness and deduce algebraicity of the fibers.

 \subsubsection{Dual leaves and extension of smoothness} \label{subsec: extsm}
For a transnormal submetry $P:M\to Y$ and a point $x\in M$, the \emph{dual leaf} of $x$ is the set of points in $M$ connected with $x$ by  horizontal piecewise geodesics, cf. \cite{Wilking}. If all fibers of $P$ are $\mathcal C^2$, the dual leaves turn out to be leaves of  a $\mathcal C^1$-foliation on $M$, Proposition \ref{prop: dualck}.
% in particular, each dual leaf is a $\mathcal C^1$-submanifold of $M$. 

Smoothness propagates along dual leaves.   The proof  of this statement is not  difficult in the regular part of $M$ and over codimension one-singularities of $Y$,
 Proposition  \ref{prop: extreg1fib}.  The general case of this statement  is considerably more technical, Theorem \ref{thm: dualleaf}.  

In this slightly less regular situation than  in \cite{Wilking}, most results of \cite{Wilking} remain  valid without changes.  In particular, if $M$ is complete, non-negatively curved and all dual leaves are complete then the dual foliation is  a singular Riemannian foliation \cite[Theorem 2]{Wilking}.    In this case, all leaves of the submetry $P$ are smooth, Theorem
\ref{thm: dualcomplete+}.

\subsubsection{Open questions}  We would like to mention the following problems on the structure of submetries. 
The first question is a  generalization of the  Molino conjecture \cite{Molino}, \cite{Molino1}  resolved in \cite{ARMolino}.

\begin{quest}
Is any  transnormal submetry  with (sufficiently smooth) connected leaves  a (suffciently smooth) singular foliation?
\end{quest}
	
While the $\mathcal C^2$-case of the next result is covered by  Proposition \ref{prop: dualck}, the general $\mathcal C^{1,1}$-case would require an extension of  the classical Stefan--Sussmann theorem, \cite{Stefan}:

\begin{quest}
Are the dual leaves of a transnormal submetry leaves of a singular foliation?
\end{quest}

The presence of submetries  requires some rigidity if the total manifold is compact and "strange" examples of submetries are not easy to construct. We do not have an example showing that \emph{any} curvature assumption is indeed  needed in the second statement of Theorem \ref{thm-sm1}.

\begin{quest}
Are there examples of  non-smooth Riemannian submersions between \emph{compact} smooth Riemannian manifolds?
\end{quest}

We expect that most of the theory of polar foliations extend to the case of (local) transnormal submetries satisfying equality in \eqref{eq: veryfirst}. Similarly, we expect that the infinitesimal polarity is equivalent to the non-explosion of the curvature as in \cite{Thorb}:

\begin{quest}
Let $P:M\to Y$ be  a transnormal submetry,  $x\in M$.  Can the property that a neighborhood of $P(x)$ in $Y$ is isometric to a Riemannian orbifold  be read off  the infinitesimal submetry $D_xP$?
\end{quest}

While we use the base space, most results about  the submetry can be formulated in terms of the total space only, moreover, most of our results are local. This gives rise to the following natural:
\begin{quest}
How much of the theory of (local) submetries can be extended to transnormal decompositions of a smooth Riemannian manifold $M$ in $\mathcal C^1$, possibly non-closed leaves.
\end{quest}

\subsection{Plan of the paper and sketches of the proofs}

\subsubsection{Preparations} In Sections \ref{sec: not}-\ref{sec: basicnormal}  we set up notations and collect basic tools. We recall the transnormal Jacobi equation from \cite{Wilking} and the   relation between the smoothness of a submanifold $L$ of a smooth Riemannian manifold,  of the  second fundamental form  of $L$ and of $L$-Jacobi  fields.  Finally, we recall structural basics of submetries as developped in \cite{KL} and properties of  basic  normal fields, as used in the theory of Riemannian foliations.

\subsubsection{Quotient space}   Sections \ref{sec: good}-\ref{sec: on} are devoted to smoothness properties of the quotient space.
This  smoothness  stated in Theorems \ref{thm: new3},  \ref{thm: codim1}   is obtained along the following lines. For simplicity, we assume that $P$ is a Riemannian submersion as in the first part of Theorem \ref{thm-sm1}.  Given $y\in Y$ and $x\in L:=P^{-1} (y)$,   the composition $P\circ \exp _x$ is a $\mathcal C^{1,1}$-diffeomorphism between a  neighborhood $O_x$ of $0_x$ in the normal space $T^{\perp} _x L$  and a neighborhood of $y$ in $Y$.  We pull-back the metric from $Y$ to $O_x$ and express this pull-backed metric in terms of the curvature tensor of $M$ along $\exp _x (O_x)$ and the second-fundamental form of $L$ at the point  $x$ only.  Thus,  the non-smoothness of $L$ does not play a role.

The above proof applies without changes to all  strata.  The proof that a neighborhood of a point on a stratum of codimension one in the quotient $Y$ is isometric to a smooth Riemannian orbifold needs to overcome additional difficulties.  A major technical detail is that the local structure of a submetry  cannot be directly linked to the much better understood  infinitesimal structure, unlike in the case of isometric group actions or singular Riemannian foliations, \cite{Thorb}, \cite{Mendes-Rad-slice}, \cite[Questions 1.14,1.15]{KL}.  The main additional point in the proof of Theorem \ref{thm: codim1} is the existence of a slice, which we only verify at points projected to strata of codimension one.

Once Theorem \ref{thm: codim1} is verified, we infer that \emph{most} horizontal geodesics in the total space of a transnormal submetry $P$ are projected by $P$ to the smooth  Riemannian orbifold part of the quotient. Moreover, it turns out that along such \emph{typical geodesics} the transversal Jacobi equation coincides with the Jacobi equation on the quotient space, upon a canonical identification.  This implies the validity of the O'Neill formula, Proposition \ref{prop: oneil}, and simplifies  later considerations related to the higher smoothness of fibers of $P$.

% as long as the second fundamental form exists at the point $x$.

 % This then increases the regularity of $P$.  Due to a boot-strap argument,  all fibers  of any  submetry with basic mean curvature are smooth, Proposition \ref{prop: meanbas}.

%\subsubsection{The Euclidean case}
%For a normal geodesic $\gamma $ to a submanifold $L$ of a Euclidean space, the focal times along this geodesic determine the eigenvalues of the second fundamental form of $L$ in the direction of $\gamma'$.  Thus  the focal times determine the scalar product  of the mean curvature vector and $\gamma'$.
 
%Given  a transnormal submetry $P:\mathbb R^n \to Y$ and any regular fiber $L$, the equifocality and the above observation imply, that the mean curvature vector field  of $L$ has constant scalar product with any normal basic field. Thus  the submetry $P$
% has basic mean curvature. This implies smoothness of all fibers of $P$ by the previous paragraph.

%The proof of the second part of Theorem \ref{thm-sm1} and of Theorem \ref{thm: main} can be seen as a variant of the simple argument above.  We use the equifocality 
%in order to improve the regularity of the second fundamental form and then boot-strap to smoothness. We provide a few more details below.

\subsubsection{Smoothness of fibers and focal points}
In Sections \ref{sec: nontrans}-\ref{sec: euclid}, we discuss smoothness properties of fibers without direct relation to non-negative curvature.  In Section \ref{sec: nontrans} we verify that a non-transnormal submetry must have regular fibers which are not $\mathcal C^2$. 
The simple idea is that non-transnormality of $P:M\to Y$ implies the existence of a point $x\in M$ such that the differential $D_xP$ is a product  of a linear projection and the distance 
function to a ray in some Euclidean space of dimension $\geq 2$.  In this infinitesimal submetry, regular fibers close to the origin of the ray are $\mathcal C^{1,1}$ but their second-fundamental forms have  huge jumps in the direction of the ray. Somewhat tedious    
but straightforward estimates are needed to extend this infinitesimal non-smoothness to a neighborhood of $x$ in $M$.
 
In Section \ref{sec: cont}, we show that it suffices to control the smoothness of regular fibers and that smoothness propagates along \emph{typical} horizontal geodesics, thus finishing the proof of Theorem \ref{thm:last}.

In Section \ref{sec: focal}, we verify the equifocality  of transnormal submetries  explained in Section \ref{subsec: equifocality} above.  We prove that along \emph{typical} geodesics projected by $P$ to a fixed  quasi-geodesic in the quotient, focalizing Jacobi fields depend as smoothly on the base point as the starting directions of the geodesics.  This somewhat technical statement is the central point of the whole paper.  It will allow  us  to apply a boot-strap argument, once regular fibers have sufficiently many focal points.

The effect of equifocality on smoothness is exemplified in the case of the Euclidean total space in Section \ref{sec: euclid}.  In this case,
 the eigenvalues of  second fundamental form of a submanifold are directly linked with the distances to focal points, \cite{PalT}. Hence, equifocality implies that all transnormal submetries $P:\mathbb R^n\to Y$
have basic mean curvature.    Thus, Theorem \ref{thm: smoothbasic} implies that any such $P$ is smooth.

In Section \ref{sec: c2}-\ref{sec: applic} we finally add the assumption that the total space is complete and  non-negatively curved.  Then any space of $L$-Jacobi fields is generated by focalizing Jacobi fields and parallel Jacobi fields, as proved in \cite{Wilking}.
In Section \ref{sec: c2}, we  use this description together with stability of focal indices  and deduce from Section \ref{sec: focal}, that  the spaces  of $L$-Jacobi fields along geodesics defined by a basic normal field depend continuously on the starting point.  This implies that  
all regular fibers, and hence all fibers are $\mathcal C^2$.

Assume now that  no parallel normal Jacobi fields exist in $M$, thus all spaces of $L$-Jacobi fields are generated by focalizing fields only. In this case the smoothness is obtained along the following lines:

On a $\mathcal C^k$ submanifold $L$, the normal bundle is $\mathcal C^{k-1}$ and the second-fundamental form $\mathcal C^{k-2}$.   In order to upgrade the smoothness of the submanifold to $\mathcal C^{k+1}$, it suffices to show that the second fundamental form is $\mathcal C^{k-1}$.  This upgrade is achieved, once we can show that $L$-Jacobi fields
in direction of (sufficiently many)  normal  fields are as smooth as the normal fields (and not worse), Section \ref{sec: smooth}.  However, this  controlled dependence of focalizing Jacobi-fields  is exactly the consequence of equifocality discussed  in Section \ref{sec: focal}.
This allows us a step-by-step upgrade to smoothness.

This argument can be extended to general non-negative curvature, if spaces of parallel Jacobi fields vary continuously.
In general,  they vary only semi-continuously, as the dimension of the space of parallel Jacobi fields can  go up in the limit.  However, this does not happen on an open and dense. Using this we show in Section \ref{sec: rank} that  the submetry is smooth on an open and dense subset.  
%Moreover, stability of indices of spaces of $L$-Jacobi fields imply that $P$ is $\mathcal C^2$ everywhere, Section \ref{sec: c2}.

In Section \ref{sec: dual}, we  show that smoothness propagates along all horizontal geodesics and that the sets of points connected by piecewise horizontal geodesics  constitute a singular foliation of the total manifold $M$.   Our proof of this result, a generalization of the corresponding statement in \cite{Wilking}, does not require any completeness  or curvature assumptions but uses  the $\mathcal C^2$-property, verified previously in non-negative curvature.

In the final Section \ref{sec: applic}, we show that the almost everywhere smoothness can be upgraded to smoothness, once this dual foliation is shown to be a singular Riemannian foliation.  Results  in \cite{Wilking} and \cite{Llohann} imply that the dual foliation is indeed Riemannian if $M$ is a symmetric space  or if $M$ is compact and $P$ is a Riemannian submersion.

\subsection{Aknowledgements}  The authors would like to thank Valeriy Berestovskii, Ricardo Mendes, Marco Radeschi, Stephan Stadler for helpful comments.

\newpage
\centerline {\bf  I.  Preliminaries}

\section{Notation} \label{sec: not}
\subsection{Riemannian} $M$ will always denote a smooth Riemannian manifold, possibly non-complete. The dimension of $M$ will always be denoted by $n$.   If non-negative or positive curvature is mentioned, it always refers to the sectional curvature.

For natural $k$ and $0<\alpha \leq 1$, 
we call a map $\mathcal C^{k,\alpha}$ if 
the map is $\mathcal C^k$ and the $k$-th derivatives are \emph{locally} $\alpha$-H\"older. Similarly, we define $\mathcal C^{k, \alpha}$-submanifolds. As usual, we set 
 $\mathcal C^{k, 0}:=\mathcal C^k$.

For a vector $h$ in the tangent bundle $TM$ we denote by $\gamma ^h$ the geodesic with initial direction $(\gamma^h)'(0)=h$.

For a $\mathcal C^2$ submanifold $L$ of $M$ and a point $x\in L$ we denote by 
$$\mathrm{II}=\mathrm{II}_L=\mathrm{II}_x=\mathrm{II}_{L,x}:T_xM\times T_xM\to T^{\perp}_xM$$
 the second fundamental form of $L$ at $x$. For  $v,w\in T_xL$ it is given by 
$$\mathrm{II} (v,w)= (\nabla _V W )^{\perp} (x)\;,$$
where $V$ and $W$ are extensions of $v$ and $w$ to Lipschitz continuous tangent fields along $L$, which are differentiable at $x$ and where $^{\perp}$ means the orthogonal projection onto the normal space $T_x^{\perp} L$.   

This also defines a second fundamental form   for any $\mathcal C^{1,1}$ submanifold $L$ of $M$ at all points $x\in L$, at which $L$ is twice differentiable.

For a normal vector $h\in T_x^{\perp}L$  we consider the \emph{shape operator $S^h:T_xL\to T_xL$  in the direction $h$} 
  and the \emph{second fundamental form $\mathrm{II}^h :T_xL\times T_xL\to \mathbb R$  in the direction $h$}:
\begin{equation} \label{eq: defsh} 
\langle S^h (v),w \rangle: =\langle \mathrm{II} (v,w), h \rangle =: \mathrm{II}^h (v,w)\;.
\end{equation}

\subsection{Metric}
By $d$ we denote the distance in metric spaces.
For a subset $A$ of a metric space $X$ we denote by $d_A:X\to \R$ the distance function to  $A$
and by $B_r(A)$ the open $r$-neighborhood around $A$ in  $X$.
By $\ell (\gamma)$ we denote the length of a curve $\gamma$ in a metric space $X$.

For \emph{geodesics}, we adopt the convention of Riemannian geometry:  
A \emph{geodesic} will denote a  locally length-minimizing curve parametrized proportionally to arclength.
%an isometric  (i.e. globally  distance preserving) embedding of an interval.
%A local geodesic $\gamma:I\to X$ is a curve whose restrictions to small sub-intervals are geodesics.
  If no ambiguity is possible we denote by $xz$ a minimizing geodesic connecting the points $x,z\in X$.

\section{On  spaces of normal Jacobi fields}
\subsection{Space of normal Jacobi fields} Let  $\gamma:I\to M$ be a geodesic. 
 Denote by   $\mathrm{Jac} (\gamma)$  the $(2n-2)$-dimensional space  of all normal Jacobi fields along $\gamma$.   For any $t\in I$, the map $J\to (J(t),J'(t))$  idenitfies  $Jac (\gamma)$
with the double   $\gamma^{\perp} (t)\oplus \gamma ^{\perp} (t)$ of the normal space $\gamma^{\perp} (t)$ of $\gamma$ at $\gamma (t)$.  This provides  scalar products on $\mathrm{Jac} (\gamma)$, dependent on $t$.  

Moreover, we obtain a  natural smooth manifold structure on 
$$\bigcup _{h\in TM, ||h||\neq 0} \mathrm{Jac} (\gamma ^h) \,,$$
the space of all normal Jacobi fields along all geodesics. We will always refer to this smooth structure, when talking about   continuous or $\mathcal C^k$-dependence of some families of Jacobi fields.

\subsection{$L$-Jacobi fields and general Lagrangians}
Let  $\gamma:I\to M$ be a geodesic. On the space  $\mathrm{Jac} (\gamma)$,
 % of  normal Jacobi fields along $\gamma$,
 there is  the  symplectic form  
\begin{equation} \label{eq: w}
\omega (J_1,J_2):=\langle J_1(t) , J'_2 (t) \rangle - \langle J_2(t), J'_1(t) \rangle \;, 
\end{equation}
 independent of the time $t\in I$, \cite{Wilking}, \cite{Jacobi}. 
We will consider \emph{Lagrangians} $\mathcal W$ with respect to this symplectic form $\omega$, thus $(n-1)$-dimensional subspaces of $\mathrm{Jac}$ to which $\omega$  restricts as the $0$-form.

A typical example of such Lagrangians is the space $\mathcal W$ of  \emph{$L$-Jacobi fields}, where  $\gamma$ is a geodesic starting normally on a $\mathcal C^2$-submanifold $L$ of $M$.    This is  the space of variational fields of all variation of $\gamma$ by geodesics starting normally on $L$ and having the velocity of $\gamma$.

In the above notations, the space $\mathcal W$ of $L$-Jacobi fields consists of 
all normal Jacobi fields along $\gamma$ with initial conditions
\begin{equation} \label{eq: initial}
J(0)\in T_pL  \; \; \text{and} \; \; J'(0)  ^{\top} = S^h (J(0))\,.
\end{equation}
 Here
$^{\top}$ is the  orthogonal projection onto   $T_pL$.

\subsection{Focal points} \label{subsec: focalpoints}
Let $\mathcal W$ be a Lagrangian of normal Jacobi fields along $\gamma:I\to M$.
For any $t\in I$, we have the evaluation map $\mathcal{E}^t:\mathcal W\to \gamma^{\perp} (t)$,
which sends the  Jacobi field  $J$ to the vector $\mathcal E^t (J):=  J(t)$.  For 
all but discretely many  $t\in I$, the evaluation map $\mathcal {E}^t$ is an isomorphism.

A  time $t\in I$  is called a \emph{$\mathcal W$-focal time} if $\mathcal E^t$ has a non-trivial kernel. In this case the kernel of $\mathcal E^t$ is denoted by $\mathcal W^t:=\ker (\mathcal E^t)$, its dimension $\dim (\mathcal W^t)$ is called the \emph{ $\mathcal W$-focal multiplicity} of $t$. Elements  of $\mathcal W^t$ are called Jacobi fields (in $\mathcal W$) \emph{focalizing at the times $t$}.

By $^o{\mathcal W}$  we denote the subspace of $\mathcal W$ generated by all focalizing subspaces  $\mathcal W^t,  t\in I$. We call it the \emph{focal-generated subspace} of $\mathcal W$.

If $\mathcal W$ is the Lagrangian of $L$-Jacobi fields along $L$, then $0$ is  $\mathcal W$-focal  with multiplicity $n-1-\dim (L)$.  Other times $0\neq t \in I$ are focal times if and only if $\gamma (t)$ is a focal point of $L$ along $\gamma$ in the sense of Riemannian geometry, thus, if the normal exponential map on the normal bundle $T^{\perp} L$ of $L$ is degenerate at the vector $t\cdot h\in T^{\perp} L$.

The following result was proved in  \cite[Corollary 1.10]{Wilking}
% and extended to positive curvature in \cite[Proposition 3.2]{Guijarro-trans}, 
using transversal Jacobi equations, which we will recall later in Section \ref{subsec: trans}.  
%This result expresses in a precise way that  focal points are  abundant in non-negative curvature settings.
 % Below we will say that 
%\emph{the curvatures along $\gamma$ are at least $\kappa \in \mathbb R$,} if for any $t\in I$ and for any  plane $E\subset T_{\gamma (t)}M$, which contains $\gamma'$, the
%sectional curvature $\kappa (E)$ is at least $\kappa$.  

\begin{thm} \label{thm: GAG}
Let  $M$ be non-negatively curved,  let $\gamma :\mathbb R\to M$ be a geodesic and let  $\mathcal W$ be a Lagrangian of normal Jacobi fields along $\gamma$. Let $^o{\mathcal W}$ be the focal-generated subspace  of $\mathcal W$ and let $\mathcal W^{par}$ be the subspace all parallel Jacobi fields in $\mathcal W$.

  Then
there is a decomposition $\mathcal W=$  $^o{\mathcal W}  \oplus \mathcal W ^{par}\,$.  Moreover, for  all $t$, all $J_0\in$ $^o{\mathcal W}$  and $J_1 \in \mathcal W ^{par}$, the vectors  $J_0(t)$ and $J_1(t)$ are orthogonal.

\end{thm}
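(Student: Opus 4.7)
The plan is to first establish the pointwise orthogonality $J_0(t)\perp J_1(t)$ for $J_0\in {}^o\mathcal W$ and $J_1\in \mathcal W^{par}$, then deduce that the sum ${}^o\mathcal W + \mathcal W^{par}$ is direct, and finally close the decomposition by a dimension count based on the transversal Jacobi equation of \cite{Wilking}. The orthogonality is almost immediate from the Lagrangian property. For $J_0\in\mathcal W$ vanishing at some time $t_0$ and any $J_1\in \mathcal W^{par}$, the parallelism $J_1'\equiv 0$ reduces the identity $\omega(J_0,J_1)=0$ to $\langle J_1(t), J_0'(t)\rangle \equiv 0$. Consequently $\tfrac{d}{dt}\langle J_0,J_1\rangle = \langle J_0',J_1\rangle+\langle J_0,J_1'\rangle\equiv 0$, so $\langle J_0,J_1\rangle$ is constant along $\gamma$ and its value at $t_0$ is zero. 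Extending linearly in $J_0$ yields the orthogonality for every $J_0\in {}^o\mathcal W$. In particular, any $J\in {}^o\mathcal W \cap \mathcal W^{par}$ is pointwise orthogonal to itself and therefore vanishes, so the sum is direct.

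To obtain spanning, the plan is to pass to the transversal Jacobi equation. Set $V^{par}(t):=\{J_1(t):J_1\in\mathcal W^{par}\}\subset \gamma^{\perp}(t)$; this is a parallel subbundle of $\gamma^{\perp}$, and by the orthogonality already established the evaluation map at each $t$ sends ${}^o\mathcal W$ into $V^{par}(t)^{\perp}$. On the quotient bundle $\gamma^{\perp}/V^{par}$ Wilking's transversal Jacobi equation is well defined, and its O'Neill-type correction term is non-negative, so the effective curvature $\bar R$ inherits $\bar R\geq 0$ from $M$. The quotient Lagrangian $\bar{\mathcal W}:=\mathcal W/\mathcal W^{par}$ contains no non-trivial transversally parallel field: any such field lifts to a Jacobi field $J\in\mathcal W$ whose covariant derivative lies in $V^{par}$, and combined with the symplectic identity and the maximality of $V^{par}$ this forces $J'\equiv 0$, so the class of $J$ in $\bar{\mathcal W}$ is zero.

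It thus suffices to prove that in the transversal setting, with $\bar R\geq 0$ and no parallel subfields, every non-zero element of $\bar{\mathcal W}$ focalizes at some time; for then $\bar{\mathcal W} = {}^o\bar{\mathcal W}$ and this lifts back to $\mathcal W={}^o\mathcal W + \mathcal W^{par}$. The argument is a Riccati comparison: at any non-focal time the evaluation isomorphism together with the Lagrangian property realises the derivative as a symmetric shape operator $S(t)$ satisfying $S'+S^2+\bar R=0$; a transversal field that never focalizes corresponds to a branch of $S$ defined for all $t\in \R$, and $\bar R\geq 0$ combined with standard Riccati comparison forces $S$ to be identically zero on some invariant subspace, producing a parallel field in $\bar{\mathcal W}$, contradicting the reduction to the parallel-free quotient. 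The main obstacle is precisely this last step: executing the Riccati comparison cleanly when $\bar R$ is only semi-definite, possibly vanishing on a proper subspace. This is handled by iterating the argument on successive invariant sub-Lagrangians, as in \cite{Wilking}, until all of $\bar{\mathcal W}$ is exhausted by focal-generated fields; the dimension count then closes and the orthogonality upgraded in the first paragraph promotes the sum to a direct decomposition.
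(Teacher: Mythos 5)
The paper does not actually prove this statement: it is quoted from \cite[Corollary 1.10]{Wilking}, so the only meaningful comparison is with Wilking's argument, which is what you are in effect reconstructing. Your first paragraph is correct and complete: for $J_0\in\mathcal W$ vanishing at some $t_0$ and parallel $J_1\in\mathcal W^{par}$, the identities $\omega(J_0,J_1)=0$ and $J_1'\equiv 0$ give $\langle J_1,J_0'\rangle\equiv 0$, hence $\langle J_0,J_1\rangle$ is constant and vanishes at $t_0$; extending linearly over ${}^o\mathcal W$ yields the pointwise orthogonality and ${}^o\mathcal W\cap\mathcal W^{par}=0$. Your verification that $\mathcal W/\mathcal W^{par}$ carries no transversally parallel class is also sound, the key point being that $V^{par}$ is a parallel subbundle, so transversal parallelism forces $J'$ to take values in $V^{par}$ while $\omega(J,J_1)=0$ forces $J'\perp V^{par}$.

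The gap is in the reduction that is supposed to produce the spanning statement. First, the assertion you claim "it suffices to prove" --- that in a parallel-free Lagrangian with non-negative transversal curvature \emph{every} non-zero element focalizes --- is false and strictly stronger than what is needed: along a line in $\mathbb R^3$ the Lagrangian spanned by $J_1(t)=(t+1)e_1$ and $J_2(t)=(t+2)e_2$ has no parallel fields and is focal-generated, yet $J_1+J_2$ never vanishes. Second, your Riccati comparison is attached to "a transversal field that never focalizes", but a single non-vanishing field does not yield a globally defined Riccati operator; the self-adjoint shape operator $S(t)$ with $S'+S^2+\tilde{\mathcal R}=0$ exists only where the evaluation map of an entire (sub-)Lagrangian is non-degenerate. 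The correct move is to quotient $\bar{\mathcal W}$ once more by its focal-generated part: the resulting family has no focal times on all of $\mathbb R$, so $S$ is globally defined, and the trace of the Riccati equation together with $\tilde{\mathcal R}\geq 0$ forces $S\equiv 0$ and $\tilde{\mathcal R}\equiv 0$ on the reduced bundle. Even granting this, the decisive step remains: converting that vanishing into honest parallel Jacobi fields of $M$ lying in $\mathcal W$ (one uses that $\tilde{\mathcal R}=\mathcal R^{\perp}+3\tilde A\tilde A^{\ast}$ is a sum of two non-negative operators, so both vanish, whence $\tilde A^{\ast}$ annihilates the relevant sections and the transversal derivative coincides with the ambient one), contradicting parallel-freeness. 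You defer exactly this step to "iterating \dots as in \cite{Wilking}", which is where the entire content of the theorem sits; as written, the proposal does not close the dimension count.
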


\subsection{Variations  of  focalizing spaces of Jacobi fields} 
Let   $g_i$ be a sequence of smooth metrics, $\mathcal C^2$-converging to $g$. Let
 $\gamma_i:[a,b]\to (M,g_i)$ be a sequence of geodesic converging to $\gamma$. Let 
 $\mathcal W_i$ be a Lagrangians of normal Jacobi fields along $\gamma_i$ converging to a Lagrangian $\mathcal W$ along $\gamma$.   

Denote  by $^o{\mathcal W}$ and $^o{\mathcal W}_i$ the focal-generated subspaces of $\mathcal W$ and $\mathcal W_i$.

\begin{lem} \label{lem: indexsemi}
In the notations above, assume in addition that the boundary points $a$ and $b$ are non-focal points for $\mathcal W$.
If there is just one $\mathcal W$-focal time $t_0\in [a,b]$ then   $^o{\mathcal W} _i$ converge to  $^o{\mathcal W}$.
In general, $^o{\mathcal W}$ is contained in any limit  $\hat {\mathcal W}$ of any  subsequence of    $  ^o {\mathcal W} _i$. 
%Hence, $\dim ($ $^o{\mathcal W} _i )\geq \dim ($ $^o{\mathcal W})$ for all but finitely many $i$. 
\end{lem}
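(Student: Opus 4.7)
The key tool is the passage from $\mathcal{C}^2$-convergence of metrics to $\mathcal{C}^2$-convergence of normal Jacobi fields. After trivializing the normal bundles $\gamma_i^{\perp}$ by parallel transport so that the trivializations converge, the Jacobi equations along $\gamma_i$ converge in $\mathcal{C}^0$-coefficients, and ODE continuity yields: every $J \in \mathcal W$ is a $\mathcal{C}^2$-limit of some $J_i \in \mathcal W_i$, and every $\mathcal{C}^2$-subsequential limit of a sequence $J_i \in \mathcal W_i$ lies in $\mathcal W$. Fixing converging bases of $\mathcal W_i \to \mathcal W$, the matrices $A_i(t)$ representing the evaluation $\mathcal E_i^t : \mathcal W_i \to \gamma_i^{\perp}(t)$ satisfy $A_i \to A$ in $\mathcal{C}^2([a,b])$, and the Wronskians $D_i := \det A_i$ converge to $D := \det A$ in $\mathcal{C}^2$. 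Using the Lagrangian relation $\omega|_{\mathcal W}=0$ together with the fact that a non-trivial normal Jacobi field cannot satisfy $J(t_0)=J'(t_0)=0$, one checks that the order of vanishing of $D$ at each focal time $t_0$ equals $\dim \mathcal W^{t_0}$: in a basis of $\mathcal W$ extending one of $\mathcal W^{t_0}$, the first $k := \dim \mathcal W^{t_0}$ columns of $A$ vanish linearly at $t_0$ with linearly independent derivatives, while the remaining columns, being $\omega$-orthogonal to those derivatives, form a nondegenerate complementary block at $t_0$.

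For the general inclusion ${}^o\mathcal W \subseteq \hat{\mathcal W}$, I fix a focal time $t_0 \in (a,b)$ and any $J \in \mathcal W^{t_0}$, and pick $J_i \in \mathcal W_i$ with $J_i \to J$ in $\mathcal{C}^2$. Then $J_i(t_0) \to 0$, so $J_i$ is asymptotically focalizing at $t_0$. On a small interval $(t_0-\delta, t_0+\delta)$ on which $t_0$ is the only zero of $D$, Rouch\'e applied to $D_i \to D$ produces focal times $t_i \to t_0$ of $\mathcal W_i$. To upgrade $J_i$ to a focalizing field $\hat J_i \in \mathcal W_i^{t_i}$ still converging to $J$, I exploit the block structure of $A_i(t)$ near $t_0$: the $k$-dimensional near-kernel of $A_i(t)$ for $t$ close to $t_0$ is a $\mathcal{C}^2$-perturbation of $\ker A(t_0) \cong \mathcal W^{t_0}$, and a linear perturbation argument of size $k \times k$ (equivalently, Rouch\'e applied to the determinant of a reduced minor) produces $\hat J_i \in \mathcal W_i^{t_i} \subseteq {}^o\mathcal W_i$ with $\hat J_i \to J$. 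Varying $J$ over a basis of $\mathcal W^{t_0}$ and $t_0$ over all $\mathcal W$-focal times gives the inclusion.

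For the reverse inclusion in the single-focal case, where $D$ vanishes only at $t_0$ with order $k$, uniform convergence plus the non-vanishing of $D$ outside any neighborhood of $t_0$ confines all zeros of $D_i$ to that neighborhood for $i$ large, with total multiplicity $k$ by Rouch\'e. Hence $\dim {}^o\mathcal W_i \leq k$. Given $\hat J \in \hat{\mathcal W}$, write $\hat J = \lim J_i$ with $J_i \in {}^o\mathcal W_i$ and choose an orthonormal basis of each ${}^o\mathcal W_i$ in the $L^2$-inner product on $[a,b]$ (allowed by the dimension bound); extracting subsequences, each basis element converges to an element of $\mathcal W$ that focalizes at some $s \in [t_0-\delta,t_0+\delta]$; since $t_0$ is the only $\mathcal W$-focal time, $s = t_0$, so the limits lie in $\mathcal W^{t_0} = {}^o\mathcal W$. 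Thus $\hat J \in {}^o\mathcal W$, and together with the previous paragraph this gives $\hat{\mathcal W} = {}^o\mathcal W$ for every subsequential limit, which upgrades to ${}^o\mathcal W_i \to {}^o\mathcal W$ by compactness of the Grassmannian.

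The main obstacle is the rank-deficient implicit-function step in the second paragraph: the matrix $A(t_0)$ is singular, so the standard IFT does not apply, and one must carefully track how the $k$-dimensional near-kernel of $A_i(t)$ depends on $(i,t)$. This is handled by the block form of $A(t)$ near $t_0$ coming from the Lagrangian property, which reduces the problem to a $k \times k$ matrix perturbation with a first-order rate in $(t-t_i)$. A milder technical point is controlling coefficient growth when decomposing elements of ${}^o\mathcal W_i$ into focalizing fields at possibly several nearby focal times; the uniform bound $\dim {}^o\mathcal W_i \leq k$ combined with $L^2$-orthonormalization resolves this cleanly.
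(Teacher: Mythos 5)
Your reduction of focal multiplicity to the order of vanishing of the Wronskian $D=\det A$ (via the Lagrangian relation, giving the block structure at $t_0$) is correct, and the ``confinement'' half of your argument is fine. The fatal gap is the persistence step: ``Rouch\'e applied to $D_i\to D$ produces focal times $t_i\to t_0$ of $\mathcal W_i$.'' Rouch\'e is a theorem about holomorphic functions; here $D_i$ and $D$ are merely smooth real functions converging in $\mathcal C^2$, and a real function vanishing to \emph{even} order $k\geq 2$ at $t_0$ admits arbitrarily small smooth perturbations with no real zeros at all near $t_0$ (think $t^2\rightsquigarrow t^2+\varepsilon$). The same failure persists at the level of your reduced $k\times k$ Schur block: a perturbation of $(t-t_0)\,\mathrm{Id}_k$ by $\varepsilon$ times a skew-symmetric matrix is invertible for every real $t$. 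So nothing you wrote excludes the possibility that $\mathcal W_i$ has \emph{no} focal times near $t_0$, in which case $^o\mathcal W_i$ contributes nothing there and the inclusion $\mathcal W^{t_0}\subseteq\hat{\mathcal W}$ — the whole content of the lemma — fails. What rescues the statement is precisely the symplectic structure you do not exploit at this point: for Lagrangians of Jacobi fields every focal crossing is a definite (``positive'') crossing, equivalently the reduced block is symmetric with respect to a suitable inner product, so its eigenvalues must actually cross zero and the total focal multiplicity on an interval with non-focal endpoints is continuous under $\mathcal C^2$-variation of the data. This index-continuity statement is exactly \cite[Proposition 1.4]{Jacobi}, which the paper's proof invokes at this point; your argument needs either that citation or a proof of the symmetry/definiteness of the reduced block together with the resulting eigenvalue-crossing count.

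A secondary gap sits in your single-focal-time paragraph: an $L^2$-orthonormal basis of $^o\mathcal W_i$ need not consist of focalizing fields, so its limits need not vanish at $t_0$; and since elements of $^o\mathcal W_i$ are sums of focalizing fields at \emph{several} nearby focal times whose coefficients may blow up as the summand subspaces degenerate towards one another, you cannot conclude $\hat{\mathcal W}\subseteq\mathcal W^{t_0}$ without ruling out that degeneration. The paper does this with the fact that limits of $\mathcal W_i^{s_i}$ and $\mathcal W_i^{r_i}$ for distinct sequences $s_i,r_i\to t_0$ are orthogonal with respect to a natural scalar product (\cite[p.~331]{Jacobi}), which makes the limit of the span equal to the span of the limits. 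With these two inputs restored, your architecture (localize at each focal time, bound above by confinement of zeros, bound below by persistence of the index) coincides with the paper's.
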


\begin{proof}
Assume first  $^o{\mathcal W} =\mathcal W^{t_0}$. 

For any sequence $s_i\in [a,b]$ and any limit $J\neq 0$ of Jacobi fields  $J_i\in \mathcal W_i^{s_i}$, we have $J(t)=0$, where $t=\lim (s_i)$.  Thus, for any $\mathcal W_i$-focal times 
$s_i$, we must have $\lim (s_i)=t_0$. Moreover, 
  the focalizing spaces 
$\mathcal W_i^{s_i}$ subconverge to a subspace of $\mathcal W^{t_0}$.

On the other hand, $\Sigma _{s\in [a,b]}  \dim (W_i^s)$ is equal to $\dim (\mathcal W^{t_0})$, for all $i$ large enough,  since these numbers are the continuously varying \emph{ indices} of the Lagrangians on the interval $[a,b]$, \cite[Proposition 1.4]{Jacobi}.  

For different $\mathcal W_i$-focal times $r_i,s_i \to t_0$, the limits of the corresponding  subspaces $\mathcal W_i^{s_i}$ and $\mathcal W_i ^{r_i}$ are orthogonal subspaces of the space  $\mathcal W^{t_0}$ with respect to a natural scalar product \cite[p.331]{Jacobi}.

Thus,  for all sufficiently large $i$,  the space $^o{\mathcal W}_i$ has dimension 
$\dim (^o{\mathcal W})= \Sigma _{s\in [a,b]}  \dim (W_i^s)$. As seen above, any limit of any sequence $J_i \in$ $ ^o {\mathcal W}_i$ is contained in $^o{\mathcal W}$.  Thus,
 $^o{\mathcal W} _i$ converge to  $^o{\mathcal W}$.

Assume now that there are more than one $\mathcal W$-focal time in $[a,b]$.  Replacing 
$[a,b]$ by  a neighborhood of any $\mathcal W$-focal time  $t_0$ and appplying the previous special case, we deduce that $\mathcal W^{t_0}$ is contained in any limit space $\hat {\mathcal W}$ of a subsequence of $^o{\mathcal W}_i$.  Since this is true for any $t_0$, we deduce that $\hat {\mathcal W}$ contains all of $^o{\mathcal W}$.
\end{proof}

\subsection{Transversal Jacobi equation} \label{subsec: trans}
Let $\gamma:I\to M$ be a geodesic. Let $\mathcal W$ be a Lagrangian of Jacobi fields along $\gamma$ and let $\mathcal V$ be an $l$-dimensional linear subspace of $\mathcal W$. 
In this situation,  \cite{Wilking}  verifies a new Jacobi-equation valid for all 
elements in the quotient space $\mathcal W /\mathcal V$, as we are going to recall.

Outside the closed discrete set of $\mathcal W$-focal times, we have an $l$-dimensional subbundle  $\tilde {\mathcal V }(t):=\mathcal E^t(\mathcal V)$ of the normal bundle $\gamma ^{\perp}(t)=\mathcal E^t (\mathcal W)$ along $\gamma$.   This bundle   $\tilde {\mathcal V}(t)$ uniquely extends to a smooth $l$-dimensional bundle on all of $I$, \cite[p. 1300]{Wilking}.  Denote by $\tilde {\mathcal H} (t)$ 
  the orthogonal complement  of $\tilde  {\mathcal V}(t)$ in $\gamma ^{\perp} (t)$. 
% Since $\mathcal W(t)= \gamma ^{\perp} (t)$ and $\mathcal V(t)= V^{\gamma (t)}$, the space $\tilde H(t)$ is the orthogonal complement of $\gamma'(t)$ inside  the horizontal space $H^{\gamma (t)}$.

For a section $Y(t)$ in the bundle $\tilde {\mathcal H}$, \cite{Wilking} defines the \emph{transversal covariant derivative} 
\begin{equation} \label{eq: defder}
\tilde {\nabla }  _{\gamma'(t)}  Y (t):=(\nabla  _{\gamma'(t)}  Y (t))^{\perp}
\end{equation} 
 as the projection to $\tilde {\mathcal H}$ of the Levi-Civita  derivative $\nabla  _{\gamma'(t)}  Y (t)$.

Another operator defined  in \cite{Wilking} is the  unique smooth field of linear maps
$\tilde A_t:\tilde V (t) \to \tilde H (t)$ satisfying, for all  $t$ and all  $J\in \mathcal V$:
\begin{equation} \label{eq: defdual}
\tilde A_t(J(t)):=(J'(t))^{\perp}\,.
\end{equation}
%for all $t$ and all  $J\in \mathcal V$. 

Denote by $\mathcal R :\gamma ^{\perp} \to \gamma ^{\perp}$ the Jacobi operator along $\gamma$, and  by $\tilde A_t^{\ast}$ the adjoint operator of $\tilde A_t$. Then the following modified Jacobi-operator  $\tilde {\mathcal R}$ on  sections $Y$ in the bundle $\tilde H$ is defined in \cite{Wilking}:
\begin{equation} \label{eq: defr}
\tilde {\mathcal R}(Y (t)):=(\mathcal R (Y) )^{\perp} + 3 \cdot (\tilde A_t \circ \tilde A_t^{\ast}) (Y (t))\,.
\end{equation}

Due to \cite[Theorem 1.9]{Wilking},  for any Jacobi field $J\in \mathcal W$,
its orthogonal projection $\tilde J $ to $\tilde H$ satisfies along $\gamma$ the equation
\begin{equation} \label{eq: tilde}
\tilde {\nabla } {\tilde  \nabla }  \tilde J + \tilde {\mathcal R} (\tilde J) =0\,.
\end{equation}

Observe now that  $\tilde {\mathcal H}$  is determined by $\mathcal V$ and does not depend on the choice of the Lagrangian $\mathcal W$ containing $\mathcal V$.  Moreover, the formulas \eqref{eq: defder}, \eqref{eq: defdual}, \eqref{eq: defr} depend only on $\mathcal V$.

Thus, given $\mathcal V\subset \mathcal W$ as above, for any Lagrangian $\mathcal W '$ containing $\mathcal V$ and  any  normal Jacobi field $J \in \mathcal W'$ the projection 
$\tilde J$ to $\tilde H$ solves \eqref{eq: tilde}.

  An $l$-dimensional space $\mathcal V$ of normal Jacobi fields  along $\gamma$ is  contained in some  Lagrangian subspace if and only if it is   \emph{isotropic} with respect to the canonical symplectic form $\omega$  on $Jac(\gamma)$.   For such an isotropic $\mathcal V$, the union of all Lagrangians containing $\mathcal V$ is exactly  the $\omega$-perpendicular subspace 
$\mathcal V^{\perp_{\omega}}$ of dimension $(2n-2-l)$.  

As explained above, the projection to $\tilde {\mathcal H}$ sends any $J\in \mathcal V^{\perp_{\omega}}$ to a solution $\tilde J$ of  \eqref{eq: tilde}.  The kernel of this map $J\to \tilde J$ consists exactly of elements of $\mathcal V$.  Since the space of solutions of \eqref{eq: tilde} has dimension $2n-2-2l$ we arrive at the following:

\begin{lem}
Let $\mathcal V$ be an $l$-dimensional isotorpic subspace of $\mathrm{Jac}(\gamma)$.
Then, in the notations above,  the map $J\to \tilde J$  given by the orthogonal projection
to $\tilde {\mathcal H}$ defines a surjective homomorphism 
 $$\mathcal V^{\perp_{\omega}} \to \widetilde {Jac} (\gamma, \mathcal V)\;,$$
where the image $\widetilde {Jac} (\gamma, \mathcal V)$ is the space of solutions of 
\eqref{eq: tilde}.
\end{lem}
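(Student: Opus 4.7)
The claim essentially consolidates the three observations made in the paragraph preceding the lemma, so my plan is to split it into (i) well-definedness of the projection, (ii) identification of the kernel as $\mathcal V$, and (iii) surjectivity via a dimension count. Items (i) and (iii) are immediate from what has been said; the hard point is (ii), and more precisely the inclusion $\ker \subset \mathcal V$.

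For (i), I would first note that every $J\in \mathcal V^{\perp_{\omega}}$ lies in some Lagrangian $\mathcal W'\supset \mathcal V$, because $\mathcal V+\R J$ is isotropic (using antisymmetry of $\omega$ together with the defining condition $\omega(J,\mathcal V)=0$) and therefore extends to a maximal isotropic subspace. Then \cite[Theorem 1.9]{Wilking} applied to $\mathcal W'$ gives $\tilde J\in \widetilde{Jac}(\gamma,\mathcal V)$. The easy inclusion $\mathcal V\subset \ker$ follows because fields in $\mathcal V$ take values in $\tilde{\mathcal V}$ at non-focal times, which is orthogonal to $\tilde{\mathcal H}$, so $\tilde J$ vanishes on a dense open subset and then everywhere by the smooth extension of $\tilde{\mathcal H}$ across focal times.

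The main obstacle is the reverse inclusion $\ker \subset \mathcal V$. Given $J\in\mathcal V^{\perp_{\omega}}$ with $\tilde J\equiv 0$, I would pick a time $t_0$ which is non-focal for $\mathcal V$ and a basis $J_1,\dots,J_l$ of $\mathcal V$; since $J(t_0)\in\tilde{\mathcal V}(t_0)$, I can subtract an element of $\mathcal V$ (still staying inside $\mathcal V^{\perp_{\omega}}$, as $\mathcal V$ is isotropic) and reduce to the case $J(t_0)=0$. The isotropy conditions $\omega(J,J_i)=0$ then force $\langle J_i(t_0),J'(t_0)\rangle=0$ for every $i$, that is, $J'(t_0)\perp \tilde{\mathcal V}(t_0)$. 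On the other hand, $J(t)\in\tilde{\mathcal V}(t)$ on a neighborhood of $t_0$ because $\tilde J\equiv 0$; writing $J(t)=\sum_i c_i(t)J_i(t)$ in the basis of $\mathcal V$ (possible by non-focality) and differentiating at $t_0$, where all $c_i(t_0)=0$, yields $J'(t_0)=\sum_i c_i'(t_0) J_i(t_0)\in\tilde{\mathcal V}(t_0)$. The two conclusions together force $J'(t_0)=0$, hence $J\equiv 0$ by uniqueness for the Jacobi equation, as required.

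Finally, surjectivity (iii) is automatic: the induced map $\mathcal V^{\perp_{\omega}}/\mathcal V\to \widetilde{Jac}(\gamma,\mathcal V)$ is an injection between vector spaces of the same dimension $2n-2-2l$, hence an isomorphism.
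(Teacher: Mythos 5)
Your proposal is correct and follows the same route the paper takes: the lemma is stated there as the summary of the preceding paragraph (well-definedness via extension to a Lagrangian, kernel equal to $\mathcal V$, surjectivity by the dimension count $\dim\mathcal V^{\perp_\omega}-l=2n-2-2l$). The only point the paper leaves unproved is the inclusion $\ker\subset\mathcal V$, and your argument for it — normalizing $J(t_0)=0$ at a non-focal time, deducing $J'(t_0)\perp\tilde{\mathcal V}(t_0)$ from isotropy and $J'(t_0)\in\tilde{\mathcal V}(t_0)$ from differentiating the coefficients — is a correct and complete filling of that gap.
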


 \section{Smoothness of submanifolds}   \label{sec: smooth}
 \subsection{Normal exponential map} \label{Sec: 8}
Assume  $(k,\alpha)=(1,1)$ or $k\geq 2$ and  $0\leq \alpha \leq 1$.
Let $L$ be    $\mathcal C^{k,\alpha}$-submanifold  of a smooth Riemannian manifold $M$.   The normal exponential map $\exp_L$  is the restriction of the smooth exponential map of $M$  from its domain of definition $\mathcal D \subset TM$  to the 
$\mathcal C^{k-1,\alpha}$-submanifold $\mathcal D\cap \nu L$ of $TM$.  Hence, $\exp _L$ is   $\mathcal C^{k-1,\alpha}$.  

The  submanifold $L$ is $\mathcal C^{1,1}$, hence it has positive reach in $M$, \cite{KL-both}. Therefore, $\exp_L$ is locally biLipschitz around the $0$-section in $\nu L$. Hence, 
$\exp_L$ is  a $\mathcal C^{k-1, \alpha}$-diffeomorphism from a neighborhood of the $0$-section in $\nu L$ onto a neighborhood $O$ of $L$ in $M$.

The projection $\Pi ^L:O\to L$ corresponds under the diffeomorphism $\exp _L$ to the projection onto the $0$-section, hence $\Pi^L:O\to L$ is $\mathcal C^{k-1,\alpha}$.

Consider the distance function $d_L$ and its square $d_L ^2$ on the neighborhood $O$ of $L$ defined above.  The gradient $X$ of $d_L^2$ equals $0$ on $L$. For $p\in O\setminus L$ the gradient $X(p)$ equals 
$$X(p)=\Phi _1 (\exp_L^{-1} (p))\,,$$
where $\Phi_t(v)$ denotes the smooth geodesic flow of the manifold $M$.
Therefore, $X$ is $\mathcal C^{k-1,\alpha}$ on $O$.  We deduce, that  $d_L^2$ is $\mathcal C^{k,\alpha}$ on $O$ and $d_L$ is $\mathcal C^{k,\alpha}$ on $O\setminus L$.

\subsection{Improving regularity}  For $k\geq 1, 0\leq \alpha \leq 1$, 
  let $L$ be a $\mathcal C^{k,\alpha}$-submanifold of a smooth Riemannian manifold $M$.  Then the restriction of the tangent bundle $\pi:TM\to L$ of $M$ to $L$ is a
$\mathcal C^{k,\alpha}$-bundle and the notion of  $\mathcal C^{k,\alpha}$-subbundles of $TM$ is well-defined.  A subbundle $W\to L$ is $\mathcal C^{k,\alpha}$ if and only if there exist $\mathcal C^{k,\alpha}$-sections $L\to TM$ with values in $W$, which generate the fiber of $W$ over each point of $L$.

Recall that for any $\mathcal C^{1,1}$ submanifold $L$ of $M$ the second fundamental form $\mathrm {II}_L$ is well-defined at all points of $L$, at which $L$ is twice diffentiable, hence at almost all points of $L$.  Thus, the mean curvature vector field $\mathfrak {h}:L\to T^{\perp} L$ is well-defined almost everywhere.
Moreover, at all such points $x\in  L$ and all normal vectors $0\neq h\in T^{\perp}_xL$, the 
Lagrangian of $L$-Jacobi fields along $\gamma ^h$ is well-defined by   
 \eqref{eq: initial}.

 The following  is a geometric version of the trivial observation that a map is $\mathcal C^{k+1}$ if its derivative is $\mathcal C^k$.

\begin{thm} \label{thm: normalfields}
Let $L$ be a $\mathcal C^{1,1}$ submanifold of $M$.
Let $k\geq 1, 0\leq \alpha \leq 1$.  If $L$ is  a
  $\mathcal C^{k,\alpha}$ submanifold then  the following are equivalent:
\begin{enumerate}
\item The submanifold $L$ is $\mathcal C^{k+1,\alpha}.$
\item The tangent bundle $TL \subset TM$ of $L$ is $C^{k,\alpha}.$
\item The normal bundle $T^{\perp} L \subset TM$ of $L$ is $C^{k,\alpha}.$
\item The second fundamental form $\mathrm{II}_L$ of $L$ in $M$ is $\mathcal C^{k-1,\alpha}$.
\end{enumerate}
If $0< \alpha < 1$ then the above  four conditions are equivalent to the statement that  the mean curvature vector $\mathfrak h:L\to T^{\perp} L$ is $\mathcal C^{k-1,\alpha}$.
\end{thm}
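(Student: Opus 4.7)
The easy directions are essentially immediate. If $L$ is parametrized locally by a $\mathcal{C}^{k+1,\alpha}$ embedding $\psi$, then the frame $\partial_i\psi$ of $TL$ is $\mathcal{C}^{k,\alpha}$, so (1) $\Rightarrow$ (2); the equivalence (2) $\Leftrightarrow$ (3) comes from taking the orthogonal complement in the smooth bundle $TM|_L$; and $\mathrm{II}(\partial_i\psi,\partial_j\psi)$ is the normal projection of $\nabla^M_{\partial_i\psi}\partial_j\psi$, hence it lies in $\mathcal{C}^{k-1,\alpha}$, giving (1) $\Rightarrow$ (4). The same calculation shows that under (1) the mean curvature $\mathfrak h = \operatorname{tr}_{g^L}\mathrm{II}$ is $\mathcal{C}^{k-1,\alpha}$.

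The main step is the upgrade (4) $\Rightarrow$ (1). I will work locally at an arbitrary point $p \in L$, choose geodesic normal coordinates on $M$ centered at $p$ adapted to $T_pM = T_pL \oplus T_p^\perp L$, and write $L$ as a Monge graph $\{(u, f(u)) : u \in U\}$ in $\R^m \times \R^{n-m}$ with $f(0)=0$ and $\partial f(0)=0$. Setting $e_i(u) := \partial_{x^i} + \partial_i f^a(u)\,\partial_{y^a}$, a direct calculation of $\nabla^M_{e_i} e_j$ in these coordinates gives
\begin{equation*}
\mathrm{II}(e_i,e_j)(u) \;=\; P^\perp(u)\bigl(\partial_i\partial_j f^a(u)\,\partial_{y^a}\bigr) \;+\; R_{ij}\bigl(u, f(u), \partial f(u)\bigr),
\end{equation*}
where $P^\perp$ is the ambient orthogonal projection onto $T^\perp L$ and $R_{ij}$ collects the ambient Christoffel contributions $\tilde\Gamma(e_i,e_j)$ projected normally. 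The crucial point is that both $P^\perp$ and $R_{ij}$ depend only on $f$, $\partial f$, and smooth ambient data, and that $P^\perp$ restricts to the identity on $\mathrm{span}(\partial_{y^a})$ at $u=0$, hence is invertible nearby. Since $f \in \mathcal{C}^{k,\alpha}$ forces $P^\perp$, $(P^\perp)^{-1}$, and $R_{ij}$ to lie in $\mathcal{C}^{k-1,\alpha}$, and $\mathrm{II} \in \mathcal{C}^{k-1,\alpha}$ by hypothesis, the identity can be algebraically inverted to give
\begin{equation*}
\partial_i\partial_j f^a \;=\; (P^\perp)^{-1}{}^a_b \bigl(\mathrm{II}(e_i,e_j)^b - R_{ij}^b\bigr) \;\in\; \mathcal{C}^{k-1,\alpha},
\end{equation*}
whence $f \in \mathcal{C}^{k+1,\alpha}$. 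The implication (2) $\Rightarrow$ (1) is a simpler version of the same idea: any $\mathcal{C}^{k,\alpha}$ frame of $TL$ is a $\mathcal{C}^{k,\alpha}$ invertible linear combination of the vectors $\partial_i\psi = e_i$ after projection onto $T\R^m$, so $\partial_i\psi$, and hence $\partial_i f$, belongs to $\mathcal{C}^{k,\alpha}$.

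For the final equivalence involving the mean curvature vector $\mathfrak h$ under the restriction $0 < \alpha < 1$, the direction from (4) is immediate. For the converse, in the same Monge chart the component of $\mathfrak h$ along $\partial_{y^a}$ equals, modulo a term in $\mathcal{C}^{k-1,\alpha}$ depending only on $f$ and $\partial f$, the quantity $g^L{}^{ij}(u)\,\partial_i\partial_j f^a(u)$, a uniformly elliptic second-order operator in $f$ with $\mathcal{C}^{k-1,\alpha}$ coefficients. The interior Schauder estimate then upgrades $f \in \mathcal{C}^{k,\alpha}$ to $\mathcal{C}^{k+1,\alpha}$ once $\mathfrak h \in \mathcal{C}^{k-1,\alpha}$; the restriction $0 < \alpha < 1$ is essential because the Schauder theory fails at the integer endpoints.

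The main obstacle is the careful verification that the remainder $R_{ij}$ and the projection $P^\perp$ appearing in the displayed identity depend only on $f$ and its first derivatives (together with the smooth ambient data), and \emph{not} on $\partial^2 f$. This separation isolates the coordinate Hessian of $f$ entirely on the right-hand side and collapses (4) $\Rightarrow$ (1) into a single algebraic inversion rather than an iterative bootstrap; the analogous separation for $\mathfrak h$ reduces (5) $\Rightarrow$ (1) to a standard Schauder estimate.
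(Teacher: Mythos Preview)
Your argument is correct and takes a genuinely different route from the paper's, so let me briefly compare.

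For the implication (4) $\Rightarrow$ (1), the paper first invokes the Nash embedding theorem to replace $M$ by Euclidean space, which kills the ambient Christoffel symbols entirely; then in the Monge graph one has simply $\mathrm{II}(\partial_i\psi,\partial_j\psi)=P^\perp(0,\partial_i\partial_j F)$, and the inversion of $P^\perp$ is the only step. You instead work directly in geodesic normal coordinates on $M$ and absorb the ambient connection into the remainder $R_{ij}$, which you correctly observe depends only on $f$ and $\partial f$ and is therefore $\mathcal C^{k-1,\alpha}$. The algebraic inversion is then the same. Your approach is more self-contained (it avoids Nash) at the price of carrying the Christoffel remainder through the computation.

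For the mean-curvature upgrade, the difference is more substantial. The paper uses the classical identity $\Delta_L\psi=l\cdot\mathfrak h$ for an isometric immersion $\psi:L\to\R^N$ (obtained via Nash), then passes to \emph{harmonic} coordinates on $L$, invoking the Sabitov--Sheffel/deTurck--Kazdan theorem to control the regularity of the intrinsic metric in those coordinates, and finally applies elliptic regularity. You stay in the Monge chart and recognize directly that the principal part of $\mathfrak h$ is $g_L^{ij}\partial_i\partial_j f^a$, a uniformly elliptic operator with $\mathcal C^{k-1,\alpha}$ coefficients (depending only on $\partial f$), and apply interior Schauder estimates straight away. Your route avoids both Nash and the harmonic-coordinate machinery, and is arguably the more transparent argument; the paper's route yields cleaner formulas but relies on two external theorems. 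Both require $0<\alpha<1$ for exactly the reason you state: Schauder theory fails at the endpoints.
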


\begin{proof}
Since taking orthogonal complements is a smooth operation,  (2) and (3) are equivalent.  Clearly, (1) implies (2), and (2) and (3) imply (4).  Moreover, if the second fundamental form  $\mathrm {II}_L$ is $\mathcal C^{k-1,\alpha}$ then so is its trace, the mean curvature $\mathfrak h $.

  For the remaining implications, we use the Nash embedding theorem to replace $M$ by a Euclidean space.  The statements are local and we may assume that $L$ is the graph of 
  a $\mathcal C^{k,\alpha}$-map  $F:O\to \R^{n-l}$, for an open set $O\subset \R^l$.

If $\mathrm {II}_L$ is $\mathcal C^{k-1,\alpha}$ then   $\mathrm {II} (\partial _i F, \partial _j F)$  is a $\mathcal C^{k-1,\alpha}$-map from $O$ to $\R^{n-l}$, for all $1\leq j\leq l$.  Since the Jacobian of $F$ is $\mathcal C^{k-1,\alpha}$ as well, the Hessian of $F$ has to be $\mathcal C^{k-1,\alpha}$.  Thus, $F$ is $C^{k+1,\alpha}$. 
Hence (4) implies (1).

Assume now that $0<\alpha <1$ and that the mean curvature $\mathfrak h$ is  $\mathcal C^{k-1,\alpha}$.  The coordinate-wise
Laplace operator satisfies 
\begin{equation} \label{eq: laplace}
\Delta F  (p)=l\cdot \mathfrak h (F(p))\;.
\end{equation}
 By the theorem of Sabitov--Sheffel,  deTurck--Kazhdan \cite{Sabitov}, \cite{KZ},  harmonic coordinates on $L$ are  $\mathcal C^{k+2 , \alpha}$ and the Riemannian metric  tensor $g$ is $\mathcal C^{k,\alpha}$ in these coordinates.
  Applying elliptic regularity, to the equation \eqref{eq: laplace},
we deduce, that $F:L\to \R^n$ is $\mathcal C^{k+1,\alpha}$ in harmonic coordinates.
Hence $L$ is a $\mathcal C^{k+1,\alpha}$-submanifold of $\R ^n$.
\end{proof}

The first part of the  above proof shows the following statement, which is formally not a special case of Theorem \ref{thm: normalfields}.

\begin{cor} \label{cor: normalfields}
Let $L$ be a $\mathcal C^{1,1}$-submanifold of  $M$.
If the almost everywhere defined second fundamental form $\mathrm{II}_L$ extends to a continuous tensor on  $L$ then the submanifold  $L$ is  $\mathcal C^2$.
\end{cor}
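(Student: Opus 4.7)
The plan is to mimic the (4)$\Rightarrow$(1) step in the proof of Theorem~\ref{thm: normalfields}, paying attention to the fact that the Hessian of the defining graph map is now only almost everywhere defined. By the Nash embedding theorem the statement is local and we may assume $M=\R^n$, since a continuous extension of $\mathrm{II}_L$ in $M$ combined with the (smooth) second fundamental form of $M\subset\R^n$ yields a continuous extension of the second fundamental form of $L$ viewed as a submanifold of $\R^n$. Locally, write $L$ as the graph of a $\mathcal C^{1,1}$ map $F:O\to\R^{n-l}$ over an open set $O\subset\R^l$.

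Since $DF$ is Lipschitz, Rademacher's theorem ensures twice differentiability of $F$ at almost every $p\in O$. At each such point the coordinate tangent vectors are $X_i(p)=(e_i,\partial_i F(p))$, and the second fundamental form satisfies
\begin{equation*}
\mathrm{II}_L\bigl(X_i(p),X_j(p)\bigr) \;=\; \pi^{\perp}_p\bigl(0,\partial_i\partial_j F(p)\bigr),
\end{equation*}
where $\pi^{\perp}_p$ denotes the Euclidean orthogonal projection from $\R^n$ onto the normal space $T^{\perp}_{(p,F(p))}L$. The restriction of $\pi^{\perp}_p$ to the subspace $\{0\}\times\R^{n-l}$ is a linear isomorphism onto the normal space whose inverse depends smoothly on $DF(p)$ alone. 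Inverting the formula, one obtains
\begin{equation*}
\partial_i\partial_j F(p) \;=\; \Phi\bigl(DF(p),\,\mathrm{II}_L(X_i(p),X_j(p))\bigr)
\end{equation*}
with $\Phi$ jointly continuous in its arguments.

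By hypothesis $\mathrm{II}_L$ admits a continuous extension to all of $L$, and $DF$ is continuous (in fact Lipschitz) on $O$. Therefore the right-hand side defines a continuous function on $O$ that agrees almost everywhere with the distributional Hessian $D^2F\in L^{\infty}_{\mathrm{loc}}(O)$. Since $DF$ is Lipschitz with weak derivative equal to a continuous representative, the classical integration-along-lines argument upgrades $DF$ to a $\mathcal C^1$ map. Hence $F\in\mathcal C^2$ and $L$ is a $\mathcal C^2$ submanifold.

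The main subtlety, absent from the proof of (4)$\Rightarrow$(1) in Theorem~\ref{thm: normalfields}, is that the Hessian of $F$ is a priori defined only almost everywhere; it is handled by observing that the recovery formula already produces a globally continuous candidate on $O$, which in combination with the Lipschitz regularity of $DF$ is enough to conclude $F\in\mathcal C^2$.
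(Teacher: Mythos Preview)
The proposal is correct and follows the same approach indicated by the paper, namely the Nash-embedding graph argument from the $(4)\Rightarrow(1)$ step of Theorem~\ref{thm: normalfields}. You spell out more carefully than the paper the one additional point needed in the $\mathcal C^{1,1}$ setting: that the almost-everywhere Hessian of $F$, once shown to have a continuous representative via your recovery formula, forces $DF\in\mathcal C^1$ by the standard weak-derivative argument.
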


\subsection{Smoothness of spaces $L$-Jacobi fields} We restate Theorem \ref{thm: normalfields} and Corollary \ref{cor: normalfields} in terms of Lagrangians of Jacobi fields:

\begin{cor} \label{cor:  contdepend}
Let $L$ be a $\mathcal C^{k,\alpha}$ submanifold of  $M$, where $(k,\alpha)=(1,1)$ or $k\geq 2$ and $0\leq \alpha \leq 1$.
Let  $h^1,...,h^m$  be  $\mathcal C^{k-1,\alpha}$-normal fields on $L$, which span the normal space at all points. For $1\leq i\leq m  $ and $z\in L$, denote   by $\mathcal W^i_z$ the Lagrangian space of $L$-Jacobi fields along the geodesic $\gamma ^{ h^i (z)}$ starting at  $z$ in the direction  $ h^i (z)$.  

\begin{enumerate}

\item If $k\geq 2$ and the families  $z\to \mathcal W^i_z$ are $\mathcal C^{k-1,\alpha}$ then $L$ is a $\mathcal C^{k+1,\alpha}$ submanifold of $M$.

\item If $(k,\alpha)=(1,1)$ and the almost everywhere defined families $z\to \mathcal W^i_z$ extend to families of Lagrangians along $\gamma ^{ h^i (z)}$ continuously defined for all $z\in L$ , then $L$ is a $\mathcal C^2$ submanifold. 

\end{enumerate}

\end{cor}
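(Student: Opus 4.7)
The plan is to extract the shape operators $S^{h^i(z)}$ from the Lagrangians $\mathcal W^i_z$ and then invoke Theorem~\ref{thm: normalfields} and Corollary~\ref{cor: normalfields}. The central observation is that evaluating $J\mapsto (J(0),J'(0))$ identifies $\mathcal W^i_z$ with the subspace of $T_zM\oplus T_zM$ that, by \eqref{eq: initial}, equals $\{(v,\xi)\co v\in T_zL,\ \xi\perp h^i(z),\ \xi^\top = S^{h^i(z)}(v)\}$. Given the $\mathcal C^{k-1,\alpha}$ subbundle $TL\subset TM|_L$, this subspace determines $S^{h^i(z)}$ algebraically through $v\mapsto \pi_{T_zL}(J'(0))$, where $J\in \mathcal W^i_z$ is any Jacobi field with $J(0)=v$. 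Since this extraction is a smooth operation on the bundle of pairs (Lagrangian, tangent space), the assumed $\mathcal C^{k-1,\alpha}$ regularity (respectively, continuous extension) of $z\to \mathcal W^i_z$ transfers to $z\to S^{h^i(z)}$.

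Having the shape operators for a spanning family, the second fundamental form is reconstructed pointwise. Locally around each point of $L$, I pick a subfamily $h^{i_1},\dots,h^{i_{n-l}}$ forming a basis of the normal bundle; the corresponding Gram matrix is $\mathcal C^{k-1,\alpha}$ (resp.~continuous) and invertible, and the identity $\langle \mathrm{II}_L(v,w),h^{i_j}\rangle = \langle S^{h^{i_j}}v,w\rangle$ then expresses $\mathrm{II}_L$ as a $\mathcal C^{k-1,\alpha}$ (resp.~continuous) tensor on $L$. The implication $(4)\Rightarrow (1)$ in Theorem~\ref{thm: normalfields} completes case (1), and Corollary~\ref{cor: normalfields} completes case (2).

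The main obstacle lies in case (2). The second fundamental form is only defined almost everywhere for the $\mathcal C^{1,1}$ submanifold $L$, so one must verify that the tensor produced by the algebraic recipe is precisely a continuous extension of the a.e.~defined $\mathrm{II}_L$, not a competing object. This is immediate, because wherever $\mathrm{II}_L$ is defined the initial condition \eqref{eq: initial} holds by definition, so the Lagrangian carries the true shape operator there and the algebraic extraction produces the same operator; elsewhere, continuity of the Lagrangian extension forces continuity of the extracted tensor. The remaining arguments reduce to routine linear algebra with the specified regularity.
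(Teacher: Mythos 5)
Your proposal is correct and follows essentially the same route as the paper: both extract the shape operators $S^{h^i(z)}$ from the Lagrangians via the evaluation $J\mapsto (J(0),J'(0))$ and the initial conditions \eqref{eq: initial}, reassemble $\mathrm{II}_L$ from the spanning family of normal fields, and conclude via Theorem~\ref{thm: normalfields} and Corollary~\ref{cor: normalfields}. The paper merely makes the "algebraic extraction" explicit by splitting $\mathcal W^i_z$ as the sum of the fields vanishing at $0$ and their orthogonal complement, which is the same well-definedness observation you make.
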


\begin{proof}
For $1\leq i\leq m$  and  $z\in L$, the space $\mathcal W^i_z$ contains the $(m-1)$-dimensional subspace $\mathcal  U^i_z$ consisting of normal Jacobi fields $J$  with inital conditions $J(0)=0, J'(0)\in T_z^{\perp}L$.  The map 
 $z\to \mathcal U^i_z$ depends in a $\mathcal C^{k-1,\alpha}$ way on $z$ (and so does the family of normal spaces).

Thus, also the orthogonal complement $\mathcal V^i_z$ of $\mathcal U^i_z$ in $\mathcal W^i_z$ (taken with respect to the canonical scalar products at time $0$) depends continuously on  $z$, (respectively, $\mathcal C^{k-1,\alpha}$ for $k\geq 2$).   The evaluation map $J\to J(0)$ defines 
a bijective isomorphism from $\mathcal V^i_z \to T_zL$, which depends continuously (respectively,  $\mathcal C^{k-1,\alpha}$)  on $z$.    

The inverse map assigns to a vector $v\in T_zL$  a normal Jacobi field $J_v$ along the geodesic $\gamma ^{ h^i (z)}$ and this assignement is continuous (respectively, $\mathcal C^{k-1,\alpha}$). 
Thus also the map $$v\to J_v'(0)= S^{ h_i (z)} (v)$$ 
is continuous (respectively, $\mathcal C^{k-1,\alpha}$). 

Thus, for all $i$,  the second fundamental form $\mathrm{II}^{ h_i}_L$ in the direction of $ h_i$ is  $\mathcal C^{k-1,\alpha}$, respectively, has a continuous extension to all of $L$.   
Since $h_i(z)$ constitute a basis of $T_z ^{\perp}L$,  the second fundamental form $\mathrm{II}$ of $L$ is  $\mathcal C^{k-1,\alpha}$ (respectively, continuous). 

Now the result follows from Theorem \ref{thm: normalfields} and Corollary \ref{cor: normalfields}.
\end{proof}

 %\newpage
%\centerline{\bf II. Preliminaries in Submetries}

\section{Basics on submetries}

\subsection{(Local) Submetries} \label{subsec: loc}
Recall that
 $P:X\to Y$ is a  \emph{submetry} if for any $x\in X$ and any  $r>0$ the equality
$P(B_r(x))= B_r(P(x))$ holds. 

The map $P$ is called a \emph{local submetry} if for any $x\in X$ there exists some $s>0$ such that the condition $P(B_r(z))= B_r(P(z))$ holds true for any $z\in B_s(x)$ and any $r<s$. 
 We call $X$ the \emph{total space} and $Y$ the \emph{base} of  the local submetry $P$.

%By definition, $P$ is a submetry if and only if it is locally $1$-Lipschitz and locally
%$1$-open.
A restriction of a (local) submetry $P:X\to Y$  to an open subset $O\subset X$ is a local submetry $P:O\to Y$.  A local submetry $P:X\to Y$ is a  submetry, if   $X$ and $Y$ are length spaces and $X$ is proper \cite[Corollary 2.9]{KL}.

Let $P:X\to Y$ be a local submetry and  let $X$ be a length space.
Replacing  $Y$ by $P(Y)$ 
 we may  assume that the local submetries are surjective.  
Replacing the metric on $Y$ by the induced length  metric, $P$ remains a local submetry
\cite[Corollary 2.10]{KL}.
 Thus, we may  assume without loss of generality that the base space $Y$ is a length space.

For a local submetry $P:X\to Y$, a rectifiable curve $\gamma :I\to X$  is \emph{horizontal} (with respect to $P$) if $\ell (\gamma)=\ell (P\circ \gamma)$.

\subsection{Structure of the base} \label{subsec: basestructure}
Let $M$ be a  smooth Riemannian manifold.
Let $P:M\to Y$ be a surjective local submetry.  Let $y\in Y$ be arbitrary.
 Then 
there exists some $r>0$ such that the closed ball
$\bar B_r(y)$ is an Alexandrov space of curvature $\geq \kappa$, for some $\kappa =\kappa (y)$.  In particular, $\bar B_r (y)$ is convex in $Y$  \cite[Proposition 3.1]{KL}.
Moreover, any geodesic starting in $y$ can be extended  to a minimizing geodesic  connecting $y$ with  the distance sphere  $\partial  B_r(y)$
\cite[Theorem 1.3]{KL}.
 In this case we will say that the \emph{injectivity radius  at $y$ is at least $r$}. 

% Under the above assumptions, any point $y'\in B_r(y)$ is connected to 
%$y$ by a unique geodesic, justifying the name.

Set $ m=\dim (Y)$.
Then  $Y$ admits a canonical  decomposition  $Y= \cup _{e=0} ^m Y^e$ into \emph{strata} $Y^e$.  Here, $Y^e$ is the set of all points $y\in Y$, for which  the tangent space $T_yY$ splits off $\R^e$ but not $\R^{e+1}$ as a direct factor.  
$Y^e$ is an $e$-dimensional  manifold with a canonical $\mathcal C^{1,1}$-atlas, which is  locally convex
 in $Y$, \cite[Theorem 1.6]{KL}. 
The distance on $Y^e$ is given  by a Lipschitz continuous Riemannian metric;  the tangent space $T_yY^e$ is the maximal Euclidean factor of 
$T_yY$ \cite[Theorem 11.1]{KL}.   Due to 
 \cite[Corollary 1.2]{L},   any stratum $Y^e$ has curvature locally bounded from above. (This is not used below and  will follow from Theorem \ref{thm: new3}).

The maximal stratum $Y^m$ is open and dense in $Y$. Points in $Y^m$ are  called \emph{regular} points of $Y$. Fibers $P^{-1} (y)$ with  $y\in Y^m$ are called \emph{regular fibers} and  points $x\in P^{-1} (Y^m)$ are called \emph{regular points} of $M$.

Due to \cite[Lemma 10.1, Theorem 11.1]{KL}, for any point $y\in Y^e$, there exists some $r_0=r_0 (y)>0$ such that 
\begin{equation} \label{eq: injradius}
%\item  $B_{2r_0}(y)$ does not contain  points in $\cup _{i=0}^{l-1}Y^i$.
  \text{The injectivity radius  at any }  y' \in B_{r_0} (y)\cap Y^e \text{ is at least } r_0.
\end{equation}

\subsection{Structure of the fibers} \label{subsec: strfib}
A locally closed subset $L\subset M$ has
\emph{positive reach} in $M$ if the closest-point projection $\Pi^L$ is uniquely defined on a neighborhood $U$ of $L$ in $M$.   In this case,  $\Pi^L$ is locally Lipschitz on $U$ and the distance function $d_L$ is $\mathcal C^{1,1}$ on $U\setminus L$ \cite{KL-both}.

% If this neighborhood  $U$ contains the tubular $r$-neighborhood around $L$,   we say $L$
 %that has \emph{positive reach at least $r$}. 

  Any set $L$  of positive reach is a topological manifold if and only if $L$ is a $\mathcal C^{1,1}$ submanifold of $M$, \cite[Proposition 1.4]{Ly-conv}.  On the other hand, any set $L$ of positive reach contains a subset $L'$ open and dense  in $L$, which is a $\mathcal C^{1,1}$-submanifold, possibly with components of different dimensions \cite[Theorem 7.5]{Rataj}.

Let $P:M\to Y$ be a surjective local submetry.  Let $y\in Y^e\subset Y$.  Then the  fiber $L=P^{-1} (y)$ and the preimage
$S=S^e=P^{-1} (Y^e)$  are subsets of positive reach in $M$ \cite[Theorems 1.1,  1.7]{KL}.

Neither $L$ nor $S$ have to be manifolds. 
However, for every $y\in Y\setminus \partial  Y$ the fiber $L=P^{-1} (y)$ is   a $\mathcal C^{1,1}$-submanifold of
$M$ \cite[Theorem 1.8]{KL}.  In particular, this applies to all  regular fibers.

For $y\in Y^e$ and $r_0>0$ as in \eqref{eq: injradius}, let
$x\in L=P^{-1} (y)$ and $4r<r_0$ be such that $\bar B_{4r} (x) $ is compact.
Then, for any $y'\in B_{r}(y) \cap Y^e$, the closest-point projection $\Pi^{L'}$ to $L'=P^{-1} (y')$   is uniquely determined on $B_r(x)$.  Moreover, on $B_r(x)$, we have (\cite[Corollary 2.9]{KL})
$$d_{L'} =d_{y'} \circ P\;.$$  
 Hence, on $B_r(y) \setminus L'$ the function  $d_{L'}$  is $\mathcal C^{1,1}$ and its gradient $\nabla d_{L'}$ is sent by the differential $DP$  to the gradient of $d_{y'}$ \cite[Section 2.5]{KL}.

\subsection{Infinitesimal structure}
Let  $P:M\to Y$ be  a local submetry, let $x\in M$  be arbitrary,  $y=P(x)$ and denote by $L$ the fiber
$P^{-1} (y)$.

There exists a  differential $D_xP:T_xM\to T_yY$, which is itself a submetry.
The tangent  space $T_xL$ is  the preimage $(D_x P)^{-1} (0)$ and it is a convex cone in  $T_xM$  \cite[Proposition 3.3, Corollary 3.4]{KL}.
 We call   $T_xL$  the \emph{vertical space} at  $x$ and denote it by $V^x$.

The \emph{horizontal space} $H^x$ is the dual cone of $T_xL$ in $T_xM$.  The cone  $H^x$ consists of all  $h\in T_xM$ such that
$||h||= |D_xP(h)|$, where  $|\cdot |$ on the right side denotes the distance  to the origin of $T_yY$.

A Lipschitz curve $\gamma:I\to M$ is horizontal if and only if the vector $\gamma'(t)$ is horizontal, for almost all $t\in I$.

  \subsection{Transnormal submetries}
    A local submetry $P:M\to Y$ is  \emph{transnormal} if  all fibers  of $P$ are topological manifolds (with empty boundary!).  This happens if and only if all
fibers are $\mathcal C^{1,1}$-submanifolds.   $P$ is transnormal if and only if all vertical spaces $V^x$ are Euclidean spaces.  Another equivalent condition is that
a  geodesic $\gamma$ in $M$ with horizontal $\gamma'(0)$ 
 stays horizontal for all times, \cite[Section 12]{KL}.

Let $P:M\to Y$ be a local transnormal submetry.
Let $y\in Y$ be a point in the stratum $Y^e$. Let $L=P^{-1} (y)$ and $S=P^{-1} (Y^e)$.  Let $\gamma :[0,a]\to M$ be a horizontal geodesic starting at $x=\gamma (0)\in L$ with $\gamma '(0)\in T_xS$.  
Then, for all but finitely  many times $t\leq a$, the point $\gamma (t)$ is contained in the stratum $S$, \cite[Corollaries 1.4, 6.3]{L}.
In particular, if $\gamma$ contains a regular point, then all but discretely many points 
on $\gamma$ are regular.     We such $\gamma$ a \emph{regular horizontal geodesic}.

Let $\gamma_{1,2} :I\to M$ be  horizontal geodesics with projections $\bar \gamma _i:=P\circ \gamma _i$.  If at some $t_0\in I$ we have 
$\bar \gamma _1(t_0)= \bar \gamma _2 (t_0)$ and $\bar \gamma _1 ' (t_0)=\bar \gamma _2 ' (t_0)$, then $\bar \gamma _1$ and $\bar \gamma _2$ coincide \cite[Proposition 12.7]{KL}, \cite[Lemma 7.1]{L}.

\section{Basic normal fields} \label{sec: basicnormal}

\subsection{Finer infinitesimal structure} \label{subsec: finer}
Let  $y=P(x)$ be contained in  the stratum $Y^e$.  For any vector $\bar h\in \mathbb R^e=T_yY^e$, the horizontal lift  $h$ of $\bar h$  in $T_xM$ is unique.   The set of all such lifts $h\in T_x M$ is an $e$-dimensional Euclidean subspace $H^x_0$ and the differential $D_xP$   respects  the decomposition
$$D_xP:T_xM =H^x_0 \times (H^x_0)^{\perp} \to T_yY = T_yY^e \times (T_yY^e)^{\perp}\;,$$
\cite[Proposition 5.6]{KL}.  
    The space $T_yY^e$ resp. $H^x_0$ is spanned by the gradients of the distance functions $d_{y'}$ resp. of $d_{L'}=d_{y'}\circ P$ with $y'$ in a neighborhood of $y$ in $Y^e$.  
 Thus,  the $e$-dimensional distribution $z\to H^z_0$ on the stratum $S=P^{-1} (Y^e)$ is Lipschitz continuous.

Let $x\in L$ be chosen such that  a neighborhood of $x$ in $L$ is a $\mathcal C^{1,1}$ submanifold of $M$. Then a neighborhood $U$ of $x$ in $S=P^{-1} (Y^e)$ 
is a $\mathcal C^{1,1}$ submanifold of $M$ as well.  In this case, the restriction $P:U\to Y^e$ is a $\mathcal C^{1,1}$ Riemannian submersion, \cite[Proposition 1.7]{KL}.  In particular, the $\dim (L)$-dimensional distribution $z\to V^z:=T_z L^z$
is Lipschitz continuous (here and below we denote by $L^z$ the fiber of $P$ through $z$).  The space $ H^z_0$ is the normal space of $V^z$ within $T_zS$.

The normal bundle $H^z_1:= T _z^{\perp} S$ of $S$ is the orthogonal complement of $H^z_0$ in $H^z$ and is  Lipschitz continuous on $U$ as well.

  For any  $z\in U$ and 
$w=h_0+h_1+v \in T_zM= H_0^z\oplus H_1^z\oplus V^z$, we have 
$$D_zP(w)= D_zP(h_0+h_1)=(D_zP(h_0), D_zP(h_1))\in T_yY^e \times  (T_yY^e)^{\perp}= T_yY\,.$$

We mention two special cases. If $e=m=\dim (M)$ then $T_yY^e=T_yY$, the spaces $H^z_1$ and $(T_yY^e)^{\perp}$ are singletons.

If $e=\dim (M)-1$, then $(T_yY^e)^{\perp}$ is  the ray $[0,\infty)$ and the restriction 
$D_zP :H_1^z\to T_yY^e=[0,\infty)$ sends any vector $h$ to $ ||h|| $.
Thus, in this case, the submetry $D_zP:H^z\to T_yY$ has  as fibers concentric spheres 
in affine spaces parallel to $H_1^z$.

\subsection{Basic normal fields} \label{subsec:setting}
 %Let $M$ be a smooth Riemannian manifold, let $P:M\to Y$ be a surjective local submetry. 
Let $y\in Y^e \subset Y$,  $L =P^{-1} (y)$ and  $S:=P^{-1} (Y^e)$ be as above.  
For any $\bar h\in \mathbb R^e =T_yY^e$ and any $z\in L$, there exists a unique 
horizontal vector $\hat h _z\in H^z$ with $D_z P(\hat h_z)=\bar h$.  The vector field
$z\to \hat h_z$ along $L$ is called a \emph{basic normal field along} $L$.  

Any basic normal field $\hat w$ is Lipschitz.  The space of all basic normal fields along $L$ is $e$-dimensional, we will denote it by $\mathcal B(L)$.  For any $z\in L$, the map $\hat h \to \hat h_z$ is an isomorphism between $\mathcal B(L)$ and $H^z _0\subset H^z$.   

For any $\hat h\in \mathcal B(L)$ the norm $ ||\hat h_z||$ coincides with the length
of the image vector $\bar h  =D_zP(\hat h_z)\in T_yY^e$ and it is independent of $z$.

\newpage
\centerline {\bf II.  Smoothness of the quotient space}

\section{Good points,  holonomy maps and holonomy fields} \label{sec: good}

\subsection{Holonomy maps} \label{subsec: holmap2}
Let $P:M\to Y$ be a local submetry, $L$ be a fiber  $P^{-1} (y)$. Let a basic normal field $\hat h\in \mathcal B(L)$ be fixed.   For any $z\in L$, consider the geodesic  $\gamma ^{\hat h_z}$ starting in the direction $\hat h_z$.
 For $r\in \mathbb R$, denote by $\mathcal D^r=\mathcal D^r(\hat h)$ the set of all  $z\in L$ such that $\gamma ^{\hat h _z}(r)=\exp (r\cdot \hat h_z)$ is defined. $\mathcal D^r$ is open in $L$; if $M$ is complete then   $\mathcal D^r =L$. 

Let  $r_0=r_0(y)>0$ be as in \eqref{eq: injradius}. Whenever $r_0> ||r\cdot h|| $, the geodesics $\gamma^{\hat h_z}, z\in \mathcal D^r$ are horizontal on $[0,r]$ and are sent by $P$ to the same geodesic in $Y$, \cite[Proposition 7.3]{KL}.

If $P$ is transnormal then the geodesics $\gamma^{\hat h_z}, z\in \mathcal D^r$ are horizontal on $[0,r]$,  for any $r$, and they are sent by $P$ to the same piecewise geodesic in $Y$, \cite[Proposition 12.7]{KL}, \cite[Lemma 7.1]{L}.

In both cases we call  the map $z\to  \gamma^{\hat h_z} (r)$ the holonomy map along $\gamma$ and denote it as 
$$Hol^{\gamma}:\mathcal D^r\to L^{\gamma (r)}\;.$$

If $r_0> ||r\cdot h|| $, the holonomy map $Hol^{\gamma} :\mathcal D^r\to L^{\gamma(r)}$ coincides with the closest-point projection to $L^{\gamma (r)}$. It is
injective and locally biLipschitz;  the inverse map is the closest-point projection to $L$.

If $P$ is transnormal and $r$ arbitrary, then the holonomy map $Hol^{\gamma}:\mathcal D^r\to L^{\gamma (r)}$ is Lipschitz open, \cite[Proposition 7.2]{L}.  If, in addition,  $(P\circ \gamma) (r)$ is contained in the same stratum as $y$ (which happens for all but discretely many times $r$), the holonomy map is injective and locally biLipschitz.

\subsection{Good points}  \label{subsec:good}
Let $P:M\to Y$ be a local submetry, let $L$ be a fiber of $P$.
 We call  $z\in L$ a \emph{good point of the fiber $L$} if 
%the following are satisfied:

\begin{enumerate}
\item
A neighborhood of $z$ in $L$ is a $\mathcal C^{1,1}$-submanifold of $L$. 
\item    $L$  is twice-differentiable at $z$, thus $L$ coincides at  $z$  with a smooth submanifold up to second order.
\item For a fixed basis $\hat h^1,... \hat h^l$ of basic normal fields along of $L$, the Lipschitz vector fields $\hat h^j$ are differentiable at $z$ for $1\leq j\leq l$.
\end{enumerate}

  Any basic normal field $\hat w$ is differentiable at any good point  $z$. Thus, $(v, \hat h)\to \nabla _v \hat h$ is  a bilinear map $V^z\times \mathcal B(L) \to T_zM$.

By the  Rademacher theorem, almost every point in every manifold fiber of $P$ is a good point of its fiber.
Since any fiber $L$  contains open dense $\mathcal C^{1,1}$-submanifolds,  the set of good points is dense in $L$.
% by  Rademacher's theorem. Almost every point in every manifold fiber of $P$ is a good point of its fiber.

At any good point $z\in L$,   the second fundamental form $\mathrm{II}_{z,L}$  is well-defined.  For any $h\in H^z=T^{\perp}_zL$, the shape-operator $S^h_L$ is well-defined by
 \eqref{eq: defsh}, and so is the Lagrangian of $L$-Jacobi fields along the geodesic $\gamma ^h$.  We will denote this  Lagrangian by $\mathcal W(h)$.
% are well-defined at the point $z$ (as the corresponding objects of $L'$ at $z$).

\subsection{Holonomy fields}  \label{subsec: goodpoints}
Let $z\in L$ be a good point of the fiber $L=P^{-1} (y)$.  Let $0\neq \hat h$ be a basic normal field along $L$, $h=\hat h_z\in H^z$ and let $\gamma=\gamma^h$ be the geodesic starting in the direction of $\gamma$.    

For $v\in V^z$ we consider the  Jacobi field along $\gamma$ defined by
\begin{equation} \label{eq: jacinit}
 J_{h, v} (0)=v \;; \;  J'_{h , v}(0)=\nabla _v \hat h \;.
\end{equation}
We call such $J_{h,v}$ a \emph{holonomy field} and denote the $\dim (L)$-dimensional space of all such Jacobi fields  along $\gamma ^h$ by $\mathcal I^h$.

The Jacobi field $J_{h,v}$ is the variational field of the variation 
\begin{equation} \label{eq: correspondence} 
\gamma _s(t):=\exp _{\eta (s)} (t\cdot \hat h _{\eta (s)})\;,
\end{equation}
where $\eta $ is an arbitrary Lipschitz curve in $L$ with $\eta(0)=z$ and $\eta'(0)=v$.
From \eqref{eq: correspondence} we directly see, that $J_{h,v}$ is an $L$-Jacobi field
along $\gamma^h$.  Moreover, for all $r$  smaller than the injectivity radius at $y$,
the value  $J_{h,v} (r)$ is tangent to the fiber of $P$ through $\gamma (r)$, hence
\begin{equation} \label{eq: verti}
J_{h,v}(r)\in V^{\gamma (r)}\;.
\end{equation}
If $P$ is transnormal, \eqref{eq: verti} holds true holds true for all $r$. 

By definition, $J_{h,v} (r)$ coinncides with the differential in the direction  $v$ of the 
holonomy map $Hol^{\gamma} : \mathcal D^r\to L^{\gamma(r)}$
\begin{equation}
 D_z Hol^{\gamma} (v)=J_{h,v} (r)\;.
\end{equation}
Since the holonomy map is Lipschitz-open, the differential must be surjective.   Thus, we have verified:

\begin{lem} \label{lem: consequent}
Let $z$ be a good point of a fiber $L=P^{-1} (y)$ of a local submetry $P$. Let $\gamma=\gamma ^h:I\to M$ be a horizontal geodesic starting at $z$, such that $h$ is tangent to the stratum $S$ of $z$.  Let   $r\in I$ be given.  Assume that $r\cdot ||h||$    is  smaller than the injectivity radius of $P(z)$  or that $P$ is transnormal.  Then  $v\to J_{h,v} (r)$ is a surjective linear map $V^z\to V^{\gamma (r)}$, which is the differential of the holonomy map along $\gamma$.
\end{lem}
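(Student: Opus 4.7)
The plan is to realize the linear map $v \mapsto J_{h,v}(r)$ as the differential of the holonomy map $\mathrm{Hol}^{\gamma}$ at $z$; this single identification will simultaneously deliver linearity, the tangency statement $J_{h,v}(r)\in V^{\gamma(r)}$, and surjectivity.

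First, I would fix $v\in V^z=T_zL$ and select a Lipschitz curve $\eta\co (-\varepsilon,\varepsilon)\to L$ with $\eta(0)=z$ and $\eta'(0)=v$. Such a curve exists because, by the good-point hypothesis, a neighborhood of $z$ in $L$ is a $\mathcal C^{1,1}$-submanifold. I then consider the variation $\gamma_s(t):=\exp_{\eta(s)}(t\cdot \hat h_{\eta(s)})$ already recorded in \eqref{eq: correspondence}. Since $\hat h$ is Lipschitz on $L$ and differentiable at the good point $z$, the variational field $V(t):=\partial_s|_{s=0}\gamma_s(t)$ is a well-defined Jacobi field along $\gamma=\gamma^h$; its initial conditions $V(0)=v$ and $V'(0)=\nabla_v\hat h$ coincide with those prescribed in \eqref{eq: jacinit}, so $V=J_{h,v}$ by uniqueness. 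Linearity of $v\mapsto J_{h,v}$ (and hence of $v\mapsto J_{h,v}(r)$) is immediate from the linearity of these initial data.

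Next, I would invoke the discussion of holonomy maps in Section \ref{subsec: holmap2}: under either of the two hypotheses (that $r\cdot\|h\|$ is smaller than the injectivity radius at $y$, or that $P$ is transnormal), every geodesic $\gamma_s$ is horizontal on $[0,r]$ and $P\circ\gamma_s$ coincides with $P\circ\gamma$. In particular $\gamma_s(r)\in P^{-1}(P(\gamma(r)))=L^{\gamma(r)}$ and by construction $\gamma_s(r)=\mathrm{Hol}^{\gamma}(\eta(s))$. Differentiating at $s=0$ along a curve $\eta$ through $z\in \mathcal D^r$ gives
\[
J_{h,v}(r)=V(r)=D_z\mathrm{Hol}^{\gamma}(v)\in T_{\gamma(r)}L^{\gamma(r)}=V^{\gamma(r)},
\]
which is independent of the chosen $\eta$. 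This establishes the tangency statement and the identification with the differential of the holonomy map.

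For surjectivity I would simply appeal to the Lipschitz-openness of $\mathrm{Hol}^{\gamma}\co\mathcal D^r\to L^{\gamma(r)}$ recorded in Section \ref{subsec: holmap2} (\cite[Proposition 7.3]{KL} in the injectivity-radius case, \cite[Proposition 7.2]{L} in the transnormal case): a Lipschitz-open map between Lipschitz manifolds must have surjective differential at every point where that differential exists, and existence here is guaranteed by the good-point hypothesis. I do not anticipate any real obstacle; the only conceptual content is the correct matching of the Jacobi-field initial data \eqref{eq: jacinit} with the variation \eqref{eq: correspondence}, and the care needed to ensure that each $\gamma_s$ in the variation is indeed a horizontal lift projecting to $P\circ\gamma$, which is exactly what distinguishes basic normal fields from arbitrary normal extensions.
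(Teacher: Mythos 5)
Your proposal is correct and follows essentially the same route as the paper: identify $J_{h,v}$ with the variational field of the variation \eqref{eq: correspondence}, observe that evaluation at $r$ is exactly the differential of $\mathrm{Hol}^{\gamma}$ at $z$ (giving linearity and $J_{h,v}(r)\in V^{\gamma(r)}$), and deduce surjectivity from the Lipschitz-openness of the holonomy map. The only cosmetic difference is that you spell out the co-Lipschitz-implies-surjective-differential step, which the paper leaves implicit.
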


\section{Smoothness of  strata}

We generalize Theorem \ref{thm: new3} and  the first part of  Theorem \ref{thm-sm1}:

\begin{thm} \label{thm-sm3}
Let $M$ be a smooth Riemannian manifold, let $P:M\to Y$ be a surjective local submetry.
Then any stratum $Y^e$ is  isometiric to a smooth Riemannian manifold.
\end{thm}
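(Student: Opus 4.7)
My plan is to work locally: fix an arbitrary $y_0\in Y^e$ and build a chart on a neighborhood of $y_0$ in $Y^e$ in which the pulled-back metric is manifestly smooth. Choose a good point $x\in L:=P^{-1}(y_0)$ in the sense of Section~\ref{subsec:good}; such good points exist since the $\mathcal C^{1,1}$-part of $L$ is open and dense (Section~\ref{subsec: strfib}) and Rademacher's theorem applies to each Lipschitz basic normal field. By the discussion in Section~\ref{subsec: finer}, a neighborhood $U$ of $x$ in $S:=P^{-1}(Y^e)$ is a $\mathcal C^{1,1}$-submanifold of $M$ and $P\colon U\to Y^e$ is a $\mathcal C^{1,1}$-Riemannian submersion; in particular, $D_xP$ sends $H_0^x\subset T_xM$ isometrically onto $T_{y_0}Y^e$. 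I then consider
\[
\phi\colon O_x\to Y^e,\qquad \phi(h):=P(\exp_x h),
\]
defined on a small ball $O_x\subset H_0^x$ around the origin. For small $h\in H_0^x$ the geodesic $\gamma_h(t):=\exp_x(th)$ is horizontal and projects to a geodesic starting at $y_0$ in direction $D_xP(h)\in T_{y_0}Y^e$; by local convexity of $Y^e$, this projection stays in $Y^e$, so $\phi$ lands in $Y^e$ and is a $\mathcal C^{1,1}$-diffeomorphism onto a neighborhood of $y_0$.

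I compute $g^{\ast}:=\phi^{\ast}g_{Y^e}$ via Jacobi fields. For $w\in H_0^x\cong T_hO_x$ one has $D\phi|_h(w)=D_{\gamma_h(1)}P\bigl(J_w^h(1)\bigr)$, where $J_w^h$ is the Jacobi field along $\gamma_h$ with $J_w^h(0)=0$ and $(J_w^h)'(0)=w$. Decomposing $T_{\gamma_h(1)}M=V^{\gamma_h(1)}\oplus H_0^{\gamma_h(1)}\oplus H_1^{\gamma_h(1)}$ orthogonally (Section~\ref{subsec: finer}) and using that $D_{\gamma_h(1)}P$ kills $V$, is an isometry onto $T_{\phi(h)}Y^e$ on $H_0$, and is norm-preserving into $(T_{\phi(h)}Y^e)^{\perp}$ on $H_1$, the fact that $D\phi(w)$ lies in $T_{\phi(h)}Y^e$ forces $J_w^h(1)_{H_1}=0$. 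Consequently,
\[
g^{\ast}(w,w)\big|_h \;=\; |J_w^h(1)_{H_0}|^2 \;=\; |J_w^h(1)|^2 - |J_w^h(1)_V|^2,
\]
and polarization gives the corresponding bilinear formula. To identify $V^{\gamma_h(1)}$, I fix a basis $v_1,\dots,v_l$ of $V^x$ and invoke Lemma~\ref{lem: consequent}: the values $J_{h,v_i}(1)$ of the holonomy fields from Section~\ref{subsec: goodpoints}, defined by $J_{h,v_i}(0)=v_i$ and $(J_{h,v_i})'(0)=\nabla_{v_i}\hat h$, span $V^{\gamma_h(1)}$.

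It remains to check that every ingredient depends smoothly on $h$. Since $M$ is smooth, the geodesic $\gamma_h$, and hence the Jacobi operator $R(\gamma_h',\cdot)\gamma_h'$, depend smoothly on $h$; by standard ODE regularity the solutions $J_w^h(1)$ depend smoothly on $(h,w)$. For the holonomy fields, the initial conditions at the fixed good point $x$ involve only the value $v_i$ and the derivative $\nabla_{v_i}\hat h$; since the correspondence $h\mapsto \hat h\in\mathcal B(L)$ is linear and $\hat h$ is differentiable at $x$, the map $h\mapsto \nabla_{v_i}\hat h$ is linear, hence smooth in $h$. Thus each $J_{h,v_i}(1)$ varies smoothly with $h$; at $h=0$ these vectors are $v_1,\dots,v_l$, which are linearly independent, so independence persists on a small enough $O_x$. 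The orthogonal projector onto $V^{\gamma_h(1)}$ is then a smooth function of $h$, and hence so is $g^{\ast}$. This proves $Y^e$ is a smooth Riemannian manifold near $y_0$; as $y_0\in Y^e$ was arbitrary, the entire stratum is smooth.

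The conceptual point, and the reason this argument tolerates the possibly singular geometry of $L$ away from $x$, is that every quantity entering $g^{\ast}$ is either smooth ambient data on $M$ (the curvature tensor along $\exp_x(O_x)$) or first-order data at the single point $x$ (the shape operator and the derivatives of basic normal fields), the latter depending only \emph{linearly} on $h\in H_0^x$. The main technical point needing care is the uniform identification of $V^{\gamma_h(1)}$ via holonomy fields on the whole neighborhood $O_x$; I expect the usual focal-point obstructions are avoided by taking $O_x$ small and propagating linear independence from $h=0$ by continuity.
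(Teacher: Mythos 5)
Your proposal is correct and follows essentially the same route as the paper: the paper also works from a good point $x$, pulls back $g_{Y^e}$ to the slice $N=\exp_x(O_x)$ (equivalently, your chart $\phi=P\circ\exp_x$ on $O_x\subset H_0^x$), reduces smoothness of the pulled-back metric to smoothness of the vertical distribution $q\mapsto V^q$ along the slice, and identifies $V^{\gamma_h(1)}$ as the span of the holonomy fields $J_{h,v}(1)$ via Lemma~\ref{lem: consequent}, whose initial data depend linearly on $h$ at the good point. Your writeup just makes explicit a couple of points the paper leaves implicit (the vanishing of the $H_1$-component and the persistence of linear independence of the $J_{h,v_i}(1)$).
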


\begin{proof}
Let $y\in Y^e$ be arbitarary. 
It is sufficient to find  some smooth coordinates around $y$ in $Y^e$, such that the distance around $y$ is defined  by a smooth Riemannian metric in these coordinates.  Then distance functions to points are smooth within the injectivity radius. Hence, distance coordinates define a smooth atlas on $Y^e$, with respect to which the Riemannian metric is smooth.  

Set $L=P^{-1} (y)$ and $S=P^{-1} (Y^e)$. 
We fix a \emph{good point} $z\in L$  and choose $r>0$, smaller than the injectivity radius of $z$ in $M$ and  than the injectivity radius of $y$ in $Y$.
% and satisfying the conclusion of  Lemma \ref{lem: consequent}.  
Let $O_z$ be the set of  vectors of norm less than $r$ in the normal space $H^z_0$ to $L$ in $S$. We  set $N=\exp _z (O_z)$.  

Making $r>0$ smaller if needed, we may assume that $B_r(z)\cap S$ is a $\mathcal C^{1,1}$-manifold.  Then $N$ is a smooth submanifold of $M$ contained in $S$.  Decreasing $r>0$ further, if needed, we may assume that the tangent spaces to $N$ do not contain vertical vectors.
Then the restriction of $P$ to $N$ defines a $\mathcal C^{1,1}$-diffeomorphism between $N$ and the ball $B_{r} (y)\cap Y^e$.

Consider the  pull-back metric $g^{\ast}:=P^{\ast} (g_{Y^e})$ to $N$  of the Riemannian metric
$g_{Y^e}$ on $Y^e$.  It suffices to prove that this, a priori only Lipschitz continuous metric  $g^{\ast}$, is indeed a smooth Riemannian metric on $N$.

 The norm $||u||_{g^{\ast}}$ of a tangent vector $u \in T_qN$ is defined as 
the norm (in the original smooth Riemannian metric on $M$) of the orthogonal projection  of $u$ onto $ H^q$.   This projection is computed by subtracting from $u$ the orthogonal projection of $u$ onto $V^q$.
Thus, in order to deduce the smoothness of the metric $g^{\ast}$, it suffices to verify that the distribution $q\to V^q$ is a  smooth distribution along the submanifold $N$.

However, for $q\in N$ and $h:=\exp^{-1}_z(q)\in H^z$,   we have 
$$V^q= \{J_{h,v } (1)  \;\; : \;\;  v\in V^z  \}\,,$$ 
by Lemma \ref{lem: consequent}.
  Since $h$ depends smoothly on $q$ and Jacobi fields depend smoothly on the initial values, we deduce that $q\to V^q$ is smooth, finishing the proof of the theorem.
% and the smooth dependence of Jacobi fields on their initial values.
\end{proof}

\section{Smoothness in codimension one}

We are going to prove the following strengthening   of Theorem \ref{thm: codim1}:

\begin{thm} \label{thm-sm4}
Let $M\to Y$ be a surjective local submetry with $\dim (Y)=m$. Let $y$ be a point in $Y^{m-1}$.  Then a neighborhood of $y$ in $Y$ is isometric to $M'/\langle g \rangle $, where $M'$ is a smooth $m$-dimensional Riemannian manifold and $\langle g \rangle $ is a two-element group of isometries of $M'$ generated by a reflection  $g$ at a totally geodesic hypersurface in  $M'$. 
\end{thm}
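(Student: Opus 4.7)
The plan is to build, near each $y\in Y^{m-1}$, a smooth $m$-dimensional slice $M'$ inside $M$ together with a natural involution $g$ so that the projection $P$ exhibits a neighborhood of $y$ as $M'/\langle g\rangle$. First I fix a good point $z\in L=P^{-1}(y)$ (dense in $L$ by \secref{subsec:good}) and use the finer infinitesimal structure from \secref{subsec: finer}: one has $H^z=H^z_0\oplus H^z_1$ with $\dim H^z_0=m-1$, and $D_zP$ restricted to $H^z_1$ is the norm map into $[0,\infty)=(T_yY^{m-1})^{\perp}$. Fix a unit vector $u\in H^z_1$ and let $\sigma$ be the linear reflection on $\hat E:=H^z_0\oplus\mathbb{R}u$ that fixes $H^z_0$ and sends $u$ to $-u$. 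For a small symmetric convex neighborhood $U\subset\hat E$ of the origin, set $M':=\exp_z(U)$, a smooth $m$-dimensional submanifold of $M$, and transport $\sigma$ to a smooth involution $g:=\exp_z\circ\sigma\circ\exp_z^{-1}$ of $M'$ whose fixed-point set $N_0:=\exp_z(U\cap H^z_0)\subset S=P^{-1}(Y^{m-1})$ is a smooth hypersurface.

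The core geometric step is to verify the slice identity
\[
P\bigl(\exp_z(h_0+tu)\bigr)=P\bigl(\exp_z(h_0-tu)\bigr)\quad\text{for all }(h_0,tu)\in U,
\]
which exhibits $\Phi:=P|_{M'}$ as factoring through $M'/\langle g\rangle$. For good points $z_0=\exp_z(h_0)\in N_0$ (open dense in $N_0$) the same codimension-one picture applies: $D_{z_0}P$ restricted to $H^{z_0}_1$ is the norm, so any two horizontal geodesics $\exp_{z_0}(\pm sv)$ with $v\in H^{z_0}_1$ share their $P$-image. The task is thus to identify $t\mapsto \exp_z(h_0+tu)$ near $t=0$ with such a pair $\exp_{z_0}(\pm sv)$. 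To this end I plan to combine the Lipschitz continuity of the normal bundle $H^{z_0}_1=T^{\perp}_{z_0}S$ along $S$ (\secref{subsec: finer}) with a Jacobi-field analysis of the two-parameter variation $(h_0,t)\mapsto \exp_z(h_0+tu)$, showing that at $t=0$ the tangent to the $t$-curve lies in $H^{z_0}_1$. Once this transversality is in place, the infinitesimal symmetry at each good $z_0$ propagates to the slice identity by continuity in $h_0$.

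With the slice identity in hand, $\Phi$ descends to a continuous bijection $\bar{\Phi}\colon M'/\langle g\rangle\to V$ onto a neighborhood $V$ of $y$ in $Y$. On $M'\setminus N_0$ the map $\Phi$ is a local diffeomorphism onto $Y^m$, and pulling back the smooth Riemannian metric of $Y^m$ produces a smooth Riemannian metric $g_{M'}$ on $M'\setminus N_0$ by the same Jacobi-field computation used in the proof of \tref{thm-sm3}; this metric is $g$-invariant by construction. In the coordinates $(h_0,t)\in H^z_0\oplus\mathbb{R}u$ the $g$-invariance forces $g_{M'}$ to depend only on even powers of $t$, which yields a smooth extension across $N_0$; on $N_0$ it agrees with the smooth metric pulled back from $Y^{m-1}$ via $P|_{N_0}$, again by \tref{thm-sm3}. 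With this metric $g$ becomes an isometry, its fixed set $N_0$ is automatically totally geodesic, and $\bar{\Phi}$ is the desired isometry.

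The principal obstacle is the slice identity. As stressed in the introduction, the local behaviour of a submetry is not directly controlled by its infinitesimal submetry at a single point, so the reflection symmetry of $D_zP$ does not automatically lift to a symmetry of $P$ itself near $z$. The hard part will be to track precisely how the bundles $H^{z_0}_0$ and $H^{z_0}_1$, the basic normal fields on $N_0$, and the horizontal geodesics leaving $N_0$ fit together, so that the two straight lines $t\mapsto \exp_z(h_0\pm tu)$ are genuinely interchanged by $P$ rather than matched only to first order.
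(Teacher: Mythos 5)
Your construction of the reflective $m$-dimensional slice $M'=\exp_z(U)$, $U\subset H^z_0\oplus\mathbb R u$, is a genuinely different route from the paper, which instead exponentiates the \emph{whole} horizontal space $H^z$, equips the resulting slice $N$ with a \emph{modified} metric (projecting only onto the complement of the smoothly varying subspace $E^u\subset V^q$ spanned by holonomy fields, not onto $H^q$), thereby reduces to the case where $P^{-1}(y)$ is a single point, and then recognizes the fibers as concentric spheres in the normal bundle of the smooth stratum $S$ — a smooth singular Riemannian foliation — so that \cite[Proposition 3.1]{Thorb} delivers the orbifold smoothness of the quotient. Measured against that, your plan has one fixable gap and two serious ones. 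The fixable one is the slice identity: after shrinking so that $Y=Y^m\cup Y^{m-1}$ and all fibers are $\mathcal C^{1,1}$ (hence $P$ is transnormal), it follows at \emph{every} $z\in L$ from the splitting $D_zP=(D_zP|_{H^z_0},D_zP|_{H^z_1})$ of \secref{subsec: finer} together with the fact that two horizontal geodesics whose projections share a point and a direction have identical projections (\cite[Proposition 12.7]{KL}); your proposed ``Jacobi-field analysis plus propagation by continuity'' is not needed and, as sketched, is not a proof.

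The serious gaps are in the metric. First, on $M'\setminus N_0$ the pulled-back metric $\Phi^{\ast}g_{Y^m}$ is computed by orthogonally projecting $T_qM'$ onto $H^q=(V^q)^{\perp}$, so its smoothness requires $q\mapsto V^q$ to be smooth along $M'\setminus N_0$. The computation in \tref{thm-sm3} does \emph{not} transfer: there the base point moves inside the stratum and $V^q$ is exactly the image of holonomy fields issuing from the single good point $z$; here $q=\exp_z(h_0+tu)$ is a regular point, $\dim V^q=n-m$, while the holonomy fields from $L$ span only the $\dim(L)$-dimensional subspace $E^{h_0+tu}$, and $\dim L=n-m+1-\dim H^z_1<n-m$ whenever $\dim H^z_1>1$. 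Recovering all of $V^q$ forces you to vary the direction inside the normal bundle $H^{\cdot}_1$ of $S$ along $L$, which is only Lipschitz; this is precisely why the paper projects onto the complement of $E^u$ rather than onto $H^q$, accepting a submetry with spherical fibers instead of a $2$-to-$1$ Riemannian cover. Second, the extension across $N_0$: evenness in $t$ of a metric that is smooth only for $t\neq 0$ does not yield a smooth extension ($|t|=\sqrt{t^2}$ is even and smooth off $0$). The smoothness of the quotient metric up to and across the boundary stratum is the actual content of the theorem, and in the paper it is obtained from the singular-Riemannian-foliation result \cite[Proposition 3.1]{Thorb}; your proposal contains no substitute for that step.
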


\begin{proof}
%The proof proceeds in two main steps. 
First we are going to  find a \emph{slice}: a smooth manifold $N$ 
with a local submetry $P':N\to Y$ onto an open neighborhood of $y$ in $Y$,
such that the $P'$-fiber  of $y$ in $N$ consists  of one point. 
% In the second step we prove the theorem under the additional assumption that the fiber over $y$ is a singleton.

 We set  $L=P^{-1} (y)$ and $S=P^{-1} (Y^{m-1})$ and fix a good point $z\in L$.
Replacing $M$ by a small ball around $z$ and $Y$ by the image of this ball, we may assume  $Y=Y^m\cup Y^{m-1}$.
% hence $Y^{m-1}$ is the boundary $\partial Y$ of $Y$.  
Moreover, we may assume that all fibers contained in $S$  are $\mathcal C^{1,1}$-submanifolds of $M$.  Then  $P$ is transnormal, since all remaining fibers are regular.

We choose $4\cdot r>0$ smaller than the injectivity radius of $M$ at $z$ and of $y$ in $Y$.
Denote by $\tilde O_z$ the open ball of radius $r$ in the horizontal space $H^z$. Thus, the restriction $\exp _z:\tilde O_z\to M$ is a smooth embedding. Denote by $N\subset M$ ist image.      By construction, the image $P(N)$ is the open ball $B_{r} (y)\subset Y$. Moreover, the preimage of $y$ in $N$ is just the point $z$.  In order to finish the first step, we need  to equip $N$ with a smooth metric, so that the restriction $P:N\to Y$  is a local submetry.

 For every $x\in L$, every $h\in H^x_1=T_x^{\perp}S$ is sent by the differential $D_xP$ to the unique vector of length $||h||$ in $T_yY= \R^{m-1} \times [0,\infty)$ which is orthogonal to $\R^{m-1}=T_y Y^{m-1}$.

 The distributions $x\to V^x$ and $x\to H^x_0$  are Lipschitz continuous along $L$  and differentiable   at $z$, hence so is the complementary distribution $x\to H^x_1$.  Therefore,  we find a neighborhood $U$ of $z$ in $L$ and Lipschitz continuous, pointwise orthonormal  vector fields 
$\hat h_1,...,\hat h_e$ along $U$, spanning $T^{\perp} S$ at all $x\in U$ and being  differentiable at the point $z$.
Thus,  linear combinations of $\hat h_i$,   extend every vector $h\in H^z_1$
to a Lipschitz continuous vector field $\hat h$ along $L$ with values in the distribution $H^x_1= T_x ^{\perp} S$.  Every $\hat h$ is differentiable at $z$, the map $h\to \hat h$ is a linear injection from $H^z_1$ into the space of sections of $T^{\perp} S$ along $U$. Finally, $D_xP (\hat h_x)\in T_y^{\perp} Y^{m-1}$ only depends on $||h||$.

 Hence,  for any $\hat h:U\to T^{\perp} S$ as above and for any basic normal field $\hat w\in \mathcal B(L)$,  the normal vector field $\hat h +\hat w$ along $U$ is \emph{basic} in the sense that $D_x P (\hat h _x+\hat w _x) \in T_yY$ is independent of  $x\in U$, Section \ref{subsec: finer}.

Thus, we have extended any $u\in \tilde O^z$ to a Lipschitz continuous horizontal vector field $\hat u$ along $U$, which is differentiable at $z$ and  such that the value $D_xP (\hat u_x)$ is independent of $x$.  Choosing $r$ smaller, if needed, we may assume that the horizontal geodeisc   $\gamma ^{\hat u_x} :[0,1] \to M$ 
starting in the direction of $\hat u_x$  is defined for all $x\in U$.   

By construction, $P\circ \gamma ^{\hat u_x}$ is a geodesic starting in $y$.  It  is independent of  $x$ and  only depends  on $u$. More precisely, it only depends on 
the projection of $u$ to $H^x_0$ and the  norm of the projection of $u$ to $H^x_1$. 

For fixed $u\in \tilde O_z$, the variational fields of the family $\gamma ^{\hat u_x}, x\in U$, define a $\dim (L)$-dimensional familiy of normal  fields along the geodesic $\gamma ^{u}$. Similarly to  Section \ref{subsec: goodpoints}, these  Jacobi fields along $\gamma ^u$  are  parametrized  as $J_{u,v}$ with $v\in V^z$, where the initial values of $J_{u,v}$ are
$$J_{ u , v }(0)=v\;; \;\;\;   J'_{ u , v}(0)=\nabla _v \hat u \;.$$

As in the previous section, the space 
$$E^{u}:=\{J_{u,v} (1)   \; ;\;  v\in V^z\} \subset V^{\exp _z(u)} \subset T_u M\,,$$
is $\dim (L)$-dimensional, if $r$ is small enough.  Moreover,   $E^u$
depends smoothly on $u\in \mathcal O_z$, hence, on the point $\exp (u)$ in $N$.

By the continuity of the family $E^u$, for any $u\in \tilde O_z$, the space $E^u$
and the tangent space $T_{\exp_z(u)}N$ are complementary to each other. 

For $q\in N$ we denote as $C^q\subset T_qM$ the orthogonal complement of $E^u$, where $u=\exp_z^{-1} (q)$.     By construction, the distribution $q\to C^q$ is smooth along $N$, moreover, the orthogonal projection $n\to \tilde n$ from $T_qN$ to $C^q$ is bijective.  

Since $E^u$ is contained in the vertical space $V^q$, the differential  $D_qP$
restricts to a submetry $D_qP:C^q\to T_{P(q)}Y$. Moreover, by the same reason, $D_qP$  
 commutes with the projection to $C^q$, thus 
$$D_qP (n)=D_qP(\tilde n)\;,$$
for all $q\in N$ and all $n\in T_qN$.

We now define a new smooth Riemannian metric 
 $\tilde g$  on $N$  by setting
$$||n||_{\tilde g}: =|| \tilde n||_g \;,$$
for all $n\in T_qN$. Here, $\tilde n$ is the orthogonal projection of $n$ onto the orthogonal complement of $E^u$.

As observed above, the differential $D_qP :T_qN\to T_{P(q)} Y$  becomes a submetry, when $T_qN$ is considered with the metric $\tilde g$ (hence identified with $C^q$).  Applying \cite[Corollary 1.4]{Lyt-open} we deduce that $P:(N,\tilde g)\to Y$ is a local submetry. This finishes the first step.

Thus, replacing $(M,g)$ by $(N, \tilde g)$, we may assume that $L$ consists just of one point. Making $M$ smaller, if needed, we then deduce that  $P^{-1} (y')$ is a singleton, for every $y'\in Y^{m-1}$.  Thus, $S$  coincides  locally around any of its points  $x$ with the exponential  image of a small ball $H^z_0$.  Thus,
$S$ is locally convex in $M$.  In particular, $S$ is a smooth manifold which is sent by $P$ locally isometrically onto  $Y^{m-1}$.

The normal bundle $T^{\perp} S$ of the smooth submanifold $S$ is smooth.    With $r>0$ as above, set $U=B_r(z)\cap S$ and let $O$ be the open set of vectors $h\in  T^{\perp} U$ with $||h|| <r$.  By the assumption on $r$, the normal exponential map
$\exp_S$ is a diffeomorphism from $O$ onto an open subset $O'$ of $M$.  Moreover, the composition $P\circ \exp _S:O\to Y$ has as fibers concentric spheres in individual normal spaces $T^{\perp}_x S=H^x_1$.   Thus, the decomposition of $O$ into fibers of $P\circ \exp _S$ is a singular foliation, meaning that the decomposition coinicdes with the orbit of the action of some group of smooth diffeomorphisms, see Section \ref{subsec: foliation} below.

Therefore, the restriction of $P$ to $O'$ is a smooth singular foliation. Hence, this restriction is a smooth singular Riemannian foliation, see Section  \ref{subsec: foliation}.   Now we can apply \cite[Proposition 3.1]{Thorb} and deduce that a neighborhood of $y$ in $Y$ is isometric to a smooth Riemannian orbifold.  Since $T_yY$ is a Euclidean halfspace, a small ball around $y$ in $Y$ is isometric to the quotient of a smooth Riemannian manifold by a single isometric  reflection at a totally geodesic hypersurface.
\end{proof}

\section{Some genericity statements}
Let $P:M\to Y$ be a surjective local submetry with $\dim (Y)=m$.
By Theorem \ref{thm-sm4},  $Y':=Y^m \cup Y^{m-1}$  is a smooth Riemannian orbifold. 

 For almost any $y\in Y^m$ and almost every unit direction $w\in T_yY$ there exists exactly one maximal quasi-geodesic starting in the direction of $w$ and this quasi-geodesic is completely contained in $Y'$, \cite[Remark 12.2]{KL}, see also \cite[Theorem 1.6]{KLP} and \cite{Mondino}. 

   Quasi-geodesics in $Y'=Y^m\cup Y^{m-1}$ are uniquely defined by their starting vectors  and they are exactly 
 the orbifold-geodesics of the orbifold $Y'$ parametrized by arclength,  cf. \cite{Lange}.   More precisely, any quasi-geodesic in $Y'$ starting  in a direction contained  in $T_yY^{m-1}$ stays in $Y^{m-1}$ and is a geodesic in the Riemannian manifold $Y^{m-1}$.  All other quasi-geodesics  intersect $Y^{m-1}$ in a discrete set of times, are  geodesics outside of $Y^{m-1}$ and are reflected at $Y^{m-1}$.  
  Any quasi-geodesic $\gamma$ in $Y'$ is an orbifold-geodesic (hence it locally lifts into Riemannian manifolds) and the notions of Jacobi fields, the Jacobi equation and conjugate points along $\gamma$ are well-defined.

%For further use we state and prove the following observation:

 \begin{lem} \label{lem: no-conj}
Let $P:M\to Y$ be a surjective local submetry, $\dim (Y)=m$.
 For almost all $y\in Y^m$ and almost every unit direction $w\in T_yY$ there exists exactly one maximal   quasi-geodesic starting in the direction of $w$.  This quasi-geodesic is contained  in  $Y^m\cup Y^{m-1}$ and it is an orbifold-geodesic in the smooth Riemannian orbifold $Y^m\cup Y^{m-1}$.  Moreover,  no conjugate point to $y$ along $\gamma$ is contained in $Y^{m-1}$. 
\end{lem}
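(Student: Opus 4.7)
The first three assertions are essentially consequences of prior results: unique extendibility for almost every starting direction is \cite[Remark 12.2]{KL} (cf.\ \cite{KLP}, \cite{Mondino}), and by Theorem \ref{thm-sm4} the union $Y' := Y^m \cup Y^{m-1}$ carries a smooth Riemannian orbifold structure; hence any such quasi-geodesic must lie in $Y'$ and coincide with an orbifold-geodesic, reflecting off $Y^{m-1}$ at a discrete set of times (cf.\ \cite{Lange}).

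For the conjugate-point assertion the plan is to apply Sard's theorem to the orbifold exponential map restricted to endpoints on $Y^{m-1}$. For each $y \in Y^m$ I would consider the smooth map
$$\exp_y\big|_{\exp_y^{-1}(Y^{m-1})} \colon \exp_y^{-1}(Y^{m-1}) \longrightarrow Y^{m-1}$$
between manifolds of the common dimension $m-1$. The key observation is that if $v \in T_y Y^m$ satisfies $\exp_y v \in Y^{m-1}$ and $(d\exp_y)_v$ is singular, then any nonzero kernel vector $V$ of $(d\exp_y)_v$ is automatically tangent to $\exp_y^{-1}(Y^{m-1})$ (since $(d\exp_y)_v V = 0$ means $\exp_y(v + sV)$ remains in $Y^{m-1}$ to first order in $s$) and is annihilated by the restricted differential. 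Hence conjugate endpoints in $Y^{m-1}$ are critical values of the restricted exponential, and Sard's theorem yields: for every $y\in Y^m$, the set of $z \in Y^{m-1}$ conjugate to $y$ along some geodesic from $y$ has measure zero in $Y^{m-1}$.

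To transfer this nullity to initial directions, for each reflection index $i$ I would consider the equidimensional smooth map $r_i\colon S^{m-1}_y \to Y^{m-1}$, $r_i(w) := \gamma_{y,w}(t_i(w))$, where $t_i$ is the $i$-th reflection time. An implicit-function computation for $t_i$ shows that at a conjugate reflection point, any nonzero kernel vector $V \perp w$ of $(d\exp_y)_{t_iw}$ produces a Jacobi field $J(t) = (d\exp_y)_{tw}(tV/t_i)$ vanishing at $t_i$; since $r_i$ takes values in $Y^{m-1}$ while the reflected velocity $\gamma'(t_i)$ is transverse to $Y^{m-1}$, this forces $dt_i(V/t_i) = 0$, whence $dr_i(V/t_i) = 0$. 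Conversely, at non-conjugate reflection points $(d\exp_y)_{t_iw}$ is invertible, which makes $dr_i$ injective and hence full rank, so the bad starting directions from $y$ are exactly the critical points of $r_i$. The main remaining obstacle is the delicate step of inferring nullity of this critical set in $S^{m-1}_y$ from the nullity of its image in $Y^{m-1}$: since the critical set of a smooth equidimensional map need not itself be null, one combines the area formula with a parameterized Sard/transversality argument in the joint variable $(y, w) \in T^1 Y^m$ to extract nullity for almost every $y$. Summing over the countably many reflection indices and invoking Fubini finishes the proof.
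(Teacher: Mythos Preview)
Your treatment of the first three assertions matches the paper's: these are indeed consequences of Theorem~\ref{thm-sm4} and the references you cite.

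For the conjugate-point assertion, your approach diverges from the paper's and leaves a genuine gap that you yourself flag. The issue is exactly the ``delicate step'': Sard tells you the critical \emph{values} of $r_i$ form a null set in $Y^{m-1}$, but you need the critical \emph{points} to be null in $S^{m-1}_y$, and for equidimensional maps this simply need not follow (a constant map has full-measure critical set). Your proposed fix---``combine the area formula with a parameterized Sard/transversality argument in the joint variable $(y,w)$''---is not an argument but a hope; nothing you wrote shows why varying $y$ should force the critical set to be null for almost every fixed $y$. There is also a subsidiary issue: $\exp_y^{-1}(Y^{m-1})$ need not be a smooth submanifold of $T_yY$ precisely at the conjugate directions you care about, so even the domain of your restricted map is problematic.

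The paper bypasses all of this with a much shorter flow argument. Let $K\subset T^1Y^m$ be the full-measure, flow-invariant set of directions whose quasi-geodesic stays in $Y'=Y^m\cup Y^{m-1}$. For any $w\in K$, the quasi-geodesic $\gamma^w$ meets $Y^{m-1}$ in at most countably many points, and each such point has at most countably many conjugate points along $\gamma^w$. Hence on each flow line through $w$, the set of times $t$ for which $(\gamma^w)'(t)$ fails the conjugate-point condition is countable. Trivializing the geodesic flow in a neighborhood of $w$ and applying Fubini to the product structure (flow direction $\times$ transversal) gives immediately that the set $K_0$ of good directions has full measure in $K$. No Sard, no critical-set analysis, no transversality machinery---just the observation that a countable set on each flow line integrates to a null set.
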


\begin{proof}
 All properties   but the last statement about conjugate points have been explaned above.

Denote by $K$ the set of all unit tangent vectors $w$ in  $TY^m$ such that  the  maximal  quasi-geodesic $\gamma ^w$ starting in the direction of $w$ is contained in $Y^m\cup Y^{m-1}$.    As explained above, $K$ has full measure in the unit tangent bundle of $Y^m$.  Moreover, $K$ is invariant under the measure preserving local  quasi-geodesic flow (which coincides with the geodesic flow of the smooth Riemannian orbifold $ Y^m \cup Y^{m-1}$).

For any $w\in K$, the quasi-geodesic $\gamma ^w$  intersects  $Y^{m-1}$  in at most countably many points.  Each of these countably many points has at most countably many conjugate points along $\gamma^w$.  

Let  $K_0$ be the set of all $w\in K$ such that no point in $Y^{m-1}$ is conjugate to $\gamma^w(0)$ along $\gamma ^w$.  Then $K\setminus K_0$ contains only countably many vectors $(\gamma^w )'(t)$ on the flow-line of $w$ under the geodesic flow.  Trivializing the flow in a neighborhood of any $w\in K$, we see that $K_0$ has full measure in $K$.  This finishes the proof.
\end{proof}

We call a regular point $y\in Y$ a \emph{typical point} and a unit direction $w \in T_yY$ a \emph{typical direction} if they satisfy the statements of Lemma \ref{lem: no-conj}. In this case, we call any positive multiple of $w$ a \emph{typical direction} in $T_yY$.
Hence, almost every point in $Y$ is  typical. For any typical point $y\in Y$,
almost every direction $w$
in  $T_yY$  is  a typical direction.

We will call a point $x\in M$ \emph{typical} if  $x$ is a regular point, $P$ is twice differentiable at $x$ and if  $y=P(x)$ is a typical point of $Y$.
  The set of all typical points will be denoted by $\tilde M\subset M$.
Since the set of regular points has full measure in $M$, the Rademacher theorem for the $\mathcal C^{1,1}$-map $P:M_{reg}\to Y_{reg}$ together with  Lemma \ref{lem: no-conj} and the Fubini theorem, imply that $\tilde M$ has full measure in $M$.

We call a fiber  $L$ of $P$   a \emph{typical fiber} of $P$ if $L\cap \tilde M$ has full measure in the  $\mathcal C^{1,1}$-manifold $L$.  By the Fubini theorem, for almost every $y\in Y$, the fiber $P^{-1} (y)$ is a typical fiber.

If $L=P^{-1} (y)$ is a typical fiber and $0\neq \hat h \in \mathcal B(L)$ is a basic normal field, we call $\hat h$
a \emph{typical basic normal field}, if  for some (and hence any) $x\in L$,  the vector $D_xP (\hat h_x)\in T_yY^m$ is  a typical direction.

We call a horizontal geodesic $\gamma$ in $M$ a \emph{typical horizontal geodesic}, if $\gamma$ contains a typical point $x= \gamma (t)$ such that $(P\circ \gamma)'(t)$ is a typical direction in $Y$.

\section{O'Neill formula and related statements} \label{sec: on}

%\subsection{Typical horizontal geodesic}
%Let $P:M\to Y$ be a transnormal local submetry.  %
%We call a horizontal vector $h\in TM$  and the corresponding geodesic $\gamma ^h$
%\emph{typical}  if $\gamma ^h$ contains a typical point, thus a regular point at which $P$ is twice differentiable.  

%By definition, the set of typical vectors is invariant under the geodesic flow.  Since \emph{typical points} have full measure in $M$, the set of typical direcions is of full measure 
%in the manifold of regular directions $\mathcal R$.

\subsection{Spaces of Jacobi fields along typical geodesics}  \label{subsec: jacident}
Let $P:M\to Y$ be a transnormal local submetry.  Let $L\subset M$ be a typical fiber  and $x\in L$ a typical point.  Let $0\neq \hat h \in \mathcal B(L)$ be a typical basic normal field and let $h=\hat h_x\in H^x$.  Let $\gamma=\gamma^h$ be the typical horizontal geodesic
with $\gamma'(0)=h$. 
Thus,  $x$ is a regular point, $P$ is twice-differentiable at $x$, $P(x)$ is a typical point in $Y$ and $P\circ \gamma$ is contained in $Y^m \cup Y^{m-1}$.

% regular point on $\gamma$, which is a good point in its fiber.  We may assume $x=\gamma (0)$.  Set $h=\gamma'(0)\in H^x$.     

 Denote by $\tilde {\mathcal U}$ the space of Jacobi fields along $ \gamma$, which arise as variational fields of variations of $\gamma$ through horizontal geodesics.    

$P$ is twice differentiable at $x$, thus the Lipschitz manifold $\mathcal H$ of horizontal vectors at regular points has a well-defined linear tangent space $T_h\mathcal H$ of dimension $n+m$.
Since horizontal geodesics close to $\gamma$ are uniquely  described by their starting vectors,  variational fields $J\in \tilde {\mathcal U}$ are in one-to-one correspondence  $T_{h} \mathcal H$.    Hence $\tilde {\mathcal U}$ is an $(n+m)$-dimensional vector space of Jacobi fields.   

The space $\tilde {\mathcal U}$ contains the two-dimensional space of Jacobi fields  everywhere tangent to $\gamma$.   Hence the pointswise orthogonal complement of this two-dimensional space, the space of \emph{normal} Jacobi fields along $\gamma$ defined by variations through horizontal geodesics, is a vector space of dimension $n+m-2$. We denote this space as $\mathcal U^h=\mathcal U^{\gamma}=\mathcal U\subset \tilde {\mathcal U}$.

As explained in Section \ref{sec: basicnormal}, the space $\mathcal I=\mathcal I^{\gamma}=\mathcal I^h$ of holonomy fields along $\gamma$ is an $(n-m)$-dimensional vector space,   which is  contained in $\mathcal U$. 
 For any $t$ in the domain of definition of $\gamma $, the vertical space $V^{\gamma (t)}$ consists of vectors $J(t)$ with $J\in \mathcal I$.   

For any $t$ in the domain of definition of $\gamma$, the space $\mathcal U$ contains 
the subspace $\mathcal W^{t}$ of all normal Jacobi fields along $\gamma$ satisfying the initial conditions $J(t)=0$ and $J'(t)\in H^{\gamma (t)} \cap \gamma ^{\perp}$, thus given by variations through  horizontal  geodesics containing $\gamma (t)$.  

If  $\gamma  (t)$ is regular (in particular, if $t$ is small enough),  $\mathcal W^t$ has dimension $m-1$ and intersects $\mathcal I$ only in $0$.  For $t=0$, the direct sum of $\mathcal W^0$ and $\mathcal I$ is exactly the space of all normal $L$-Jacobi fields along $\gamma$. 

Let now  $t_0 >0$ be  smaller than the normal injectivity radius of $L$.  Then the space $\mathcal W^0\oplus \mathcal I$  of normal $L$-Jacobi fields does not have focal times in $(0,t_0]$. Hence this space  intersects $\mathcal W^{t_0}$ only in $0$. 

By the definition of the canonical symplectic form on the space of Jacobi fields,  the space  $\mathcal W^t$ is contained  in the  $\omega$-orthogonal  space $\mathcal I^{\perp _{\omega}}$ of $\mathcal I$, for any $t$ in the domain of definition of $\gamma$.   Hence,
$$\mathcal W^0\oplus \mathcal I \oplus \mathcal W^{t_0} \subset  \mathcal I^{\perp _{\omega}}  \; \;  \text{and} \; \; \mathcal W^0\oplus \mathcal I \oplus \mathcal W^{t_0} \subset  \mathcal U\;.$$  

Since $\dim (\mathcal W^0\oplus \mathcal I \oplus \mathcal W^{t_0})=\dim (\mathcal U)=\dim (\mathcal I^{\perp _{\omega}} )$,  both inclusions are equalities. Hence, $\mathcal U= \mathcal I^{\perp _{\omega}}  \,.$

\subsection{Transversal versus basic Jacobi equation}  \label{subsec: varhor}
In the above notation, % let  $\gamma$ be a \emph{typical} horizontal geodesic in $M$.   
consider the  quasi-geodesic $\bar \gamma =P\circ \gamma$ in $Y$.  Since $\gamma$ is typical,  $\bar \gamma$ is an orbifold-geodesics  in the smooth Riemannian orbifold $Y^m\cup Y^{m-1}$. 
 All subsequent considerations are local, hence, we may replace $Y$ by $Y^m\cup Y^{m-1}$ and  $M$ by the correpsonding preimage. Thus, we may   assume  
$Y=Y^m\cup Y^{m-1}$.

As verified in Section \ref{subsec: jacident},  the $\omega$-orthogonal $(n+m-2)$-dimensional space $\mathcal I^{\perp _{\omega}}$ of normal Jacobi fields coincides with $\mathcal U$.
%Let $\mathcal I$ denote the $(n-m)$-dimensional space vector space of holonomy fields along $\gamma$.  Consider the $\omega$-orthogonal $(n+m-2)$-dimensional space $\mathcal I^{\perp _{\omega}}$, Section \ref{subsec: vari}, which is  the union of all Lagrangians spaces of normal Jacobi fields containing $\mathcal I$.  As verified in Section \ref{subsec: jacident}, this space   $\mathcal I^{\perp _{\omega}}$ coincides with the space $\mathcal U$ of all normal Jacobi fields arising as variational fields of variations of $\gamma$ through horizontal geodesics.

Since horizontal geodesics in $M$ are sent by $P$ to orbifold-geodesics in $Y$,  the differential of $P$ sends any Jacobi field $J\in \mathcal U$ to a normal Jacobi field $\bar J:=DP\circ J$  along $\bar \gamma$.

The map $J\to \bar J$ is linear and has exactly $\mathcal I$ as its kernel. Since the space $\bar {\mathcal U}$ of all normal Jacobi fields along $\bar \gamma$ has dimension $2m-2$, the map $J\to \bar J$ is a surjective map $\mathcal U\to \bar {\mathcal U}$.

Every Jacobi field $\bar J$ in $\bar {\mathcal U}$ is a solution of the Jacobi equation along the orbifold-geodesic $\bar \gamma$ in the smooth Riemannian orbifold $Y$:
\begin{equation} \label{eq: jacc}
 \bar J'' +\bar {\mathcal R} (\bar J)=0\;,
\end{equation}
where $\bar {\mathcal R}$ is the Jacobi operator along $\bar \gamma$ in the normal bundle $\bar \gamma ^{\perp}$.

On the other hand,  we use the $\omega$-isotropic vector space $\mathcal I$ of normal Jacobi fields to define the transversal Jacobi equation along $\gamma$ as recalled in Section 
\ref{subsec: trans} above.   More precisely,  there is a smooth bundle $\tilde H$ along $\gamma$, such that at any regular point $\gamma (t)$, the fiber $\tilde H^{\gamma (t)}$ is the orthogonal complement of the vertical space $V^{\gamma (t)}$ within $\gamma ^{\perp} (t)$.  Thus, $\tilde H^{\gamma(t)}$ is the orthogonal complement of $\gamma '(t)$ in  $H^{\gamma (t)}$.

Then
the orthogonal projection $\tilde J$ of any $J\in \mathcal U$ onto 
 $\tilde H$ solves the transversal Jacobi equation 
\begin{equation} \label{eq: jacc1}
 \tilde \nabla \tilde \nabla \tilde  J +\tilde {\mathcal R} (\tilde  J)=0\;,
\end{equation}
where $\tilde \nabla$ is the covariant derivative    and  $\tilde  {\mathcal R}$ is the transversal Jacobi operator on  $\tilde H$  defined in  \cite{Wilking} and recalled in Section \ref{subsec: trans} above.

The differential $DP$ of $P$ is an isometric identification of the bundle $\tilde H$ along $\gamma$ and the normal bundle  $\bar \gamma^{\perp}$ along $ \gamma$, on the regular part of $M$.  By  continuity, this extends  to the discretely many non-regular points in $\gamma$.
Upon this identification, we obtain on the bundle $\tilde H$ 
two (a priori different) pairs of covariant derivatives and Jacobi operators $\tilde {\mathcal R}$ and $\bar {\mathcal R}$. 

As observed above the $(2m-2)$-dimensional space $\bar  {\mathcal U}$ of sections in this Euclidean bundle $\tilde H$ is exactly the space of  Jacobi fields for \emph{both} pairs  $(\nabla_{\bar \gamma '}, \bar {\mathcal R})$ and
$(\tilde \nabla, \tilde {\mathcal R})$.   However,  this implies that both covariant derivatives coincide and $\bar {\mathcal R} =\tilde {\mathcal R}$ as verified in \cite[Lemma 2.3]{I-L} or can be seen by a direct computation.

\subsection{Transversal equality}
%Let now $P:M\to Y$ be  not necessarily  a Riemannian submersion  but a  transnormal local submetry.
Let $\gamma _1$ and $\gamma _2$ be two typical horizontal geodesics in $M$ with the same projection $\bar \gamma=P\circ \gamma _1 =P\circ \gamma _2$. 
For all regular times $\gamma _i(t)$ we have a canonical identification of the horizontal subbundle $\tilde H_i$ of the normal bundle $\gamma _i^{\perp}$ with 
the normal bundle $\bar \gamma ^{\perp}$. This provides a canonical identification of  the vector bundles  $\tilde H_i ^{\gamma _i (t)}$  for $i=1,2$, and, by continuity, also for non-regular $\gamma _i (t)$.

Due to Section \ref{subsec: varhor}, this identification preserves the transversal covariant derivative and the transversal Jacobi equation.

\subsection{A conclusion}  We  now easily derive the O'Neill formula:
\begin{proof}[Proof of Proposition \ref{prop: oneil}]
Thus, let $Y$ be a Riemannian manifold. Assume first that  $z\in M$ is a typical point.  Then any horizontal geodesic starting at $z$ is a typical geodesic, since all quasi-geodesics in $Y$ are typical.
 Let $E\subset H^z$ be an arbitrary plane spanned by orthogonal unit vector $h$ and $w$.   

 Consider the geodesic $\gamma=\gamma ^h$.  As in Section \ref{subsec: varhor}, we identify the 
transversal Jacobi operator $\tilde {\mathcal R}$  along $\gamma$ with the Jacobi operator along $\bar \gamma$.  Thus, the sectional curvatures $\kappa (E)$ of $E$ and $\bar \kappa (\bar E)$  of the projection $\bar E=DP (E)$ in $T_{P(z)} Y$ satisfy
$$\bar \kappa (\bar E)=\langle \bar {\mathcal R} (D_zPw), D_zP(w)\rangle= \langle \tilde {\mathcal R} (w),w\rangle  =$$
$$=\langle  {\mathcal R} (w),w\rangle  +3\cdot  ||\tilde A ^{\ast}(w)||^2 =\kappa (E) +3\cdot  ||\tilde A ^{\ast}(w)||^2 \geq \kappa (E) \,,$$
by the definition of $\tilde {\mathcal R}$, \eqref{eq: defr}.   Here, the endomorphisms $\tilde A$ along $\gamma$ assign  to  vertical vectors $J(t)$ the horizontal component of $J'(t)$, for any holonomy field  $J\in \mathcal I$, see Section \ref{subsec: trans}.

This directly implies $\bar  \kappa (\bar E) \geq \kappa (E)$, for all horizontal planes $E$ at all typical points $z \in  M$.  By continuity, we deduce \eqref{eq: veryfirst}, thus the inequality  
$\kappa (E) \leq \bar \kappa (\bar E)$, for all horizontal planes $E$ at all $z\in M$. 

Assume now that equality in \eqref{eq: veryfirst}  holds  for all  horizontal planes.
Then, along any typical horizontal geodesic $\gamma$ the field of endomorphisms $\tilde A^{\ast}$ has to be constantly $0$, hence $\tilde A=0$ as well.  By the definition of $\tilde A$, this implies that the vertical bundle is parallel along any such geodesic. Thus, the horizontal bundle is parallel along any typical horizontal geodesic as well.

By continuity, we deduce that the horizontal bundle is parallel along any horizontal geodesic.  By the torsion-freeness of the Levi-Civita connection, for all Lipschitz continuous horizontal vector fields $h_1,h_2$ on $M$ the almost everywhere defined Lie bracket $[h_1,h_2] $ is almost everywhere horizontal.    The Frobenius theorem (in case of Lipschitz distributions,  see \cite{Rampazzo}) implies that every $x\in M$ is contained in an $m$-dimensional $\mathcal C^{1,1}$ submanifold $C_x$ of $M$, \emph{the section through $x$} whose tangent space at every $p\in C_x$ is the horizontal space $T_pC_x=H^p$.    

The restriction $P:C_x\to Y$ is  locally isometric with respect to the intrinsic metric of $C_x$.  Since $P$ is $1$-Lipschitz, $C_x$ must be locally convex, hence totally geodesics.  Thus, $P:C_x\to Y$ is an isometry in a neighborhood of $x$.

 If, on the other hand, every $x\in M$ is contained in a totally geodeisc submanifold $C_x$, which is mapped by $P$ isometrically onto a neighborhood of $y=P(x)$ in $Y$, then all tangent spaces of $C_x$ need to be horizontal.  Moreover, for every plane $E\subset T_xC_x=H^x$, the curvature of $E$ in $M$ equals the curvature of $E$ in $C_x$, hence, by assumption, it also equals  the curvature  $\bar \kappa (D_xP(E))$.  This finishes the proof.
\end{proof}

\newpage
\centerline{ \bf III. Smoothness without curvature assumptions}

\section{Non-transnormality implies non-smoothness} \label{sec: nontrans}
%The following is  the  main part of the local version of Theorem \ref{thm:last}: 
 
\begin{thm} \label{thm: transnormal}
Let $M$ be smooth and $P:M\to Y$ be a local submetry. If all regular fibers of $P$ are $\mathcal C^2$-submanifolds 
then $P$ is transnormal.
\end{thm}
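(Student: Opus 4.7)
The plan is to argue by contradiction: assume every regular fiber of $P$ is a $\mathcal C^2$ submanifold and yet $P$ is not transnormal, so that some fiber $L=P^{-1}(y)$ is not a topological manifold. The criterion from Section \ref{subsec: strfib} then provides a point $x\in L$ at which the vertical cone $V^x=(D_xP)^{-1}(0)\subset T_xM$ is not a linear subspace. Splitting off the maximal Euclidean factor of $V^x$ and, if necessary, iterating the infinitesimal submetry construction of Section \ref{subsec: finer} at a suitable boundary direction of $V^x$, I would arrive at a reference point at which, up to an orthogonal linear projection factor, the infinitesimal submetry is the distance function $R\co\R^k\to[0,\infty)$ to a ray $\rho\subset\R^k$ with $k\geq 2$.

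Next I would make explicit the non-smoothness of the model $R$. For $r>0$, the regular fiber $F_r:=R^{-1}(r)$ is a half-cylinder of radius $r$ over $\rho$ glued to a hemispherical cap of radius $r$. Along the $(k-2)$-sphere where these two pieces meet, the second fundamental form is discontinuous: on the cylindrical side the principal curvature in the direction of the axis $\rho$ vanishes, whereas on the spherical side it equals $1/r$. Hence $F_r$ is $\mathcal C^{1,1}$ but not $\mathcal C^2$, with a jump of $\|\mathrm{II}\|$ of size exactly $1/r$.

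The main step is to transfer this infinitesimal non-smoothness to $P$ by a blow-up argument. I would pick a sequence of regular points $y_n\in Y^m$ with $r_n:=d(y_n,y)\to 0$ and rescale $M$ at $x$ by the factor $r_n^{-1}$. After passing to a subsequence, the rescaled submetries converge to $D_xP$ and the rescaled fibers $r_n^{-1}\exp_x^{-1}(P^{-1}(y_n))$ converge in Hausdorff distance to $F_1$. Selecting points $p_n,q_n\in P^{-1}(y_n)$ whose rescaled images approach a common transition point of $F_1$ from the cylindrical and spherical sides respectively, the model computation above forces $\|\mathrm{II}\|$ on $P^{-1}(y_n)$ to be $o(1/r_n)$ near $p_n$ and of order $1/r_n$ near $q_n$, whereas $d_M(p_n,q_n)=O(r_n)$. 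The hypothesis that each $P^{-1}(y_n)$ is $\mathcal C^2$ makes $\mathrm{II}$ continuous there and, via \cref{cor:  contdepend}, yields continuous families of Lagrangians of $L$-Jacobi fields along horizontal geodesics issuing from $p_n$ and $q_n$; these Lagrangians must converge under blow-up to continuous families on $F_1$, contradicting the jump of the second fundamental form described in the previous step.

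The main obstacle will be this last extraction. The $\mathcal C^2$ hypothesis gives only continuity of $\mathrm{II}$ with no quantitative modulus, so the contradiction cannot come from a pointwise derivative bound but must be drawn from convergence of Lagrangians of Jacobi fields under blow-up. The technical ingredients are the Lipschitz behaviour of the holonomy maps of Section \ref{subsec: holmap2}, which identifies corresponding good points in $P^{-1}(y_n)$ and in $F_1$, and \cref{cor:  contdepend}, which converts continuity of $\mathrm{II}$ into continuity of the associated Lagrangians. Making this extraction precise, and checking that the contradiction is insensitive to the choice of subsequence, is what the introduction describes as the ``somewhat tedious but straightforward'' estimates.
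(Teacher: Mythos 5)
Your reduction to the infinitesimal model (distance to a ray, whose regular fibers are half-cylinders capped by hemispheres, with a jump of size $1/r$ in $\mathrm{II}$ across the equatorial sphere) is exactly the heuristic the paper's introduction describes, and your first step --- locating a point $x$ where the vertical cone $V^x$ is a halfspace --- is also how the actual proof begins (via \cite[Theorem 1]{L-note}). The gap is in the transfer step. A blow-up at $x$ by factors $r_n^{-1}$ cannot produce the contradiction you describe: $\mathcal C^2$-ness of each fiber $P^{-1}(y_n)$ is a purely qualitative statement, carrying no uniform modulus of continuity for $\mathrm{II}$ or for the associated families of Lagrangians, and qualitative continuity is not preserved under blow-up limits. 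A sequence of \emph{smooth} hypersurfaces (for instance the model fiber with its corner mollified at scale $\epsilon_n\to 0$) converges in precisely your sense to a $\mathcal C^{1,1}$ limit with discontinuous $\mathrm{II}$; likewise a sequence of continuous families of Lagrangians can converge to a discontinuous one. So the jump in the limit is fully compatible with every approximating fiber being smooth, and no contradiction arises. You flag the absence of a quantitative modulus yourself, but the appeal to ``convergence of Lagrangians under blow-up'' does not supply one.

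The paper extracts the contradiction at a \emph{fixed} small scale $\delta$ on a \emph{single} regular fiber $L^{\delta}$, and the contradiction is with the equidistance of fibers rather than with continuity of $\mathrm{II}$ in a limit. One exponentiates a $\mathcal C^{1,1}$ curve $\eta$ in the fiber $Q$ of $D_xP$ which meets the hyperplane $H_0$ orthogonally, obtaining a curve $\eta_{\delta}\subset L^{\delta}$ lying on the distance sphere of radius $\delta$ around $x$. Since $L^{\delta}$ is $\mathcal C^2$ by hypothesis, there is a $\mathcal C^2$ curve $\tilde\eta_{\delta}$ in $L^{\delta}$ agreeing with $\eta_{\delta}$ at a sequence of times $t_i\to 0$; because these agreement points lie on the $\delta$-sphere, Taylor expansion of $t\mapsto d(x,\tilde\eta_{\delta}(t))$ forces the geodesic curvature of $\tilde\eta_{\delta}$ at $0$ in the radial direction to be at least $(2\delta)^{-1}$. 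Feeding this into the second variation formula for the distance between $\tilde\eta_{\delta}$ and a $\mathcal C^2$ curve osculating $L$ at $x$ in the boundary direction $v_0$ gives $l_{\delta}''(0)<0$ for small $\delta$, hence points of $L^{\delta}$ at distance strictly less than $\delta$ from $L$ --- contradicting $d_{L^{\delta}}\equiv\delta$ on $L$. To keep your framework you would need an analogous single-point, single-scale obstruction; as written, the limit argument does not close.
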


\begin{proof}
Assume the contrary. Thus,  there is a fiber $L$ of $P$ 
 which is not a $\mathcal C^{1,1}$-submanifold.     
Due to  \cite[Theorem 1]{L-note}, we find a point $x\in L$ such that $T_xL= V^x$
is a $k$-dimensional halfspace and such that $L$ is $k$-dimensional in a neighborhood of
$x$.

Denote by $\hat V$ the $k$-dimensional linear subspace of $T_xM$ spanned by   $V^x$.
 Let  $V_0$ be the  hyperplane in  $\hat V$
which bounds the halfspace $V^x$.
Denote by $v_0$ the unique unit vector in $V^x$ which is orthogonal to $V_0$.

We find  $\varepsilon >0$ and  a $\mathcal C^{1,1}$-curve $\gamma :[0,\varepsilon]\to L$,   
% parametrized by arclength, 
with  $\gamma '(0)=v_0$.
(The existence of such $\gamma$ is obvious if $L$ is a convex subset. The general case follows from  the convex case by  
 \cite[Theorem 2]{L-note}).    

Since $\gamma $ is $\mathcal C^{1,1}$, we find   a $\mathcal C^2$-curve $\tilde \gamma:[0,\varepsilon) \to M$, with   $\gamma (s_i)=\tilde \gamma (s_i)$ for some $s_i> 0$ converging to $0$.
In particular, $\tilde \gamma (0)=x$ and $\tilde \gamma  '(0)=v_0$.

%Consider the differential $D_xP : T_xM \to T_y Y$. 

 The horizontal space $H^x$  is the dual cone of $V^x$.  Thus, $H^x$ is the set of all vectors $h$ in the orthogonal complement $V_0^{\perp}$ such that $\langle h,v_0 \rangle \leq 0$.  This  is a halfspace of   $V_0^{\perp}$  bounded by the hyperplane $H_0 =(\hat V)^{\perp}$.

The differential $D_xP:T_xM\to T_yY$ and its restrictions 
 $D_xP:H^x \to T_yY$ and $D_xP:H_0 \to T_yY$ are submetries  \cite[Proposition 5.5]{KL}.
%to the boundary $H_0=(\hat V)^{\perp}$ of the halfspace $H^x$.

Consider an arbitrary  regular unit vector $w\in T_yY$.  Denote by $Q$ its fiber $D_xP ^{-1} (w) \subset H^x$ and
by $Q_0$ the intersection $Q_0=Q\cap H_0$.   Let $h\in Q_0$ be arbitrary.  Set $z_{\delta}:= \exp _x (\delta \cdot h) \in M$ and  denote by $L^{\delta}$ the fiber of $P$ through $z_{\delta}$.

   We will proceed along the following lines.  We  find a  $\mathcal C^{1,1}$-curve $\eta:(-\varepsilon, 0]\to Q$,  which meets  $H_0$ orthogonally at $h=\eta (0)$.   Exponentiating $\eta$ we obtain 
$\mathcal C^{1,1}$ curves $\eta _{\delta}$ in $L^{\delta}$, contained in the $\delta$-sphere around $x$.   By the assumption, $L^{\delta}$ is $\mathcal C^2$. Thus,  we find a 
$\mathcal C^2$ curve $\tilde \eta _{\delta}$, which intersects $\eta _{\delta}$  at a sequence of times around $0$.  As $\eta _{\delta}$  is contained in the $\delta$-sphere around $x$, this curve  $\tilde \eta _{\delta}$ has very large geodesic curvature at the origin.  Then the second variation formula will imply that  the distance from $\tilde \eta _{\delta} (s_i)$ and $\tilde \gamma (s_i) =\gamma (s_i)$ is less than $\delta$, in contradiction to the  fact that no point of $L_{\delta}$ has distance less than $\delta$ to $L$.
% Therefore, this $\mathcal C^2$ curve has very large 

First,  we claim that $Q$ is a $\mathcal C^{1,1}$-manifold with a  non-empty  boundary $Q_0$,  such that $Q$  meets the hyperplane  $H_0$ orthogonally. 

Indeed, consider the composition  $F= D_xP \circ I :V_0^{\perp}\to T_yY$ of the submetry  $D_xP:H^x\to T_yY$ and the quotient map $I:V_0^{\perp} \to H^x$ for the reflection at the hyperplane $H_0 \subset V_0 ^{\perp}$.  Then  $F$ is a submetry and $\hat Q:=F^{-1} (w)$ is a regular fiber of  $F$. Hence, $\hat Q$   is a $\mathcal C^{1,1}$-submanifold.  The submanifold
$\hat Q$ has dimension one more than the dimension of $Q_0$ and $\hat Q$ is invariant under the reflection at $H_0$.  This implies the claim.

 Since  $Q \subset H^x$ meets the hyperplane $H_0$
orthogonally, we find   a $\mathcal C^{1,1}$-curve $\eta:(-\varepsilon, 0] \to Q$ 
with  $\eta (0)= h$ and $\eta '(0)= v_0$.   

 Due to \cite[Corollary 6.3]{L}, the point 
$\exp _y(\delta \cdot w)$ is regular point of $Y$, for all small $\delta >0$.  Hence, $L^{\delta}$ is a regular fiber of $P$. The distance from $L$ to  any  point in $L^{\delta}$, sufficiently close to $z$, is $\delta$.

The curve $\eta _{\delta} (t):=\exp _x (\delta \cdot \eta (\delta ^{-1} \cdot t ))$ is  $\mathcal C^{1,1}$ with $\eta_{\delta} (0)=z_{\delta}$.   Once $\delta$ is small enough, $\eta _{\delta}$ is contained in $L^{\delta}$, \cite[Proposition 7.3]{KL}.

% smaller than the injectivity radius at $y$.
%Therefore, the fiber $L^{\delta}$ of $P$ through the point $z_{\delta}:=\exp_x (\delta \cdot h)$ is a 
%regular fiber of $P$, which has distance $\delta $ to $L$.   
%We fix a sufficiently small 
%$\delta$, whose value will be made precise at the end.  

%The curve $\eta _{\delta} (t):=\exp _x (\delta \cdot \eta (\delta ^{-1} \cdot t ))$ is a $\mathcal C^{1,1}$-curve  on $L^{\delta}$ starting in $z_{\delta}$, \cite[Proposition 7.3]{KL}.

By assumption, the regular fiber  $L^{\delta}$ is a $\mathcal C^{2}$-submanifold of $M$.
Thus, we find a $\mathcal C^2$-curve $\tilde \eta _{\delta} :(-\varepsilon, \varepsilon)\to L^{\delta}$ which
coincides with $\eta _{\delta}$ at  a sequence of negative times   $t_i$ converging to $0$.
%=\exp _x (\delta h)$.
% Reparametrized by arclength  $\tilde \eta$ satisfies:
%\begin{enumerate}
%\item  $\tilde \eta (0)=z_{\delta}$
%\item
%  $$d(x, \tilde \eta (s))= \delta+ o(s^2)\;.$$ 
%\end{enumerate} 
%  Indeed,  the function  $s\to d(x, \eta (s))$ is $\mathcal C^2$ and 
%it equals  $\delta$ on a non-constant  sequence of times $s_i$ converging to $0$. 
% Moreover,
 By construction, 
$$\tilde \eta _{\delta}(0)=z_{\delta}\;\; ;\;\;  \tilde \eta _{\delta}  '(0)  = \eta _{\delta}'(0)  =\delta^{-1} \cdot  J(\delta)\,,$$
where $J$ is the Jacobi field along  the geodesic $c(t)=\exp _x(t\cdot h)$ with the initial conditions $J(0)=0$ and  $J'(0)= v_0$.

Denote   the geodesic curvature  of $\tilde \eta _{\delta} $  in the  direction of $c'(\delta)$ by  $a_{\delta}$.   Since $\tilde \eta _{\delta} (t_i)$ is contained in the $\delta$-sphere around $x$, we have
$$a_{\delta}:= \langle \nabla _{\tilde \eta _{\delta}} \tilde \eta_{\delta} , c'(\delta)  \rangle  \geq  (2\delta)^{-1}\,,$$
for all $\delta$ small enough.
%$$ 
%Since $\tilde \eta _{\delta} (t_i)$ is contained in the $\delta$-sphere around $x$, we have
%$$a_{\delta} \leq -  (2\delta)^{-1}\,.$$ 

Denote the geodesic curvature  of $\tilde \gamma$ in the  direction  $h=c'(0)$ by
 $$a:= \langle \nabla _{\tilde \gamma} \tilde \gamma , h  \rangle \,.$$  
% $xz_{\delta}$.
% Then
%$a_{\gamma}$ is independent of  $\delta$, whereas 
%$$a_{\delta} \leq - \frac 1 {2\delta}\,,$$ for all small $\delta$, since $\tilde \eta$ is contained in the $\delta$-sphere around $x$.
%
%$$\lim _{\delta \to 0} a_{\delta} \leq  \lim_{\delta \to 0} (- \frac 1 {2\delta}) = -\infty\,. $$

Set $l_{\delta}(s)= d(\tilde \gamma (s), \tilde \eta (s))$.  Then $l_{\delta}(0)=\delta$ and $l_{\delta}'(0)=0$, by the first variation formula.  We claim that $l_{\delta}''(0)<0$, for $\delta$ small enough.

% Then we would deduce that the points  $\tilde \gamma (s_i) =\gamma (s_i)$ and $\tilde \eta (s_i)= \tilde \eta _{\delta} (s_i)$ have distance less than $\delta$ from each other, in contradiction to $d(L^{\delta}, L)=\delta$.

%The claim $l''(0)<0$, for sufficiently small $\delta $,  is a consequence of
Indeed, by  the second variation formula,  \cite[Proposition 4.7]{Ballmann}:
$$l_{\delta}''(0)= a-a_{\delta} +  \mathcal M _{\delta}\,,$$
%where $a_{\delta}$ and $a_{\gamma}$ are the geodesic curvatures as above, 
% and 
where $M_{\delta}$ denotes the \emph{index form}
$$M_{\delta}:=\int _{0} ^{\delta} (||J _{\delta} '||^2 - \langle \mathcal R (J_{\delta}), J_{\delta}\rangle)\;.$$
Here $\mathcal R$ is  the Jacobi operator and   $J_\delta$ is  the  Jacobi field  along the geodesic $c$  with $J_{\delta}(0)=\tilde \gamma'(0)= v_0$   and $J_{\delta}(\delta)=\tilde \eta '(0)=\delta ^{-1} \cdot J(\delta)$.  

After rescaling, we may assume that the norm of  all sectional curvatures in a small ball around $x$ is bounded from above by $1$.
 Then, for all  small $\delta$ the Rauch comparison implies, \cite[Lemma 2.1]{Jacobi}
$$|J_{\delta}(\delta)- J_{\delta} (0)|=|\delta ^{-1} \cdot J(\delta)-v_0| \leq || J'(0)|| \cdot \delta=\delta\,. $$
Applying the Taylor formula twice we deduce that $J'_{\delta}$ is uniformly bounded on $[0,\delta]$. Hence, $M_{\delta}$ is uniformly bounded for all sufficiently small $\delta$. 

Since $a$ is independent of $\delta$ and   $a_{\delta}$ converges to $-\infty$ for $\delta \to 0$, we  have verified  that $l_{\delta}''(0)<0$, for $\delta$ sufficiently small.  Fix any such $\delta$.  Then, for all sufficiently small positive $s$
$$l_{\delta} (s)= d(\tilde \gamma (s), \tilde \eta (s)) <\delta\;.$$

Hence for all sufficiently large $i$, the  distance between the points $\tilde \eta (s_i)$ and
 $\tilde \gamma (s_i)=\gamma (s_i)\in L$ is smaller than $\delta$.  

This contradiction finishes the proof.
\end{proof}

\section{Continuation of smoothness} \label{sec: cont}
%\subsection{Regular part}

%We start with the following observation:  

\begin{lem} \label{lem: regsm}
Let $P:M\to Y$ be a Riemannian submersion between smooth Riemannian manifolds.  Let
$Q$ be a dense subset of points in $Y$ such that $P^{-1} (y)$ is $\mathcal C^{k,\alpha}$ 
for all $y\in Q$.  Then $P$ is  a $\mathcal C^{k,\alpha}$-map.
\end{lem}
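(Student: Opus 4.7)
The plan is to express $P$ locally as the composition of a map with values in $\R^m$ and the smooth inverse of a distance-coordinate chart on $Y$, exploiting the submetry identity $d_{P^{-1}(y')} = d_{y'} \circ P$ recalled in Section \ref{subsec: strfib}. Here $m := \dim Y$.

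Fix $x_0 \in M$ and set $y_0 := P(x_0)$. I first choose points $y_1,\dots,y_m \in Q \setminus \{y_0\}$ close to $y_0$ so that the smooth map $\Psi := (d_{y_1},\dots,d_{y_m}): Y \to \R^m$ is a diffeomorphism from an open neighborhood $W$ of $y_0$ onto an open subset of $\R^m$. Starting from a basis $v_1,\dots,v_m$ of $T_{y_0}Y$ and the test configuration $\exp_{y_0}(\varepsilon v_i)$, whose distance-function gradients at $y_0$ are $-v_i$, the condition that the differentials $d\,d_{y_i}(y_0)$ are linearly independent is open and persists under small perturbations, so the density of $Q$ furnishes such $y_i \in Q$.

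Let $L_i := P^{-1}(y_i)$. By hypothesis each $L_i$ is a $\mathcal C^{k,\alpha}$-submanifold of $M$. Since $y_i \neq y_0$, we have $x_0 \notin L_i$, and by Section \ref{Sec: 8} the distance function $d_{L_i}$ is $\mathcal C^{k,\alpha}$ on an open neighborhood of $x_0$. Shrinking to a common neighborhood $U$ of $x_0$ on which, for every $i$, the identity $d_{L_i} = d_{y_i} \circ P$ holds (Section \ref{subsec: strfib}) and on which $P(U) \subset W$, I obtain
\begin{equation*}
\Psi \circ P\big|_U \;=\; (d_{L_1},\dots,d_{L_m})\big|_U \;\in\; \mathcal C^{k,\alpha}(U,\R^m).
\end{equation*}

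Composing on the left with the smooth local inverse $\Psi^{-1}$ then gives $P|_U = \Psi^{-1} \circ (\Psi \circ P)|_U$, which is $\mathcal C^{k,\alpha}$ as the composition of a $\mathcal C^{k,\alpha}$-map with a smooth map. Since $x_0 \in M$ is arbitrary, $P$ is $\mathcal C^{k,\alpha}$ everywhere. The heart of the argument is the identification $d_{P^{-1}(y')} = d_{y'} \circ P$, which transports the $\mathcal C^{k,\alpha}$-regularity of distance functions to $\mathcal C^{k,\alpha}$-submanifolds (the content of Section \ref{Sec: 8}) into coordinatewise smoothness of $P$; no step looks like it should be a serious obstacle beyond the elementary genericity selection of the $y_i$.
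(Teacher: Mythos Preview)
Your proof is correct and follows essentially the same approach as the paper's own argument: choose distance coordinates on $Y$ centered at points of $Q$, pull them back via the identity $d_{L_i}=d_{y_i}\circ P$, and use the $\mathcal C^{k,\alpha}$-regularity of $d_{L_i}$ from Section~\ref{Sec: 8}. You are slightly more explicit in justifying the genericity selection of the $y_i$ and in writing out the final composition with $\Psi^{-1}$, but there is no substantive difference.
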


\begin{proof}
The statement is local. We fix some $x\in M$ and a sufficiently small ball $B
=B_{r}(x)$. Set $y= P(x)$. We find points $y_i\in Q\cap P(B)$ for $1\leq i\leq m$, sufficiently close to $y$, so that  $f_i=d_{y_i}$  define distance coordinates in a neighborhood $B'$ of $y$.

By assumption, the fibers $L_i:= P^{-1} (y_i)$ are  $\mathcal C^{k,\alpha}$, hence the distance functions $d_{L_i}$ are  $\mathcal C^{k,\alpha}$ on $B\setminus L_i$, Section \ref{Sec: 8}. 
 In the ball $B$ we have the equality   $f_i \circ P =d_{L_i}$ , \cite[Corollary 2.9]{KL}. Hence
$P$ is $\mathcal C^{k,\alpha}$ in a neighborhood of $x$.
\end{proof}

The proof shows that, if $M$  is complete, it is enough for the conclusion   of Lemma \ref{lem: regsm} to assume that $m=\dim (Y)$ fibers in  a sufficiently  general position are $\mathcal C^{k,\alpha}$.

We will restrict from now on to the case $\alpha =0$, as it will be sufficient for our needs. 
%\subsection{Continuation of  smoothness} \label{subsec: continsm}
By an abuse of notation, we say that a local  submetry $P:M\to Y$ is $\mathcal C^{k}$ on an open subset $U$ of $M$, if  $L\cap U$ is a $\mathcal C^{k}$ submanifold of $M$, for any fiber $L$ of $P$.  

  If $Y=Y_{reg}$, thus if  $P$ is a Riemannian submersion, the local submetry $P$ is $\mathcal C^{k}$ in this sense if and only if the map $P$
 is a $\mathcal C^{k}$-map   in the usual sense,  due to Lemma \ref{lem: regsm}.

The local submetry $P$ is $\mathcal C^1$ on $U\subset M$  if and only if the restriction of $P$ to $U$ is transnormal. 
% Let from now on $k$ be at least $2$.

\begin{prop} \label{thm: ext}
Let $P:M\to Y$ be a transnormal local submetry. Let $P$ be $\mathcal C^{k}$ on an open subset $U\subset M$.  Let $x\in U$ be arbitrary and let $\gamma:[0,t]\to M$ be a horizontal geodesic starting in $x$.  Then  a neighborhood of $z=\gamma (t)$ in the fiber $L^z$ through $z$ is $\mathcal C^{k}$.  
\end{prop}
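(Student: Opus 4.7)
The argument propagates smoothness of fibers along $\gamma$ in small steps. A compactness/continuity reduction will allow me to assume $t$ is as small as I wish: setting
\[
T=\sup\{s\in[0,t]:L^{\gamma(s')}\text{ is }\mathcal{C}^k\text{ near }\gamma(s')\text{ for every }s'\in[0,s]\},
\]
we have $T>0$ because $x\in U$. If the small-$t$ case of the proposition is known, one can apply it with fresh base points $\gamma(s)$ for $s$ slightly less than $T$, whose fibers are $\mathcal{C}^k$ near $\gamma(s)$ by definition of $T$; combined with the compactness of $\gamma([0,t])$ and Lemma~\ref{lem: regsm} (which upgrades fiber-smoothness to a local $\mathcal{C}^k$ Riemannian submersion), this gives $T=t$ with the supremum attained. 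Hence it remains to prove: if $L^x$ is $\mathcal{C}^k$ near $x$ and $t$ is sufficiently small, then $L^z$ is $\mathcal{C}^k$ near $z$.

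For small $t$ I would first establish $\mathcal{C}^{k-1}$-regularity of $L^z$ directly. By Section~\ref{Sec: 8}, the normal exponential map $\exp_{L^x}$ is a $\mathcal{C}^{k-1}$-diffeomorphism from a tube around the zero section of $\nu L^x$ onto a neighborhood of $L^x$ in $M$, and transnormality places $L^z$ inside this tube. Under this diffeomorphism, $L^z$ corresponds to the graph $\{(q,t\hat h_q):q\in L^x\cap B\}$ for some neighborhood $B$ of $x$, where $\hat h\in\mathcal{B}(L^x)$ is the basic normal field with $\hat h_x=\gamma'(0)$. Because $L^x\cap U$ is $\mathcal{C}^k$, the vertical distribution $TL^x$ and the horizontal distribution $H=(TL^x)^\perp$ are $\mathcal{C}^{k-1}$ bundles on $L^x\cap U$; and since $\hat h_q$ is uniquely characterized as the vector in $H^q$ whose image under $D_qP$ equals a fixed element of $T_{P(x)}Y$, the section $\hat h$ is $\mathcal{C}^{k-1}$. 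Composing with $\exp_{L^x}$ shows $L^z$ is a $\mathcal{C}^{k-1}$ submanifold near $z$.

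Next I would upgrade $L^z$ from $\mathcal{C}^{k-1}$ to $\mathcal{C}^k$ via Corollary~\ref{cor:  contdepend}. For $k=2$ part~(2) of the corollary asks that the Lagrangians of $L^z$-Jacobi fields extend continuously to all of $L^z$; for $k\geq 3$, part~(1) applied with parameter $k-1$ requires that these Lagrangians vary $\mathcal{C}^{k-2}$. The strategy is to identify $L^z$-Jacobi fields at a point $z'\in L^z$ near $z$ with $L^x$-Jacobi fields at $q=(Hol^\gamma)^{-1}(z')\in L^x$, using the holonomy fields from Section~\ref{subsec: goodpoints} and the transversal Jacobi apparatus of Sections~\ref{subsec: trans}--\ref{subsec: jacident}. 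Because $L^x$ is $\mathcal{C}^k$, its shape operators $S^h_{L^x}$ are $\mathcal{C}^{k-2}$, so the $L^x$-Jacobi field spaces vary $\mathcal{C}^{k-2}$ in the starting point; this regularity is preserved under the smooth dependence of the Jacobi equation on initial data and is inherited by the $L^z$-Jacobi fields, supplying the hypothesis of Corollary~\ref{cor:  contdepend}.

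\textbf{The hard part} will be the identification in the upgrade step of Lagrangians of $L^z$-Jacobi fields with the Lagrangians of $L^x$-Jacobi fields transported along $\gamma$. This is essentially an equifocality statement for holonomy maps of limited regularity, and demands a careful matching of the initial conditions \eqref{eq: initial} involving the second fundamental forms of $L^x$ and $L^z$ at corresponding points; one must verify that the $L^x$-Jacobi field variation, pushed forward by the holonomy, lands exactly in the space of $L^z$-Jacobi fields. The transversal Jacobi framework is the natural language, but keeping all identifications consistent when $\gamma$ traverses singular strata of $Y$ during $[0,t]$ will require delicate bookkeeping.
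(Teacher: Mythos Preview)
Your plan contains a genuine gap in the ``hard part,'' and the identification you propose is actually false. The Lagrangian of $L^z$-Jacobi fields along $\gamma$ is \emph{not} the same as the Lagrangian of $L^x$-Jacobi fields. Already for $M=\mathbb{R}^3$ foliated by lines parallel to the third axis and $\gamma(r)=(r,0,0)$, one computes $\mathcal{W}_{L^x}=\mathrm{span}\{(0,r,0),(0,0,1)\}$ but $\mathcal{W}_{L^z}=\mathrm{span}\{(0,r-t,0),(0,0,1)\}$ at $z=\gamma(t)$; these are distinct $2$-planes in the $4$-dimensional space of normal Jacobi fields. What is true (cf.\ Section~\ref{subsec: jacident}) is that both Lagrangians contain the holonomy fields $\mathcal I$, and $\mathcal{W}_{L^z}=\mathcal I\oplus\mathcal W^t$ with $\mathcal W^t=\{J:J(t)=0,\ J'(t)\in H^{z}\cap\gamma'(t)^{\perp}\}$; but to show $\mathcal W^t$ varies $\mathcal C^{k-2}$ along $L^z$ you would already need the normal bundle of $L^z$ to be $\mathcal C^{k-2}$, which is what you are after. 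Your supremum reduction has a related circularity: applying the ``small-$t$'' step at a fresh base point $\gamma(s)$ requires $P$ to be $\mathcal C^k$ on a \emph{neighborhood} of $\gamma(s)$, not merely that the single fiber $L^{\gamma(s)}$ is $\mathcal C^k$; Lemma~\ref{lem: regsm} does not give this from one fiber.

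The paper avoids all of this by working with first-order rather than second-order data, and by inducting on $k$ rather than on $t$. By the inductive hypothesis $L^z$ is already $\mathcal C^{k-1}$ near $z$. After reducing to $x$ regular, set $w=-\gamma'(t)\in H^z$. The basic field $\hat h\in\mathcal B(L^x)$ is $\mathcal C^{k-1}$ on $U\cap L^x$, so the holonomy map $Hol^{\gamma}(q)=\exp_q(t\hat h_q)$ is $\mathcal C^{k-1}$; being Lipschitz-open by \cite[Proposition 7.2]{L}, it is a $\mathcal C^{k-1}$ submersion onto a neighborhood of $z$ in $L^z$. Pick a $\mathcal C^{k-1}$ section $N\subset L^x$ sent diffeomorphically onto $V\ni z$, and define the normal field $\tilde w(q')=-(\gamma^{\hat h_p})'(t)$ with $p=(Hol^{\gamma}|_N)^{-1}(q')$. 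This is $\mathcal C^{k-1}$. Repeating with directions $w_1,\dots,w_r\in H^z$ close enough to $w$ that $\exp_z(tw_i)$ lies in the regular part of $U$ produces a $\mathcal C^{k-1}$ frame of $T^{\perp}L^z$, and Theorem~\ref{thm: normalfields}(3) upgrades $L^z$ to $\mathcal C^k$. No small-$t$ reduction, no matching of Lagrangians.
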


\begin{proof}
We proceed by induction on $k$. The case $k=1$ holds true by transnormality.  Thus, we  may assume $k\geq 2$ and that a neighborhood $V$ of $z$ in $L^z$ is $\mathcal C^{k-1}$.

Let $w=-\gamma'(t)   \in H^z$ denote the horizontal incoming direction of $\gamma$.  We approximate $w$ by horizontal directions $w'\in H^z$ such that the corresponding horizontal  geodesics $ \gamma^{w'}$ are regular.  For $w'$ sufficiently close to $w$, the geodesic $\gamma ^{w'}$ intersects the open set $U$.   Since all but discretly many points 
on $\gamma ^{w'}$ are regular, we may replace $x$ by a point on $\gamma^{w'}$ and assume that $x$ is a regular point.

We claim that $w$ extends to a $\mathcal C^{k-1}$  normal field in a neighborhood $V'$ of $z$ in $L^z$.  Once this is done, we can apply  the same argument to any normal vector $w' \in H^z$ close enough to $w$, so that $\exp _z (tw')$ is a regular point in $U$.     Then, choosing a basis $w_1,...,w_r$ of $H^z$ consisting of vectors close enough to $w$, we infer  that the normal bundle of $L^z$ is $\mathcal C^{k-1}$ in a neighborhood $V''$ of $z$ in $L^z$.    Applying Theorem \ref{thm: normalfields}, we then conclude that $V''$ is a $\mathcal C^{k}$-submanifold of $M$.

 It remains to verify the claim.  Consider the basic normal field  $\hat h$ along $L^x$
 with $\hat h_x= \gamma'(0)$.
%$h=\gamma'(0)\in H^x$ and   denote by $\hat h$ the basic normal field along $L^x$ which extends $h$. 
  By  Lemma \ref{lem: regsm},  $\hat h$ is  
 $\mathcal C^{k-1}$ on $U\cap L^x$. 

 Consider the set   $\mathcal D^t$  of all points $p\in L^z$, such that 
$$Hol^{\gamma} (p) := \exp _p (t\hat h_p)$$  
is defined, see  Section \ref{subsec: holmap2}.  This holonomy  map $Hol^{\gamma}:\mathcal D^t\to L^z$
is Lipschitz-open by  \cite[Proposition 7.2]{L}. 
Therefore,  $Hol^{\gamma}$ is a $\mathcal C^{k-1}$-submersion.   

Hence, we find a $\mathcal C^{k-1}$ submanifold $N$ of $\mathcal D^t$,  which is sent by $Hol ^{\gamma}$  diffeomorphically onto a neighborhood  $V$ of $z$ in $L^z$.      Now we can defined the normal field $\tilde  w$ along $V$ as the following  composition.  

We send a point $q\in V $ by $(Hol^{\gamma})^{-1} (q)$ to a point $p\in N$.  Then we send  $p$ to   the normal vector $\hat h_p \in H^p$ and apply the geodesic flow for the time $t$ to obtain the  horizontal  vector $(\gamma ^{\hat h_p}  ) '(t) \in H^q$.  Finally, we set
 $\tilde w (q):=   -(\gamma ^{\hat h_p}  ) '$.
   By construction, this map $q\to \tilde w (q)$ is $\mathcal C^{k-1}$. This finishes the proof of the claim and of the Proposition. 
\end{proof}

This proposition \emph{does  not imply} that  $P$ is $\mathcal C^{k}$ near 
$z$, thus that \emph{all} fibers near $z$ are $\mathcal C^{k}$.  However, we have some partial results in this direction, see  also  Theorem  \ref{cor: smimpltr}  and Theorem \ref{thm: dualleaf} below.

\begin{prop} \label{prop: spec}
Under the assumption of Theorem \ref{thm: ext}, let $S$ be the stratum of $P$ through  $z$.  Then, there exists a neighborhood $O$ of $z$, 
such that for  any $z'\in S$, the intersection $ L^{z'} \cap O$ is $\mathcal C^{k}$.
\end{prop}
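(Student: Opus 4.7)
The plan is to reduce Proposition \ref{prop: spec} to Theorem \ref{thm: ext} applied to a family of horizontal geodesics ending at nearby points of $S$. By Theorem \ref{thm: ext}, $L^z$ is a $\mathcal C^k$-submanifold of $M$ in a neighborhood of $z$; in particular $L^z$ is $\mathcal C^{1,1}$ there, so by Section \ref{subsec: finer} the stratum $S$ is itself a $\mathcal C^{1,1}$-submanifold on a neighborhood $W$ of $z$ in $S$, and the vertical distribution $q\mapsto V^q=T_qL^q$, together with its orthogonal complement $q\mapsto H^q$, is Lipschitz continuous on $W$. Transnormality ensures that every $V^q$ is Euclidean, so $H^q$ is a genuine linear subspace.

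For every $z'\in W$ sufficiently close to $z$, I construct a horizontal geodesic $\sigma_{z'}:[0,t]\to M$ from some $x'\in U$ to $z'$ and apply Theorem \ref{thm: ext}. Setting $v:=-\gamma'(t)\in H^z$, the Lipschitz continuity of $H$ on $W$ allows us to pick $w_{z'}\in H^{z'}$ with $w_{z'}\to v$ as $z'\to z$. By transnormality the geodesic $s\mapsto \exp_{z'}(sw_{z'})$ is horizontal; by continuous dependence on initial conditions it is defined on $[0,t]$, and its endpoint $x':=\exp_{z'}(tw_{z'})$ tends to $\exp_z(tv)=x$ as $z'\to z$. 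Since $U$ is open and contains $x$, we have $x'\in U$ whenever $z'$ is close enough. Reversing parametrization, $\sigma_{z'}(s):=\exp_{z'}((t-s)w_{z'})$ is then a horizontal geodesic from $x'\in U$ to $z'$, and Theorem \ref{thm: ext} applied to $\sigma_{z'}$ yields that $L^{z'}$ is $\mathcal C^k$ in a neighborhood of $z'$.

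For the uniform neighborhood $O$ asserted in the statement, I choose $O$ as a sufficiently small ball around $z$, contained in $W$. For any $z'\in S$ with $L^{z'}\cap O\neq \emptyset$ and any $q\in L^{z'}\cap O$, the construction above applied with $q$ in place of $z'$ produces a horizontal geodesic from $U$ to $q$, and Theorem \ref{thm: ext} gives that $L^q=L^{z'}$ is $\mathcal C^k$ in a neighborhood of $q$. Hence $L^{z'}\cap O$ is covered by open $\mathcal C^k$-pieces and is itself a $\mathcal C^k$-submanifold of $M$. The main technical point is the Lipschitz continuity of $H$ along $S$ near $z$, which is what licenses the choice of $w_{z'}$ close to $-\gamma'(t)$; this is the step that uses Theorem \ref{thm: ext} together with the structural results of Section \ref{subsec: finer} to upgrade $z$ from a generic point of $S$ to one where $L^z$ is $\mathcal C^{1,1}$.
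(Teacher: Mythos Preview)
Your argument is correct and follows essentially the same route as the paper: use the continuity of the horizontal distribution along the stratum $S$ to perturb the incoming horizontal direction $-\gamma'(t)$ to a nearby horizontal vector $w_{z'}\in H^{z'}$, shoot the horizontal geodesic back into $U$, and invoke Theorem~\ref{thm: ext} at each $z'\in O\cap S$. Two minor remarks: (i) you do not need Theorem~\ref{thm: ext} to know $L^z$ is $\mathcal C^{1,1}$ near $z$ --- transnormality already gives this for every fiber, and hence the Lipschitz continuity of $q\mapsto H^q$ along $S$ from Section~\ref{subsec: finer} is available without that detour; (ii) the phrase ``$O$ contained in $W$'' is imprecise since $W\subset S$ while $O$ is an open ball in $M$; you mean $O\cap S\subset W$. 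The paper additionally first replaces $x$ by a regular point, but this step is not logically required for the present argument and your omission of it is harmless.
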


\begin{proof}
As in   the proof of Theorem \ref{thm: ext}, we can
assume that $x$ is a regular point.  Using the notations of the proof of Theorem \ref{thm: ext}, we find a sufficiently small neighborhood $\tilde O$ of $w\in H^z$ in the tangent bundle $TM$, such that $\exp (tw') $ is a regular point in $U$, for all $w'\in \tilde O$.

Along the stratum $S$, the horizontal spaces define a continuous distribution. Thus, we find  some neighborhood $O$ of $z$ in $M$, such that  for  any $z'\in O \cap S$ there exists a horizontal vector $w'\in H^{z'} \cap \tilde O$. 

Now,  Theorem \ref{thm: ext} implies that a neighborhood of $z'$ in $L^{z'}$ 
is $\mathcal C^{k}$.    Since this applies to all $z'\in O\cap S$, we deduce that $L^{z'} \cap O$ is $\mathcal C^{k}$ submanifold of $M$.
\end{proof}

If  $z$ is a regular point of $M$, the stratum $S$ of $z$  is open. Hence
 Proposition \ref{prop: spec} and Lemma \ref{lem: regsm} imply:

\begin{cor} \label{cor: extregfib}
Under the assumption of Proposition \ref{thm: ext},  let  $P(z)$ be  a regular point of $Y$.  Then $P$ is $\mathcal C^{k}$ in a neighborhood of $z$ in $M$.
\end{cor}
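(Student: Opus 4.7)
The plan is to combine Proposition \ref{prop: spec} with Lemma \ref{lem: regsm}, which essentially already matches the situation at a regular base point. The only real work is to observe that a neighborhood of $z$ lies entirely in the regular stratum.

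First, since $P(z)$ is a regular point of $Y$ and $Y^m$ is open in $Y$ (Section \ref{subsec: basestructure}), the preimage $M_{\text{reg}}=P^{-1}(Y^m)$ is open in $M$ and contains $z$. Thus the stratum $S$ through $z$ contains a whole neighborhood of $z$ in $M$, and after shrinking the neighborhood $O$ provided by Proposition \ref{prop: spec} we may assume $O\subset M_{\text{reg}}$.

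Next, Proposition \ref{prop: spec} guarantees that for every $z'\in S\cap O=O$, the intersection $L^{z'}\cap O$ is a $\mathcal C^{k}$ submanifold of $M$. In particular, every fiber of the restricted submetry $P\colon O\to P(O)$ is $\mathcal C^{k}$. Since $P(O)\subset Y^m$ is a smooth Riemannian manifold (Theorem \ref{thm-sm3}) and $P|_{O}$ is a $\mathcal C^{1,1}$ Riemannian submersion onto its image by the regularity statements recalled in Section \ref{subsec: basestructure}, Lemma \ref{lem: regsm} applies directly to $P|_{O}$ (taking $Q=P(O)$) and yields that $P|_{O}$ is a $\mathcal C^{k}$-map. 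Hence $P$ is $\mathcal C^{k}$ in a neighborhood of $z$, as claimed.

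There is no genuine obstacle here; the only point requiring any care is the passage from ``all fibers through $O$ are $\mathcal C^{k}$'' to ``$P$ is a $\mathcal C^{k}$-map on $O$'', and this is precisely what Lemma \ref{lem: regsm} has been designed for. The whole argument is therefore a short assembly of the previously developed tools, and no new geometric input beyond the openness of $Y^m$ is required.
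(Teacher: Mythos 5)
Your proposal is correct and follows exactly the paper's argument: the paper likewise notes that for regular $z$ the stratum $S$ is open, so Proposition \ref{prop: spec} gives $\mathcal C^k$ fibers on a neighborhood and Lemma \ref{lem: regsm} upgrades this to $P$ being a $\mathcal C^k$ map there. No differences worth noting.
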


We expect that the next statement holds true for all strata.

\begin{prop} \label{prop: extreg1fib}
Under the assumption of Proposition \ref{thm: ext}, assume in addition, that $P(z)$ is contained in the codimension-one stratum $Y^{m-1}$ of  $Y$. Then $P$ is $\mathcal C^{k}$ in a neighborhood of $z$ in $M$. Moreover, the stratum $S$ through $z$ is a $\mathcal C^{k}$-submanifold at $z$.
\end{prop}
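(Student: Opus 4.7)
The plan has two stages, one for each assertion.

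For the first assertion I would establish that every regular point in a neighborhood of $z$ lies on a horizontal geodesic starting in $U$. As in the proof of \pref{thm: ext}, after replacing $x$ by a regular point on a slightly perturbed nearby horizontal geodesic, one may assume that $x$ is regular. Consider then the evaluation map
\[
\Psi\co\{(x',w',t')\,:\,x'\in U,\ w'\in H^{x'},\ t'>0\}\to M,\qquad \Psi(x',w',t')=\exp_{x'}(t'w').
\]
I claim that $d\Psi$ is surjective at $(x,\gamma'(0),t)$ onto $T_zM$. Varying $x'$ along $L^x\cap U$ with initial velocity $v\in V^x$ while transporting $\gamma'(0)$ as the basic normal field $\hat w$ along $L^x$ produces at time $t$ the holonomy field value $J_{\hat w,v}(t)\in V^z$, and by \lref{lem: consequent} these exhaust $V^z$. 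Varying $w'$ in $H^x$ at fixed $x'=x$ produces Jacobi fields $\tilde J$ along $\gamma$ with $\tilde J(0)=0$ and $\tilde J'(0)\in H^x$ arbitrary; for $t$ non-focal, the values $\tilde J(t)$ range over the $m$-dimensional subspace $D\exp_x|_{tw}(H^x)\subset T_zM$, which is complementary to $V^z$ (a nonzero such $\tilde J$ with $\tilde J(t)\in V^z$ would project via $DP$ to a nonzero normal Jacobi field along $\bar\gamma=P\circ\gamma$ vanishing at time $t$, contradicting the injectivity of $D_xP|_{H^x}$ in the transnormal case). Focal times form a discrete set and may be avoided by adjusting $t$ slightly. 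Hence $\Psi$ is open at $(x,\gamma'(0),t)$ and its image contains an open neighborhood $O'$ of $z$.

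For each regular $z'\in O'$, the corresponding horizontal geodesic from $U$ to $z'$ combined with \cref{cor: extregfib} shows that $P$ is $\mathcal C^k$ near $z'$. Together with \pref{prop: spec} applied to fibers through points of the stratum $S$, one concludes that in a sufficiently small neighborhood $O$ of $z$ every fiber of $P$ intersected with $O$ is a $\mathcal C^k$-submanifold of $M$. By the convention introduced just after \lref{lem: regsm}, this is exactly the statement that $P$ is $\mathcal C^k$ on $O$, proving the first assertion.

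For the smoothness of $S$ at $z$ I would first use \tref{thm-sm4} to identify a neighborhood of $P(z)$ in $Y$ with $M'/\langle g\rangle$ for a smooth Riemannian manifold $M'$ and an isometric reflection $g$ at a totally geodesic hypersurface $H'\subset M'$ corresponding to $Y^{m-1}$. The $g$-invariant smooth function $d_{H'}^2$ on $M'$ descends to a smooth orbifold function on the quotient, equal to $d_{Y^{m-1}}^2$. Choosing points $y_0,\dots,y_{m-1}\in Y^{m-1}$ near $P(z)$ such that $(d_{y_j}^2)_j$ give smooth orbifold coordinates at $P(z)$, one writes $d_{Y^{m-1}}^2=G(d_{y_0}^2,\dots,d_{y_{m-1}}^2)$ for a smooth $G$. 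Pulling back by $P$ and using that each $L^{y_j}$ is $\mathcal C^k$ by the first assertion, so that each $d_{L^{y_j}}^2$ is $\mathcal C^k$ on $O$ by \secref{Sec: 8}, one obtains $d_S^2=G(d_{L^{y_0}}^2,\dots,d_{L^{y_{m-1}}}^2)$ as a $\mathcal C^k$ function on $O$. The main remaining obstacle is to pass from ``$d_S^2$ is $\mathcal C^k$'' to ``$S$ is $\mathcal C^k$'', since $d_S^2$ vanishes to second order on $S$ and so $S$ is not a regular level set. The approach I would take is to construct, on the branched double cover $\tilde O$ of $O$ along $S$, a $\mathcal C^k$-signed square root of $d_S^2$ via a Malgrange-type factorization $d_S^2=t^2 u$ with $u>0$ on a normal tube of $S$; this equips $\tilde O$ with a $\mathcal C^k$-smooth structure for which the lifted map $\tilde P\co\tilde O\to M'$ is a $\mathcal C^k$-Riemannian submersion, so that $S=\tilde P^{-1}(H')$ emerges as the preimage of a smooth hypersurface under a $\mathcal C^k$-submersion and is therefore a $\mathcal C^k$-submanifold at $z$.
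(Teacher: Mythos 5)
Your argument for the first assertion rests on the claim that the endpoint map $\Psi(x',w',t')=\exp_{x'}(t'w')$ is open at $(x,\gamma'(0),t)$, i.e.\ that every point near $z$ lies on a \emph{single} horizontal geodesic emanating from $U$. This is false in general, and the failure occurs precisely at points over the codimension-one stratum where the fiber dimension drops. Take $P\colon\mathbb R^2\to[0,\infty)$, $P(x)=\|x\|$ (so $m=1$, $Y^{0}=\{0\}$ is the codimension-one stratum, $z=0$): the horizontal geodesics are the radial lines, so the union of horizontal geodesics through a small ball $U$ around $x=(1,0)$ is a double sector around the $x$-axis, which is \emph{not} a neighborhood of the origin. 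Your dimension count hides this: the image of $d\Psi$ at a preimage of $z$ is contained in $V^z+\{\tilde J(t)\}$, of dimension at most $\dim V^z+m=n+1-\dim H^z_1$, and $\dim H^z_1=\dim T_z^{\perp}S$ can exceed $1$ (it equals $2$ in the example, and equals $n$ for $\mathbb R^n\to[0,\infty)$). You implicitly assume $\dim V^z=n-m$, which holds only for regular $z$. Consequently the regular points of $O'$ that you feed into Corollary \ref{cor: extregfib} need not be reachable from $U$ by a horizontal geodesic, and the first assertion is not proved. The paper's proof avoids reachability altogether: after Theorem \ref{thm-sm4} identifies $P(O)$ with a smooth manifold with totally geodesic boundary $\partial Y=P(S)$, it picks, for each regular $x'$ near $z$, points $y_1,\dots,y_m\in\partial Y$ whose distance functions give smooth coordinates at $P(x')$; the fibers $L_i=P^{-1}(y_i)$ lie in $S$ and are $\mathcal C^k$ by Proposition \ref{prop: spec}, so $d_{L_i}=d_{y_i}\circ P$ are $\mathcal C^k$ with independent gradients at $x'$, and the regular fiber $L^{x'}$ is their transversal intersection. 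That is the idea your proposal is missing.

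For the second assertion your route also diverges from the paper's and is not carried out. The claim that $(d_{y_0}^2,\dots,d_{y_{m-1}}^2)$ with all $y_j\in Y^{m-1}$ give ``orbifold coordinates'' at $P(z)$ is problematic: lifted to $M'$ these are $g$-invariant functions, whose gradients at points of the fixed hypersurface $H'$ are tangent to $H'$, so their differentials cannot be independent there; one would instead need a Whitney-type factorization of the invariant function $d_{H'}^2$ through a fold map, which you do not supply. The subsequent passage from ``$d_S^2$ is $\mathcal C^k$'' to ``$S$ is $\mathcal C^k$'' via a branched double cover and a Malgrange-type factorization is only announced, and such division arguments are delicate in finite $\mathcal C^k$ regularity. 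The paper instead observes that $d_S=d_{\partial Y}\circ P$ with $d_{\partial Y}$ smooth on the manifold with boundary, and deduces the regularity of $S$ from the $\mathcal C^k$-regularity of this function off $S$ together with the positive reach of $S$.
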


\begin{proof}
We choose a sufficiently small open ball $O$ around $z$, such that the conclusions of Proposition \ref{prop: spec} hold true.  We replace $M$ by $O$ and $Y$ by $P(O)$.  By Theorem \ref{thm-sm4}, $Y$ is 
 a smooth Riemannian manifold with totally geodesic boundary $\partial Y = P(S)$.  

Set $y=P(z)$.  By the choice of $O=M$, all non-regular fibers in $M$ are contained in $S$ and are $\mathcal C^{k}$.   

Let $x$ be an arbitrary regular point in $M$ sufficiently close to $z$. Set $y':=P(x)\in Y_{reg}$. The incoming directions in $y'$ of geodesics connectings $y'$ with points on $\partial Y$
generate the tangent space $T_{y'} Y$.  Hence, we find $m$ points $y_i\in \partial Y$ close to
$y$, such that the distance functions $d_{y_i}$ define smooth coordinates 
 in $Y$ around $y'$.  

Around the point $x$, the pull-backs $d_{y_i} \circ  P$ are  given by distance functions $d_{L_i}$, where $L_i$ is the fiber $P^{-1} (y_i)$.  By Proposition \ref{prop: spec}
and Section \ref{Sec: 8}, these distance functions are $\mathcal C^{k}$ in a neighborhood of $x$.

 The gradients of the functions $d_{y_i}$ are linearly independent at $y'$. Hence,  the gradients of $d_{L_i}$ are linearly independent at $x$. Thus, the fibers of the distance functions $d_{L_i}$ intersect transversally at $x$ and their intersection is a $\mathcal C^{k}$ submanifold around $x$.  By construction, this intersection is exactly the fiber $L^x$ of $P$.  

Thus, all fibers of $P$ are $\mathcal C^{k,\alpha}$ in a neighborhood $O$ of $z$.
Hence, $P:O\setminus S\to Y^m$ is $\mathcal C^{k}$.

The distance function  $d_{\partial Y}$ in the smooth Riemannian manifold $Y$ is smooth on a neighborhood of $y$. Hence, the composition $d_S=d_{\partial Y} \circ P$ is $\mathcal C^{k}$ on  $O'\setminus S$ for a small neighborhood  for a neighborhood $O'\subset O$ of $z$.
Hence, $S$  is $\mathcal C^{k}$ submanifold in a neighborhood $O'$ of $z$.
\end{proof}

%In general we have, the probably non-optimal:

We now complete the proof of a local generalization of Theorem \ref{thm:last}:

\begin{thm} \label{cor: smimpltr}
Let $P:M\to Y$ be a local submetry and $k\geq 2$.  If all regular fibers are $\mathcal C^{k}$ then $P$ is transnormal and all fibers are $\mathcal C^{k}$.

For any $l$, the stratum  $S^l =P^{-1} (Y^l)$  and  the restriction
$P: S\to Y^l$ are $\mathcal C^{k-1}$.  If $l\geq \dim (Y)-1$ then $S=P^{-1} (Y^l)$  and the restriction
$P: S^l\to Y^l$ are $\mathcal C^{k}$.
%
%For all $l$, the stratum $S=P^{-1} (Y^l)$  and the restriction
%$P: S\to Y^l$ are $\mathcal C^{k-1}$.
\end{thm}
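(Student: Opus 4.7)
The plan is to chain together the results already established in this section. First, since the regular fibers are $\mathcal C^k$ with $k \geq 2$, they are in particular $\mathcal C^2$, and Theorem~\ref{thm: transnormal} yields transnormality of $P$. By Theorem~\ref{thm-sm3} the stratum $Y^m$ is a smooth Riemannian manifold, so $P|_{M_{reg}}: M_{reg} \to Y^m$ is a Riemannian submersion between smooth manifolds; Lemma~\ref{lem: regsm} then gives that $P$ is $\mathcal C^k$ on the open set $M_{reg}$, settling the stratum statement in the case $l = m$.

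To upgrade every fiber of $P$ from $\mathcal C^{1,1}$ to $\mathcal C^k$, I apply Proposition~\ref{thm: ext} with $U := M_{reg}$. For any $z \in M$, pick $y' \in Y^m$ close enough to $y := P(z)$ that a closest point $x \in P^{-1}(y')$ to $z$ exists in a small compact ball around $z$. By the submetry property $d(x,z) = d(y,y')$, and the minimizing geodesic from $x$ to $z$ projects to a curve in $Y$ of length at most $d(x,z) = d(y,y')$, hence to a minimizing geodesic of the same length, so the geodesic from $x$ to $z$ is horizontal. Proposition~\ref{thm: ext} then furnishes a $\mathcal C^k$ neighborhood of $z$ inside the fiber $L^z$. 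Varying $z$, every fiber of $P$ is $\mathcal C^k$. The case $l = m-1$ of the stratum statement is now immediate from Proposition~\ref{prop: extreg1fib}.

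For general $l$, I adapt the parametrization used to prove Theorem~\ref{thm-sm3}. Fix $y \in Y^l$, a good point $z \in L := P^{-1}(y)$, and choose $y'_1, \dots, y'_l \in Y^l$ near $y$ whose distance functions $d_{y'_i}$ form smooth coordinates on $Y^l$ around $y$. Setting $L'_i := P^{-1}(y'_i)$, the local identity $d_{L'_i} = d_{y'_i} \circ P$ together with Section~\ref{Sec: 8} and the $\mathcal C^k$-regularity of the $L'_i$ just established shows that $\nabla d_{L'_i}$ restricts to a $\mathcal C^{k-1}$ basic normal field along $L$. These $l$ fields span the subbundle $H_0 \subset T^\perp L$, hence $H_0$ is $\mathcal C^{k-1}$ as a subbundle of the $\mathcal C^{k-1}$-bundle $T^\perp L$ over the $\mathcal C^k$-manifold $L$. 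The map $\Psi(p,v) := \exp_p(v_p)$, where $p$ varies near $z$ in $L$, $v \in H_0^z$, and $v_p$ denotes the basic-normal translate of $v$, is then $\mathcal C^{k-1}$ and a local diffeomorphism at $(z,0)$, equipping $S^l$ with a $\mathcal C^{k-1}$ atlas near $z$. The basic-normal identity $P \circ \Psi(p,v) = P(\exp_z(v))$ is constant in $p$ and smooth in $v$ (being the slice map to $Y^l$ furnished by Theorem~\ref{thm-sm3}), so $P|_{S^l}$ is $\mathcal C^{k-1}$ in these coordinates.

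The most delicate step is the propagation in the second paragraph: transnormality is essential for ensuring that the minimizing geodesic from a regular $x$ to a possibly highly singular $z$ remains horizontal all the way, and this is precisely what enables Proposition~\ref{thm: ext} to transfer $\mathcal C^k$ fiber-regularity along a geodesic crossing strata of arbitrary codimension. Once this propagation is in place, the general-stratum analysis reduces to regularity book-keeping on the parametrization of $S^l$ already used in the proof of Theorem~\ref{thm-sm3}.
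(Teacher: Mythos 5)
Your proposal is correct and follows essentially the same route as the paper's proof: transnormality via Theorem~\ref{thm: transnormal}, propagation of $\mathcal C^k$ fiber regularity from the regular part along horizontal geodesics via Proposition~\ref{thm: ext}, the strata of codimension $\leq 1$ via Lemma~\ref{lem: regsm} and Proposition~\ref{prop: extreg1fib}, and $\mathcal C^{k-1}$ regularity of a general stratum $S^l$ by showing the basic normal fields spanning $H_0$ along a fiber are $\mathcal C^{k-1}$ and exponentiating. The only cosmetic difference is that you obtain the $\mathcal C^{k-1}$ regularity of $H_0$ from the gradients of the distance functions $d_{L'_i}$, whereas the paper pulls back the nearby fibers in $S^l$ under the normal exponential map; both rest on the same already-established facts.
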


\begin{proof}
The transnormality has been verified in Theorem \ref{thm: transnormal}.    

Let now $z\in M$ be arbitrary. Since $z$ lies on some  horizontal geodesic containing a regular point, the fiber $L^z$ is $\mathcal C^k$ by Proposition \ref{thm: ext}.

  The stratum $S^l$ for $l=\dim (M)$ is the open set of regular points in $M$. 
The restiction $P:S^l\to Y^l$ is $\mathcal C^k$ by Lemma \ref{lem: regsm}.

If $l=\dim (M)-1$, then $S^l$ is $\mathcal C^k$ by Proposition \ref{prop: extreg1fib}. 

Let now $l$ be arbitrary. Let $x\in S$ be arbitrary and let $L$ be the fiber through $x$.  Fix  any  sufficiently small vector $h\in H^x_0$ (the normal space of $L$ in $S$). Then   the exponential image of the basic normal field $\hat h$ extending $h$ to a neighborhood of $x$ in $L$ is an open subset in 
another fiber of $P$ contained in $S$, hence a $\mathcal C^k$ submanifold of $M$.  Since the normal exponential map of $L$ is the restriction of a smooth map to a $\mathcal C^{k-1}$ submanifold, we deduce that $\hat h$ is $\mathcal C^{k-1}$.  Hence the normal bundle of $L$ in $S$ is a $\mathcal C^{k-1}$ submanifold of the tangent bundle of $M$.  
Applying the  exponential map, we deduce that a neighborhood of $z$
in $S$ is $\mathcal C^{k-1}$.

  The statement that the Riemannian submersion $P:S^l \to Y^l$ is $\mathcal C^k$  for $k=l-1$ and $\mathcal C^{k-1}$ for all $l$, follows exactly as in Lemma \ref{lem: regsm}.
%$P:S^l\to Y^l$ is $\mathcal C^{k-1}$ follows as in the regular case $l=\dim (M)$.
\end{proof}

%Under  the assumptions of Theorem \ref{cor: smimpltr}, we expect  all strata $S^l$ to be $\mathcal C^k$, but we do not know, how to prove it.

\section{Focal times and focalizing Jacobi fields} \label{sec: focal}

\subsection{Focal points, vertical and horizontal}
Let $P:M\to Y$ be a transnormal  local submetry.  Let $L$ be a regular fiber of $P$ and  $x\in L$ be a \emph{good}  point.
Let $h\in H^x$ be a unit normal vector and $\gamma =\gamma ^h$ be the geodesic in the direction of $h$.

%The second fundamental form $\mathrm{II}$ of $L$  at $x$ is well-defined and so is  
The Lagrangian vector space 
$\mathcal W =\mathcal W(h)$ of  $L$-Jacobi fields along $\gamma$ is well-defined and so is 
its $\dim (L)$-dimensional subspace $\mathcal I=\mathcal I^h$ of holonomy fields, see Section \ref{subsec: goodpoints}.

As in Section \ref{subsec: focalpoints}, denote for any $t$ in the domain of definition of $\gamma$, by  $\mathcal W^t$ the space  of elements $J\in \mathcal W$ with $J(t)=0$.    As before, we call $t$ a focal time (along $\gamma$) if $\mathcal W^t$ is non-trivial. 

Following, \cite{Jacobi}, \cite{Thorb}, we call the number $\dim (\mathcal W^t \cap \mathcal I)$ the \emph{vertical focal multiplicity} (of $\gamma$, or of $L$ along $\gamma$).
 By Lemma \ref{lem: consequent},  the holonomy fields span at the time $t$ exactly the vertical space
$V^{\gamma (t)}$.  Hence, the vertical focal multiplicity of the focal time $t$, is exactly the difference of the dimensions 
$$\dim (V^{\gamma (0)})-\dim (V^{\gamma (t)}) =
n-m-\dim (V^{\gamma (t)})\;.$$
Thus, the vertical focal multiplicity is positive if and  only if  $\dim (V^{\gamma (t)})$ (hence the dimension of the fiber of $P$ at $\gamma (t)$)  is not equal $\dim (L)=n-m$.   In such a case, we say that $t$ is a \emph{vertical focal time}.

As the \emph{horizontal focal multiplicity} of the time $t$, we denote $$\dim (\mathcal W^t) -\dim (\mathcal W^t \cap \mathcal I) \;.$$
   As observed in  \cite[Lemma 3.1]{Jacobi}, the horizontal focal multiplicity of the time $t$  is exactly the $\tilde {\mathcal W}$-focal multiplicity of $t$ for the transversal Jacobi equation, where $\tilde {\mathcal W}=\mathcal W /\mathcal I$, see Section \ref{subsec: trans}.   If this horizontal focal multiplicity is positive, we call $t$ a \emph{horizontal focal time}.

\subsection{The case of typical geodesics}  
Assume in addition, that $L$ is a typical fiber, $x$ a typical point and $\gamma$ a typical geodesic.  Then, by definition, 
$P\circ \gamma$  is contained in the Riemannian orbifold part $Y^m\cup Y^{m-1}$.  
Moreover, all conjugate points of $y=P(x)$ along the orbifold-geodesic $\bar \gamma =P\circ \gamma$
are not contained in $Y^{m-1}$.  

The identification of the transversal Jacobi equation along $\gamma$ with the Riemannian Jacobi equation along $\bar \gamma$, explained in Section \ref{subsec: varhor},  implies that  the horizontal focal  multiplicities of $\gamma$ coincide with the focal (=conjugate)  multiplicities of the point $\bar \gamma (t)$ along $\bar \gamma$ with respect to $y=\bar \gamma (0)$ in the Riemannian orbifold $Y^m\cup Y^{m-1}$.

In particular,  vertical  and horizontal focal times of $L$ along a typical geodesic $\gamma$ never coincide.

\subsection{Spaces of focalizing Jacobi fields}  Here comes the key result, which will enable   boot-strapping to smoothness:

\begin{prop} \label{jac: family}
Let $L$ be a typical fiber of  a transnormal local submetry $P:M\to Y$. Let 
$\hat h$  be a typical basic normal field along $L$. Let $t\in \mathbb R$ be fixed  and  $\mathcal D^t$ be the set of points $q\in L$ for which the horizontal geodesic $\gamma _q:=\gamma ^{\hat h_q}$ in the direction of $\hat h_q$ is defined at 
time $t$.

For $q\in \mathcal D^t$, denote by $\mathcal W_q ^t$ the space of all normal Jacobi fields $J$ along $\gamma _q$ such that $J(t)=0$, $J'(t)$ is  horizontal and $J(0)\in V^q$. Then  
\begin{enumerate}
\item If $q$ is a typical point then $\mathcal W_q^t=\mathcal W^{t}(\hat h_q)$, the space of all $L$-Jacobi fields along $\gamma _q$, which focalize at time $t$.
% is exactly the space of all $L$-Jacobi fields focalizing at the time $t$.
\item The map $q\to \mathcal W_q^t$ is locally Lipschitz. In particular, $\dim (\mathcal W_q^t)$ is constant on connected components of $\mathcal D^t$.
\item The map $q\to \mathcal W_q^t$ is $\mathcal C^{k-1}$ if  $P$ is  $\mathcal C^k$ in a neighborhood of $\mathcal D^t$.
% then the map $q\to \mathcal W_q^t$ is $\mathcal C^{k-1}$.
\end{enumerate}

\end{prop}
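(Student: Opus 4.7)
For (1), I would argue by a symplectic pairing with holonomy fields, leveraging that $\mathcal{W}(\hat h_q)$ is a Lagrangian in $\mathrm{Jac}(\gamma_q)$ containing the isotropic subspace $\mathcal{I}$. For any $L$-Jacobi field $J$ with $J(t)=0$, the Lagrangian relation $\omega(J,K)=0$ for $K \in \mathcal{I}$ reduces to $\langle K(t), J'(t) \rangle = 0$; since $\{K(t) : K \in \mathcal{I}\} = V^{\gamma_q(t)}$ by Lemma \ref{lem: consequent}, this forces $J'(t) \in H^{\gamma_q(t)}$, while $J(0) \in V^q$ is automatic from \eqref{eq: initial}, so $J \in \mathcal{W}_q^t$. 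Conversely, for $J \in \mathcal{W}_q^t$ the same pairing (using now $J'(t) \perp V^{\gamma_q(t)}$) yields $\langle v, J'(0)^\top \rangle = \langle J(0), \nabla_v \hat h_q \rangle$ for every $v \in V^q$; the symmetry of $\mathrm{II}^{\hat h_q}$ at the good point $q$ rewrites this as the $L$-Jacobi condition $J'(0)^\top = S^{\hat h_q}(J(0))$ of \eqref{eq: initial}, placing $J$ in $\mathcal{W}(\hat h_q)$.

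For (2), I would first establish equifocality, i.e.\ constancy of $\dim \mathcal{W}_q^t$ on connected components of $\mathcal{D}^t$. Because $\hat h$ is basic, the projections $\bar\gamma := P \circ \gamma_q$ coincide for all $q \in \mathcal{D}^t$, and typicality of $\hat h$ keeps $\bar\gamma$ inside $Y^m \cup Y^{m-1}$. Thus $\bar\gamma(t)$ lies in a fixed stratum of $Y$, so $\dim V^{\gamma_q(t)}$ and the vertical focal multiplicity $\dim V^q - \dim V^{\gamma_q(t)}$ (via Lemma \ref{lem: consequent}) are independent of $q$. The horizontal focal multiplicity is captured by the transversal Jacobi equation along $\gamma_q$, which for typical $q$ is identified via $DP$ with the Jacobi equation along $\bar\gamma$ in the smooth Riemannian orbifold $Y^m \cup Y^{m-1}$ (Section \ref{subsec: varhor}), and so is an invariant of $\bar\gamma$ alone. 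With equifocality in hand, $\mathcal{W}_q^t$ is the kernel of a Lipschitz, constant-rank family of linear maps $A_q : H^{\gamma_q(t)} \cap \gamma_q^\perp \to \gamma_q^\perp(0)/V^q$ obtained by parametrizing a Jacobi field by $J'(t)$, propagating backward via the Jacobi ODE along $\gamma_q$, and projecting $J(0)$ onto a complement of $V^q$. The Lipschitz dependence of $A_q$ on $q$ follows from the Lipschitz dependence of $\hat h$, of the horizontal and vertical distributions along the two strata visited, and of the Jacobi operator; constancy of rank then promotes this to Lipschitz continuity of $q \mapsto \ker A_q = \mathcal{W}_q^t$.

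For (3), the $\mathcal{C}^k$ hypothesis on $P$ upgrades each of these ingredients from Lipschitz to $\mathcal{C}^{k-1}$: the horizontal distribution is $\mathcal{C}^{k-1}$, so are the basic field $\hat h$, the endpoint $\gamma_q(t)$, and the Jacobi operator; hence $A_q$ is $\mathcal{C}^{k-1}$ of constant rank, and its kernel $\mathcal{W}_q^t$ is $\mathcal{C}^{k-1}$ in $q$. The principal obstacle is the equifocality step at non-typical $q \in \mathcal{D}^t$, where the transversal-Jacobi/orbifold identification of Section \ref{subsec: varhor} is not directly available. I would handle this by density and semicontinuity: typical points are dense in the typical fiber $L$, so the horizontal focal multiplicity is constant on a dense subset of $\mathcal{D}^t$, and Lemma \ref{lem: indexsemi} applied to sequences $q_i \to q_0$ in $L$ (with the fixed metric on $M$, restricted to a small time interval around $t$ containing only one focal time) prevents the focalizing dimension from jumping in the limit, pinning $\dim \mathcal{W}_q^t$ to its generic value throughout $\mathcal{D}^t$.
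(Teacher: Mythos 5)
Your part (1) is correct and is essentially the paper's argument in symplectic clothing: the paper also derives $J'(t)\perp V^{\gamma_q(t)}$ from $\omega(J,K)=0$ for holonomy fields $K$, and for the converse it subtracts a holonomy field with the same initial value instead of evaluating $\omega$ at $t=0$; both routes use only that $q$ is a good point, so this part is fine.

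For (2) and (3) there is a genuine gap. Your reduction of Lipschitz/$\mathcal C^{k-1}$ dependence to ``$\mathcal W_q^t=\ker A_q$ for a regular constant-rank family $A_q$'' is sound, but everything then hinges on proving that $\dim\mathcal W_q^t$ is constant on all of $\mathcal D^t$, including non-typical $q$. Your tools only give the wrong inequality there: for a continuous family of linear maps the kernel dimension is \emph{upper} semicontinuous, so at a non-typical $q_0$ you get $\dim\mathcal W_{q_0}^t\geq$ the generic value, and you need $\leq$. Lemma \ref{lem: indexsemi} does not close this: at a non-typical $q_0$ the Lagrangian $\mathcal W(\hat h_{q_0})$ of $L$-Jacobi fields need not be defined, and even for a subsequential limit $\hat{\mathcal W}$ of the Lagrangians $\mathcal W(\hat h_{q_i})$ the lemma controls $\hat{\mathcal W}^t$, not $\mathcal W_{q_0}^t$; the inclusion $\mathcal W_{q_0}^t\subseteq\hat{\mathcal W}$ that you would need is exactly what is unproved. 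The paper avoids equifocality-by-limits altogether: it identifies $\mathcal W_q^t$ with $K^q:=\{J'(t):J\in\mathcal W_q^t\}\subset H^{\tilde q}$ and shows, \emph{for every} $q\in\mathcal D^t$, that $D_{\tilde q}P$ maps $K^q$ isomorphically onto a single subspace $K\subset T_{\tilde y}Y$ determined by the projected geodesic $\bar\gamma$ alone (when $\tilde L$ is regular), and that $K^q$ is the explicit hyperplane in $T_{\tilde q}^{\perp}S$ orthogonal to $\gamma_q'(t)$ (when $\bar\gamma(t)\in Y^{m-1}$, using that $\bar\gamma(t)$ is not conjugate to $\bar\gamma(0)$ by typicality of $\hat h$). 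This gives constancy of the dimension and the Lipschitz, respectively $\mathcal C^{k-1}$, dependence in one stroke; the $\mathcal C^{k-1}$ regularity of $H^{\tilde q}$ and of the normal bundle of $S$ is supplied by Corollary \ref{cor: extregfib} and Proposition \ref{prop: extreg1fib}, a point your sketch also leaves implicit. Your proposal would need to be repaired along these lines, since the singular-endpoint case in particular cannot be handled by the transversal/orbifold identification at non-typical $q$.
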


\begin{proof}
The statement is local, so we may assume that $\mathcal D:=\mathcal D^t$ is connected.
For $q\in \mathcal D$, let $\tilde  q$ denote the point $\gamma _q(t)$.  Then  $q\to \tilde  q$ is a holonomy map. Hence, its image $\hat {\mathcal D}$ is a connected open subset of a fiber $\tilde  L$ of $P$. By connectedness, the dimension of the vertical space $V^{\tilde q}$
does not depend on $q$.

Let $q\in \mathcal D$ be a typical point.  Any $L$-Jacobi field along $\gamma _q$ satisfies $J(0)\in V^q$.  If, in addition,  $J(t)=0$, then $J'(t)$ must be horizontal: Indeed,  $J'(t)$ must be orthogonal to  $\hat J (t)$, for all holonomy fields  $\hat J \in \mathcal I^{\hat h_q}$, as explained in Section \ref{subsec: jacident}, and the set of all such values $\hat J(t)$ is exactly $V^{\tilde q}$.

On the other hand, any normal Jacobi field $J\in \mathcal W^t_q$ satisfies $J(t)=0$ and $J'(t)\in H^{\tilde  q}$.  Thus, $J$ is a variational field of a variation through horizontal geodesics passing through $\tilde q$. Since $J(0)\in V^q$, we find a holonomy field $\hat J$, such that $\hat J(0)=J(0)$. Then $J-\hat J$ is a variational field of a variation through horizontal geodesics (Section \ref{subsec: jacident}), which vanishes at $0$. Hence $J-\hat J$ is an $L$-Jacobi field and so is $J$.   This finishes the proof of (1).

%$y=P(x)$ and $\hat y=P(\hat x)$. The map $q\to \hat q$ is the   holonomy map $Hol^{\gamma _x}$ along the geodesic $\gamma _x$.  Thus, $\hat q$ and $\hat x$ are contained in a connected (!) open subset $\hat {\mathcal D} =Hol^{\gamma _x} (\mathcal D)$ in the fiber $\hat L =P^{-1} (\hat y)$.
%Therefore, the vertical spaces $V^{\hat x}$ and $V^{\hat q}$ have the same dimension, for all $q\in \mathcal D$.  

 Let $\bar \gamma$ denote the projection $\bar \gamma =P\circ \gamma _q$, which is independent of $q \in \mathcal D$. 
For $q\in \mathcal D$ we  consider the vector space   
$$K^q= \{J'(t) \in H^{\tilde q} \; \;  , \,\;  J\in \mathcal W_q^t\}\,.$$
Since Jacobi fields depend smoothly on the pair $(J(t),J'(t))$ and $J(t)=0$ for all $J\in  \mathcal W_q^t$, we  need to verify that the vector space $K^q\subset H^{\tilde  q}\subset  T_{\tilde q} M$ depends in a Lipschitz way on $q$ and that this dependence is $\mathcal C^{k-1}$, if $P$ is $\mathcal C^k$ near $\mathcal D$.

  Assume  that $\tilde  L$ is a regular fiber of $P$.  Set $\tilde y =P(\tilde L)$ and $y=P(L)$.  We fix $q\in \mathcal D$. For any $w\in K^q$, the unique Jacobi field $J^w\in \mathcal W_q^t$ with $(J^w)'(t)=w$   is the variational field of a variation of $\gamma _q$ by horizontal geodesics passing through $\tilde q$.  Composing with $P$ we deduce that $\bar J^w:=DP (J^w)$ is a Jacobi field of a variation of $\bar \gamma$ by orbifold-geodesic passing through $\tilde y$.    We have $\bar J^w(t)=0, (\bar J^w)'(0)=D_{\tilde q} P (w)$ and $\bar J^w (0)=0$, since 
$J^w(0)$ is vertical.

For $u\in T_{\tilde y} Y$ consider the unique Jacobi field $\bar J^u$ along $\bar \gamma$ with $\bar J^u (t)=0$ and $(\bar J^u )'(t)=u$.  Denote by $K$  the space of all vectors $u \in T_{\tilde y} Y$, such that $J^u (0)=0$.  As we have seen above, the differential 
$D_{\tilde q} P$ sends $K^q$ to a subpace of $K$.

On the other hand, for any $u\in K$, consider the corresponding Jacobi field $\bar J^u$ and
the unique horizontal vector $u_{\tilde q} \in H^{\tilde q}$ which is sent by $D_{\tilde q} P$ to $u$.
Consider the Jacobi field $J$ along $\gamma _q$ given by $J(t)=0$ and $J'(t)=u_q$.
The field $J$ can be obtained as a variational field of horizontal geodesics passing through $\tilde q$.
Hence the composition $DP (J)$ is a Jacobi field, which must coincide with $\bar J^u$, by the initial coditions.  Hence, $DP(J(0))=0$ and $J(0)$ must be a vertical vector.  Therefore, $J\in \mathcal W^t _q$ and $\tilde u_q \in K^q$.  Hence, the map $D_{\tilde q } P:K^q\to K$ is an isomorphism.

We have verified that for any $q\in \mathcal D$, the isomorphism $D_{\tilde q} P :H^{\tilde q} \to T_{\tilde y} Y$ sends $K^q$  to $K$. Since the map $q\to \tilde q$, $\tilde q\to H^{\tilde q}$ and $\tilde q\to D_{\tilde q} P$ are Lipschitz continuous so is the map $q\to K^q$.

If, in addition, $P$ is $\mathcal C^k$ in a neighborhood of $\mathcal D$, then the map $q\to \tilde q$ is $\mathcal C^{k-1}$. Moreover, $P$ is $\mathcal C^k$  around  $\tilde q$, for any $q\in \mathcal D$, by Corollary   \ref{cor: extregfib}.   Hence, $H^{\tilde q}$ and $D_{\tilde q} P$ depend in $\mathcal C^{k-1}$ way on $\tilde q$.  Thus, $q\to K^q$ is $\mathcal C^{k-1}$.

This finishes the proof of (2) and (3) if $\tilde L$ is a regular fiber.

Assume now that $\tilde  y$ is not a regular point. By our assumption, $\tilde y\in Y^{m-1}$ and  $\tilde y$ is not conjugate to $y$ along $\bar \gamma$.

 For any $q\in \mathcal D$, any $J\in \mathcal W^t_q$ is defined as a variational field of variation through horizontal geodesics passing through $\tilde q$. Hence 
$ \bar J:= DP(J)$ is a Jacobi field along the orbifold geodesic $\bar \gamma$. By assumption, $\bar J(t)=0$ and $\bar J(0)=0$.  Since $\tilde y$ is not conjugate to $y$ we deduce $\bar J=0$.

Thus, for a normal Jacobi field $J$ along $\gamma _q$ with $J(t)=0$ and $J'(t) \in H^{\tilde q}$  we have the equivalence
$$J\in  \mathcal W^t_q  \Leftrightarrow  \bar J=0   \Leftrightarrow \bar J'(t)=0 $$

Denote by $w_q\in H^{\tilde q}$ the outgoing direction $w_q:=\gamma_q' (t)$.
Any normal Jacobi field $J$ along $\gamma _q$  with $J(t)=0$ and  $u:=J'(t)\in H^{\tilde q}$ 
is defined by (taking the variational field of the   exponentiation of) a  $\mathcal C^1$-curve
$\eta$  in the unit sphere in $H^{\tilde q}$, such that $\eta (0)=w_q$ and $\eta'(0)=u$.
For such a field $J$,  and its projection $\bar J=DP(J)$, the value $\bar J'(t)$ is given 
by the starting direction of the curve  $D_{\tilde q} P \circ \eta$.

Thus, $\bar J'(t)=0$ if and only if $u$ is a vertical direction at the point $w_q$ for the submetry $D_{\tilde q} P:H^{\tilde q} \to T_{\tilde y} Y$.   

Denoting by 
$F_q$ the fiber through $w_q$ of the submetry $D_{\tilde q} P:H^{\tilde q} \to T_{\tilde y} Y$, we thus have shown
$$K^q=T_{w_q} F_q\;.$$

However, $\tilde y$ is contained in $Y^{m-1}$, $T_{\tilde y}Y= \R^{m-1} \times [0,\infty)$  and we have described the submetry $D_{\tilde q} P$ in details as follows in   Section \ref{subsec: finer}.  The fiber $F_q$ is the   round sphere   of all elements $g\in H^{\tilde q}$ which have the same norm as $w_q$  and the same projection as $w_q$  to the subspace $H^{\tilde q}_0=T_{\tilde q} S$, the tangent space of the stratum $S$ through $\tilde q$.

Hence the tangent space $T_{w_q} F_q$   is the hyperplane $A^{w_q}$ in $H^{\tilde q} _1 =T_{\tilde q}^{\perp} S$, which is orthogonal to $w_q$.

Thus, we have shown $K^q=A^{w_q}$, the hyperplane of all vectors orthogonal to $w_q$  in the normal space $T_{\tilde q} S$ to the stratum $S$.

Since  $w_q$ and $\tilde q$ depend Lipschitz on $q$ and  the normal bundle of $S$ is Lipschitz, we deduce that $K^q$ depend Lipschitz continuously on $q$. 

Moreover, if $P$ is $\mathcal C^k$ around $\mathcal D$, then the maps $q\to \tilde q$ and $q\to w_q$ are $\mathcal C^{k-1}$ and so is the normal bundle of $S$ by 
 Proposition \ref{prop: extreg1fib}.  Thus $q\to K^q$ is $\mathcal C^{k-1}$ as well.
\end{proof}

\section{Smoothness in the Euclidean case} \label{sec: euclid}
We obtain a local generalization of Proposition \ref{thm:  smoothbasic}:
%The following is a boot-strapping version of the statement that  constant mean curvature implies smoothness:

\begin{prop}  \label{prop: meanbas}
Let $P:M\to Y$ be a local submetry. Assume that  for almost every regular fiber $L$, the almost everywhere defined mean curvature vector is a basic normal field. Then $P$  is smooth.
\end{prop}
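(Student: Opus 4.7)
The proof is a boot-strap argument combining the elliptic regularity statement at the end of Theorem~\ref{thm: normalfields} with Lemma~\ref{lem: regsm}. First observe that it suffices to prove that every regular fiber of $P$ is smooth: once this is established, Theorem~\ref{cor: smimpltr} implies that $P$ is transnormal and that every fiber of $P$ is smooth, which is exactly the meaning of $P$ being smooth.

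Fix some $\alpha\in(0,1)$. I will prove by induction on $k\geq 1$ that every regular fiber is a $\mathcal{C}^{k+1,\alpha}$ submanifold of $M$. For the base case, any regular fiber $L$ is a priori $\mathcal{C}^{1,1}$, hence $\mathcal{C}^{1,\alpha}$. For a fiber on which the almost everywhere defined mean curvature $\mathfrak{h}$ agrees with a basic normal field, $\mathfrak{h}$ admits a Lipschitz, hence $\mathcal{C}^{0,\alpha}$, representative, since every basic normal field is Lipschitz. The final statement of Theorem~\ref{thm: normalfields} applied with $k=1$ then yields that such a fiber is $\mathcal{C}^{2,\alpha}$. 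By the almost everywhere hypothesis, this conclusion holds on a dense family of regular fibers, and Lemma~\ref{lem: regsm} upgrades it to every regular fiber.

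The inductive step uses the following observation. If every regular fiber is $\mathcal{C}^{k,\alpha}$ with $k\geq 2$, then by Lemma~\ref{lem: regsm} the restriction $P:M_{reg}\to Y_{reg}$ is a $\mathcal{C}^{k,\alpha}$ Riemannian submersion. Consequently the differential $DP$, the vertical distribution $\ker DP$, the horizontal distribution $H=(\ker DP)^{\perp}$, and the fiberwise linear isomorphism $DP|_H:H\to TY_{reg}$ are all $\mathcal{C}^{k-1,\alpha}$. Inverting the last isomorphism shows that every basic normal field along every regular fiber is $\mathcal{C}^{k-1,\alpha}$. Hence on any regular fiber $L$ whose mean curvature is basic, $\mathfrak{h}$ is $\mathcal{C}^{k-1,\alpha}$, and the last statement of Theorem~\ref{thm: normalfields} promotes $L$ to $\mathcal{C}^{k+1,\alpha}$. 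Density together with Lemma~\ref{lem: regsm} again upgrades this to all regular fibers, closing the induction.

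The substantial analytic content---the elliptic regularity step through harmonic coordinates---has already been absorbed in Theorem~\ref{thm: normalfields}, so the remaining work is organizational. The only point meriting brief verification is that basic normal fields inherit the regularity of $DP$, but this is immediate from their characterization as horizontal lifts of fixed tangent vectors by the fiberwise inverse of $DP|_H$. I therefore expect no genuine obstacle beyond the bookkeeping of regularity exponents.
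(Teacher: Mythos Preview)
Your proof is correct and follows essentially the same route as the paper: a boot-strap using the mean-curvature case of Theorem~\ref{thm: normalfields} together with Lemma~\ref{lem: regsm}, after which Theorem~\ref{cor: smimpltr} upgrades smoothness of regular fibers to smoothness of all fibers. The only cosmetic differences are that the paper fixes $\alpha=\tfrac12$ rather than a generic $\alpha\in(0,1)$ and indexes the induction slightly differently; your inductive step gains one order of regularity per step just as the paper's does.
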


\begin{proof}
Fix a fiber $L$ as in the assumption.   Any basic normal field along $L$ is Lipschitz, hence, by assumption, so is the mean curvature field.  
Due to  Theorem \ref{thm: normalfields}, any such fiber $L$ is  $\mathcal C^{2,\alpha}$, for  
$\alpha =\frac 1 2$.  Applying Lemma \ref{lem: regsm}, we deduce that $P$ is $\mathcal C^{2,\alpha}$ on the regular part.

Now, we prove by induction on $k$, that $P$ is $\mathcal C^{k,\alpha}$ on the regular part.  Using the inductive assumption, $P$ is  $\mathcal C^{k-1,\alpha}$. Hence, any basic normal  field is $\mathcal C^{k-2,\alpha}$. In particular, for any fiber $L$ as in the formulation of the proposition, the mean curvature is  $\mathcal C^{k-2,\alpha}$.
Due to Theorem 
\ref{thm: normalfields}, $L$ is $\mathcal C^{k,\alpha}$. By Lemma \ref{lem: regsm}, the restriction of $P$ to the regular part is $\mathcal C^{k,\alpha}$.

Hence, all regular fibers of $P$ are smooth.  Due to Corollary  \ref{cor: smimpltr}, $P$ is transnormal and all fibers of $P$ are smooth.
\end{proof}

The following statement is closely related to  \cite[Proposition 4.1]{ARad}, \cite[Propositions 17-18]{Mendes-Rad}.

\begin{thm} \label{thm: eucl}
Let $P:\mathbb R^n \to Y$ be a transnormal submetry.  Then $P$ is smooth and has basic mean curvature.
\end{thm}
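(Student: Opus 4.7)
The plan is to establish that $P$ has basic mean curvature; once this is done, the smoothness of $P$ follows immediately from Proposition \ref{prop: meanbas}. Since almost every regular fiber is a typical fiber in the sense defined after Lemma \ref{lem: no-conj}, it suffices to verify the basic-mean-curvature property on every typical regular fiber $L = P^{-1}(y)$.

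I would fix such an $L$ and a typical basic normal field $\hat h \in \mathcal B(L)$, and at each good typical point $q \in L$ compute the $L$-Jacobi fields along the geodesic $\gamma^{\hat h_q}(t) = q + t\hat h_q$ explicitly. Because the ambient metric is Euclidean, the equation $J'' = 0$ combined with the initial conditions \eqref{eq: initial} gives the Palais--Terng description \cite{PalT}: $J(t)=0$ occurs precisely when $J(0) \in \ker(I + tS^{\hat h_q})$, and the focal times are exactly the numbers $-1/\lambda$ as $\lambda$ ranges over the nonzero eigenvalues of the shape operator $S^{\hat h_q}$, with multiplicity. Now Proposition \ref{jac: family}(1)--(2) says that $\dim \mathcal W_q^t$ is locally constant in $q$, and part (1) identifies $\mathcal W_q^t$ with the focalizing subspace of $L$-Jacobi fields at good typical points. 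Translating through the eigenvalue correspondence, the multiset of nonzero eigenvalues of $S^{\hat h_q}$ is independent of $q \in L$, and in particular the trace is constant:
\begin{equation*}
\dim(L)\cdot \langle \mathfrak h(q), \hat h_q\rangle \;=\; \operatorname{tr}(S^{\hat h_q}) \;=\; \text{const}.
\end{equation*}

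To upgrade this to basicness of $\mathfrak h$, I use that $P$ is a $\mathcal C^{1,1}$-Riemannian submersion on the regular part, so $D_qP: H^q \to T_yY$ is a linear isometry for every $q \in L$. Under this isometry, basic normal fields correspond to fixed vectors of $T_yY$, so pairings between basic fields along $L$ are constant in $q$. Combined with the constancy of $\langle \mathfrak h(q), \hat h_q\rangle$ just established (valid for every typical basic $\hat h$, which form a dense subset of $\mathcal B(L)$ by continuity), the vector $D_qP(\mathfrak h(q)) \in T_yY$ has the same inner product with every vector of $T_yY$ at every good $q$; therefore $D_qP(\mathfrak h(q))$ is independent of $q$, which is exactly the statement that $\mathfrak h$ extends to a basic normal field along $L$. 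Proposition \ref{prop: meanbas} then yields smoothness of $P$, and basicness of mean curvature for all regular fibers then follows by continuity from the smooth case.

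The main obstacle is the second step: the fiber $L$ is only $\mathcal C^{1,1}$, so $\mathrm{II}_L$ and $S^{\hat h}$ are defined only almost everywhere, and one must justify the identification of the Palais--Terng focal eigenspaces with the focalizing subspaces $\mathcal W_q^t$ provided by Proposition \ref{jac: family}. This is exactly where the restriction to good typical points is essential -- at such points $L$ is twice differentiable and $\hat h$ is differentiable, so the initial-value description \eqref{eq: initial} is available and part (1) of Proposition \ref{jac: family} applies; the equifocality then transfers the a.e.\ algebraic data (eigenvalues of $S^{\hat h_q}$) into genuinely global geometric data on $L$.
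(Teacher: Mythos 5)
Your proposal is correct and follows essentially the same route as the paper: reduce to basic mean curvature via Proposition \ref{prop: meanbas}, then combine the equifocality of Proposition \ref{jac: family} with the Palais--Terng correspondence between focal times and eigenvalues of the shape operator to get constancy of $\langle \mathfrak h, \hat h\rangle$ along typical fibers. The only difference is that you spell out the Euclidean Jacobi-field computation and the final passage from constant pairings to basicness more explicitly than the paper does.
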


\begin{proof}
Due to Proposition \ref{prop: meanbas}, it suffices to prove that any typical  fiber $L$ has  basic mean curvature.   We fix such a fiber $L$.

  Choose a typical basic normal field $\hat h\in \mathcal B(L)$.  Consider, for any $q\in L$ the geodesic $\gamma^{\hat h _q}$.   Due to Proposition \ref{jac: family}, for any pair of typical   points $x,z \in L$, the $L$-focal times and  corresponding multiplicities along $\gamma^{\hat h_x}$ and $\gamma ^{\hat h_z}$ coincide. 
  Since the ambient space is a Euclidean space, the focal times determine the eigenvalues of
the second fundamental form, \cite[Proposition 4.1.8]{PalT}, hence also its trace.
Thus, for all typical $x,z\in L$, the traces of $\mathrm {II} _{L,x} ^{\hat h_x}$ and of $\mathrm {II} _{L,z} ^{\hat h_z}$  coincide.

%the trace of 
%$\mathrm {II} _{L,q} ^{\hat h_q}$ .  Thus, the trace  of the second fundamental form $\mathrm{II}$ at $x$ and $q$ in direction $\hat h _x$ and $\hat h_q$, respectively, coincide.

Thus, for  the  mean curvature vector $\mathfrak h$ along $L$, well-defined at all typical points, the scalar product  $\langle \mathfrak h, \hat h\rangle$ assumes  the same value at all typical points. Since this is true for  any basic normal field $\hat h$, we deduce that the mean curvature vector $\mathfrak h$ is basic.
%  Hence, the mean curvature  vector is  basic. 
\end{proof}

\newpage
\centerline{\bf  IV. Smoothness in non-negative curvature }

\medskip
\medskip

\section{$\mathcal C^2$} \label{sec: c2}
If $M$ is complete then any local submetry $P:M\to Y$ is a submetry. 
 Moreover,  the decomposition of $M$ in  connected components of the fibers of $P$ is again a submetry, \cite[Theorem 12.9]{KL}, \cite[Theorem 10.1]{L-subm}  
Thus, for all questions related to smoothness, we can replace the fibers of $P$ by their connected components and assume without loss of generality, that all fibers of $P$ are connected.

\begin{thm} \label{thm: c2neu}
Let $M$ be a complete and  of non-negative curvature. Then any transnormal submetry $P:M\to Y$ is $\mathcal C^2$.
\end{thm}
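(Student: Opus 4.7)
The plan is to show that every regular fiber $L$ of $P$ is a $\mathcal C^2$-submanifold of $M$; once this is known for all regular fibers, \tref{cor: smimpltr} applied with $k=2$ (together with the hypothesis that $P$ is transnormal) promotes the $\mathcal C^2$-regularity to every fiber, and hence $P$ itself is $\mathcal C^2$ in the sense of Section \ref{sec: cont}. So the task reduces to producing enough regularity on a single regular fiber.

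Fix such an $L$ and choose a basis $\hat h^1,\dots,\hat h^m$ of \emph{typical} basic normal fields along $L$ spanning $H^p$ at every point. At every good point $q\in L$ (a full-measure subset by Rademacher) the Lagrangian $\mathcal W^i_q$ of $L$-Jacobi fields along $\gamma^{\hat h^i_q}$ is well defined. By \cref{cor:  contdepend}(2) it suffices to extend each family $q\mapsto \mathcal W^i_q$ continuously to all of $L$; equivalently, for every sequence $p_n\to p_0$ of good typical points, the Lagrangians $\mathcal W^i_{p_n}$ must admit a unique limit independent of the sequence. This is where non-negative curvature enters, via the direct sum decomposition $\mathcal W^i_{p_n}={}^o\mathcal W^i_{p_n}\oplus \mathcal W^{i,par}_{p_n}$ of \tref{thm: GAG} into a focal-generated summand and a pointwise-orthogonal parallel summand.

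The focal-generated summand converges uniquely. Indeed, \pref{jac: family} yields, for every $p\in L$ and every focal time $t$, a space $\mathcal W^{i,t}_p$ that depends locally Lipschitz on $p$ and has locally constant dimension; at typical $p$ it coincides with the space of $L$-Jacobi fields focalizing at $t$. Combined with the stability of the focal index (\lref{lem: indexsemi}), this forces ${}^o\mathcal W^i_{p_n}=\sum_t\mathcal W^{i,t}_{p_n}$ to converge uniquely to $\sum_t\mathcal W^{i,t}_{p_0}$. For the parallel summand I argue by dimension count at time $0$. The evaluation $\mathcal E^0\colon\mathcal W^{i,par}_{p_n}\to V^{p_n}$ is injective (a parallel normal Jacobi field is determined by its initial value), and by \tref{thm: GAG} its image is orthogonal to $\mathcal E^0({}^o\mathcal W^i_{p_n})$ inside $V^{p_n}$. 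Using $\dim\mathcal W^i_{p_n}=n-1$, $\dim V^{p_n}=n-m$, and the fact that the $(m-1)$-dimensional kernel of $\mathcal E^0$ on $\mathcal W^i_{p_n}$ lies inside ${}^o\mathcal W^i_{p_n}$, one checks that $\mathcal E^0(\mathcal W^{i,par}_{p_n})$ fills the entire orthogonal complement of $\mathcal E^0({}^o\mathcal W^i_{p_n})$ in $V^{p_n}$. Unique convergence of $V^{p_n}$ and of the focal-generated piece pins down the limit complement, and since parallel-ness is preserved under limits of Jacobi fields, the parallel summand itself converges uniquely.

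The main obstacle is this parallel summand: a priori the dimension of the space of parallel normal Jacobi fields along a varying geodesic is only upper semi-continuous (as remarked in the introduction), so without extra rigidity one can only get subsequential limits of varying dimensions. The pointwise orthogonality built into \tref{thm: GAG}, coupled with the dimension count above, is what converts semi-continuity into full continuity for the particular subspaces $\mathcal W^{i,par}_{p_n}$ produced by the decomposition; the decomposition theorem of \cite{Wilking} is therefore indispensable here.
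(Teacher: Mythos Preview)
Your proposal is correct and follows essentially the same line as the paper's proof: both reduce to showing that the Lagrangians of $L$-Jacobi fields along typical basic normal directions extend continuously to all of $L$, both split the Lagrangian via \tref{thm: GAG} into a focal-generated summand (controlled by \pref{jac: family}) and a parallel summand (pinned down as the orthogonal complement at $t=0$ inside $V^p$), and both finish with \cref{cor:  contdepend}. The only cosmetic difference is that you apply the GAG decomposition at each approximating point $p_n$ and pass each summand to the limit, whereas the paper first passes to a subsequential limit Lagrangian $\hat{\mathcal W}$ and then applies GAG to $\hat{\mathcal W}$; the two arguments are interchangeable.

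One small correction: your reduction step needs the fiber $L$ to be a \emph{typical} fiber, not merely a regular one, since \pref{jac: family} is stated only for typical fibers and typical basic normal fields. Correspondingly you should invoke \lref{lem: regsm} (density of typical fibers among regular ones) before applying \tref{cor: smimpltr}, exactly as the paper does.
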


\begin{proof}
As  explained above, we may assume all fibers of  $P$  to be  connected, \cite[Theorem 12.9]{KL}.
By   Theorem
\ref{cor: smimpltr} and Lemma \ref{lem: regsm}, it suffices   to show that every typical fiber  of $P$ is $\mathcal C^2$.

Let $L$ be a typical fiber of $P$ and let us fix  a typical normal basic field  $\hat h$ along $L$.
For any typical point  $x\in L$ denote by $\mathcal W_x=\mathcal W(\hat h_x)$ the space of $L$-Jacobi fields along $\gamma ^{\hat h _x}$.

We claim that the family $x\to \mathcal W_x$ defined for \emph{all typical} $x\in L$ extends to a \emph{continuous} family of Lagrangians along $\gamma ^{\hat h_x}$ for \emph{all} $x\in L$.

There is a discrete set of times $t_j\in \mathbb R$ (which include the time $0$), such that 
for any typical $x\in L$, the time $t_j$ is a $\mathcal W_x$-focal time with a multiplicity $m_j$ independent of $x$.  Moreover, the $m_j$-dimensional subspace $\mathcal W_x ^{t_j}$ of Jacobi fields in $\mathcal W_x$ focalizing at  time $t_j$ extends in a Lipschitz continuous way to a family $\mathcal W_x ^{t_j}$ defined on all of $L$,  Proposition \ref{jac: family}.

Let us now fix a point $p\in L$ and a sequence of typical  points $p_i\in L$ converging to $p$. Assume, after choosing a subsequence,  that the Lagrangians $\mathcal W_{p_i}$ converge  to a Lagrangian $\hat {\mathcal W}$
of normal Jacobi fields along $\gamma ^{\hat h_p}$.  In order to prove the continuity  claim, it suffices to verify that $\hat {\mathcal W}$ does not depend on the choice of such a sequence  $p_i$. 

 For any focal time $t_j$,  the continuity proved in Proposition \ref{jac: family}, implies
$\mathcal W_p^{t_j} \subset \hat {\mathcal W}$.  Thus, the space $ ^o \mathcal W_p$ generated by all these $\mathcal W_p^{t_j}$ is  a subspace of $\hat {\mathcal W}$, which is  independent of the sequence $p_i$.

By Lemma \ref{lem: indexsemi}, this space $^o{\mathcal W_p}$ contains all focalizing Jacobi fields in the Lagrangian $\hat {\mathcal W}$.  By Proposition \ref{jac: family},  we have $$\hat {\mathcal W}= { ^o \mathcal W} \oplus \hat{\mathcal W}^{par}\,,$$
where $\hat {\mathcal W}^{par}$ consists of parallel Jacobi fields in $\hat {\mathcal W}$.
  Moreover, this decomposition is orthogonal at every $t$.   Therefore,  the space of
 initial values $\{ J(0); J\in \hat{ \mathcal W}^{par}\}$ of parallel vector fields in $\hat {\mathcal W}$ is  exactly the orthogonal complement   of  $\{ J(0); J\in {^o \mathcal W}\}$ inside the space   $\{ J(0); J\in \hat{ \mathcal W}\}$ of all initial values.

 By definition of $\hat {\mathcal W}$ the space of all inital values  $\{ J(0); J\in \hat{ \mathcal W}\}$  is exactly the vertical space $V^p$, thus independent of the sequence $p_i$.    Thus,
 the space of initial values of elements in $\hat {\mathcal W}^{par}$ is also independent of $p_i$. Therefore, $\hat {\mathcal W}^{par}$ is independent of $p_i$ as well.  
 This finishes the proof of the claim.

We  fix a basis $\hat h^1,..., \hat h^m$ of the space of basic normal fields along $L$, such that any $\hat h^e$ is typical.   We apply the previous consideration to all  $\hat h^e$.  The continuous  dependence of the unqiue extension of the space  of $L$-Jacobi fields $\mathcal W(\hat h^e _x)$   on $x$ implies that $L$ is $\mathcal C^2$ by Corollary \ref{cor: contdepend}.
\end{proof}

\section{Rank and smoothness on a dense subset} \label{sec: rank}
Let again $M$ be complete and non-negatively curved and let $P:M\to Y$ be a transnormal submetry. We have already verified that all fibers of $P$ are $\mathcal C^2$.  In particular, any point on any fiber is a good point and any basic normal field is $\mathcal C^1$.

We consider the set of regular points $M_{reg}$  and the bundle of unit horizontal vectors $\mathcal H_{reg}\to M_{reg}$.      For   any vector $0\neq h\in \mathcal H_{reg}$, we  consider 
the  base point $p=\pi (h)\in M_{reg}$  and the fiber $L=L^p=L(h)$ of $P$ through $p$.  Further we consider the geodesic $\gamma ^h$ in the direction of $h$. Along $\gamma ^h$ we have the space $\mathcal W(h)$ of $L$-Jacobi fields.
% and its subspace $\mathcal V^h$ of holonomy fields along $\gamma ^h$.    The spaces $\mathcal W^h$ are Lagrangians and the maps $h\to \mathcal W^h$ and $h\to \mathcal V^h$ are continuous.  

Let again ${^o\mathcal W}(h)$ be the focal-generated subspace of $\mathcal W(h)$ and 
$ {\mathcal W} ^{par}  (h)$ be the space of parallel Jacobi fields in $\mathcal W (h)$. By Theorem \ref{thm: GAG}, ${\mathcal W} ^{par}  (h)$ and ${^o\mathcal W}(h)$ are complementary and pointwise orthogonal  in $\mathcal W(h)$.

%we have a pointwise orthogonal direct sum decomposition 
%$$\mathcal W^h =(\mathcal W^h)^{par} \oplus \tilde {\mathcal W} ^h\;.$$
 By the rank of $h$, we denote the number
$$\mathrm{rk} (h):=\dim (\mathcal W ^{par} (h))\;.$$

On $\mathcal H_{reg}$ the second fundamental forms $\mathrm {II} ^h _{L(h)}$ depend continuously on $h$.  Hence, $\mathcal W(h)$ depend continuously on $h$ as well.
Parallel Jacobi fields converge to parallel Jacobi fields under convergence of corresponding geodesics. Thus, for any convergent sequence $h_i \to h$ in $\mathcal H_{reg}$ we have 
$$\limsup (\mathrm{rk} (h_i)) \leq \mathrm{rk} (h)\;.$$
Since the rank can assume only finitely many values, we obtain an open dense subset $\mathcal U$ of $\mathcal H_{reg}$  on which $\mathrm{rk}$ is locally constant.

\begin{thm} \label{thm: openc}
Let $M$ be complete and non-negatively curved. Let $P:M\to Y$ be a transnormal submetry.
Let $\mathcal U$ be the set of unit horizontal vectors at regular points, in whose neighborhoods  the rank function is locally constant.   Then $\pi (U)$ is a dense open subset of $M_{reg}$ and the restriction of $P$ to $U$ 
 is a smooth Riemannian submersion.
\end{thm}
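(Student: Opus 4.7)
The statement has two parts: first, that $\pi(\mathcal{U})$ is open and dense in $M_{reg}$; second, that the restriction of $P$ to it is smooth. Openness of $\pi(\mathcal{U})$ is immediate from openness of $\mathcal{U}$ in $\mathcal{H}_{reg}$ since $\pi$ is an open map; density follows from density of $\mathcal{U}$ in $\mathcal{H}_{reg}$, because for any non-empty open $V \subset M_{reg}$ the open set $\pi^{-1}(V)$ meets $\mathcal{U}$, forcing $V \cap \pi(\mathcal{U}) \neq \emptyset$.

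For smoothness of $P$ on $\pi(\mathcal{U})$, I would argue by induction on $k$ that every fiber $L$ of $P$ is $\mathcal{C}^k$ at every point $p \in L \cap \pi(\mathcal{U})$, for all $k \geq 2$; the base case $k=2$ is Theorem \ref{thm: c2neu}. For the inductive step, assume $L$ is $\mathcal{C}^k$ near $p \in \pi(\mathcal{U})$. Using that $\mathcal{U} \cap H^p$ is open in the unit horizontal sphere at $p$, I would select basic normal fields $\hat h^1, \dots, \hat h^m$ along a neighborhood of $p$ in $L$ which span $H^p$ at $p$ and satisfy $\hat h^i_p \in \mathcal{U}$. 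Since $P$ is $\mathcal{C}^k$ near $p$, these basic normal fields are $\mathcal{C}^{k-1}$, and Corollary \ref{cor: contdepend} will upgrade $L$ to $\mathcal{C}^{k+1}$ at $p$ once I establish that each family $z \mapsto \mathcal{W}(\hat h^i_z)$ of $L$-Jacobi Lagrangians is $\mathcal{C}^{k-1}$ in $z$.

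To establish this regularity, I would use Theorem \ref{thm: GAG}'s orthogonal decomposition
\[
\mathcal{W}(\hat h^i_z) = {^o{\mathcal W}}(\hat h^i_z) \oplus \mathcal{W}^{par}(\hat h^i_z)
\]
and treat the summands separately. By Proposition \ref{jac: family}, each focalizing subspace $\mathcal{W}^{t_j}(\hat h^i_z)$ depends $\mathcal{C}^{k-1}$-ly on $z$; local constancy of $\mathrm{rk}$ along $z \mapsto \hat h^i_z$, a consequence of $\hat h^i_p \in \mathcal{U}$ together with continuity of $\hat h^i$, pins the dimension of ${^o{\mathcal W}}(\hat h^i_z)$, so these focal subspaces assemble into a $\mathcal{C}^{k-1}$ bundle of the right total dimension. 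For the parallel summand, the pointwise orthogonality at $t=0$ given by Theorem \ref{thm: GAG} identifies the $J(0)$-image of $\mathcal{W}^{par}(\hat h^i_z)$ as the orthogonal complement in $T_zL$ of the $J(0)$-image of ${^o{\mathcal W}}(\hat h^i_z)$; since both are $\mathcal{C}^{k-1}$ in $z$ with locally constant dimensions, so is this complement, and each element of $\mathcal{W}^{par}(\hat h^i_z)$ is then recovered from its initial value by smooth parallel transport along $\gamma^{\hat h^i_z}$. To legitimately invoke Proposition \ref{jac: family}, I would first restrict to typical $p$ and typical $\hat h^i$, a dense open subset of $\pi(\mathcal{U})$, carry out the induction there, and then propagate smoothness to all of $\pi(\mathcal{U})$ via Proposition \ref{prop: spec}; Lemma \ref{lem: regsm} then yields smoothness of $P$ on the open set $\pi(\mathcal{U})$. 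I expect the main technical delicacy to lie in showing that the constancy of $\mathrm{rk}$ really does force the initial-value images of focal and parallel components to have locally constant dimensions and to vary with the same regularity as the focal bundle itself, so that the orthogonal-complement step produces genuine $\mathcal{C}^{k-1}$-smoothness rather than merely pointwise agreement.
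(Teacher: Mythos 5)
Your proposal is correct and follows essentially the same route as the paper: induction on $k$ starting from the $\mathcal C^2$ base case, the decomposition $\mathcal W = {}^o\mathcal W \oplus \mathcal W^{par}$ from Theorem \ref{thm: GAG}, the $\mathcal C^{k-1}$-dependence of focalizing subspaces from Proposition \ref{jac: family} combined with local constancy of the rank to fix dimensions, recovery of $\mathcal W^{par}$ as the parallel fields over the orthogonal complement of the initial values of ${}^o\mathcal W$ in $V^z$, and Corollary \ref{cor: contdepend} plus Lemma \ref{lem: regsm} to conclude. The "technical delicacy" you flag is resolved exactly as you suggest: constancy of the rank gives $\dim {}^o\mathcal W(\hat h_q) = n-1-b$ and $\dim F^q = n-m-b$ independent of $q$, so a finite set of $\mathcal C^{k-1}$-varying focalizing fields that is a basis at one point remains a basis nearby.
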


\begin{proof}
The set $\mathcal U$ is by definition open in the  horizontal bundle over $M_{reg}$.  By the semi-continuity of $\mathrm {rk}$, the set $\mathcal U$ is dense.  Since the bundle-projection $\pi$ is open, the image $U:=\pi (\mathcal U)$ is open and dense
in $M_{reg}$.  By Theorem \ref{thm-sm3},  $P(U)$ is a smooth Riemannian manifold and the restriction $P:U\to P(U)$ is a Riemannian submersion.

We will  verify  by induction on $k$ that this Riemannian submersion  $P:U\to P(U)$ is  $\mathcal C^k$.   The case $k=2$ has been proved in Theorem \ref{thm: c2neu}.
Assume the result has already been verified for some $k \geq 2$.   In order  to prove that
$P$ is $\mathcal C^{k+1}$ on $U$, we only need to show  that   the intersection $L\cap U$ is $\mathcal C^{k+1}$, for any \emph{typical} fiber $L$, Lemma  \ref{lem: regsm}.

We fix  a  typical fiber $L$ and a point $x\in L\cap U$. Since $x\in U$,  we find  a  horizontal vector $0\neq h \in \mathcal U \cap H^x$.  Since $L$ is a typical  fiber and  $\mathcal U$ is open, we may further assume that $h$ extends to a \emph{typical basic normal field}  $\hat h$ along $L$.     By inductive assumption, $\hat h$ is $\mathcal C^{k-1}$
on $U$. 

We  claim that the Lagrangians $\mathcal W (\hat h_q)$  of $L$-Jacobi fields  along $\gamma ^{\hat h_q}$ depend in a $\mathcal C^{k-1}$ way on the base point $q$ in a neighborhood of $x$.  Once the claim is proven, we apply it to some basis 
 $\hat h^i$  of the space $\mathcal B(L)$ of all basic normal fields  along $L$,  such that any $\hat h^i$ is typical and $\hat h^i _x$ is sufficiently close to $h$, hence contained in $\mathcal U$.    The statement that  the maps $q\to \mathcal W(\hat h_q^i )$ are $\mathcal C^{k-1}$ would  imply that $L$ is $\mathcal C^{k+1}$ in a neighborhood of $x$ by 
Corollary \ref{cor:  contdepend}.  

It remains to prove the claim.   By assumption, $\mathrm{rk} (\hat h_q)$ is a constant $b$ in a neighborhood $O$ of $x$ in $L$.   Hence, for $q\in O$ the space of 
 focal-generated Jacobi fields,  $^o\mathcal W(\hat h_q)  \subset {\mathcal W}(\hat h_q)  $  has dimension $n-1-b$.

%Then, the $r$-dimensional family 
% $\mathcal W ^{par} (\hat h_q)$ of parallel normal Jacobi fields depends in a continuous way on $q\in O$.  Hence also the orthogonal complements, the space of focal-generated Jacobi fields,  $\tilde {\mathcal W}(\hat h_q)$ depend continuously on $q\in O$.

We  fix focalizing Jacobi fields $J_i \in \mathcal W^{t_i} (\hat h_x)$,  for $i=1,...,n-1-b$,
which generate $\tilde {\mathcal W}(\hat h_x)$.   By the inductive assumption and  Proposition \ref{jac: family}, the families $\mathcal W^{t_i} (\hat h_q)$  of Jacobi fields focalizing at the time $t_i$, depend in a $\mathcal C^{k-1}$-way on the point $q\in L$.

In particular, we can extend the Jacobi fields $J_i$ to a $\mathcal C^{k-1}$ family of Jacobi fields $q\to J_i^q\in  \mathcal W^{t_i} (\hat h_q)$ on a neighborhood $O_1\subset O$ of $x$ in $L$.   

By continuity, the $(n-b-1)$ Jacobi fields $J_i^q$ are linearly independent, for $q$ sufficiently close to $x$.  By the dimensional considerations above, these Jacobi fields constitute a basis of $^o{\mathcal W} (\hat h_q)$, for all $q\in O_1$.  Thus, the map $q\to  {^o\mathcal W}(\hat h_q)$ is
$\mathcal C^{k-1}$ on $O_1$.
%...,J_k  \in \tilde {\mathcal W} ^h$ which build a basis of this space and such that each $J_i$ extends to a $\mathcal C^{k-1}$ field 
%of focalizing $L$-Jacobi fields $q\to J_i ^q \in \tilde {\mathcal W }^{\hat h _q}$.

Consider for $q\in O$ the space of initial values of focal-generated fields:
$$F^q:=\{J(0)   \;\;  : \;\;  J\in     {^o\mathcal W}(\hat h_q) \}  \subset V^q\;.$$
By assumption on the rank, $\dim (F^q)=n-m-b$, for all $q\in O$.  Since this dimension is independent on $q$, the map $q\to F^q$ is $\mathcal C^{k-1}$ on $O_1$. 

Then also the orthogonal complement of $F^q$ in $V^q$ depends in a $\mathcal C^{k-1}$
on  $q\in O_1$. Since this orthogonal complement is exactly the set of initial values of elements $\mathcal W^{par} (\hat h_q)$, we deduce that  the space of parallel fields $\mathcal W^{par} (\hat h_q)$ depends in a $\mathcal C^{k-1}$ way on $q\in O_1$.
  Therefore, 
$$q\to \mathcal W(\hat h_q)={^o\mathcal W}(\hat h_q)  \oplus  \mathcal W^{par} (\hat h_q)$$
is $\mathcal C^{k-1}$ as well.
This finishes the proof of the claim, of the inductive step and of the theorem.
\end{proof}

A combination of Theorem \ref{thm: c2neu}  and Theorem \ref{thm: openc}  now finishes the proof of Theorem \ref{thm: main}.

If there are no parallel normal fields along any geodesic in $M$, then  the rank of any normal non-zero vector $h\in \mathcal H_{reg}$ is $0$.  Hence, the set $\mathcal U$ defined in Theorem \ref{thm: openc} coincides with $\mathcal H_{reg}$ and the submetry
$P$ is smooth on  the regular part of $M$, hence on all of $M$. 

In particular, this applies if the curvature of $M$ is positiv, proving the first  part of Theorem \ref{thm: maincompact}:
\begin{cor}
Let $M$ be complete and positively curved. Then any transnormal submetry $P:M\to Y$ is smooth.
\end{cor}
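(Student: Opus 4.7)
The plan is to reduce this corollary to a short consequence of Theorem \ref{thm: openc} together with the transnormal smoothness-propagation result Theorem \ref{cor: smimpltr}. The central observation I would make first is that positive sectional curvature rules out every non-trivial parallel normal Jacobi field along every geodesic in $M$. Indeed, if $J$ is a parallel normal Jacobi field along a geodesic $\gamma$ with $J\not\equiv 0$, then $J''=0$ forces $R(J,\gamma')\gamma'=0$, so the plane spanned by $J(t)$ and $\gamma'(t)$, which is non-degenerate since $J$ is normal to $\gamma$, has vanishing sectional curvature, contradicting positivity.

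Given this, I would next deduce that for every unit horizontal vector $h\in \mathcal H_{reg}$ the space $\mathcal W^{par}(h)$ of parallel Jacobi fields in $\mathcal W(h)$ is trivial, hence $\mathrm{rk}(h)=0$ identically on $\mathcal H_{reg}$. Consequently the rank function is locally constant on all of $\mathcal H_{reg}$, so the open set $\mathcal U$ defined in Theorem \ref{thm: openc} coincides with the whole horizontal bundle $\mathcal H_{reg}$, and its projection $U$ equals $M_{reg}$.

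At this point Theorem \ref{thm: openc} applies directly and shows that the restriction $P:M_{reg}\to P(M_{reg})$ is a smooth Riemannian submersion. In particular every regular fiber of $P$ is a $\mathcal C^k$-submanifold of $M$ for every $k\geq 2$. The conclusion is now immediate from Theorem \ref{cor: smimpltr}: since $P$ is already assumed transnormal, the smoothness of regular fibers propagates to smoothness of \emph{all} fibers, so $P$ is smooth.

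I do not expect any genuine obstacle in this deduction; the entire work is packaged in Theorem \ref{thm: openc} and Corollary \ref{cor: smimpltr}. The only point requiring care is the elementary curvature computation that excludes parallel normal Jacobi fields under strict positivity, which is the precise place where the hypothesis of \emph{positive} (as opposed to merely non-negative) sectional curvature enters and lets the rank stratification degenerate to a single open stratum.
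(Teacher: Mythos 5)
Your proposal is correct and follows exactly the paper's own argument: positive curvature excludes parallel normal Jacobi fields, so the rank function vanishes identically, $\mathcal U=\mathcal H_{reg}$ in Theorem \ref{thm: openc}, and smoothness on the regular part propagates to all fibers via Theorem \ref{cor: smimpltr}. No differences worth noting.
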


%\begin{proof}[Proof of Theorem \ref{thm: main}]
%Thus let $M$ be non-negatively curved and let $P:M\to Y$ be a transnormal submetry.
%In Theorem \ref{thm: c2neu} we have verified that all fibers of $P$ are $\mathcal C^2$.
 %In Theorem \ref{thm: openc} we have shown that $P$ is a smooth submersion on an open and dense subset.
%In Theorem \ref{thm: eucl} we have shown that $P$ is smooth on all of $M$ if $M$ is the Euclidean space.
%Let finally $M$ have positive curvature at every point.  Then there do not exist non-zero normal, parallel Jacobi fields along any geodesic. Hence the rank of any unit horizontal vector at any regular point is $0$.  Hence, the space $\mathcal U$ used in Theorem \ref{thm: openc} is the whole unit horizontal bundle over  $M_{reg}$.  From Theorem \ref{thm: openc} we deduce that  $P$ is smooth on $M_{reg}$.
%any regular fiber is smooth.  By Theorem \ref{cor: smimpltr}, any fiber of $P$ is smooth.
%\end{proof}

\newpage

\centerline{\bf V.  Dual leaves and smoothness}

\section{Dual leaves} \label{sec: dual}

\subsection{Singular  foliations} \label{subsec: foliation}
 %Let $M$ be a $\mathcal C^k$ manifold
%
% For some  $k\geq 1$. 
Let $\mathcal G$ be a family 
of $\mathcal C^k$-flows $\Phi : M\times \R\to  M$, for some $k\geq 1$.  As usual, we write $\Phi _t(x):=\Phi (x,t)$.

   Denote by $\Psi=\Psi ^{\mathcal G}$ the group of all $\mathcal C^k$-diffeomorphisms $\phi:M\to M$, which can be  represented as 
\begin{equation} \label{eq: repr}
\phi =\Phi ^1_{t_1} \circ ...\circ \Phi ^l_{t_l}\;,
\end{equation}
for some $l$, some  $t_i\in \mathbb R$ and some flows $\Phi ^i \in \mathcal G$. We recall the following result from  \cite[Theorem 1]{Stefan}:

\begin{thm} \label{thm: stefan}
 In the situation described above, each orbit $\Psi\cdot x$ is an injectively immersed $\mathcal C^k$-manifold  in $M$.  
%The tangent space to the orbit $T_x(\Psi\cdot x)$ is
%generated by all vectors $g^{\ast} (X_{g\cdot x})$ with $g\in \Psi$ and $X$ is a vector field in the defining family  $\mathcal G$.  
\end{thm}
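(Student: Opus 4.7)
The plan is to produce, for each $x \in M$, an immersed $\mathcal C^k$-submanifold $N_x \subset M$ through $x$ that locally coincides with the orbit $\Psi \cdot x$, constructed by using compositions of flows as local parametrizations. First I would define, for each $p \in M$,
\[
d(p) := \sup_{l,\, \mathbf{\Phi} \in \mathcal G^l,\, \mathbf{t} \in \mathbb R^l} \mathrm{rank}\bigl(D_{\mathbf t} F_{p,\mathbf{\Phi}}\bigr), \qquad F_{p,\mathbf{\Phi}}(\mathbf s) := \Phi^1_{s_1} \circ \cdots \circ \Phi^l_{s_l}(p).
\]
Clearly $d(p) \leq \dim M$. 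Since every $\psi \in \Psi$ is itself a composition of flows, any composition starting at $\psi(p)$ can be rewritten, by appending $\psi$, as a composition starting at $p$ with the same rank; applying the same to $\psi^{-1}$ shows that $d$ is constant on each $\Psi$-orbit.

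Next, fixing $x$ and setting $d := d(x)$, I would pick a maximizer $F := F_{x,\mathbf{\Phi}}$ of full rank $d$ at some parameter $\mathbf t_0$. Replacing $F$ by $\phi^{-1} \circ F(\mathbf t_0 + \cdot)$ with $\phi := \Phi^1_{t_{0,1}} \circ \cdots \circ \Phi^l_{t_{0,l}} \in \Psi$, I may assume $F(\mathbf 0) = x$ and that the rank at $\mathbf 0$ equals $d$. The rank theorem, applied after restricting $F$ to a $d$-dimensional affine slice through $\mathbf 0$ on which $DF$ is injective, gives a $\mathcal C^k$-embedding whose image $N_x \subset M$ is a $d$-dimensional submanifold through $x$ and is contained in $\Psi \cdot x$ by construction.

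The crux, and the main obstacle, is to show that $N_x$ exhausts a neighborhood of $x$ in the orbit. The decisive observation is that $d(x)$ bounds the rank of \emph{every} composition starting at $x$ at \emph{every} parameter point, not only at the chosen maximizer. Using this I would verify the infinitesimal invariance: for each $\Phi \in \mathcal G$ and each $q \in N_x$ close to $x$, the velocity $\tfrac{d}{ds}\big|_{s=0}\Phi_s(q)$ is tangent to $N_x$ at $q$. Indeed, writing $q = F(\mathbf r_q)$ and considering the augmented composition $G(\mathbf s, s') := \Phi_{s'} \circ F(\mathbf s)$, the differential $DG$ at $(\mathbf r_q, 0)$ has rank $d + 1$ whenever the generator of $\Phi$ at $q$ escapes $T_q N_x = \mathrm{image}(DF|_{\mathbf r_q})$, contradicting the maximality of $d$. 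This tangency forces $\Phi_s(q) \in N_x$ for small $s$, and iterating through arbitrary words in $\Psi$ shows that every orbit point near $x$ lies in $N_x$. Finally, transition maps between different charts $N_x$ are themselves expressible through flow compositions and are therefore $\mathcal C^k$; declaring these charts to define the topology on $\Psi \cdot x_0$ yields the desired injectively immersed $\mathcal C^k$-manifold structure, with the inclusion into $M$ a $\mathcal C^k$-immersion by construction.
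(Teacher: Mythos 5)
First, a remark on the comparison itself: the paper does not prove this statement — Theorem~\ref{thm: stefan} is quoted from \cite[Theorem 1]{Stefan} (the Stefan--Sussmann orbit theorem) — so there is no in‑paper proof to measure you against. Your argument is the standard Sussmann‑style proof by maximal‑rank compositions of flows, and its skeleton is sound: the rank function $d$ is well defined and constant on orbits for the reason you give (freezing the parameters of the appended word can only decrease rank), the maximizer $F$ produces a $d$‑dimensional $\mathcal C^k$ plaque $N_x$ inside the orbit, and the augmented composition $G(\mathbf s,s')=\Phi_{s'}(F(\mathbf s))$ together with the bound $\operatorname{rank}(DG)\le d$ is exactly the right tool.

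There is, however, one genuine gap, precisely at the regularity the paper needs. The step ``this tangency forces $\Phi_s(q)\in N_x$ for small $s$'' does not follow for $\mathcal C^1$ flows: the generator of a $\mathcal C^k$ flow is in general only $\mathcal C^{k-1}$, so for $k=1$ it is merely continuous, and a continuous vector field tangent to a submanifold along that submanifold may have integral curves that leave it (uniqueness fails). Since the paper invokes the theorem for $\mathcal C^{k-1}$ flows with $k\ge 2$ (Proposition~\ref{prop: dualck}), the case of $\mathcal C^1$ flows is exactly the one that matters. The repair uses only what you already have: near $(\mathbf r_q,0)$ the rank of $DG$ is pinched at $d$ (it is $\le d$ by maximality and $\ge d$ because the restriction to $s'=0$ is already an immersion of rank $d$), so the constant rank theorem shows that the image of $G$ near $(\mathbf r_q,0)$ is a $d$‑dimensional $\mathcal C^k$ submanifold containing an open piece of the $d$‑dimensional $N_x$ around $q$, hence locally coinciding with $N_x$; thus $\Phi_{s'}(q)\in N_x$ for small $s'$ with no appeal to the generator at all. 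A second, smaller imprecision: ``every orbit point near $x$ lies in $N_x$'' is false if ``near'' refers to the topology of $M$ — orbits are only injectively immersed (an irrational line on a torus returns arbitrarily close to $x$ without entering $N_x$). What the iteration over words actually yields is that the plaques $N_{x'}$, $x'\in\Psi\cdot x$, cover the orbit, overlap in sets open in each, and have $\mathcal C^k$ transitions; the orbit topology must then be \emph{defined} as the one generated by these plaques, as your final sentence correctly does.
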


Indeed, \cite[Theorem 1]{Stefan} also proves that the decomposition of $M$ in $\Psi$-orbits is a \emph{singular $\mathcal C^k$-foliation}:  
For any $x\in M$ there exists a $\mathcal C^k$ diffeomorphism $F: O_1\times O_2 \to U$ with the following properties:
\begin{itemize}
\item $U$ is a neighborhood of $x$ in $M$,
\item  $O_i$ are open in $\mathbb R^{k_i}$,
\item $k_1=\dim (\Psi \cdot x)$,
%\item $F (0)=x$,
\item For every $y\in U$   there exists a subset $W_y$ of $O_2$
such that $$F(O_1\times W_y) =\Psi \cdot y \cap U\;.$$
\end{itemize}

We will use the word \emph{leaves} to denote the $\Psi$-orbits under the assumptions of Theorem \ref{thm: stefan}.  Given such a leaf $\mathcal L$ we call its \emph{intrinsic completion} $\tilde {\mathcal L}$   the set of points $z$  in $M$, such that
there exists a Lipschitz curve $\gamma:[0,1]\to M$, with $\gamma (1)=z$ and $\gamma (t)\in \mathcal L$ for all $t<1$.

%The property of being a singular $\mathcal C^1$-foliation has the following implications:

\begin{lem} \label{lem: leafclosure}
Consider the decomposition of a manifold $M$ into leaves $\mathcal L_x:=\Psi \cdot x$ as in Theorem \ref{thm: stefan}.  Then the closure $\bar {\mathcal L}$ and the completion $\tilde {\mathcal L}\subset \bar {\mathcal L}$ of any leaf $\mathcal L$ are unions of leaves of dimension at most $\dim (\mathcal L)$.    
  All leaves in 
$\tilde {\mathcal L}\setminus  \mathcal L$ have dimension smaller than $\dim (\mathcal L)$.
% The intrinsic completion $\tilde L$ is a uni
\end{lem}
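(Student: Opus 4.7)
My plan is to first show that both $\bar{\mathcal L}$ and $\tilde {\mathcal L}$ are $\Psi$-invariant, and then to extract the dimension bounds from Stefan's local chart around an accumulation point. For the $\Psi$-invariance, note that any $\phi \in \Psi$ is a finite composition of time-$t$ flows of elements of $\mathcal G$, and hence a $\mathcal C^k$-diffeomorphism of $M$ that preserves the orbit decomposition, so $\phi(\mathcal L) = \mathcal L$. Continuity of $\phi$ then gives $\phi(\bar{\mathcal L}) \subset \overline{\phi(\mathcal L)} = \bar{\mathcal L}$, and for any $z \in \tilde{\mathcal L}$ witnessed by a Lipschitz curve $\gamma$, the composition $\phi \circ \gamma$ is again a Lipschitz curve whose image before the endpoint lies in $\mathcal L$, witnessing $\phi(z) \in \tilde{\mathcal L}$. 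Hence both sets are unions of leaves.

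For the dimension bound, let $z \in \bar{\mathcal L}$ and apply the Stefan chart $F: O_1 \times O_2 \to U$ at $z$, so that $k_1 = \dim(\mathcal L_z)$ and every leaf $\mathcal L_y$ meeting $U$ satisfies $\mathcal L_y \cap U = F(O_1 \times W_y)$, which in particular has dimension at least $k_1$. Since $\mathcal L$ meets $U$ (because $z$ is a limit of points of $\mathcal L$), this gives $\dim(\mathcal L) \geq k_1 = \dim(\mathcal L_z)$, settling the dimension bound both for $\bar{\mathcal L}$ and for $\tilde{\mathcal L} \subset \bar{\mathcal L}$.

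For the strict inequality, I argue by contradiction: suppose $z \in \tilde{\mathcal L} \setminus \mathcal L$ satisfies $\dim(\mathcal L_z) = \dim(\mathcal L) = k_1$. In the chart at $z$, the equality $\mathcal L \cap U = F(O_1 \times W_\mathcal L)$ together with $\dim(\mathcal L) = k_1 = \dim O_1$ forces each plaque $F(O_1 \times \{w\})$, $w \in W_\mathcal L$, to be open in $\mathcal L$ for the intrinsic topology; since $\mathcal L$ is second countable as a Stefan leaf, $W_\mathcal L \subset \mathbb R^{k_2}$ is countable. Writing the witnessing Lipschitz curve near its endpoint as $\gamma(t) = F(\alpha(t), \beta(t))$ for $t \in [t_0, 1]$, the map $\beta \colon [t_0, 1) \to W_\mathcal L$ is continuous with connected image inside a countable subset of Euclidean space, hence its image is a single point $w_0$. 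Continuity at $t = 1$ then forces $\beta(1) = w_0 \in W_\mathcal L$, so $z = F(\alpha(1), w_0) \in F(O_1 \times W_\mathcal L) = \mathcal L \cap U \subset \mathcal L$, contradicting $z \notin \mathcal L$. The main subtle point is the countability of $W_\mathcal L$, which I extract from the second countability of leaves of a Stefan foliation (itself a consequence of the ambient second countability of $M$); once that is in hand the connectedness-of-the-transverse-projection step is automatic, since a connected subset of a countable subset of $\mathbb R^{k_2}$ is necessarily a singleton.
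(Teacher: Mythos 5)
Your proof is correct and follows essentially the same route as the paper's: $\Psi$-invariance gives that both sets are unions of leaves, the Stefan chart gives the dimension bound (the paper cites this as semicontinuity of leaf dimension from Stefan's paper), and the strict inequality comes from the countability of the transverse set $W_{\mathcal L}$ forcing a curve in $\mathcal L$ to stay in a single plaque. You merely spell out in full the countability and connectedness details that the paper compresses into its final two sentences.
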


\begin{proof}
Since the leaves are orbits of a group of diffeomorphisms, $\bar {\mathcal L}$ and $\tilde  {\mathcal L}$ 
are $\Psi$-invariant, hence they are union of dual leaves.  The dimension of leaves of a singular foliation is semi-continuous,  \cite[p. 700]{Stefan}, thus, all leaves contained in $\bar {\mathcal L}$ have dimension at most $\dim (\mathcal L)$.

Given a  leaf $\mathcal L'\subset \tilde {\mathcal L} \setminus \mathcal L$, and a point $x\in \mathcal L'$ we consider a trivializing chart $U$ at $x$, as described above. If $\dim (\mathcal L')=\dim (\mathcal L)$ then $\mathcal L\cap U$ is a countable union  of parallel translates of $\mathcal L' $.  Hence, a curve  contained  in $\mathcal L$ cannot converge to a point $x$, in contradiction to $\mathcal L'\subset \tilde {\mathcal L}$.
\end{proof}

We say that a leaf $\mathcal L$ is \emph{intrinsically complete} if $\mathcal L=\tilde {\mathcal L}$.  Due to  Lemma 
\ref{lem: leafclosure}, all leaves of minimial dimension are intrinsically complete.
If the manifold $M$ is complete, a leaf $\mathcal L$ is intriniscally complete if and only if $\mathcal L$ is complete as metric space, when equipped with its intrinsic metric.

\subsection{Singular Riemannian foliations} \label{sec: srf}
Let $M$ be a Riemannian manifold.  A $\mathcal C^k$ singular foliation on $M$ in the sense of Section \ref{subsec: foliation} is called a  $\mathcal C^k$ \emph{singular Riemannian foliation} if any geodesic  $ \gamma$ which intersects one leaf $\mathcal L$  of the foliation orthogonally is orthogonal to all leaves  intersected by $\gamma$.  We refer to \cite{Molino}, \cite{Alexandrino-survey} for the theory of singular Riemannian foliations with smooth leaves. 

 For a  smooth singular Riemannian foliation  on a complete Riemannian manifold the leaves are pairwise equidistant and so are their closures.  The leaf closures are again leaves of a singular Riemannian foliation, \cite{ARMolino}, \cite{Al-RadI}, \cite{Alexandrino-survey}.  We will use the following much simpler result in the non-smooth setting.
% whose proof does not  follow quite as the corresponding smooth statement.

\begin{prop} \label{prop: leafclos}
Let $M$  be a complete Riemannian manifold. For any $\mathcal C^1$ singular Riemannian foliation $\mathcal F$ on $M$ and  any pair of leaves $\mathcal L_1, \mathcal L_2$, the leaf closures $\mathcal L_1$ and $\mathcal L_2$ are  equidistant.

 The leaf closures of $\mathcal F$ are fibers of a transnormal submetry 
$$P_{\bar {\mathcal F}}:M\to M/\bar{\mathcal F}\,.$$
%are pairwise equidistant  $\mathcal C^1$ manifolds.  The decomposition of $M$ as the union of leaf closures of $\mathcal F$ defines a transnormal submetry of $M$.
\end{prop}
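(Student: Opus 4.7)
My plan is to prove the proposition in three steps: equidistance of individual leaves of $\mathcal F$, passage to closures to set up the quotient submetry, and verification of transnormality, which will be the main obstacle.

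For equidistance of two leaves $\mathcal L_1$ and $\mathcal L_2$, I will show that $f := d(\cdot, \mathcal L_2)$ is constant on $\mathcal L_1$. Along any $\mathcal C^1$-path $\eta$ in $\mathcal L_1$, the composition $f \circ \eta$ is Lipschitz. At almost every parameter $t_0$, a minimizing segment $\gamma_{t_0}$ from $\eta(t_0)$ to $\overline{\mathcal L}_2$ exists and is unique, and the first variation formula gives $(f \circ \eta)'(t_0) = -\langle \eta'(t_0), \gamma_{t_0}'(0)\rangle$. The segment $\gamma_{t_0}$ arrives at $\overline{\mathcal L}_2$ tangentially to a leaf of $\mathcal F$, so the defining property of a $\mathcal C^1$-SRF forces it to be orthogonal to every leaf it crosses; in particular it is orthogonal to $\mathcal L_1$ at $\eta(t_0)$, making the inner product vanish. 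Hence $f \circ \eta$ has a.e.\ zero derivative and is constant, and connectedness of $\mathcal L_1$ together with the fact that any two of its points can be joined by a $\mathcal C^1$-path yields constancy of $f$ on all of $\mathcal L_1$.

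Because $d(\cdot, \mathcal L_2) = d(\cdot, \overline{\mathcal L}_2)$ and $f$ is continuous, constancy extends from $\mathcal L_1$ to $\overline{\mathcal L}_1$ by density; symmetrising in the two leaves gives equidistance of $\overline{\mathcal L}_1$ and $\overline{\mathcal L}_2$. A first corollary is that for every leaf $\mathcal L' \subseteq \overline{\mathcal L}$ one has $\overline{\mathcal L'} = \overline{\mathcal L}$, so $P_{\overline{\mathcal F}}\colon M \to M/\overline{\mathcal F}$, sending $x$ to the class of the closure of its leaf, is well-defined with fibers exactly the leaf closures. I will equip $M/\overline{\mathcal F}$ with the quotient metric $d_Y(P(x), P(y)) := d(\overline{\mathcal L}_x, \overline{\mathcal L}_y)$; both inclusions of the submetry identity $P(B_r(x)) = B_r(P(x))$ follow immediately from equidistance, the $\subseteq$ direction from $d_Y(P(z), P(x)) \leq d(z, x)$, and the $\supseteq$ direction from the fact that any $y$ with $d_Y(P(x), P(y)) < r$ supplies, via equidistance, some $z \in \overline{\mathcal L}_y$ with $d(x, z) < r$.

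The main obstacle will be transnormality, i.e., showing that every fiber $\overline{\mathcal L}$ is a topological manifold without boundary. By Lemma \ref{lem: leafclosure}, $\overline{\mathcal L}$ is a union of leaves whose dimensions do not exceed $d := \dim(\mathcal L)$, with $\mathcal L$ itself the unique leaf of full dimension $d$. The approach is a local homogeneity bootstrap: for any $x \in \overline{\mathcal L}$, equidistance of closures together with the flows in the Stefan family $\mathcal G$ generating $\mathcal F$ make the local structure of $\overline{\mathcal L}$ near $x$ identical to that near any nearby point $x_0 \in \mathcal L$. Concretely, I will transport a local topological chart of $\overline{\mathcal L}$ at some $x_0 \in \mathcal L$ to $x$ by iterating holonomy along short horizontal geodesics within $\mathcal F$ and taking limits; equidistance of closures guarantees that the transported chart lies inside $\overline{\mathcal L}$, while the SRF property together with completeness of $M$ ensure that the required horizontal geodesics exist on the necessary time intervals. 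Once every fiber is known to be a topological manifold, the equivalent geodesic characterisation of transnormality follows at once from the SRF property of $\mathcal F$ itself, since any geodesic horizontal for $P_{\overline{\mathcal F}}$ is in particular horizontal for $\mathcal F$, hence stays horizontal for all leaves and, by continuity, for all their closures.
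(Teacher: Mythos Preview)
Your equidistance argument and the construction of the quotient submetry are essentially the paper's, modulo two small slips: the minimizing segment $\gamma_{t_0}$ arrives \emph{orthogonally}, not tangentially, to the leaf through its endpoint (this orthogonality is exactly what lets you invoke the SRF axiom), and the a.e.\ uniqueness of the minimizing segment is neither needed nor obviously true before you know the fibers have positive reach --- the paper avoids it by showing directly that \emph{every} minimizing geodesic from $x$ to $\overline{\mathcal L}_1$ is orthogonal to $\mathcal L_2$ at $x$, so $D_xf(v)=0$ for tangent $v$.

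The real gap is transnormality. Your claim that $\mathcal L$ is the \emph{unique} leaf of dimension $d$ inside $\overline{\mathcal L}$ is false: equidistance forces every leaf in $\overline{\mathcal L}$ to be dense in $\overline{\mathcal L}$, hence (by Lemma~\ref{lem: leafclosure} applied in both directions) to have the same dimension $d$ as $\mathcal L$. More importantly, the ``local homogeneity bootstrap'' does not go through as stated. The flows in $\mathcal G$ preserve each leaf individually, so they do not move a point $x_0 \in \mathcal L$ to a point $x \in \overline{\mathcal L}\setminus\mathcal L$; and holonomy along $\mathcal F$-horizontal geodesics maps between \emph{distinct} leaf closures, not within one. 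Thus you have no mechanism to transport a chart from $x_0$ to $x$, and you have not even argued that $\overline{\mathcal L}$ is a manifold near the dense set $\mathcal L$ itself: in a Stefan chart at $x_0$ one has $\overline{\mathcal L}\cap U \cong O_1 \times W$ with $W$ a closed subset of the transversal, and nothing you wrote rules out $W$ being, say, a Cantor set.

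The paper's route is quite different: once the submetry $P_{\overline{\mathcal F}}$ is in hand, each fiber $L=\overline{\mathcal L}$ has positive reach by \cite[Theorem~1.1]{KL}. If such a set failed to be a manifold without boundary, it would contain (by \cite[Theorem~1.2]{L-note}) an open subset $K$ homeomorphic to a manifold with nonempty boundary. Invariance of domain then prevents the homeomorphism group $\Psi$ from moving a boundary point of $K$ into the interior of $K$, so the leaf through that boundary point cannot be dense in $L$ --- contradicting the equidistance you already proved. You should replace your homogeneity sketch with this positive-reach plus domain-invariance argument.
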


\begin{proof}
Let $L$ be the closure of  $\mathcal L_1$ and let $f$  be the distance function $f=d_L=d_{\mathcal L_1}$.  
Assume that $\mathcal L_2$ is not contained in $L$ and let 
 $x\in \mathcal L_2  \setminus L$ be arbitrary. The distance from $L$ to $x$ is realized by
some geodesics from $x$ to some points $z_i \in L$.  For any such $z_i$, its leaf $\mathcal L_{z_i}$ is contained in $L$, Lemma \ref{lem: leafclosure}. By the first formula of variation, any shortest geodesic from $x$ to $z_i$ meets $\mathcal L_{z_i}$ orthogonal at $z_i$.

By the definition of a singular Riemannian foliation, any such geodesic meets $\mathcal L_2$ orthogonal at $x$.  By the first formula of variation, the differential $D_xf (v)$  vanishes, for any 
direction $v\in T_x M$ tangent to the manifold $\mathcal L_2$.    Thus, $f$ is constant along any $\mathcal C^1$ curve in $\mathcal L_2$.  Thus, $f$ is constant of $\mathcal L_2$ and then also on the closure of $\mathcal L_2$.    

Exchanging the roles of $\mathcal L_1$ and $\mathcal L_2$, we deduce that their leaf closures     
are equidistant.  
Thus,  the decomposition of $M$ into leaf closures  of $\mathcal F$ defines a submetry 
$P_{\mathcal F}:M\to Z$, whose fibers are exactly these leaf closures.

It remains to verify that any leaf closure $L:=\bar {\mathcal L_1}$ is a manifold without boundary.  Let $\Psi$ denote the group of $\mathcal C^1$ diffeomorphsms of $M$ defining $\mathcal F$.  Then, $L$ is $\Psi$-invariant by Lemma \ref{lem: leafclosure}. 
Moreover, for any $q\in L$, the leaf $\Psi \cdot q$ has the same closure $L$, due to equidistance. 

$L$ is a subset of positive reach in $M$, \cite[Theorem 1.1]{KL}. If $L$ is not a manifold without boundary, $L$ contains a subset  $K$, open in $L$ and homeomorphic to a manifold  with non-emoty boundary, \cite[Theorem 1.2]{L-note}.  By the domain invariance, the group $\Psi$ of homeomorphisms of $L$ cannot map a boundary point $q$ of $K$ to an interior point of $K$.  Thus, the leaf $\Psi \cdot q$ cannot be dense in   $L$, in contradiction to  the equidistance statement.

Hence, $L$ is a manifold without boundary and  $P_{\mathcal F}$ is transnormal.
\end{proof}

%The proof of Proposition \ref{prop: leafclos} is a straightforward adaptation of the corresponding statement for smooth singular Riemannian foliations.

\subsection{Dual leaves}  Let again $P:M\to Y$ be a  local submetry. 
For any $x\in M$, we denote by $\mathcal L^{\sharp}_x$  the set of points which can be connected to $x$ by a   piecewise horizontal geodesic and call it the \emph{dual leaf} of $x$.
Clearly, $M$ is a disjoint union of dual leaves.  As in the smooth situation, \cite[Proposition 2.1]{Wilking} we can now prove: 

\begin{prop} \label{prop: dualck}
Let the local submetry $P:M\to Y$ be  $\mathcal C^k$, for some $k\geq 2$.   Then the dual 
leaves of $P$ are leaves of a singular $\mathcal C^{k-1}$-foliation. In particular, they are immersed $\mathcal C^{k-1}$ submanifolds.  
 % Every Lipschtz continuous horizontal curve starting in $x$ stays in $\mathcal F_x$. 
\end{prop}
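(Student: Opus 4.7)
Since $P$ is $\mathcal C^k$ with $k\geq 2$, every fiber of $P$ is a $\mathcal C^k$-submanifold, so both the vertical distribution $V$ and the horizontal distribution $H$ are $\mathcal C^{k-1}$-subbundles of $TM$. My strategy is to realize each dual leaf as a Stefan orbit of the family $\mathcal G$ consisting of local flows of $\mathcal C^{k-1}$-sections of $H$ (compactly supported on open subsets of $M$). Applying Theorem~\ref{thm: stefan} to $\mathcal G$ yields a decomposition of $M$ into $\Psi^{\mathcal G}$-orbits $\mathcal O_p$, each an injectively immersed $\mathcal C^{k-1}$-submanifold, together forming a singular $\mathcal C^{k-1}$-foliation. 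One verifies $T_q\mathcal O_p=H_q$ for every $q\in\mathcal O_p$: the inclusion $\supseteq$ holds because every horizontal vector extends to a $\mathcal C^{k-1}$-section of $H$, and $\subseteq$ holds because integral curves of such sections are horizontal.

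It remains to identify $\mathcal O_p$ with $\mathcal L^{\sharp}_p$ for every $p\in M$. For the inclusion $\mathcal L^{\sharp}_p\subseteq\mathcal O_p$, I will show that any horizontal geodesic $\gamma\colon[0,T]\to M$ starting at $p$ is an integral curve of some $X\in\mathcal G$. Subdividing so that each piece is embedded, I extend $\dot\gamma$ to a $\mathcal C^{k-1}$-horizontal vector field $X$ on a tubular neighborhood of the piece by orthogonally projecting a coordinate vector field onto $H$; then $\gamma$ is the integral curve of $X$ starting at $p$, so $\gamma(T)\in\mathcal O_p$, and concatenation handles the piecewise case.

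For the reverse inclusion I plan to show $\mathcal L^{\sharp}_p$ is both open and closed in the connected immersed manifold $\mathcal O_p$. The key ingredient is that for each $q\in M$, the horizontal disk $D_q:=\exp_q(H_q\cap B_\epsilon)$ is a $\mathcal C^{k-1}$-embedded $(\dim H_q)$-submanifold of $M$ entirely contained in $\mathcal L^{\sharp}_q$, with $T_qD_q=H_q=T_q\mathcal O_q$. Using a Stefan trivializing chart around $q$, the connected set $D_q$ lies in a single slice of $\mathcal O_q$; since $\dim D_q=\dim\mathcal O_q$ and the tangent spaces at $q$ agree, the inverse function theorem promotes the inclusion to a local diffeomorphism, so $D_q$ covers an open neighborhood of $q$ inside $\mathcal O_q$. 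This gives openness at any $q\in\mathcal L^{\sharp}_p$, and the same fact applied at an accumulation point of $\mathcal L^{\sharp}_p$ inside $\mathcal O_p$ gives closedness (the disk at the limit point must meet $\mathcal L^{\sharp}_p$, yielding a piecewise horizontal geodesic back to $p$). Connectedness of $\mathcal O_p$ then forces equality.

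The principal technical obstacle is the openness step: one must check that $D_q$, a priori only known to lie inside $\mathcal L^{\sharp}_q\subseteq\mathcal O_q$ as a subset of $M$, is in fact open inside $\mathcal O_q$ in its finer intrinsic (Stefan) topology. This is resolved using the connectedness of $D_q$ together with the structure of Stefan charts, in which each leaf meets the chart as a countable union of embedded slices. Once this is settled, the argument follows \cite[Proposition 2.1]{Wilking} with smoothness replaced by $\mathcal C^{k-1}$ throughout; the hypothesis $k\geq 2$ is precisely what ensures that $H$ is $\mathcal C^1$ and that its sections admit well-defined $\mathcal C^1$-flows.
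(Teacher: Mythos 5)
Your construction breaks down at the identity $T_q\mathcal O_p=H^q$, and this is fatal to the rest of the argument. The Stefan orbit of a family of horizontal flows has tangent space spanned by the pushforwards of the generating fields under all compositions of the flows; these pushforwards are not horizontal unless the horizontal distribution is integrable, which it almost never is. (Already for the Hopf fibration $S^3\to S^2$ the horizontal $2$-plane field is contact-type, the orbit of the horizontal flows is all of $S^3$, and the dual leaf is likewise all of $S^3$ -- of dimension $3$, not $2$.) Your justification of $T_q\mathcal O_p\subseteq H^q$ ("integral curves of such sections are horizontal") only shows that the orbit is contained in the horizontally path-accessible set, not that its tangent space is horizontal. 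Consequently $\dim D_q=\dim H^q$ is in general strictly smaller than $\dim\mathcal O_q$, the inverse-function-theorem step does not apply, and the whole open--closed argument for $\mathcal O_p\subseteq\mathcal L^{\sharp}_p$ collapses. This reverse inclusion is also where the real content lies: your generators produce points reachable by piecewise horizontal \emph{curves}, whereas the dual leaf is defined by piecewise horizontal \emph{geodesics}, and identifying the two accessible sets is precisely what needs proof. A secondary problem is the opening claim that $H$ is a $\mathcal C^{k-1}$-subbundle of $TM$: the fibers of a transnormal submetry can have different dimensions, so $V$ and $H$ have jumping rank and are subbundles only along each stratum.

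The paper sidesteps both difficulties by choosing a different generating family $\mathcal G$: flows compactly supported in tubes $O=\exp_{\mathcal D}(U)$ over open pieces $\mathcal D$ of fibers, required to preserve each individual horizontal exponential disk $\exp_q(H^q\cap U)$. With that choice the inclusion (orbit)\,$\subseteq$\,(dual leaf) is immediate, since any two points of such a disk are joined to the common base point $q$ by genuine horizontal geodesics; and the reverse inclusion follows by covering a piecewise horizontal geodesic by finitely many such tubes and moving within the disks $H^p\cap U$. No integrability of $H$, no constant rank, and no comparison between horizontal curves and horizontal geodesics is ever needed. If you want to salvage your approach you would have to prove independently that the Stefan orbit of the horizontal flows equals the set accessible by piecewise horizontal geodesics; that is essentially a theorem of Wilking for smooth singular Riemannian foliations and is not available here without further work.
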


\begin{proof}
We consider the set  of all  triples  $( O,  \Phi, \mathcal D)$  such that:

\begin{enumerate}
\item $O$ is an open subset of $M$.
\item $\Phi$ is a flow of $\mathcal C^{k-1}$-diffeomorphisms  of $M$, whose support is compact and  contained in $O$.
\item $\mathcal D$ is an open subset of some fiber $L$ of $P$.
\item The normal exponential map $\exp _{\mathcal D}$ of $\mathcal D$  sends some open subset $U \subset T^{\perp} \mathcal D$  diffeomorphically onto $O$.
\item The pull-back of $\Phi $ to $U$   via $\exp _{\mathcal D}$ preserves every single normal space $T^{\perp} _q \mathcal D \cap U =H^q\cap U$. 
\end{enumerate}

Consider the set $\mathcal G$ of all flows $\Phi $ (for all triples as above) and the corresponding group $\Psi =\Psi ^{\mathcal G}$ of $\mathcal C^{k-1}$-diffeomorphisms.
We claim that  the $\Psi ^G$-orbit of $x$ coincides with the dual leaf $\mathcal L^{\sharp} _x$.  

Indeed, let  $\Phi  \in \mathcal G$ be arbitrary and let $s\in \mathbb R$ be fixed. If $z\in M\setminus O$  then $\Phi _{s}(z)=z$.  If $z\in O$ then $z$ and $\Phi _s (z)$  are given as $\exp_q (h_1)$ and $\exp _q (h_2)$ for some $q\in \mathcal D$ and some $h_1,h_2\in H^q$, due to the properties  (4) and (5) above.
  Then  $z$ and $\Phi _s (z)$ are  connected with $q$ by horizontal geodesics, hence $z$ and $\Phi _s(z)$ are in the same dual leaf $\mathcal L^{\sharp} _q$.  

 By induction on 
the number $l$ of factors in the representation \eqref{eq: repr}, we deduce  $\phi (x)\in \mathcal L^{\sharp} _x$, for any diffeomorphism
$\phi\in \Psi^{\mathcal G}$. 

In order to verify the reverse inclusion $\mathcal L^{\sharp} _x \subset \Phi (x) $, consider any horizontal piecewise geodesic $\eta$ starting at $x$.   For any point $p$ on $\eta$ we find a small neighborhood $\mathcal D$ in the fiber $L=L^p$  and a small normal disc bundle $U \subset  T^{\perp} \mathcal D$, such that the normal exponential map of $\mathcal D$ sends $U$ diffeomorphically onto its open image $O$.  We may further assume that $O\cap \eta$ is a concatenation of two geodesics meeting at $p$.

$U$ is a $\mathcal C^{k-1}$ bundle over $\mathcal D$. Hence,  for any $h_1,h_2 \in U\cap H^p$, we find a  compactly supported $\mathcal C^{k-1}$-flow of $U$ which preserves every fiber $U\cap H^q$  and sends at the time $s=1$ the point $h_1$ to $h_2$.  
Sending this flow forward to $O$ via the normal exponential map, we deduce that 
$\exp _p(h_1)$ and $\exp_p (h_2)$ are contained in the same $\Psi$-orbit.   Hence $U\cap \eta$ is contained in one  $\Psi$-orbit.  Covering the geodesic $\eta$ by finitely many such open sets $U$, we deduce that $\eta \subset \Psi \cdot x$.

This  finishes the proof of the claim.  Now the proposition follows from
  \cite[Theorem 1]{Stefan}.
\end{proof}

\subsection{Normal spaces to dual leaves}
Let  $P:M\to Y$ be a  local submetry with $\mathcal C^2$-fibers.
For any $x\in M$ we denote by $\mathcal L^{\sharp}_x$ the  dual leaf of $P$ through $x$.
Due to Proposition \ref{prop: dualck},  $\mathcal L^{\sharp}_x$ is a $\mathcal C^1$ submanifold of $M$.  The normal space $T_x^{\perp}\mathcal L^{\sharp} _x$ will be denoted by $A^x$  and called the \emph{dual-normal} space at $x$.
%  Thus, the orthogonal complement $(A^x)^{\perp}$ is the tangent space 
% $T_x\mathcal L^{\sharp} _x$.

All horizontal geodesics starting at $x$ are contained in $\mathcal L^{\sharp}_x$.  Therefore,  $H^x\subset  T_x\mathcal L^{\sharp} _x$, hence the dual-normal space $A^x$ is contained in $V^x$.

The next  observation  probably holds in  greater generality. We state and prove the version sufficient for the subsequent consequence

%  and its corollary are contained in \cite[Sections 3,4]{Wilking} in the setting of smooth singular Riemannian foliations:

\begin{lem}  \label{lem: dualtang+}
Let $P:M\to Y$ be a  local submetry with $\mathcal C^2$-fibers and let $\gamma:I\to M$ be a typical horizontal geodesic contained in a dual leaf $\mathcal L$.
   Let $J$ be a normal Jacobi field
 along $\gamma$,  which is given as a variational field of a variation of horizontal geodesics.
Let $t_0\in I$ be arbitrary.  If $J(t_0)=0$ or if $\gamma (t_0)$ is regular and $J(t_0)$ is tangent to   $\mathcal L$  then $J(t)$ is tangent to $\mathcal L$, for all $t\in I$. 
\end{lem}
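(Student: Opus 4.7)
The plan is to treat the two hypotheses separately and then to deduce the second from the first. Three structural facts will be used throughout: (i) $P$ is transnormal with linear horizontal spaces $H^x$ (Theorem~\ref{cor: smimpltr}, using that all fibers are $\mathcal C^2$); (ii) the dual leaf $\mathcal L$ is an immersed $\mathcal C^1$-submanifold (Proposition~\ref{prop: dualck}); and (iii) by the very definition of the dual leaf, any horizontal geodesic issuing from a point of $\mathcal L$ remains in $\mathcal L$.

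For the case $J(t_0)=0$, I first observe that any $J \in \mathcal U$ with $J(t_0)=0$ already lies in $\mathcal W^{t_0}$: a variation $\gamma_s$ of $\gamma$ through horizontal geodesics realizing $J$ can be replaced by $\tilde\gamma_s(t) := \exp_{\gamma(t_0)}((t-t_0)\gamma_s'(t_0))$, which fixes $\gamma(t_0)$ and has the same $J$ as variational field; since $\tilde\gamma_s'(t_0)$ is a curve in the \emph{linear} space $H^{\gamma(t_0)}$, we get $J'(t_0) \in H^{\gamma(t_0)} \cap (\gamma'(t_0))^\perp$. I then use the concrete variation
\[
\gamma_s(t) := \exp_{\gamma(t_0)}\!\bigl((t-t_0)(\gamma'(t_0)+s\,J'(t_0))\bigr),
\]
whose variational field is $J$ and which consists of horizontal geodesics from $\gamma(t_0) \in \mathcal L$, so each $\gamma_s$ stays in $\mathcal L$. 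Continuous dependence of the geodesic flow on initial data ensures this variation is defined on any compact subinterval of $I$ for $s$ small enough, forcing $J(t) \in T_{\gamma(t)}\mathcal L$ for all $t \in I$.

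For the second hypothesis, I pick a $\mathcal C^1$-curve $c:(-\varepsilon,\varepsilon)\to \mathcal L$ with $c(0)=\gamma(t_0)$, $c'(0)=J(t_0)$, and extend $\gamma'(t_0)$ to a $\mathcal C^1$-section $h(s) \in H^{c(s)}$ of the horizontal distribution; such an extension is available because $P$ is $\mathcal C^2$ in a neighborhood of the regular point $\gamma(t_0)$ by Corollary~\ref{cor: extregfib}, so $H$ is a $\mathcal C^1$-subbundle there. The variation $\tilde\gamma_s(t) := \exp_{c(s)}((t-t_0)h(s))$ consists of horizontal geodesics from $c(s) \in \mathcal L$ and so lies in $\mathcal L$; writing its variational field as $\tilde J = \tilde J^\perp + f(t)\gamma'$, the normal summand $\tilde J^\perp \in \mathcal U$ is tangent to $\mathcal L$ throughout $I$ (since $\gamma' \in H \subset T\mathcal L$) and satisfies $\tilde J^\perp(t_0) = c'(0) = J(t_0)$. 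The difference $K := J - \tilde J^\perp$ then lies in $\mathcal U$ with $K(t_0)=0$, so the first case applied to $K$ gives $K(t) \in T_{\gamma(t)}\mathcal L$ for every $t$, and $J = \tilde J^\perp + K$ is tangent to $\mathcal L$ on all of $I$. The main point to watch is that this construction cannot match $J'(t_0)$ on the nose, only $J(t_0)$; but the resulting discrepancy lies in $\mathcal W^{t_0}$, which is precisely the situation absorbed by the first case, and it is this two-step split that makes the argument close.
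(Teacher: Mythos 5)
Your proposal is correct and follows essentially the same route as the paper: the vanishing case is handled by realizing $J$ as the variational field of horizontal geodesics all issuing from the single point $\gamma(t_0)\in\mathcal L$ (using that $J'(t_0)$ is horizontal and $H^{\gamma(t_0)}$ is linear by transnormality), and the general case is reduced to it by constructing a comparison variation over a curve in $\mathcal L$ with initial velocity $J(t_0)$ and a $\mathcal C^1$ horizontal extension of $\gamma'(t_0)$, then subtracting. The only cosmetic difference is that you justify the horizontality of $J'(t_0)$ by a reparametrization of the variation rather than by $\omega$-orthogonality to the holonomy fields as in the proof of Proposition \ref{jac: family}, but both are standard and valid.
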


\begin{proof}
If $J(t_0)=0$, then $J$ can be written as the  variational field of a variation of horizontal geodesics $\gamma _s$ with $\gamma _s(t_0)=\gamma (t_0)$ for all $s$, as has been observed in the proof of Proposition \ref{jac: family}.   By definition, all these geodesics $\gamma _s$ are contained in $\mathcal L$, hence $J(t)$ is tangent to $\mathcal L$ for all $t$.
%passing is a vector space.

Let $\gamma (t_0)$ be regular and $J(t_0)$ be tangent to $\mathcal L$. % By assumption $J$ is the variational field of a variation $\gamma _s$ of $\gamma$ through horizontal geodesics.
   Consider a curve $\eta$ in $\mathcal L$ which starts in $\gamma (t_0)$ in the direction of $J(t_0)$.  Since the 
horizontal vectors define a $\mathcal C^1$-bundle over $M_{reg}$,  we find a $\mathcal C^1$ horizontal field along $\eta$ extending $\gamma'(t_0)$.  Exponentiating this vector field we obtain a variation of $\gamma$ through horizontal geodesics $\gamma _s$ with 
$\gamma _s(t_0)=\eta (s)$.  By definition, these geodesics $\gamma _s$  are contained in $\mathcal L$, hence the corresponding Jacobi field $\tilde J$ is everywhere tangent to $\mathcal L$.     By construction, $J(t_0)=\eta'(0)=\tilde J(t_0)$.  The difference $J-\tilde J$
is a Jacobi field along $\gamma$ which vanishes in $t_0$.  Moreover, $J-\tilde J$ is given a variational field of a variation by horizontal geodesics, see Section \ref{subsec: jacident}.
Subtracting a Jacobi field everywhere tanegnt to $\gamma$, we may assume that $J-\tilde J$ is a normal Jacobi field.   By the previous consideration, $J-\tilde J$ is then everywhere tangent to $\mathcal L$. Hence $J$ is  everywhere tangent to $\mathcal L$ as well.
\end{proof}

As a direct consequence we deduce:

\begin{lem} \label{lem: dualtangent}
Let $P:M\to Y$ be a  local submetry with $\mathcal C^2$-fibers. Let $L$ 
be a typical fiber and $x\in L$ be arbitrary.
%   and let   $\mathcal L^{\sharp}_x$ be the dual leaf through the point $x$. 
Let $0\neq h\in H^x$ be a typical direction and  $\mathcal W(h)$ be the space of $L$-Jacobi fields along  $\gamma ^h$.   Let $^o\mathcal W (h)$ be the  focal-generated subspace of $\mathcal W(h)$. Then, the tangent space to the dual leaf $ T_x \mathcal L^{\sharp}_x$
contains 
$$\mathcal T :=\{J(0)  \;\;  :  \;\;  J\in {^o\mathcal W} (h) \}\,.$$ 
 Hence, $A^x$ is contained in the orthogonal complement of $\mathcal T$ in $V^x$.
\end{lem}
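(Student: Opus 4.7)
The plan is to reduce the statement to a direct application of Lemma \ref{lem: dualtang+} applied to each individual focalizing Jacobi field, combined with the characterization of focalizing $L$-Jacobi fields along typical geodesics provided by Proposition \ref{jac: family}(1).

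First I would recall that by the very definition in Section \ref{subsec: focalpoints}, the focal-generated subspace $^o\mathcal W(h)$ is the linear span of the focalizing subspaces $\mathcal W(h)^t = \ker(\mathcal E^t)$ over all focal times $t$ in the domain of $\gamma^h$. By linearity, it therefore suffices to show that $J(0)\in T_x\mathcal L^{\sharp}_x$ for each $J\in \mathcal W(h)^t$, and then pass to the span.

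Fix such a $J$ and a focal time $t$. Since $h$ is a typical direction and $L$ is a typical fiber, Proposition \ref{jac: family}(1) identifies $\mathcal W(h)^t$ with the space $\mathcal W^t_x$ of normal Jacobi fields along $\gamma^h$ satisfying $J(t)=0$, $J'(t) \in H^{\gamma^h(t)}$ and $J(0) \in V^x$. In particular $J(t)=0$, and, as noted in the proof of that proposition (cf.\ the first paragraph dealing with case (1)), such a $J$ is the variational field of a variation of $\gamma^h$ through horizontal geodesics (those passing through $\gamma^h(t)$). Because $\gamma^h$ itself is a horizontal geodesic emanating from $x$, it is contained in the dual leaf $\mathcal L^\sharp_x$, so $\gamma^h(t)\in \mathcal L^\sharp_x$ and hence $\mathcal L^\sharp_{\gamma^h(t)} = \mathcal L^\sharp_x$. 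The hypotheses of Lemma \ref{lem: dualtang+} are thus satisfied at $t_0 = t$ (with $J(t_0)=0$), so $J(s)$ is tangent to $\mathcal L^\sharp_x$ for every $s$ in the domain of $\gamma^h$. Taking $s=0$ gives $J(0)\in T_x\mathcal L^\sharp_x$.

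This proves $\mathcal T \subset T_x\mathcal L^\sharp_x$, which is the first assertion. The second assertion is then immediate: $A^x = T_x^\perp \mathcal L^\sharp_x$ is orthogonal to $T_x\mathcal L^\sharp_x$ and in particular to $\mathcal T$, while the observation preceding the lemma records $A^x\subset V^x$; intersecting, $A^x$ lies in the orthogonal complement of $\mathcal T$ inside $V^x$. There is no serious obstacle here: the content sits in the previously established Lemma \ref{lem: dualtang+} and in the identification of $\mathcal W^t_x$ with the focalizing $L$-Jacobi fields along typical geodesics, and the present lemma is essentially a bookkeeping consequence of both.
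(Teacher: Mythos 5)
Your argument is correct and is exactly the deduction the paper intends: the paper states this lemma ``as a direct consequence'' of Lemma \ref{lem: dualtang+}, and you have simply written out that consequence (focalizing $L$-Jacobi fields vanish at the focal time and are variational fields of variations through horizontal geodesics, so Lemma \ref{lem: dualtang+} with $J(t_0)=0$ makes them everywhere tangent to the dual leaf; then span and take orthogonal complements). One cosmetic point: Proposition \ref{jac: family}(1) is stated for \emph{typical} base points while the lemma allows $x\in L$ arbitrary, but you do not actually need that identification — since the fibers are $\mathcal C^2$ every point is good and $P$ is transnormal, so any $L$-Jacobi field is already a variational field of a variation through horizontal geodesics, which is all Lemma \ref{lem: dualtang+} requires.
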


In non-negative curvature we obtain as a consequence:

\begin{cor}
Let $M$ be complete and non-negatively curved.  Let $P:M\to Y$ be a transnormal submetry,
let $x\in M$ be an arbitrary regular point. % Let $L$ be the fiber of $P$ though $x$ and $\mathcal L^{\sharp}$ the dual leaf through $x$.  
Then for any $v\in A^x \subset V^x$ and any non-zero horizontal vector $h\in H^x$ the following holds true:

The holonomy field $J_{h,v}$  along $\gamma ^h$ with $J_{h,v}(0)=v$ is parallel and $J_{h,v} (t)$ is contained in $A^{\gamma ^h(t)}$ for all $t\in \mathbb R$.
% orthogonal to $\mathcal L^{\sharp}$ at all points $\gamma ^h(t) \in  \mathcal L^{\sharp}$.
\end{cor}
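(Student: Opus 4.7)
The plan is to combine Theorem~\ref{thm: GAG} with Lemma~\ref{lem: dualtangent} to identify $J_{h,v}$ with a parallel $L$-Jacobi field along $\gamma^h$. Since $\mathcal{L}^\sharp_x$ contains every horizontal geodesic through $x$, one has $H^x\subset T_x\mathcal{L}^\sharp_x$, so $A^x\subset V^x$. For a typical direction $h$, Theorem~\ref{thm: GAG} yields the pointwise orthogonal decomposition $\mathcal W(h)={}^o\mathcal W(h)\oplus\mathcal W^{\mathrm{par}}(h)$; evaluating at $t=0$ and using that $J\mapsto J(0)$ maps $\mathcal W(h)$ onto $V^x$, this produces the orthogonal direct sum $V^x=\mathcal T(h)\oplus\mathcal T_{\mathrm{par}}(h)$, where $\mathcal T(h)$ and $\mathcal T_{\mathrm{par}}(h)$ are the images at $t=0$ of the two summands. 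Lemma~\ref{lem: dualtangent} gives $\mathcal T(h)\subset T_x\mathcal{L}^\sharp_x$, hence $A^x\subset\mathcal T_{\mathrm{par}}(h)$; the case of a general non-zero $h$ follows by continuity and density of typical directions.

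For $v\in A^x$, let $J_1\in\mathcal W^{\mathrm{par}}(h)$ be the unique parallel $L$-Jacobi field with $J_1(0)=v$, so $J_1(t)=P_tv$. Decompose $J_{h,v}=J_0+\tilde J_1$ according to Theorem~\ref{thm: GAG}. At $t=0$, the identity $v=J_0(0)+\tilde J_1(0)\in\mathcal T(h)\oplus\mathcal T_{\mathrm{par}}(h)$ combined with $v\in\mathcal T_{\mathrm{par}}(h)$ forces $J_0(0)=0$ and $\tilde J_1=J_1$. It therefore suffices to show $J_0\equiv 0$: then $J_{h,v}=J_1=P_tv$ is parallel, and the inclusion $J_{h,v}(t)\in A^{\gamma(t)}$ follows by repeating the whole argument at the point $\gamma(t)$ in place of $x$ (using that $\mathcal{L}^\sharp_{\gamma(t)}=\mathcal{L}^\sharp_x$).

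To kill $J_0$, I would use Lemma~\ref{lem: dualtang+}: each focalizing $L$-Jacobi field, being a variation of horizontal geodesics vanishing at its focal time, is tangent to $\mathcal{L}^\sharp$ at every time, so $J_0(t)\in T_{\gamma(t)}\mathcal{L}^\sharp$ throughout $\gamma$. Meanwhile, $J_{h,v}(t)\in V^{\gamma(t)}$ by the holonomy-field property under transnormality. Working in the three-way orthogonal decomposition $T_{\gamma(t)}M=A^{\gamma(t)}\oplus(V^{\gamma(t)}\cap T_{\gamma(t)}\mathcal{L}^\sharp)\oplus H^{\gamma(t)}$ and exploiting the pointwise orthogonality $J_0(t)\perp J_1(t)$ together with $|P_tv|=|v|$ and the initial condition $P_0v\in A^x$, one propagates forward in $t$ the containment $P_tv\in A^{\gamma(t)}$. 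Once this is established, $J_0=J_{h,v}-P_tv$ is itself a holonomy field vanishing at $t=0$, so $J_0\equiv 0$ by the injectivity of $\mathcal I^h\to V^{\gamma(t)}$ for small $t$ (cf.\ Lemma~\ref{lem: consequent}) and analyticity of Jacobi fields along $\gamma$. The main obstacle is precisely this propagation step: turning the tangency of $J_0$ to $\mathcal{L}^\sharp$ and verticality of $J_{h,v}$ into the assertion that $P_tv$ has no component in $V\cap T\mathcal{L}^\sharp$ requires careful bookkeeping in the three-summand decomposition and ultimately reflects the ``horizontal-totally-geodesic'' behaviour of dual leaves in non-negative curvature.
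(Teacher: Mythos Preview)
Your setup is correct up to the point where you identify $v\in A^x$ as the initial value of a parallel $L$-Jacobi field $J_1\in\mathcal W^{\mathrm{par}}(h)$, and you are right that the whole problem reduces to showing $J_1(t)\in A^{\gamma(t)}$ (equivalently, that $J_1$ is everywhere vertical). But your ``propagation step'' does not close, and you correctly flag it as the obstacle. The orthogonality from Theorem~\ref{thm: GAG} only gives
\[
J_1(t)\ \perp\ \{J(t)\;:\;J\in{}^o\mathcal W(h)\}\,,
\]
and Lemma~\ref{lem: dualtang+} only tells you that this latter space lies in $T_{\gamma(t)}\mathcal L^\sharp_x$. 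It does \emph{not} follow that $J_1(t)\perp T_{\gamma(t)}\mathcal L^\sharp_x$: there may well be other parallel fields $J_2\in\mathcal W^{\mathrm{par}}(h)$ with $J_2(0)\in T_x\mathcal L^\sharp_x$, and for those you have no orthogonality from GAG. The three-way decomposition you write down therefore does not force $P_tv$ to stay in $A^{\gamma(t)}$, and your final step (``$J_0=J_{h,v}-P_tv$ is a holonomy field'') begs exactly this question.

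The paper fixes this by enlarging the test space. Instead of ${}^o\mathcal W(h)$, consider
\[
\mathcal J:=\{J\in\mathcal W(h)\;:\;J(0)\in T_x\mathcal L^\sharp_x\}\,.
\]
For any $J\in\mathcal J$ one still has $\langle J_1(t),J(t)\rangle=0$ for all $t$: split $J=J_{\mathrm{foc}}+J_{\mathrm{par}}$ via GAG; the focal part is handled by GAG, while for the parallel part the inner product $\langle J_1(t),J_{\mathrm{par}}(t)\rangle$ is constant and vanishes at $t=0$ because $J_{\mathrm{par}}(0)=J(0)-J_{\mathrm{foc}}(0)\in T_x\mathcal L^\sharp_x$ is orthogonal to $v\in A^x$. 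Lemma~\ref{lem: dualtang+} then gives $\{J(t):J\in\mathcal J\}\subset T_{\gamma(t)}\mathcal L^\sharp_x$. The missing ingredient is a dimension count: $\dim\mathcal J=(n-1)-\dim A^x=\dim\big(T_{\gamma(t)}\mathcal L^\sharp_x\cap\gamma'(t)^\perp\big)$, so for non-focal $t$ the inclusion is an equality. This forces $J_1(t)\in A^{\gamma(t)}$, hence $J_1$ is everywhere vertical, hence a holonomy field, hence $J_1=J_{h,v}$. Your decomposition $J_{h,v}=J_0+J_1$ is then unnecessary: one shows directly that the parallel field $J_1$ \emph{is} the holonomy field.
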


\begin{proof}
Due to Theorem \ref{thm: c2neu}, the submetry $P$ is $\mathcal C^2$, hence the dual leaves are leaves of a $\mathcal C^1$ foliations and Lemma \ref{lem: dualtangent} applies.

Due to the $\mathcal C^2$ property and the density of the set of typical  fibers and typical directions, it suffices to prove the statement under the assumption that the fiber  $L$ of $x$ is a typical fiber and $\gamma =\gamma ^h$ is a typical horizontal geodesic.

We fix such $h$ and any $v\in A^x$.
Denote by $\mathcal J\subset \mathcal W(h)$ the space of all $L$-Jacobi fields, for which 
$J(0)\in T_x\mathcal L^{\sharp} _x$.  
 Appyling Lemma \ref{lem: dualtangent} and the non-negativity of the curvature, we deduce from Theorem \ref{thm: GAG} that $v$ is the starting vector of a parallel  Jacobi  $J^v\in \mathcal W(h)$.   Moreover, $J^v(t)$ is orthogonal to $J(t)$, for any $t$ and any $J\in \mathcal J$.

  By Lemma \ref{lem: dualtang+}, the vector $J(t)$ is tangent to $\mathcal L_x^{\sharp}$ for all $t$ and all $J\in \mathcal J$.    By dimensional reasons, for any $t$ which is not $\mathcal W(h)$-focal,  the space
$$\{J(t) \; \; ; \;\;  J\in \mathcal J\}$$
coincides with the normal space to $\gamma'(t)$ within the tangent space $T_{\gamma(t)} \mathcal L^{\sharp}_x$.   Thus, for any such $t$, the vector $J^v(t)$ is orthogonal to $\mathcal L^{\sharp}_x$.  

Since this applies to almost all $t$, it is true for all $t$, by continuity. Since the normal space
to $\mathcal L^{\sharp}_x$ is vertical at any point, $J^v(t)$ is vertical, for all $t$. Hence $J^v$ is a holonomy field, therefore $J^v=J_{h,v}$.
 \end{proof}

\section{Continuation of smoothness revisited}

\begin{thm} \label{thm: dualleaf}
Let $P:M\to Y$ be a transnormal local submetry. Let $\mathcal L^{\sharp}$ be a dual leaf of $P$ and let $x,z\in \mathcal L ^{\sharp}$.  If $P$ is $\mathcal C^2$ on $M$ and smooth around    $x$ then $P$ is smooth around $z$.
%
% Let $x\in M$ be abritrary and assume that $P$ is smooth in a neighborhood of $x$.   Let $L_x^{\sharp}$ be the dual leaf.  Then $P$ is smooth around any point $z\in L_x^{\sharp}$.
\end{thm}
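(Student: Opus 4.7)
The plan is to reduce to the case of a single horizontal geodesic joining $x$ to $z$ and then use smooth extensions of horizontal vectors at $x$ to basic-type fields over $L^x$ in order to build a smooth chart near $z$ in which the fibers of $P$ appear as smooth slices.

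Since $x$ and $z$ lie in the same dual leaf, they are joined by a piecewise horizontal geodesic. By induction on the number of geodesic segments it suffices to treat a single segment $\gamma\colon [0,t_0]\to M$ with $\gamma(0)=x$ and $\gamma(t_0)=z$. Proposition~\ref{thm: ext}, applied with $U$ a neighborhood of $x$ on which $P$ is smooth, already yields that a neighborhood of $z$ in the fiber $L^z$ is $\mathcal C^k$ for every $k$, so $L^z$ is smooth near $z$. The harder task is to show that \emph{every} fiber passing sufficiently close to $z$ is smooth near the corresponding point, which is what ``$P$ is smooth around $z$'' actually asserts.

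To that end, choose a Lipschitz pointwise-orthonormal frame $\hat h_1,\dots,\hat h_e$ of the subdistribution $H_1=T^\perp S$ over $L^x\cap U$; by smoothness of $P$ on $U$ this frame can be taken smooth. For $h=(h_0,h_1)\in H^x=H^x_0\oplus H^x_1$ close to $\gamma'(0)$, let $\hat h$ denote the horizontal extension along $L^x\cap U$ obtained by taking the canonical basic extension of $h_0$ and the corresponding linear combination of the $\hat h_i$ for $h_1$ (cf.\ Section~\ref{subsec: finer}). Then $(x',h)\mapsto\hat h(x')$ is smooth. Composing with the Riemannian exponential gives a smooth map
\[
\Phi\colon (L^x\cap U)\times B \longrightarrow M,\qquad \Phi(x',h):=\exp_{x'}\bigl(\hat h(x')\bigr),
\]
where $B\subset H^x$ is a small ball around $\gamma'(0)$. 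The defining property $D_{x'}P(\hat h(x'))=D_xP(h)$ gives $P\circ\Phi(x',h)=\exp_{P(x)}(D_xP(h))$, independent of $x'$; hence each slice $\Phi(\,\cdot\,,h)$ is a holonomy map from $L^x\cap U$ into a single fiber of $P$. Since holonomy maps are Lipschitz-open \cite[Proposition 7.2]{L} and smooth, each such slice realizes an open piece of its target fiber as a smooth image of an open subset of $L^x\cap U$, giving the desired smoothness of that fiber near the relevant point.

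The main obstacle is to show that the image of $\Phi$ contains a full neighborhood of $z$ in $M$. The differential of $\Phi$ at $(x,\gamma'(0))$ may fail to be surjective precisely when $z$ is a focal point of $L^x$ along $\gamma$, so the naive inverse function argument collapses. This is where the global $\mathcal C^2$-hypothesis on $P$ is essential: equifocality (Proposition~\ref{jac: family}) supplies smooth families of focalizing $L^x$-Jacobi fields along the deformed geodesics $\gamma_{x'}(t)=\exp_{x'}(t\,\widehat{\gamma'(0)}(x'))$, while the $\mathcal C^1$-regularity of the horizontal distribution on each stratum ensures that perturbations of $\gamma'(0)$ in $H^x$ reach every stratum touching a small neighborhood of $z$. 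Combining the vertical variation inside $L^x$ with the horizontal variation in $H^x$, one shows that $\Phi$ covers a neighborhood of $z$ in the required fiberwise sense, so every fiber of $P$ meeting that neighborhood is realized as a smooth holonomy image and hence is a smooth submanifold at the corresponding point. This gives smoothness of $P$ around $z$.
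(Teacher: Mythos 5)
Your reduction is reasonable as far as it goes: assuming $x$ regular, each slice $\Phi(\cdot,h)$ is a holonomy map given by exponentiating a smooth basic normal field, it is Lipschitz-open onto its target fiber by \cite[Proposition 7.2]{L}, hence a smooth submersion onto an open piece of that fiber, which is therefore smooth there --- this is essentially the content of Proposition~\ref{thm: ext}. But the step you label ``the main obstacle'' is not an obstacle you then overcome; it is the entire theorem, and your final paragraph asserts rather than proves it. You correctly note that $D\Phi$ degenerates when $z$ is a focal point of $L^x$ along $\gamma$, and at a critical value the image of a smooth map need not contain a neighborhood. Nothing you cite repairs this: Proposition~\ref{jac: family} is stated only for \emph{typical} fibers and \emph{typical} basic normal fields, whereas the geodesic from $x$ to $z$ need not be typical (e.g.\ $P(z)$ may lie in a stratum of high codimension), and ``combining the vertical variation with the horizontal variation'' is exactly the surjectivity statement in question, restated. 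Moreover, if $x$ is not regular your frame $\hat h_1,\dots,\hat h_e$ of $T^\perp S$ does not in general yield extensions with $D_{x'}P(\hat h(x'))$ independent of $x'$; the paper only manages such a slice construction over codimension-one strata (Theorem~\ref{thm-sm4}), which is why the reduction to $x$ regular must be made explicit.

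The paper avoids the covering problem entirely and argues by contradiction with an induction on $\dim M$: since $L^z$ is smooth near $z$ (by Proposition~\ref{thm: ext}), the distance spheres $\mathcal N^{\varepsilon}$ around it are smooth hypersurfaces carrying induced local submetries $P_\varepsilon$, to which the inductive hypothesis applies; non-smoothness of $P$ near $z$ produces a point $p_\varepsilon\in\mathcal N^\varepsilon$ of non-smoothness whose $P_\varepsilon$-dual leaf closure must avoid the smooth region $Q_\varepsilon$. Rescaling by $1/\varepsilon$ makes $\mathcal N^\varepsilon$ converge to the \emph{non-negatively curved} model $\R^l\times\Ss^{k-1}$, and there the critical-point argument for $d_{Q_\varepsilon}$, combined with Lemma~\ref{lem: dualtangent}, Lemma~\ref{lem: indexsemi} and the focal-generated/parallel splitting of Theorem~\ref{thm: GAG}, forces the tangent spaces of the dual leaves to contain directions pointing toward $Q_\varepsilon$ --- the desired contradiction. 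Note in particular that the theorem assumes no curvature condition on $M$; the non-negative curvature needed for Theorem~\ref{thm: GAG} is manufactured by the blow-up. Your proposal engages with none of this, so the proof as written has a genuine gap at its central step.
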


\begin{proof}
We proceed by induction on the dimension $n=\dim (M)$. If  $n=1$ the statement is clear: any local submetry is smooth on $M$.

Let the statement be true in  dimensions smaller than $n \geq 2$. 
Let $x,z\in \mathcal L^{\sharp}$ be as in the formulation of the theorem.
Arguing by induction on the number of edges of a horizontal  piecewise-geodesic curve connecting $x$ and $z$, we may assume that $x$ and $z$ are connected by a horizontal geodesic.  Moving $x$ slightly, we may further assume that $x$ is a regular point.    
Assume that $P$ is not smooth around $z$. 

 Let $L$ be the fiber of $P$ through $z$. By  Proposition \ref{thm: ext}, we find  some $r>0$, so that $B_r(z)\cap L$ is smooth. We may  assume that $\bar B_{10r}(z)$ is compact and that $r$ is smaller than the injectivity radius of $P(z)$ in $Y$. 

For any point $p\in B_r(z)\setminus L$, let $p_0$ denote the projection of $p$ to $L$ and, for any $0<\varepsilon <r$ let  $p_{\varepsilon}$ be the point on the horizontal geodesic $p_0p$ with distance $\varepsilon$ to $ p_0$.   

Denote by $\mathcal N^{\varepsilon}$ the set of points in $B_r(z)$ with distance $\varepsilon $ to $L$.  Since $L$ and the normal exponential map of $L$ are smooth, the set 
 $\mathcal N^{\varepsilon}$ is a smooth submanifold of dimension $n-1$.
The restriction of $P$ to $\mathcal N^{\varepsilon}$ is a local submetry  onto the $\varepsilon$-distance sphere $S_{\varepsilon} $ around $y$ in $Y$.

The map $p\to p_{\varepsilon}$ is smooth (since it can be defined via the smooth normal exponential map of $L$), moreover, it sends  fibers of $P$ to fibers of $P$.
Thus, $P$ is smooth   at  $p$ if and only if 
$P$ is smooth at $p_{\varepsilon}$.  Moreover, it happens if and only if  
the local submetry  $P_{\varepsilon}:=P:  \mathcal N^{\varepsilon} \to S_{\varepsilon} $ is smooth in a neighborhood of  $p$, for some $r>\varepsilon>0$.

On the horizontal geodesic $xz$,  any point $\tilde  x$ with $0< d(\tilde x, z)< r$ is regular and, 
by Corollary \ref{cor: extregfib},  $P$ is smooth in a neighborhood of $\tilde x$. We may replace $x$ by $\tilde x$ and assume that $x\in B_r(z)$.

By assumption, we find points $p^j$ converging to $z$, such  $P$ is not smooth in a neighborhood of $p^j$.  Then $P$ is also non-smooth at  $p^j_{s}$,  for any fixed $0<s<r$.    Choosing a subsequence, we may assume that the points $p^j_s$  converge to a point $p$ with $d(p,z)=d(p,L)=s$.  By construction, $P$ is not smooth at $p$.

On the other hand, $P$ is smooth  at  $x_s$. We replace $x$ by $x_s$.

The projection $\Pi^L$ onto $L$ is Lipschitz open, when restricted to the fiber $L^x$ of $P$ through $x$, \cite[Proposition 7.2]{L}.  Since  $P$ is $\mathcal C^2$, the projection  $\Pi^L$ is $\mathcal C^1$. Hence, the restriction of $\Pi^L$ to $L^x$  is a submersion. 
  Thus, we find a $\mathcal C^2$-submanifold $x\in Q$ of $L^x$ such that the projection 
$\Pi^L:Q\to L$ is a $\mathcal C^1$-diffeomorphism onto a neighborhood of $z$ in $L$.

We may assume that $Q$ is contained in the set $U$, where $P$ is smooth.  Then we also may assume that $Q$ is smooth.  For any $\varepsilon$, the subset 
$$Q_{\varepsilon} :=\{ q_{\varepsilon} \;\;  : \;\;  q\in Q\} \subset \mathcal N^{\varepsilon}\;$$
consists of points at which the local submetry $P_{\varepsilon}:\mathcal N^{\varepsilon} \to S_{\varepsilon}$ is smooth.  On the other hand, $P_{\varepsilon}$ is not smooth at $p_{\varepsilon}$.

We will obtain a contradiction, by showing that the closure of the  dual leaf $\mathcal L_{\varepsilon}^{\sharp} \subset \mathcal N^{\varepsilon} $ through the point $p_{\varepsilon}$ intersects non-trivially the set $Q_{\varepsilon}$.  Here, we consider the dual leaf  $\mathcal L_{\varepsilon}^{\sharp} $ with respect to $P_{\varepsilon}: \mathcal N^{\varepsilon}\to S_{\varepsilon} $.  Once the non-triviality of the intersection is verified, we arrive at a contradiction:  Indeed,  by the inductive assumption the open set of points in whose neighborhood   $P_{\varepsilon}$ is smooth cannot intersect the dual leaf   $\mathcal L_{\varepsilon}^{\sharp}$  of $p_{\varepsilon}$, hence also the closure of this dual leaf.

It thus remains to verify that $Q_{\varepsilon}$ non-trivially intersects the closure of the dual leaf $\mathcal L_{\varepsilon}^{\sharp} $ through $p_{\varepsilon}$, for some sufficiently small $\varepsilon >0$.

We fix the following  smooth coordinates on $B_r(z)$.
We use  exponential coordinates in $L$ centered in $z$, to identify $B_r(z) \cap L$ (for sufficiently small $r$)  with the Euclidean ball in $\mathbb R^l=T_zL=V^z$.   Making $r$ smaller, if needed,  we trivialize the normal bundle of $L$, thus identifying $B_r(z)$ with an open subset of the trivial bundle $V^z\times H^z=\mathbb R^l\times  \mathbb R^k$.
 We use these coordinates to compare  tangent vectors at different points.

% Alternatively, we can directly compare the vectors via parallel translation, cf. \cite{BK}, \cite[Section 2]{L}.

By construction, $Q_{\varepsilon}$ converges (uniformly) in the $\mathcal C^1$-sense to $L$. Thus, for any $\delta >0$, and all  sufficiently
 small $\varepsilon<\varepsilon _0 (\delta)$
\begin{equation}
d(T_{q_{\varepsilon}} Q_{\varepsilon}, T_{q_0}L) <\delta\,,
\end{equation}
for all $q\in Q$.  The distance above is measured in the Grassmanian.
% or as the maximal distance of a unit vector in one subspace to another.

Since horizontal vectors converge to horizontal vectors, we can make $\varepsilon$ smaller and assume  that for all  $e\in \mathcal N^{\varepsilon}$
% and any unit vertical vector $v\in V^{e_0}$, there is a unit vertical vector $\hat v \in V^e$ with $d(v,\hat v)<\delta$. 
%This  property can be restated (possibly upon changing $\varepsilon$) as follows:
and  any unit horizontal directon $h\in H^e$, the differential of the projection to $L$ is very small: 
\begin{equation} \label{eq: dep}
D_e \Pi^L (h)<\delta\,.
\end{equation}

We denote by $\mathcal N$ the subset $\mathbb R^l \times \mathbb S^{k-1}$ of $\mathbb R^l \times \mathbb R^k$ and  by $\Pi:\mathcal N\to \mathbb R^l$ the projection onto the first coordinate.  Using the coordinates introduced above and
  rescaling  by $\frac 1 {\varepsilon}$,  we  identify $\mathcal N_{\varepsilon} \subset M$ with an open subset of $\mathcal N$.  The subsets $\mathcal N_{\varepsilon}$ of $\mathcal N$ become larger as $\varepsilon $ gets smaller and the union of $\mathcal N_{\varepsilon}$  is all of $\mathcal N$.

The pull-backed metric from $M$ gives a smooth Riemannian metric $g_{\varepsilon}$ on $\mathcal N_{\varepsilon}$.  The metrics $g_{\varepsilon}$ converge locally uniformly in the $\mathcal C^{\infty}$ sense to  the standard product metric on $\mathcal N$.

Each $\mathcal N_{\varepsilon}$ is a total space of a local submetry $P_{\varepsilon}$  
with all fibers $\mathcal C^2$.  Each $\mathcal N_{\varepsilon}$ contains a smooth submanifold $Q_{\varepsilon}$ which is projected by $\Pi$ diffeomorphically onto an open subset $\Pi (Q_{\varepsilon})$ of $\mathbb R^l$. Moreover,  $\Pi (Q_{\varepsilon})$  contains the $\frac 1 {\delta}$-ball around $0$ in $\mathbb R^l$.   Here and below $\delta$ goes to $0$ with $\varepsilon$.

 The tangent spaces at all points  of $Q_{\varepsilon}$ are  at most $\delta$-far from $\mathbb R^l$.    Making $\varepsilon$ smaller if needed and using the convergence of $g_{\varepsilon}$ to $g_0$, we may assume, in addition, the following: Any  geodesic $\gamma$  of length  less than $4$,  which starts orthogonally to $Q_{\varepsilon}$  and is parametrized by arclength satisfies  for all $t$ with $||(\Pi\circ\gamma) (t)) ||<10 $ the condition:
\begin{equation} \label{eq: 2delta}
||(\Pi \circ \gamma)'(t) ||< 2\delta\,.
\end{equation}

For any $P_{\varepsilon}$-horizontal unit vector $h$ at any  $e\in \mathcal N_{\varepsilon}$ we have by \eqref{eq: dep}:
\begin{equation} \label{eq: depi}
||D_e\Pi (h)|| <\delta\,.
\end{equation}

Finally, there is  a point  $p\in \Pi^{-1} (0)$, independent of $\varepsilon$.  As explained above, it is suffcient to verify that the closure $C_{\varepsilon}$ in $\mathcal N_{\varepsilon}$ of the $P_{\varepsilon}$-dual leaf  through $p$ intersects $Q_{\varepsilon}$, for some $\varepsilon >0$

We assume, on the contrary,  that  $C_{\varepsilon}$ does not intersect $Q_{\varepsilon}$ for all $\varepsilon >0$. Consider the distance function 
$$f_{\varepsilon}=d_{Q_{\varepsilon}} : C_{\varepsilon} \to \mathbb R\;.$$

 We have $f_{\varepsilon}(p)<\pi +\delta <4$, since the sphere $\Pi^{-1} (0)$ contains a point in $Q_{\varepsilon}$.
By assumption, $f_{\varepsilon}(e)>0$ for all $e\in C_{\varepsilon}$.

 Hence,  we find points $e_{\varepsilon}\in C_{\varepsilon}$ with $d(e_{\varepsilon},p)<10$, such that $f_{\varepsilon}$ does not decrease with velocity at least $\frac 1 2$ at $e_{\varepsilon}$ (hence the absolute gradient of $-f$ at $e_{\varepsilon}$ is less than $\frac 1 2$), see \cite[Lemma 4.1]{Lyt-open}.  

We fix such a point $e_{\varepsilon}  \in C_{\varepsilon}$.
The subset $C_{\varepsilon}$ is a closure of a dual leaf, hence it is a union of dual leaves. Thus, $C_{\varepsilon}$ contains the dual leaf $\mathcal L^{\sharp} _{e_\varepsilon}$ through  the point $e_{\varepsilon}$. 
% Consider  a point $e_{\varepsilon}$, where the $f$ does not decrease with velocity $\frac 1 2$.
Consider a shortest  geodesic from $e_{\varepsilon}$ to $Q_{\varepsilon}$ and denote by $w_{\varepsilon}$ the incoming  unit direction of this geodesic at $e_{\varepsilon}$.  Since this geodesic is orthogonal to $Q_{\varepsilon}$, we have  by \eqref{eq: 2delta}:
$$D_{e_\varepsilon}\Pi (w_{\varepsilon})<2\delta\;.$$

By  assumption on $e_{\varepsilon}$, 
 the tangent space $T_{e_{\varepsilon}}\mathcal L^{\sharp} _{e_\varepsilon}$ does not contain vectors 
$v$, which include  an angle less than $\frac \pi 3$ with $w_{\varepsilon}$  (with respect to the  standard product metric on $\mathcal N$).

Once $\varepsilon$ small enough,  any  unit tangent vector $u_{\varepsilon} \in T_{e_{\varepsilon} }\mathbb S^{k-1}$,  closest  to $w_{\varepsilon}$,  has the  follwoing property:
%unit vector $h_{\varepsilon}\in T_{e_{\varepsilon}} \mathcal N$,
%with $D_{e_{\varepsilon}} \Pi (h)=0$ (the unit tangent vector to $\mathbb S^{k-1}$, closest ve to $w$), 

For any  $0\neq v\in  T_{e_{\varepsilon}}\mathcal L^{\sharp} _{\varepsilon}$  the angle between $v$ and $u_{\varepsilon}$  is larger than  $\frac\pi 4$.

Since the dual leaf is $\mathcal C^1$, we may  slightly move the point $e_{\varepsilon}$ in its dual leaf, so  that $e_{\varepsilon}$ becomes a typical point of $P$ and still, for some unit vector  $u_{\varepsilon} \in T_{e_{\varepsilon}} \mathbb S^{k-1}$ the above property holds true.

 Let $h_{\varepsilon}$ be any unit horizontal vector in $H^{e_{\varepsilon}}$ and consider the Lagrangian  $\mathcal W_{\varepsilon}:=   \mathcal W_{\varepsilon} (h_{\varepsilon})$ 
 of $L_{\varepsilon}$-Jacobi fields along $\gamma^{h_{\varepsilon}}$.  Here, $L_{\varepsilon}$ is the fiber of $P_{\varepsilon}$ through $e_{\varepsilon}$.

Denote by  $^o{\mathcal W_{\varepsilon}}$ the focal-generated subspace of the Lagrangian of normal Jacobi fields $\mathcal W_{\varepsilon}$.  
The vector $h_{\varepsilon}$ is horizontal hence belongs to  the tangent space $T_{e_{\varepsilon}}\mathcal L_{e_\varepsilon}^{\sharp}$.   

 Due to  Lemma \ref{lem: dualtangent},  the subspace 
$$^o\mathcal W_{\varepsilon} (0):=\{J(0) \; \; :  \;\; J \in {^o \mathcal W_{\varepsilon}} \}$$ 
belongs to the tangent space to the dual leaf at $e_{\varepsilon}$ as well.

We find a sequence of values of $\varepsilon \to 0$, such that the corresponding points $e_{\varepsilon}$ converge to some $e\in \mathcal N$ and the vectors $h_{\varepsilon}$ and $u_{\varepsilon}$ converge to unit vector $u,v\in T_{e} \mathcal N$.   Since $u_{\varepsilon}$, is tangent to $\mathbb S^{k-1}$ so is the vector $u$. Due to \eqref{eq: depi}, $h$ is tangent to $\mathbb S^{k-1}$ as well.

Taking another subsequence, we may assume that the Lagrangians $\mathcal W_{\varepsilon}$ converge to a Lagrangian $\mathcal W$ of normal Jacobi fields along $\gamma ^h$ in the non-negatively curved manifold $\mathcal N$.  Moreover, we may assume that the focal-generated subspaces $^o{\mathcal W}_{\varepsilon}$ converge to a subspace $\hat {\mathcal W}$ of $\mathcal W$.

Applying Lemma \ref{lem: indexsemi}, we deduce that $\hat {\mathcal W}$ contains the 
focal-generated subspace $^o {\mathcal W}$ of $\mathcal W$.  Due to Theorem \ref{thm: GAG},  
at the point $e=\gamma ^h(0)$, the orthogonal complement $K$ of $\hat {\mathcal W} (0)$ in $h^{\perp}$  is given by a space of starting values of parallel Jacobi fields in $\mathcal W$.  Due to the explicit structure of $\mathcal N$, $K$ must be a subspace of $T_e\mathbb R^l$.  Thus, $\hat {\mathcal W} (0)$ contains  all elements in $T_e\mathbb S^{k-1}$, orthogonal to $h$.

By construction, the space generated by $h$ and  $\hat {\mathcal W} (0)$ is contained in the limit of the tangent spaces  to dual leaves
$$ \mathbb R\cdot h +  \hat {\mathcal W} (0) \subset  \lim _{\varepsilon \to 0} T_{e_{\varepsilon} }  \mathcal L_{\varepsilon} ^{\sharp}\,.$$

Thus, by the defining property of the vectors $u_{\varepsilon}$, the unit vector point $u\in T_e\mathbb S^{k-1}$ is not contained in  $\mathbb R\cdot h +  \hat {\mathcal W} (0) $.
This contradiction finishes the proof of the Theorem.
\end{proof}

\section{Applications to non-negative curvature}  \label{sec: applic}
\subsection{General statement}  Let $M$ be  complete and  non-negatively curved and $P:M\to Y$ be  a transnormal submetry.  Due to Theorem \ref{thm: c2neu}, all fibers of $P$ are $\mathcal C^2$  and due to Theorem \ref{thm: main}, there is a dense open subset $O$ in $M$, such that $P$ is smooth on $O$.  There is a unique largest open subset $\mathcal O$ of $M$ with this property. 

 The intersection of any fiber with $\mathcal O$ is smooth.   Due to Theorem \ref{thm: dualleaf},  $\mathcal O$ is a union of dual leaves of $P$ and so is the complement $M\setminus \mathcal O$.    All dual leaves contained in $M\setminus \mathcal O$ are $\mathcal C^1$ submanifolds, whereas all dual leaves contained in $\mathcal O$ are smooth by Proposition \ref{prop: dualck}.

 We now observe that the proof of \cite[Theorem 2]{Wilking} applies literally to our situation 
and shows:

\begin{thm} \label{thm: dualcomplete} Let $P:M\to Y$ be a transnormal submetry, where $M$ is complete, smooth and non-negatively curved.  If all dual leaves of $P$  are complete  then the dual foliation is  an  (a priori non-smooth) singular Riemannian foliation of $M$.   
\end{thm}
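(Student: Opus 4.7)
The plan is to verify the two defining properties of a (possibly non-smooth) singular Riemannian foliation: first, that the dual leaves are leaves of a $\mathcal{C}^1$-singular foliation in the sense of Section \ref{subsec: foliation}, and second, that every geodesic meeting one leaf orthogonally remains orthogonal to every other leaf it crosses. The strategy is to follow \cite[Theorem 2]{Wilking} essentially verbatim; the reason this works is that every key ingredient used there has already been reproved under our weaker regularity assumptions in the preceding sections.

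For the foliation property, I would combine Theorem \ref{thm: main} and Proposition \ref{prop: dualck}: on the open dense set $\mathcal{O}\subset M$ where $P$ is smooth, the dual leaves already form a $\mathcal{C}^1$-singular foliation. To extend $\mathcal{C}^1$-regularity across $M\setminus\mathcal{O}$ at a point $x$, I would use the assumption that the dual leaf $\mathcal{L}_x^{\sharp}$ is complete: $x$ is then an interior point of $\mathcal{L}_x^{\sharp}$ with respect to its intrinsic metric, so a local chart for $\mathcal{L}_x^{\sharp}$ at $x$ can be built by concatenating horizontal geodesics, whose initial directions sweep out a Lipschitz horizontal bundle over $\mathcal{L}_x^{\sharp}$ (by the $\mathcal{C}^2$-property from Theorem \ref{thm: c2neu}). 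This gives a well-defined tangent space $T_x\mathcal{L}_x^{\sharp}$ and the Stefan-type construction of Section \ref{subsec: foliation} applies.

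For the Riemannian property, the central input is the Corollary following Lemma \ref{lem: dualtangent}: for any regular $x$, any $v\in A^x$ and any $0\neq h\in H^x$, the holonomy field $J_{h,v}$ along $\gamma^h$ is parallel and stays in $A$. Combined with the decomposition of $L$-Jacobi fields into focal-generated and parallel parts guaranteed by Theorem \ref{thm: GAG}, and with the transversal Jacobi equation of Section \ref{subsec: trans}, this tells us that the distribution $x\mapsto A^x$ is preserved by parallel transport along horizontal geodesics on the regular part of $M$. A geodesic $\gamma$ with $\gamma'(0)\in A^{\gamma(0)}$ can then be analyzed by varying it transversally through horizontal piecewise geodesics inside the dual leaf $\mathcal{L}_{\gamma(0)}^{\sharp}$; the resulting Jacobi-type computation, identical to the one in \cite[Section 5]{Wilking}, shows that $\gamma'(t)\in A^{\gamma(t)}$ for all $t$, which is precisely the orthogonality condition.

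The main obstacle, already present in \cite{Wilking}, is that the dimension of the dual leaf and hence the codimension of the dual-normal distribution may jump along $\gamma$; one has to rule out the possibility that $\gamma'(t)$ escapes $A^{\gamma(t)}$ precisely at such a jump time. The completeness of dual leaves is used exactly here, to propagate the parallel transport description of $A$ into the limit and close up the argument. In our setting all the analytic tools invoked in \cite{Wilking}, namely the non-negativity of curvature, the parallel-transport Corollary, the $\mathcal{C}^2$-property of fibers from Theorem \ref{thm: c2neu}, and the $\mathcal{C}^1$-structure of the dual foliation on $\mathcal{O}$ from Proposition \ref{prop: dualck}, are available with exactly the regularity required by the original proof, so no modification is needed.
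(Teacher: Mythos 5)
Your proposal takes essentially the same route as the paper: the paper's entire argument for Theorem~\ref{thm: dualcomplete} is the observation that the proof of Wilking's Theorem~2 applies literally once the $\mathcal C^2$-regularity of fibers (Theorem~\ref{thm: c2neu}), the $\mathcal C^1$ singular-foliation structure of the dual leaves (Proposition~\ref{prop: dualck}), and the parallelism of holonomy fields with initial value in $A^x$ (the corollary to Lemma~\ref{lem: dualtangent}) are in place, which is exactly the list of ingredients you assemble. One remark: your two-step argument for the foliation property is unnecessary, since $P$ is $\mathcal C^2$ on \emph{all} of $M$ by Theorem~\ref{thm: c2neu}, so Proposition~\ref{prop: dualck} yields the $\mathcal C^1$ singular foliation globally with no separate extension across $M\setminus\mathcal O$.
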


Literally as in \cite[p. 1313]{Wilking}, we obtain the following result from Theorem \ref{thm: dualcomplete} and Rauch's rigidity:
\begin{cor} \label{cor: rauch}
Under the assumptions of Theorem \ref{thm: dualcomplete}, for any direction $v\in V^x$, which is orthogonal to the dual leaf $\mathcal L^{\sharp}$, the geodesic in the direction of $v$ is contained in the leaf $L$ of $P$ through $x$.  
\end{cor}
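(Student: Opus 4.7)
The plan is to combine three ingredients already established in the paper: the singular Riemannian structure of the dual foliation supplied by Theorem \ref{thm: dualcomplete}, the preceding corollary on the existence of \emph{parallel} holonomy fields with values in $A^x$, and the classical flat strip rigidity in non-negative curvature.

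First I would translate the hypothesis on $v$. Since $H^x \subset T_x\mathcal L^{\sharp}$ and $V^x = (H^x)^{\perp}$, a vertical vector orthogonal to $\mathcal L^{\sharp}$ is precisely an element of $A^x$, so the preceding corollary applies to $v$. Next, by Theorem \ref{thm: dualcomplete} the dual foliation is singular Riemannian, so $\gamma^v$ remains perpendicular to every dual leaf it meets, and since $H \subset T\mathcal L^{\sharp}$ this forces $(\gamma^v)'(s) \in V^{\gamma^v(s)}$ at every $s$. Thus $\gamma^v$ is a \emph{vertical} geodesic; the issue is to show that it stays inside a \emph{single} fiber.

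The core step is the following flat-strip construction. Fix any $h \in H^x$ and let $\eta\subset L$ be a $\mathcal C^1$ curve with $\eta(0)=x,\,\eta'(0)=v$. Consider the variation $c(s,t)=\exp_{\eta(s)}\bigl(t\,\hat h_{\eta(s)}\bigr)$ through horizontal geodesics; its variational field is exactly the holonomy field $J_{h,v}$, which by the preceding corollary is \emph{parallel} along $\gamma^h$ with values in $A^{\gamma^h(t)}$. Since $M$ has non-negative curvature, a non-trivial parallel Jacobi field along $\gamma^h$ forces the plane $\mathbb R(\gamma^h)'(0)\oplus \mathbb R v$ to be flat and, by Rauch's flat strip theorem, extends $\gamma^h$ to a flat totally geodesic strip whose transverse direction is $v$. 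This canonical flat strip is determined by its tangent $2$-plane at $x$, so its image coincides with the image of $c$. The transverse $s$-curves in the strip are honest $M$-geodesics, and on $t=0$ they agree in first order with $\eta$, so $\eta$ coincides (up to reparametrization) with the geodesic $\gamma^v$ on the domain of the strip. Because $\hat h$ is a basic normal field, $P\circ c(s,\cdot)=P\circ\gamma^h$ for every $s$, and in particular $P(\eta(s))=P(x)$, so $\gamma^v(s)=\eta(s)\in L$ in a neighborhood of $0$.

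To propagate this to all of $\mathbb R$ I would iterate: at any $s_0$ with $\gamma^v(s_0)\in L$, the verticality established in the first step gives $(\gamma^v)'(s_0)\in A^{\gamma^v(s_0)}$, so the flat-strip argument can be restarted at $\gamma^v(s_0)$, and completeness of $M$ exhausts the parameter line. The main obstacle I foresee is making the identification of the variation $c$ with the canonical flat strip delivered by Rauch's theorem fully rigorous — in particular justifying that the $s$-curve of the strip through $x$ really is the geodesic $\gamma^v$ and not merely tangent to it — and handling the (discrete) times at which $\gamma^v$ may meet non-regular fibers, where the preceding corollary must be read through a limit argument based on the $\mathcal C^2$-regularity of fibers supplied by Theorem \ref{thm: c2neu} and the density of regular points.
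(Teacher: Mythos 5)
Your first two steps are correct and, in fact, already contain everything needed: $v\in A^x$, and by Theorem~\ref{thm: dualcomplete} the geodesic $\gamma^v$ meets every dual leaf orthogonally, so $(\gamma^v)'(s)\in A^{\gamma^v(s)}\subset V^{\gamma^v(s)}$ for all $s$. From here the conclusion follows without any flat strip: for each $s_0$ and $s$ near $s_0$ one has $d_Y\bigl(P(\gamma^v(s)),P(\gamma^v(s_0))\bigr)=d_M\bigl(\gamma^v(s),P^{-1}(P(\gamma^v(s_0)))\bigr)$, and since the fiber through $\gamma^v(s_0)$ is a $\mathcal C^{1,1}$ submanifold whose tangent space $V^{\gamma^v(s_0)}$ contains $(\gamma^v)'(s_0)$, this distance is $o(|s-s_0|)$. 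Hence the Lipschitz curve $P\circ\gamma^v$ has vanishing metric derivative everywhere and is constant, i.e. $\gamma^v\subset L$. (The paper itself gives no written argument; it defers to Wilking's proof, which is designed for singular Riemannian foliations without a quotient metric space and therefore must use the parallel holonomy fields and Rauch rigidity. In the submetry setting the quotient map does the work for you.)

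The flat-strip step you propose as the core of the proof has a genuine logical flaw. You take an \emph{arbitrary} $\mathcal C^1$ curve $\eta\subset L$ with $\eta'(0)=v$, form $c(s,t)=\exp_{\eta(s)}(t\hat h_{\eta(s)})$, and then claim that the image of $c$ coincides with the flat totally geodesic strip produced by Rauch rigidity because both have the same tangent $2$-plane at $x$. A surface is determined by a tangent plane at a point only if it is already known to be totally geodesic; the image of the variation $c$ is not, so the identification fails. Worse, the conclusion you draw --- that $\eta$ coincides with $\gamma^v$ --- cannot be right, since there are many distinct $\mathcal C^1$ curves in $L$ with initial velocity $v$ and they cannot all equal one geodesic; agreement to first order at a single point with a geodesic does not make a curve that geodesic. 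Even the preliminary claim that a single parallel Jacobi field along $\gamma^h$ "extends $\gamma^h$ to a flat totally geodesic strip" needs more than nonnegative curvature: parallelity of $J_{h,v}$ only gives $R(J,(\gamma^h)')(\gamma^h)'=0$ \emph{along} $\gamma^h$, and the Berger--Rauch rigidity producing a flat totally geodesic rectangle is applied to the variation $F(s,t)=\exp_{\gamma^v(s)}(tE(s))$ with $E$ the \emph{parallel} transport of $h$ along the geodesic $\gamma^v$ (whose transversal Jacobi fields have vanishing initial covariant derivative), not to a variation over an arbitrary curve inside $L$. If you want to run the Wilking-style argument rather than the quotient-map argument, the rectangle must be built over $\gamma^v$, and one then has to show that its $t$-lines are horizontal geodesics projecting to the same curve in $Y$; as written, your construction presupposes motion inside $L$ and then argues backwards.
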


%Recall that a singular $\mathcal C^1$-foliation on $M$ is \emph{Riemannian} if any geodesic starting orthogonally is orthogonal to all leaves it intersects. If $M$ is complete   all leaves are pairwise equidistant and  so are the closures of the leaves. The latter are  topological manifolds, hence they provide a transnormal submetry of $M$.

Using Theorem \ref{thm: dualcomplete} we can now verify:

\begin{thm} \label{thm: dualcomplete+} Let $P:M\to Y$ be a transnormal submetry, where $M$ is complete, smooth and non-negatively curved.  If all dual leaves of $P$  are complete then $P$ is smooth.  
\end{thm}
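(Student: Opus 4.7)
The plan is to combine Theorem \ref{thm: main}, Theorem \ref{thm: dualleaf}, and Theorem \ref{thm: dualcomplete}: the first supplies an open dense subset $\mathcal{O}\subset M$ on which $P$ is smooth; the second shows that smoothness of $P$ propagates along dual leaves, so $\mathcal{O}$ is a union of dual leaves of $P$; the third asserts that the dual foliation $\mathcal{F}^\sharp$ is a singular Riemannian foliation. The goal is to upgrade ``open dense'' to ``all of $M$''.

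First I would fix an arbitrary $z\in M$ and aim to show $z\in\mathcal{O}$. Exploiting the SRF structure of $\mathcal{F}^\sharp$ provided by Theorem \ref{thm: dualcomplete} and Proposition \ref{prop: leafclos}, I would produce a small transverse slice $\Sigma$ at $z$, namely the exponential image of a small disk in $A^z=T_z^\perp\mathcal{L}^\sharp_z$. Since $H^z\subset T_z\mathcal{L}^\sharp_z$ (horizontal geodesics through $z$ lie in the dual leaf) one has $A^z\subset V^z$, and by Corollary \ref{cor: rauch} every geodesic $\gamma^v$ with $v\in A^z$ stays in the single $P$-fiber $L^z$. Consequently $\Sigma\subset L^z$, the latter being a $\mathcal{C}^2$ submanifold of $M$ by Theorem \ref{thm: c2neu}. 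Because $\mathcal{O}$ is open and dense in $M$, the intersection $\mathcal{O}\cap\Sigma$ is open and dense in $\Sigma$, so there exist points $w\in\Sigma\cap\mathcal{O}$; for each such $w$ the submanifold $L^w=L^z$ is smooth near $w$.

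Next I would promote this dense smoothness of the $\mathcal{C}^2$-submanifold $L^z$ to smoothness at $z$ itself, along the lines of the bootstrap argument in Theorem \ref{thm: openc}. For any typical basic normal field $\hat h$ along $L^z$, Proposition \ref{jac: family} identifies the focalizing pieces $q\mapsto\mathcal{W}^{t_i}(\hat h_q)$ and controls their regularity by the regularity of $P$ on nearby dual leaves in $\mathcal{O}\cap\Sigma$. By Theorem \ref{thm: GAG} the Lagrangian $\mathcal{W}(\hat h_q)$ decomposes as the focal-generated part plus the parallel part; by Corollary \ref{cor: rauch} the parallel part at $q\in\mathcal{L}^\sharp_z$ is spanned by holonomy fields with initial value in $A^q$, hence has dimension $n-\dim\mathcal{L}^\sharp_z$, a constant along $\mathcal{L}^\sharp_z$. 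Pasting the focal-generated and parallel pieces together as in the proof of Theorem \ref{thm: openc} shows that all Lagrangians $\mathcal{W}(\hat h_q)$ depend smoothly on $q$ up to and including $z$; Corollary \ref{cor:  contdepend} then yields smoothness of $L^z$ at $z$. Applying Theorem \ref{cor: smimpltr} (or Lemma \ref{lem: regsm} stratum-wise), smoothness of $P$ at $z$ follows, contradicting $z\notin\mathcal{O}$.

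The main obstacle is this last upgrade: ensuring that the dimension of the parallel Jacobi space does not jump at $z$, so that the rank function stays locally constant and the Theorem \ref{thm: openc} bootstrap applies at the possibly non-regular point $z$. I expect to resolve this by using Corollary \ref{cor: rauch} to produce, for every $v\in A^z$, a genuine parallel holonomy field along every horizontal geodesic issuing from $z$, matching the generic dimension observed on nearby regular leaves; combined with the upper semicontinuity of the rank function from Section \ref{sec: rank}, this forces equality of dimensions in a neighborhood of $z$, so the rank is locally constant there and the inductive step of Theorem \ref{thm: openc} carries through.
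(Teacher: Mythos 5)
Your opening reductions are fine and match the paper's setup: by Theorem \ref{thm: main} there is a maximal open dense $\mathcal O$ on which $P$ is smooth, by Theorem \ref{thm: dualleaf} both $\mathcal O$ and its complement are unions of dual leaves, and by Corollary \ref{cor: rauch} the slice $\Sigma=\exp_z(A^z\cap B)$ lies in the single fiber $L^z$. But from there the argument has genuine gaps. First, ``$\mathcal O$ is open and dense in $M$, hence $\mathcal O\cap\Sigma$ is dense in $\Sigma$'' is a non-sequitur: an open dense subset of $M$ need not meet a positive-codimension submanifold at all. This particular step can be repaired, but only by using that $M\setminus\mathcal O$ is leaf-saturated and that the saturation of a relatively open subset of the slice has nonempty interior in $M$ (Stefan's local product structure); you never invoke this. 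Second, and more seriously, the rank-matching step fails. The paper only gives the inclusion $A^q\subseteq\{J(0):J\in\mathcal W^{par}(\hat h_q)\}$ (Lemma \ref{lem: dualtangent} plus Theorem \ref{thm: GAG}); the reverse inclusion, which you need to conclude $\dim\mathcal W^{par}=n-\dim\mathcal L^\sharp_q$, is not established. Moreover, even granting the lower bound $\mathrm{rk}\geq\dim A^q$, nearby dual leaves have dimension $\geq\dim\mathcal L^\sharp_z$, so $\dim A^{q}$ at nearby points is \emph{at most} $\dim A^z$; combined with upper semicontinuity of the rank this squeezes nothing, and the rank may still drop off $z$. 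Third, the bootstrap itself is circular: Proposition \ref{jac: family}(3) requires $P$ to already be $\mathcal C^k$ on a full neighborhood in $M$ of the relevant portion of $L^z$ (including $z$) in order to conclude that $q\mapsto\mathcal W^{t_i}(\hat h_q)$ is $\mathcal C^{k-1}$ at $z$; smoothness of $P$ on the leaf-saturated set $\mathcal O$, which may miss the dual leaf of $z$ entirely, does not supply this hypothesis. Finally, if $z$ is non-regular the basic normal fields span only $H^z_0\subsetneq H^z$, so Corollary \ref{cor: contdepend} cannot be fed with basic fields alone, and Theorem \ref{cor: smimpltr} is a global statement about all regular fibers, not a local patch.

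The paper's proof takes a different and essentially orthogonal route that avoids all of these issues. It passes to the leaf closures of the dual foliation, which by Theorem \ref{thm: dualcomplete} and Proposition \ref{prop: leafclos} are the fibers of a second transnormal submetry $P^{\sharp}:M\to Z$; all $P^{\sharp}$-fibers are shown to be smooth by propagating smoothness from $\mathcal O$ with Proposition \ref{thm: ext}. Then, for a fixed $P$-fiber $L$, Corollary \ref{cor: rauch} makes the restriction $P^{\sharp}:L\to Z$ a submetry whose fibers $F=L\cap(\text{dual leaf closure})$ are smooth, with smooth normal bundles inside $L$ (obtained from smooth holonomy maps out of a regular fiber $F\subset L\cap\mathcal O$), and smoothness of $L$ near every point is recovered from the normal exponential map of $F$. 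In other words, the paper exploits the orthogonal transversal pair $(P,P^{\sharp})$ rather than trying to rerun the rank bootstrap of Theorem \ref{thm: openc} at a bad point. If you want to salvage your approach, the missing ingredient is precisely a replacement for that transversal structure: some mechanism forcing the parallel part of $\mathcal W(\hat h_q)$ to coincide with $A^q$ and the regularity hypothesis of Proposition \ref{jac: family}(3) to hold at $z$.
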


\begin{proof}
By Theorem \ref{thm: dualcomplete}, the dual foliation  is Riemannian.  By Propoposition \ref{prop: leafclos}, the leaf closures of $\mathcal F$ constitute the fibers of a transnormal submetry $P^{\sharp}:M\to Z$.    

 Since there exists an open set $\mathcal O$ consisting of smooth dual leaves, the closure of any such leaf is smooth again, \cite{Alexandrino-survey}.    Due to Proposition \ref{thm: ext}, all fibers of $P^{\sharp}$ are smooth.

Any two fibers of $P$ and of $P^{\sharp}$ intersect transversally and orthogonally, in the sense that the normal space of one fiber is contained in the tangent space of the other.  From Corollary \ref{cor: rauch} we deduce that the restriction $P^{\sharp} :L\to Z$  of $P^{\sharp}$ to any leaf $L$ of $P$  is a submetry.   Moreover,
on the intersection $L\cap \mathcal O$ of $L$ with the subset $\mathcal O\subset M$ on which $P$ is smooth,
all fibers of this restriction $P^{\sharp}:L\to Z$ are smooth (as transversal intersection of $L$ and smooth fibers of $P^{\sharp}$).

We claim that any fiber  $F$ of this restriction $P:L\to Z$ and its normal bundle $T^{\perp }F \cap TL$  in $L$ are smooth (seen as subbundles of $TM$).  Once this is verified, we deduce the smoothness of $L$ around $F$ by considering the normal exponential map of  $F$.

It remains to verify the claim.  The restriction  $P^{\sharp}:L\to Z$ is  smooth on $L\cap O$ (since the fibers of $P:L\to Z$ are the transversal intersections of $L$ with the corresponding fibers of $P^{\sharp}$). 
%Note that $L\cap \mathcal O$ and the fiber $L^{\sharp}_x$ of $P^{\sharp}$ through any point $x\in O$ are smooth. 

Hence  the normal bundle any such fiber $F\subset L\cap O$ within $L$ is smooth. Moreover, the basic normal fields of $F$ with respect to the restriction   $P^{\sharp}:L\to  Z$ are smooth.  Fix such a regular smooth fiber $F\subset L\cap O$.  Consider any horizontal geodesic $\gamma$ connecting $F$ to another fiber $F'$ and the corresponding holonomy map $Hol^{\gamma} :F\to F'$.
The holonomy map is given by exponentiation in $M$ of a smooth basic normal field, hence  $Hol ^{\gamma}$ is smooth.  Due to \cite[Proposition 7.2]{L}  the holonomy map is Lipschitz-open  (even if the ambient manifolf $L$ is not smooth). Hence $Hol^{\gamma}$  is a smooth submersion and $F'$ is a smooth submanifod of $M$.   Thus,  all fibers of $P^{\sharp}:L\to Z$ are smooth.

Since all fibers  of $P^{\sharp}:L\to Z$  are smooth submanifolds of $M$,   any basic holonomy field along any such fiber $F$ is smooth.  This already proves the normal bundle of $F$ within $L$  is smooth for any regular fiber $F$.

  For a non-regular fiber $F$, and $x\in F$ we choose any regular horizontal  vector  $h\in T_x L \cap T_x^{\perp} F$. We consider the regular fiber $F'$ through $z:=\exp (\varepsilon \cdot h)$. We now use that the projection $\Pi:F'\to F$ (which is exactly a holonomy map discussed above) is a smooth submersion.  We thus find a smooth submanifold $Q\subset F'$ which is projected by $\Pi$ diffeomorphically onto a neighborhood  $U$ of $x$ in $F$.   The preimage of $Q$ under the normal exponential map of $F$  defines  a smooth extension of $h$ to a normal vector field  on $U$ within $L$.   This proves smoothness of the normal bundle of $F$ within $L$ and finishes the proof of the Theorem.
\end{proof}

\subsection{Riemannian submersion}
We  now finish the proof of Theorem \ref{thm-sm1}.
Thus, let $M$ be a  compact Riemannian manifold and let $P:M\to Y$ be a Riemannian submersion.  By Theorem \ref{thm-sm3},  $Y$ is smooth.

If, in addition,  $M$ is non-negatively curved, then $P$ is $\mathcal C^2$ by Theorem \ref{thm: c2neu}.   The proof of \cite[Theorem 3]{Wilking} (given under the assumption that $P$ is smooth) applies literally 
and shows that  all leaves of the dual foliation of $P$ are complete.  Hence,  by Theorem \ref{thm: dualcomplete+},  $P$ is smooth on all of $M$.  This finishes the proof of Theorem \ref{thm-sm1}.

\subsection{Symmetric spaces}
Let now $M$ be a non-negatively curved symmetric space and let $P:M\to Y$ be an arbitrary transnormal submetry.   Then $P$ is $\mathcal C^2$ by Theorem \ref{thm: c2neu}.  Now, the arguments of \cite{Llohann} apply without changes to the present situation and show that the dual leaves of $P$ are complete. In fact, by  \cite{Llohann},  any dual leaf is a direct factor of $M$ in this case.  Thus, Theorem \ref{thm: dualcomplete+}  shows that all fibers of $P$ are smooth.  This proves the only remaining case stated in Theorem \ref{thm: maincompact} and finishes the proof of this theorem.

%\subsection{Regular geodesics}
%We call a horizontal vector $h\in TM$
%  and the corresponding horizontal geodesic $\gamma ^h$ 
%\emph{regular} 
%if $\gamma ^h$ contains  regular points.  In this case, all but discretely many points on $\gamma ^h$ are regular.

%Denote by $\mathcal R\subset TM$ the set of all regular vectors. By definition and transnormality, $\mathcal R$ is  open subset of  the closed set $\mathcal H\subset TM$ of all horizontal vectors. Since the set of regular points is dense in $M$,  the set $H^x\cap \mathcal R$ is dense in $H^x$, for any $x\in M$. Hence, $\mathcal R$ is dense in $\mathcal H$.

%If $x \in M$ is a regular point and $h\in H^x$ is arbitrary,  then a neighborhood of $h$ in $\mathcal H$ is contained in $\mathcal R$. Hence, a neighborhood of $h$ in $\mathcal R$ 
%is an $(n+m)$-dimensional Lipschitz manifold.

%By definition, $\mathcal R$ is invariant under the geodesic flow and every $h\in \mathcal R$ 
%can be moved by the geodesic flow to a horizontal vector at a regular point.  Hence, all of 
%$\mathcal R$ is $(n+m)$-dimensional Lipschitz manifold.

\bibliographystyle{alpha}
\bibliography{smooth}

\end{document}